\numberwithin{equation}{section}
\DeclareMathOperator*{\plim}{plim}
\DeclareMathOperator*{\PP}{\mathbb{P}}
\DeclareMathOperator*{\E}{\mathbb{E}}
\DeclareMathOperator{\TPP}{\textup{TPP}}
\DeclareMathOperator{\FDP}{\textup{FDP}}
\DeclareMathOperator*{\fdpinf}{\textup{FDP}^{\infty}}
\DeclareMathOperator*{\tppinf}{\textup{TPP}^{\infty}}
\DeclareMathOperator{\prox}{\textup{prox}}
\DeclareMathOperator{\AVE}{\textup{AVE}}
\DeclareMathOperator{\supp}{supp}
\DeclareMathOperator{\sgn}{sign}
\newcommand{\R}{\mathbb{R}}
\newcommand{\Align}[1]{\begin{align}#1\end{align}}
\newcommand{\be}{\begin{equation}}
	\newcommand{\ee}{\end{equation}}
\newcommand{\ben}{\begin{equation*}}
	\newcommand{\een}{\end{equation*}}
\newcommand{\norm}[1]{\lVert#1\rVert}
\renewcommand{\mathbf}{\bm}
\newcommand\goto{\rightarrow}
\newcommand{\bet}{\boldsymbol{\beta}}
\newcommand{\p}{p}
\newcommand{\n}{n}
\newcommand{\Z}{\mathbf{Z}}
\newcommand{\X}{\mathbf{X}}
\newcommand{\w}{\mathbf{w}}
\newcommand{\y}{\mathbf{y}}
\newcommand{\blam}{\bm\lambda}
\newcommand{\thet}{\boldsymbol \theta}
\newcommand{\z}{\mathbf{z}}
\newcommand{\bfalph}{\boldsymbol \alpha}
\newcommand{\h}[1]{\mathlarger{\eta}_{{\textstyle\mathstrut}#1}}
\newcommand{\hprime}[1]{\mathlarger{\eta}'_{{\textstyle\mathstrut}#1}}
\newcommand{\N}{\mathcal{N}}
\renewcommand{\P}{\mathbb{P}}
\newcommand{\tppmax}{u^\star_{\textnormal{\tiny DT}}}
\renewcommand{\omega}{w}
\newcommand{\Mobius}{M\"obius }
\renewcommand{\AA}{\mathrm{A}_{\textnormal{eff}}}
\newcommand{\A}{\mathrm{A}}
\renewcommand{\o}{o}
\definecolor{dgreen}{rgb}{0.1,0.5,0.1}
\newcommand{\GAM}{\xi}
\newtheorem{theorem}{Theorem}
\crefname{theorem}{Theorem}{Theorem}
\newtheorem{othertheorem}{othertheorem}[section]
\newtheorem{lemma}[othertheorem]{Lemma}
\crefname{lemma}{Lemma}{Lemma}
\newtheorem{corollary}[othertheorem]{Corollary}
\crefname{corollary}{Corollary}{Corollary}
\newtheorem{proposition}[othertheorem]{Proposition}
\crefname{proposition}{Proposition}{Proposition}
\newtheorem{definition}[othertheorem]{Definition}
\crefname{definition}{Definition}{Definition}
\newtheorem{fact}[othertheorem]{Fact}
\crefname{fact}{Fact}{Fact}
\theoremstyle{remark}
\newtheorem{rem}[othertheorem]{Remark}
\title{
	Characterizing the SLOPE Trade-off: A Variational Perspective and the Donoho--Tanner Limit
}
\author{Zhiqi Bu\thanks{Author names are listed alphabetically.} \thanks{Graduate  Group  in  Applied  Mathematics  and  Computational  Science, University of Pennsylvania. Email: {\tt zbu@sas.upenn.edu}}
	\and Jason M. Klusowski\thanks{Department of Operations Research and Financial Engineering, Princeton University. Email: {\tt jason.klusowski@princeton.edu} } 
	\and Cynthia Rush\thanks{Department of Statistics, Columbia University. Email: {\tt cynthia.rush@columbia.edu} }
	\and Weijie J. Su\thanks{Department   of   Statistics and Data Science,   University   of   Pennsylvania. Email: {\tt suw@wharton.upenn.edu} }}
\begin{document}
	\maketitle
	
	\begin{abstract}
Sorted $\ell_1$ regularization has been incorporated into many methods for solving high-dimensional statistical estimation problems, including the SLOPE estimator in linear regression. In this paper, we study how this relatively new regularization technique improves variable selection by characterizing the optimal SLOPE trade-off between the false discovery proportion (FDP) and true positive proportion (TPP) or, equivalently, between measures of type I error and power. Assuming a regime of linear sparsity and working under Gaussian random designs, we obtain an upper bound on the optimal trade-off for SLOPE, showing its capability of breaking the Donoho--Tanner power limit. To put it into perspective, this limit is the highest possible power that the Lasso, which is perhaps the most popular $\ell_1$-based method, can achieve even with arbitrarily strong effect sizes. Next, we derive a tight lower bound that delineates the fundamental limit of sorted $\ell_1$ regularization in optimally trading the
FDP off for the TPP. Finally, we show that on any problem instance, SLOPE with a certain regularization sequence outperforms the Lasso, in the sense of having a smaller FDP, larger TPP and smaller $\ell_2$ estimation risk simultaneously. Our proofs are based on a novel technique that reduces a calculus of variations problem to a class of infinite-dimensional convex optimization problems and a very recent result from approximate message passing theory.
		
	\end{abstract}

	
	\section{Introduction}

Reconstructing the signal from noisy linear measurements is vital in many disciplines, including statistical learning, signal processing, and biomedical imaging. In many modern applications where the number of explanatory variables often exceeds the number of measurements, the signal is often believed---or, wished---to be sparse in the sense that most of its entries are zero or approximately zero. Put differently, this means that a majority of the explanatory variables are simply irrelevant to the response of interest. 


Accordingly, a host of methods have been developed to tackle these problems by leveraging the sparsity of signals in high-dimensional linear regression. These methods often rely on, among others, the concept of \textit{regularization} to constrain the search space of the unknown signals. Perhaps the most influential instantiation of this concept is $\ell_1$ regularization, which gives rise to the Lasso method~\citep{lasso_paper}. The optimal amount of regularization, however, hinges on the sparsity level of the signal. Intuitively speaking, if the sparsity level is low, then more regularization should be imposed, and vice versa (see, for example, \citet{abramovich2006adapting}).


This intuition necessitates the development of a regularization technique that is adaptive to the sparsity level of signals, which is typically unknown in practical problems. To achieve this desired adaptivity, \cite{SLOPE1} introduced \textit{sorted $\ell_1$ regularization}. This new regularization technique turns into a method called SLOPE in the setting of a linear regression model
\begin{equation}\label{eq:linear_model}
\y = \X\bet + \w,
\end{equation}
where $\X$ is the $\n\times \p$ design matrix, $\bet \in \R^p$ are the regression coefficients, $\y \in \R^n$ is the response, and $\w \in \R^n $ is the noise term. Explicitly, SLOPE estimates the coefficients by solving the convex programming problem
\begin{equation}\label{eq:SLOPE_cost}
\arg \min_{\bm b} \frac{1}{2}\|\y - \X\bm{b}\|^2 + \sum_{i=1}^p \lambda_i|b|_{(i)},
\end{equation}
where $|b|_{(1)} \ge \cdots \ge |b|_{(p)}$ are the order statistics in absolute value of $\bm b = (b_1, \ldots, b_p)$ and $\lambda_1\geq \cdots \geq \lambda_p \geq 0$ (with at least one strict inequality) are the regularization parameters. The sorted $\ell_1$ penalty, $\sum_{i=1}^p \lambda_i|b|_{(i)}$, is a norm, and the optimization problem for SLOPE is, therefore, convex (see also \citet{figueiredo2016ordered}). As an important feature, the sorted $\ell_1$ norm penalizes larger entries more heavily than smaller ones. Indeed, this regularization technique is shown to be adaptive to the degree of sparsity level and enables SLOPE to obtain optimal estimation performance for certain problems~\citep{SLOPE2}. Notably, in the special case $\lambda_1=\cdots = \lambda_p$, the sorted $\ell_1$ norm reduces to the usual $\ell_1$ norm. Thus, the Lasso can be regarded as a special instance of SLOPE.



A fundamental question, yet to be better addressed, is how to quantitatively characterize the benefits of using the sorted $\ell_1$ regularization. To explore this question, Figure~\ref{fig:simulation_overcome_ques} compares the model selection performance of SLOPE and the Lasso in terms of the \textit{false discovery proportion} (FDP) and \textit{true positive proportion} (TPP) or, equivalently, between measures of type I error and power. Needless to say, a model is preferred if its FDP is small while its TPP is large. As the first impression conveyed by this figure, both methods seem to undergo a trade-off between the FDP and TPP when the TPP is below a certain limit. More interestingly, while \textit{nowhere} on the
Lasso path is the TPP above a limit, which is about 0.5707 in the left plot of Figure~\ref{fig:simulation_overcome_ques} and 0.4343 in the right, SLOPE is able to pass the limit toward achieving full power. To be sure, these contrasting patterns persist even for an arbitrarily large signal-to-noise ratio. This distinction must be attributed to the flexibility of the SLOPE regularization sequence $(\lambda_1, \ldots, \lambda_p)$ compared to a single value as in the Lasso case. Recognizing this message, we are tempted to ask (1) \textit{why} the use of sorted $\ell_1$ regularization brings a significant benefit over $\ell_1$ regularization in the high TPP regime and, equally importantly, (2) \textit{why} SLOPE exhibits a trade-off between the FDP and TPP just as the Lasso does in the low TPP regime.


\begin{figure}[!htp]
	\centering
	\includegraphics[width=6.5cm,height=5cm]{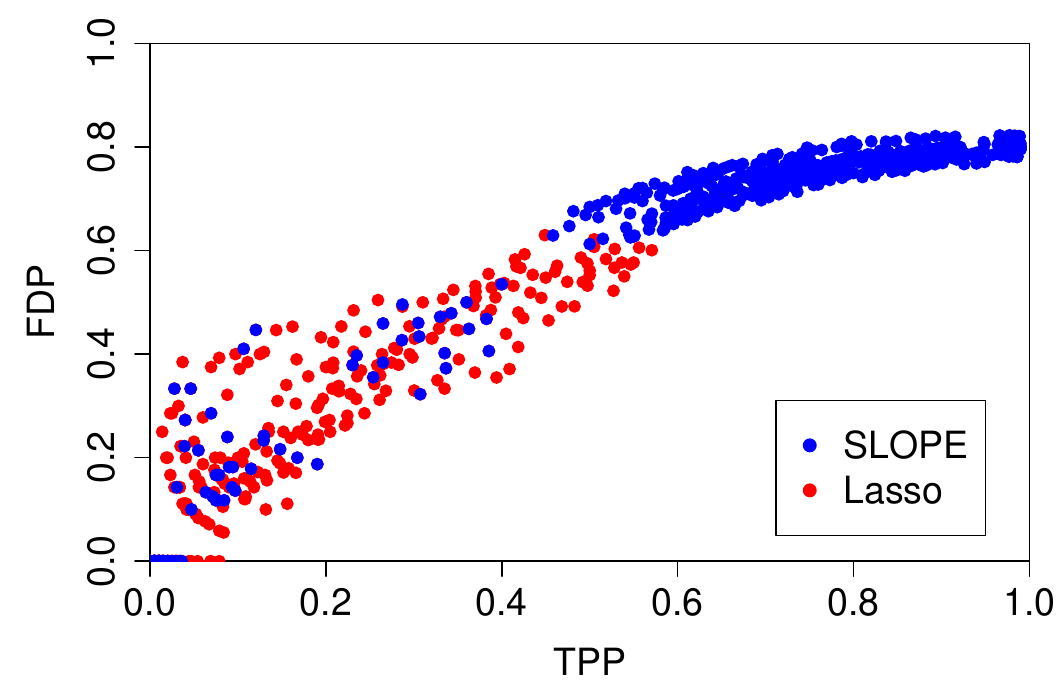}
	\includegraphics[width=6.5cm,height=5cm]{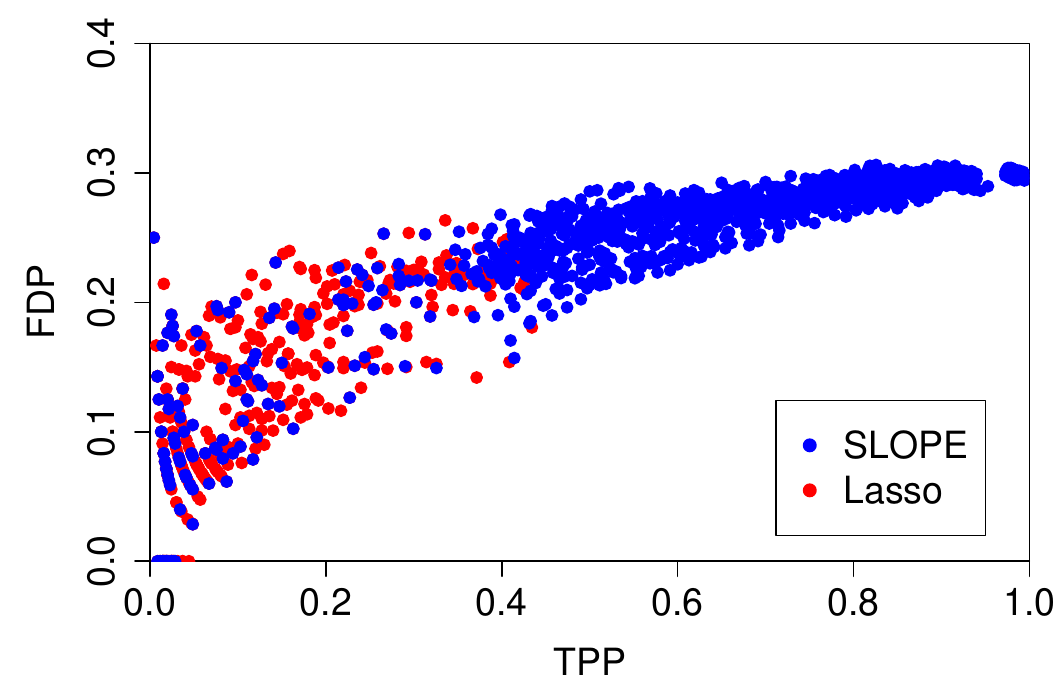}
	\caption{Comparison between SLOPE and the Lasso in terms of the TPP--FDP trade-off. Given an estimate $\widehat{\bet}$, define its $\FDP = \frac{|\{j:\beta_j = 0 ~\text{and}~ \widehat{\beta}_j \ne 0\}|}{|\{j: \widehat{\beta}_j \ne 0\}|}$ and $\TPP = \frac{|\{j:\beta_j \ne 0 ~\text{and}~ \widehat{\beta}_j \ne 0\}|}{|\{j: {\beta}_j\ne 0\}|}$. The SLOPE regularization sequence $\blam_{\lambda,r\lambda,w}$ is defined in \eqref{eq:theta_func}, with varying $0 < r < 1$ and $\lambda > 0$, and $w = 0.2$ in the left plot and $w = 0.3$ in the right plot. The results of the Lasso are taken over its entire solution path, and its highest TPP is about $0.5707$ in the left plot and $0.4343$ in the right plot. Left: $(n,p)=(300,1000), |\{j:\beta_j \neq 0\}|/p=0.2$, and $\w=\bm{0} $ (noiseless); right: $(n,p)=(400,1000), |\{j:\beta_j \neq 0\}|/p=0.7$, and $\w=\bm{0} $. On both plots, non-zero entries of $\bet$ are i.i.d.~draws from the standard normal distribution. More specifications of the setup are detailed in Section~\ref{sec:extend-donoho-tann}. The result presents 10 independent trials.}
\label{fig:simulation_overcome_ques}
\end{figure}


\subsection{A peek at our results}

To address these two questions, in this paper we characterize the optimal trade-off of SLOPE between the TPP and FDP, uncovering several intriguing findings of sorted $\ell_1$ regularization. Assuming $\TPP \approx u$ for $0 \le u \le 1$, loosely speaking, the trade-off curve gives the smallest possible value of the FDP of SLOPE using any regularization sequence in the large system limit. To prepare for a rough description of our contributions, in brief, we work in the setting where the design has i.i.d.~Gaussian entries and the regression coefficients $\beta_1, \ldots, \beta_p$ are i.i.d.~draws from a distribution that takes non-zero values with a certain probability. Notably, it is generally nontrivial to define false discoveries in high dimensions~\citep{g2013false}, which is not an issue however in the case of independent regressors. The assumption on the signal prior corresponds to the \textit{linear sparsity} regime. In addition, we assume that both $n, p
\goto \infty$ and the sampling ratio $n/p$ converges to a constant (see more detailed assumptions in Section~\ref{sec:extend-donoho-tann}). From a technical viewpoint, these assumptions allow us to make use of tools from approximate message passing (AMP) theory \citep{amp1,amp2}.

\paragraph*{Breaking the Donoho--Tanner power limit} To explain the contrasting results presented in Figure~\ref{fig:simulation_overcome_ques}, we prove that under the aforementioned assumptions, SLOPE can achieve an arbitrarily high TPP. Moving from sorted $\ell_1$ regularization to $\ell_1$ regularization, in stark contrast, the Lasso exhibits the Donoho--Tanner (DT) power limit when $n < p$ and the sparsity is above a certain threshold~\citep{donoho2006high,donoho2005neighborly}. Informally, the DT power limit is the largest possible power that any estimate along the Lasso path can achieve in the large system limit. For example, in the setting of Figure~\ref{fig:simulation_overcome_ques} this power limit is about 0.5676 in the left plot and 0.4401 in the right plot. For SLOPE and a certain choice of the regularization sequence, interestingly, we show that the asymptotic TPP-FDP trade-off of SLOPE beyond the DT power limit is given by a simple \Mobius transformation, which is shown by the blue curve in Figure~\ref{fig:intro_result}. This \Mobius transformation naturally serves as an upper bound on the
(optimal) SLOPE trade-off curve above the DT power limit.


\paragraph*{Lower bound via convex optimization} 
Next, we address the second question by lower bounding the optimal trade-off for SLOPE, followed by a comparison between the trade-offs for the two methods in the low TPP regime. To put it into perspective, the Lasso trade-off obtained by~\citet{lassopath} is plotted as the green solid curve in Figure~\ref{fig:intro_result}. Apart from the simple fact that the SLOPE trade-off is better than or equal to the Lasso counterpart, however, it requires new tools to take into account the structure of sorted $\ell_1$ regularization. To this end, we develop a technique based on a class of infinite-dimensional convex optimization problems. The resulting lower bound is shown in red in Figure~\ref{fig:intro_result}. It is worth noting that the development of this technique presents several novel ideas that might be of independent interest for other regularization schemes.

\paragraph*{Instance superiority of SLOPE} The results illustrated so far are taken from an optimal-case viewpoint. Moving to a more practical standpoint, we are interested in comparing the two methods on a specific problem instance and, in particular, wish to find a SLOPE regularization sequence that allows SLOPE to outperform the Lasso with any given penalty parameter in terms of, for example, the TPP, the FDP, or the $\ell_2$ estimation risk. Surprisingly, we prove that on any problem instance, SLOPE can dominate the Lasso according to these three indicators simultaneously. This comparison conveys the message that the flexibility of the sorted $\ell_1$ regularization can turn into appreciable benefits. This result is formally stated in \Cref{thm:instance better}.


\begin{figure}[!htp]
	\centering
	\includegraphics[width=6.5cm,height=5cm]{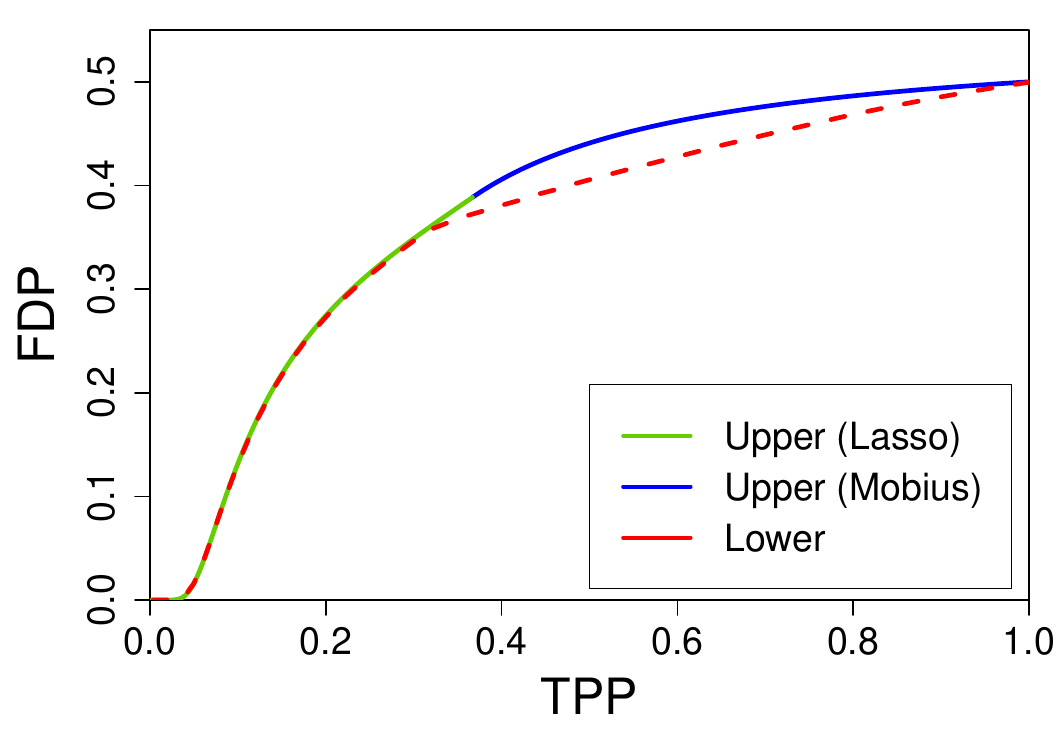}
	\includegraphics[width=6.5cm,height=5cm]{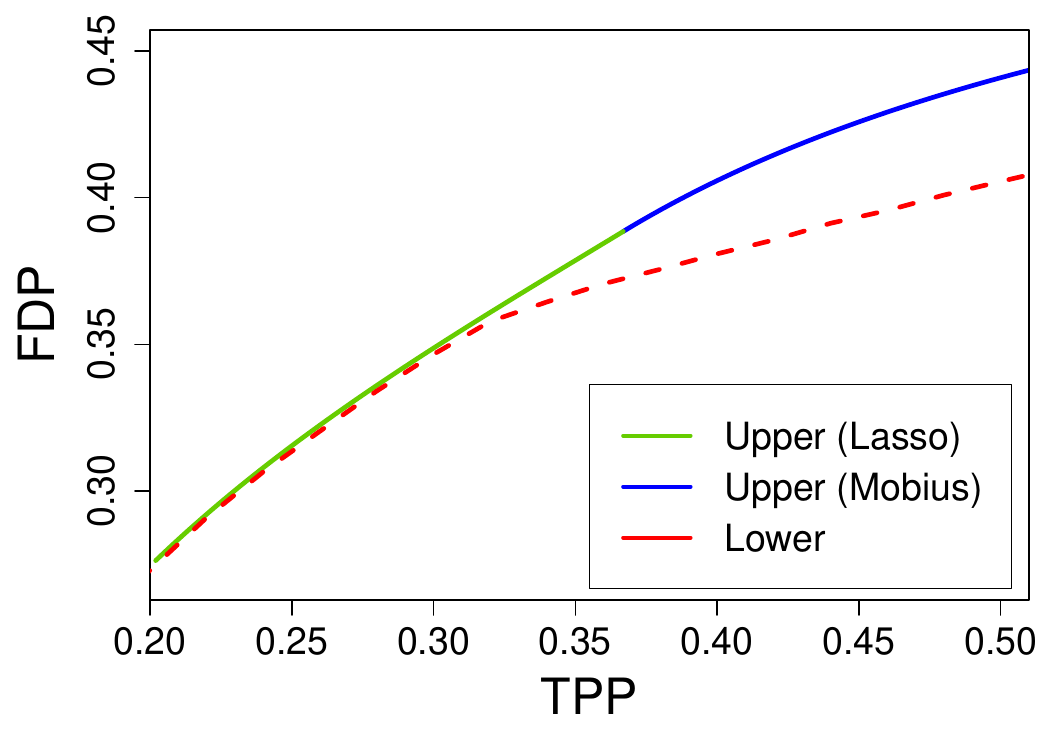}
	\caption{Illustration of the upper bound $q^\star$ and lower bound $q_\star$ for the SLOPE TPP--FDP trade-off. The right plot is the zoom-in of the left. Here $n/p = 0.3$ and $|\{j:\beta_j \neq 0\}|/p=0.5$ (see more details in the working assumptions in \Cref{sec:extend-donoho-tann}). The Lasso trade-off curve shown in green is truncated at the DT power limit about 0.3669~\citep{lassopath}. The optimal SLOPE trade-off curve must lie between the two curves. Notably, the two bounds agree at $\TPP = 1$.}
	\label{fig:intro_result}
\end{figure}

\subsection{Organization}

The remainder of this paper is structured as follows. In \Cref{sec:extend-donoho-tann}, we present the main results of this paper. Next, \Cref{sec:prelim} introduces the AMP machinery at a minimal level as a preparation for the proofs of our main results. In \Cref{sec:lower bound}, we detail the derivation of the lower bound based on variational calculus and infinite-dimensional convex optimization. In \Cref{sec:mobius}, we specify the upper bound, especially the part given by a \Mobius transformation above the DT power limit. We conclude this paper in \Cref{sec:discussion} by proposing several future research directions. Omitted proofs are relegated to the appendix.



\section{Main results} \label{sec:extend-donoho-tann}

Throughout this paper, we make the following working assumptions to specify the design matrix $\X \in \R^{n \times p}$, regression coefficients $\bet \in \R^p$, and noise $\w \in \R^n$ in the linear model \eqref{eq:linear_model}, as well as the SLOPE regularization sequence $\blam = (\lambda_1, \ldots, \lambda_p)$. To obviate any ambiguity, we consider a sequence of problems indexed by $(n, p)$ with both $n, p$ tending to infinity.

\begin{itemize}
	\item[\indent(\textnormal{A1})] The matrix $\X$ has i.i.d.~$\N(0, 1/n)$ entries. The sampling ratio $n/p$ converges to a constant $\delta > 0$. 
	\item[\indent(\textnormal{A2})]  The entries of $\bet$ are i.i.d.~copies of a random variable $\Pi$ satisfying $\P(\Pi \ne 0) = \epsilon$ for a constant $0 < \epsilon < 1$ and $\E(\Pi^2\max\{0,\log \Pi\}) < \infty$. The noise vector $\bm w$ consists of i.i.d.~copies of a random variable $W$ with bounded second moment $\sigma^2 := \E(W^2)  < \infty$.	%
	\item[\indent(\textnormal{A3})]
	
	The SLOPE regularization sequence $\blam(\p) = (\lambda_1, \ldots, \lambda_p)$ is the order statistics of $p$  i.i.d.~realizations of a (nontrivial) non-negative random variable $\Lambda$.
	
\end{itemize}

Moreover, we assume that $\X, \bet$, and $\bm w$ are independent. Notice that the sparsity level of $\bet$ is about $\epsilon p$ and that each column of $\X$ has approximately a unit $\ell_2$ norm. The noise variance $\sigma^2$ can equal $0$, meaning that our results apply to both noisy and noiseless settings. In (A3), by ``nontrivial'' we mean that $\Lambda$ is not always equal to 0. As an aside, SLOPE is reduced to the Lasso if the distribution of $\Lambda$ is a unit probability mass at some positive value.

The working assumptions are mainly driven by their necessity in AMP theory~\citep{amp1,amp2}, which enables the use of the recent analysis of an AMP algorithm when applied to solve SLOPE~\citep{ourAMP} (similar analysis is given in~\citet{SLOPEasymptotic} and requires similar assumptions). 
Regarding (A2), the condition $\PP(\Pi\neq 0)=\epsilon$, which implies linear sparsity of the regression coefficients, is not required for AMP theory. Rather, this condition is only made so that the TPP and FDP are well-defined. Besides, the merit of the linear sparsity regime has been increasingly recognized in the high-dimensional literature~\citep{mousavi2018consistent,weng2018overcoming,su2018first, sur2019likelihood,wang2019does}.

\subsection{Bounds on the SLOPE trade-off}
\label{sec:main-results}

Our main result is the characterization of a trade-off curve that teases apart asymptotically achievable TPP and FDP pairs from the asymptotically unachievable pairs for SLOPE\footnote{R code to reproduce the results, e.g., to calculate $q_\star$ and $q^\star$, is available at \url{https://github.com/woodyx218/SLOPE_AMP}.}. For any estimate $\widehat\bet$, recall that its FDP and TPP are defined as
\begin{align}
	\label{eq:original_defs}
	\FDP = \frac{|\{j:\beta_j = 0 ~\text{and}~ \widehat{\beta}_j \ne 0\}|}{|\{j: \widehat{\beta}_j \ne 0\}|}, \quad \TPP = \frac{|\{j:\beta_j \ne 0 ~\text{and}~ \widehat{\beta}_j \ne 0\}|}{|\{j: {\beta}_j\ne 0\}|},
\end{align}
with the convention $0/0 = 0$. When it comes to the SLOPE estimator, we use $\TPP(\bet,\blam)$ and $\FDP(\bet,\blam)$ to denote its TPP and FDP, respectively. 

Likewise, we define the thresholded FDP and TPP, namely, 
	\begin{align}
		\label{eq:new_defs}
		\FDP_\GAM = \frac{|\{j:\beta_j = 0 ~\text{and}~ |\widehat{\beta}_j|>\GAM\}|}{|\{j: |\widehat{\beta}_j|>\GAM\}|}, \quad \TPP_\GAM = \frac{|\{j:\beta_j \ne 0 ~\text{and}~ |\widehat{\beta}_j|>\GAM\}|}{|\{j: {\beta}_j\ne 0\}|},
	\end{align}
	which reduce to FDP and TPP when $\GAM=0$.
	These thresholded versions of FDP and TPP are introduced purely for technical reasons, and have been used in previous work on penalized estimators like SLOPE including in \citet{wang2017bridge}. Specifically, the SLOPE estimator is known to possibly have many elements that are very close to zero, but not strictly equal to zero, causing the direct asymptotic analysis of the FDP and TPP defined in \eqref{eq:original_defs} to be difficult. We refer interested readers to \citet[Example 3 and Figure 3]{SLOPEasymptotic} for a concrete example that illustrates such a phenomenon.  Instead, we analyze asymptotic (in $p$) properties of $\FDP_{\GAM}$ and $\TPP_{\GAM}$ in \eqref{eq:new_defs} and then allow $\GAM \rightarrow 0$ to recover asymptotic properties of FDP and TPP defined in \eqref{eq:original_defs}.

Our main results are stated in the following two theorems, which give lower and upper bounds on the optimal SLOPE trade-off. Taken together, they demonstrate a fundamental separation between asymptotically achievable TPP--FDP pairs and the unachievable pairs over all signal priors $\Pi$ and SLOPE regularization sequences $\blam$. Note that both the upper bound $q^\star$ and lower bound $q_\star$ are defined on $[0, 1]$ and completely determined by $\epsilon$ and $\delta$. The expression for $q^\star$ is given in \eqref{cor:whole tradeoff}, while $q_\star$ is detailed in \Cref{sec:lower bound}.


\begin{theorem}[Lower bound]\label{thm:lower trade-off-curve-no-q-form}
	Under the working assumptions, namely (A1), (A2), and (A3), 
	the following inequality holds with probability tending to one:
		\begin{align*}
			\FDP_{\GAM}(\bet,\blam) \geq q_\star \left( \TPP_{\GAM}(\bet,\blam); \delta,\epsilon \right) - c_\GAM,
		\end{align*}
		for some positive constant $c_\GAM$ which tends to 0 as $\GAM\to 0$.
\end{theorem}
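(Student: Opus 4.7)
The plan is to proceed in four conceptual steps: (i) translate the SLOPE problem to a scalar one via AMP; (ii) recast the candidate lower bound as an infimum over admissible priors; (iii) reduce that variational problem to an infinite-dimensional convex program; and (iv) transfer the asymptotic inequality to the finite-$n$ statement, with the slack $0.0001$ absorbing the approximation error.

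Step 1 (AMP characterization). I will first invoke the AMP analysis of SLOPE that is recalled in \Cref{sec:prelim}. Under (A1)--(A3), it supplies a state-evolution fixed point $(\tau,\alpha)$ together with a scalar ``effective SLOPE'' shrinkage $\eta$ (determined by the law of $\Lambda$) such that the empirical joint distribution of $(\beta_j,\widehat{\beta}_j)_{j\le p}$ converges weakly to the law of $(\Pi,\,\eta(\Pi+\tau Z;\,\alpha\tau))$ with $Z\sim\N(0,1)$ independent of $\Pi$. Consequently $\TPP(\bet,\blam)$ and $\FDP(\bet,\blam)$ concentrate around deterministic functionals $\tppinf$ and $\fdpinf$ of the inputs $(\Pi,\Lambda;\delta,\epsilon,\sigma^2)$, turning the theorem into a statement about scalar random variables.

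Steps 2 and 3 (variational reformulation and convex reduction---the main obstacle). Armed with this scalar description, the natural candidate for the lower bound is
\begin{equation*}
q_\star(u;\delta,\epsilon)\;:=\;\inf_{(\Pi,\Lambda)}\bigl\{\fdpinf : \tppinf = u\bigr\},
\end{equation*}
the infimum being taken over $(\Pi,\Lambda)$ compatible with (A2)--(A3); any asymptotically achievable (TPP, FDP) pair must then lie on or above its graph. The crux is that $\tppinf$ and $\fdpinf$ depend on $\Lambda$ in a highly non-convex way through the SLOPE proximal operator, so the raw minimization is not tractable. Following the novel technique advertised in the introduction, I plan to reparameterize the problem in terms of the effective per-coordinate shrinkage threshold applied to $\Pi+\tau Z$ (equivalently, a suitable monotone function of the quantile of $|\Pi+\tau Z|$), together with the joint law of $(\Pi,|\Pi+\tau Z|)$. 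After this change of variables, both the numerator and denominator of $\fdpinf$ as well as $\tppinf$ become affine functionals of the new variable, the monotonicity and state-evolution constraints become convex, and a Charnes--Cooper-type rescaling converts the fractional program into a genuine infinite-dimensional linear program whose value equals $q_\star$. The principal difficulty I anticipate is proving that this reparameterization is \emph{faithful}: every measure admissible in the convex program must be realizable by some $(\Pi,\Lambda)$ satisfying (A2)--(A3), lest the convex value be only a loose relaxation. Verifying this tightness, presumably via extreme-point or bang-bang arguments on the infinite-dimensional feasible set together with a careful inspection of the SLOPE proximal operator, is where I expect the heaviest technical work.

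Step 4 (finite-sample transfer). To close the argument, I will combine the AMP concentration from Step~1, which yields $(\TPP(\bet,\blam),\FDP(\bet,\blam))\to(\tppinf,\fdpinf)$ in probability, with continuity of $q_\star$ on $[0,1]$ (an automatic byproduct of its convex-optimization representation). The explicit additive slack $0.0001$ then serves as a safety cushion absorbing the diminishing discrepancy between the empirical quantities and their asymptotic counterparts, and can in fact be replaced by any fixed positive constant.
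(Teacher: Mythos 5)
Your high-level outline (AMP scalar reduction, recast as a variational problem, reduce to infinite-dimensional convex optimization, then transfer to finite $n$ via concentration and slack) matches the paper's strategy, but two points in Steps 2--3 misread what the argument actually requires.

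First, your concern about ``faithfulness'' of the relaxation is not the difficulty---it is the opposite of what is needed. The paper \emph{deliberately} uses a one-sided relaxation: the constraints on the effective penalty function $\AA$ in problem \eqref{eq:variational optimization} are only the \emph{necessary} conditions extracted from Appendix~\ref{bridge}, and the paper explicitly notes they ``may not be sufficient'' and that $\min F_\alpha$ over the relaxed set may be \emph{strictly smaller}. Enlarging the feasible set can only enlarge the supremum of admissible zero-thresholds, i.e.\ $\alpha^\star(u)\le t_\star(u)$ in \eqref{eq:t_star maybe}, and since $\fdpinf$ is a fixed \emph{decreasing} function of the zero-threshold once $\tppinf=u$ is pinned down (see \eqref{eq:tpp fdp zero-threshold} and \eqref{eq:maximize zero threshold}), this gives $q_\star\le q_{_{\textnormal{\tiny SLOPE}}}$ automatically. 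A ``loose relaxation'' is a valid lower bound by construction; proving that every admissible measure is realizable would only be needed for \emph{tightness}, not for \Cref{thm:lower trade-off-curve-no-q-form} as stated. This monotone link between FDP and $\alpha$---which lets you trade the fractional objective for a feasibility question about zero-thresholds---is the key conceptual step you are missing.

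Second, the proposed Charnes--Cooper rescaling into an infinite-dimensional linear program does not match the structure of the constraint set and would not go through as stated: the state-evolution constraint $F_\alpha[\AA,\rho]\le\delta$ is \emph{quadratic} in $\AA$ (see \eqref{eq:F_alpha definition} and \eqref{eq:functional split}), not affine, and the joint problem in $(\AA,\rho)$ is only bi-convex, not jointly convex. The paper's actual route is: (i) fix $\tppinf=u$ and a candidate zero-threshold $\alpha$, (ii) observe the sub-problem over $\rho$ with $\AA$ held fixed is a linear program in two linear constraints, so by an extreme-point argument the optimal $\rho^\star$ is a two-point distribution (Lemma~\ref{lem:two point vertex}), (iii) for each $(t_1,t_2)$ solve a convex \emph{quadratic} program \eqref{eq:quadratic programming functional} over $\AA$, and (iv) grid-search over $(t_1,t_2)$ and $\alpha$ to find the largest feasible zero-threshold $t_\star(u)$. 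Without the two-point reduction you have no way to handle the infinite-dimensional search over priors; and without parametrizing by the zero-threshold you are forced into the fractional program that you cannot in fact linearize. Your Step~4 is broadly correct, although continuity of $q_\star$ in $u$ is argued in the paper via a perturbation analysis of the quadratic program rather than being an ``automatic byproduct.''
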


\begin{theorem}[Upper bound]\label{thm:upper trade-off-curve-no-q-form}
	Under the working assumptions, namely (A1), (A2), and (A3), for any $0 \le u \le 1$, 	there exist a signal prior $\Pi$ and a SLOPE regularization prior $\Lambda$ such that the following inequalities hold with probability tending to one:
		\begin{align*}
			\FDP_{\GAM}(\bet,\blam) \leq q^\star \left( \TPP_{\GAM}(\bet,\blam); \delta,\epsilon \right) + c_\GAM(\Pi,\Lambda)  \quad  \text{ and } \quad |\TPP_{\GAM}(\bet,\blam) - u| \le c_\GAM(\Pi,\Lambda),
		\end{align*}
		for some positive constant $c_\GAM$ which tends to 0 as $\GAM\to 0$.
\end{theorem}

\begin{rem}
	The probability is taken with respect to the randomness in the design matrix, regression coefficients, noise, and SLOPE regularization sequence in the large system limit $\n, \p \rightarrow \infty$. In relating to the assumptions made previously, this theorem holds even for $\sigma^2 = 0 $, the noiseless case.
\end{rem}

The proofs of \Cref{thm:lower trade-off-curve-no-q-form} and \Cref{thm:upper trade-off-curve-no-q-form} are given in \Cref{sec:lower bound} and \Cref{sec:mobius}, respectively. Most notably, our proof of \Cref{thm:lower trade-off-curve-no-q-form} starts by formulating the problem of finding a tight lower bound as a calculus of variations problem. Relying on several novel elements, we further reduce this problem to a class of infinite-dimensional convex programs.


On the one hand, \Cref{thm:lower trade-off-curve-no-q-form} says that it is impossible to achieve high power and a low FDP simultaneously using any sorted $\ell_1$ regularization sequences, and this trade-off is specified by $q_\star$. On the other hand, Theorem~\ref{thm:upper trade-off-curve-no-q-form} demonstrates that SLOPE can achieve at least the same trade-off as that given by $q^\star$ by specifying a prior $\Pi$ and a regularization sequence $\blam$. Indeed, the proof of this theorem is constructive in that we will show that SLOPE can come arbitrarily close to any point on the curve $q^\star$ (see \Cref{sec:mobius}). Another important observation from \Cref{thm:upper trade-off-curve-no-q-form} is that SLOPE can achieve any power levels, which is not necessarily the case for $\ell_1$ regularization-based methods, as we show in \Cref{sec:comp-with-lasso-1}.

Informally, let $q_{_{\textnormal{\tiny SLOPE}}}$ denote the optimal SLOPE trade-off curve. That is, $q_{_{\textnormal{\tiny SLOPE}}}(u)$ is asymptotically the minimum possible value of the FDP under the constraint that the TPP is about $u$, over all possible SLOPE regularization sequences (see formal definition in \Cref{sec:prelim}). Combining the two theorems above, we readily see that the optimal SLOPE trade-off must be sandwiched between $q^\star$ and $q_\star$:
\[
q_\star(u) \leq q_{_{\textnormal{\tiny SLOPE}}}(u) \leq q^\star(u),
\]
for all $0 \le u \le 1$. Consequently, the sharpness of the approximation to the SLOPE trade-off rests on the gap between the two curves, and throughout the paper, we refer to the gap as the function $u\mapsto q^\star(u)-q_\star(u)$. Figure~\ref{fig:SLOPEtradeoffExamples} illustrates several examples of the two curves for various pairs of $\epsilon, \delta$. Importantly, the plots show that the two bounds are very close to each other, thereby demonstrating tightness of our bounds. In fact, the gap between $q_\star$ and $q^\star$ is an upper bound of the gap between the analytical $q^\star$ and the true trade-off $q_\textnormal{\tiny SLOPE}$. Furthermore, a closer look at the plots reveals that the two curves seem to coincide exactly when the TPP is below a certain value. In this regard, the SLOPE trade-off might have been uncovered exactly in this regime of TPP. Future investigation is required to obtain a fine-grained comparison between the two curves.

Looking at \Cref{fig:SLOPEtradeoffExamples}, the reader may initially find the non-monotonicity of the trade-off curves  in $\epsilon$ as surprising. We argue that this is due to the DT phase transition: in the case of the Lasso, for fixed $\delta$, it can be shown that the trade-off curves are monotonically increasing in $\epsilon$; in other words, $q_\text{\tiny Lasso}(u;\delta,\epsilon_1)>q_\text{\tiny Lasso}(u;\delta,\epsilon_2)$ whenever $\epsilon_1>\epsilon_2$. However, in some settings, we empirically observe that $\TPP=1$ is achieved with a dense SLOPE estimator. When this occurs, $q_\text{\tiny  SLOPE}(1)=1-\epsilon$ and thus $q_\text{\tiny  SLOPE}(1;\delta,\epsilon_1)<q_\text{\tiny  SLOPE}(1;\delta,\epsilon_2)$. In words, the SLOPE trade-off at $\TPP=1$ is monotonically \emph{decreasing} in $\epsilon$. Therefore, the patterns may not be monotone between the TPP upper limit $u_{\text{DT}}^\star$ and 1, shifting from increasing in $\epsilon$ to decreasing in $\epsilon$ at the extreme. In short, the regime beyond DT phase limit is different for SLOPE and when SLOPE enters this regime, breaking the monotonicity in $\epsilon$ may occur.

To be complete, we remark that the message conveyed by these two theorems does not contradict earlier results established for FDR control of SLOPE~\citep{bogdan2013statistical,SLOPE1,groupslope,kos2020asymptotic}. The crucial difference between the two sides arises from the linear sparsity assumed in the present paper, which is a clear departure from the much lower sparsity level considered in the literature. In this regard, our results complement the literature by extending our understanding of the inferential properties of the SLOPE method to an unchartered regime. 


\begin{figure}
	\centering
	\includegraphics[width=6cm,height=4.3cm]{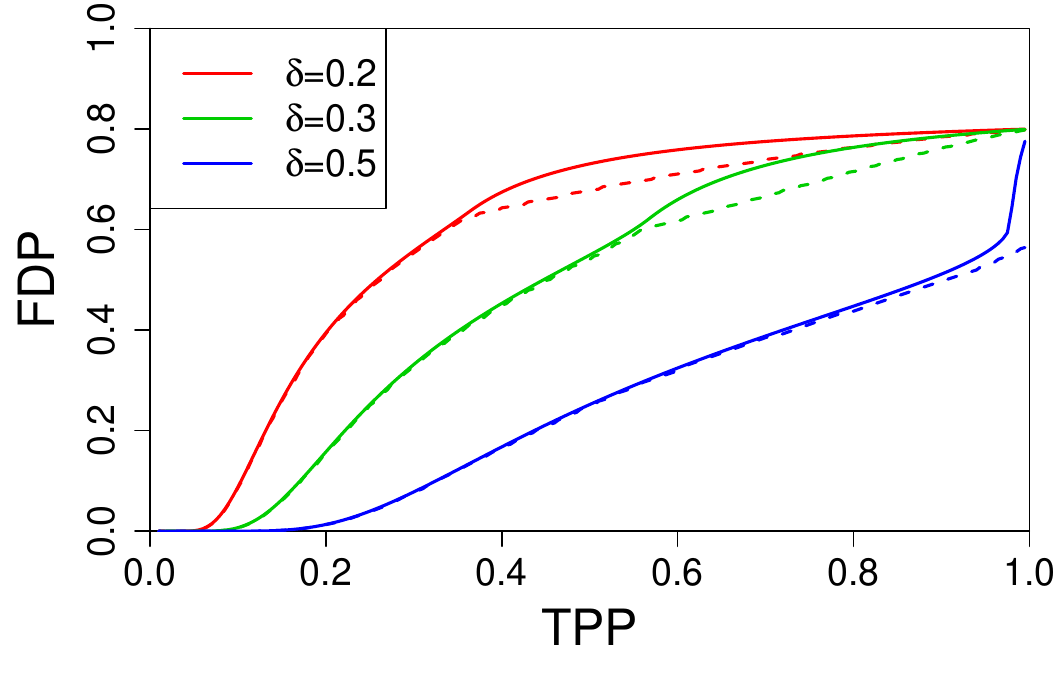}
	\includegraphics[width=6cm,height=4.3cm]{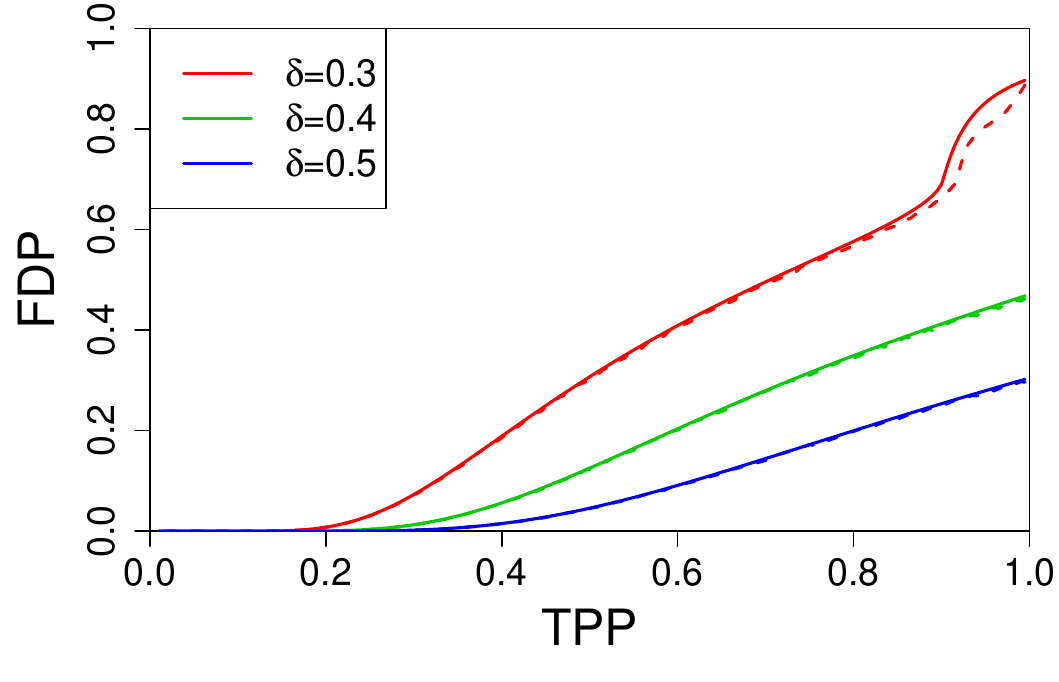}
	\includegraphics[width=6cm,height=4.3cm]{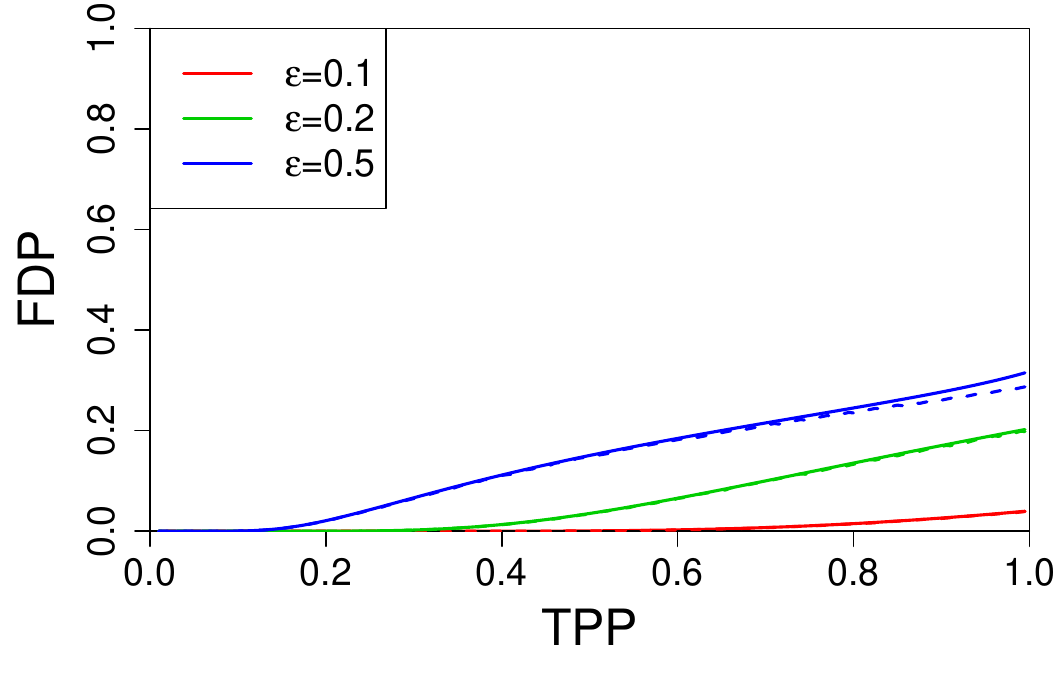}
	\includegraphics[width=6cm,height=4.3cm]{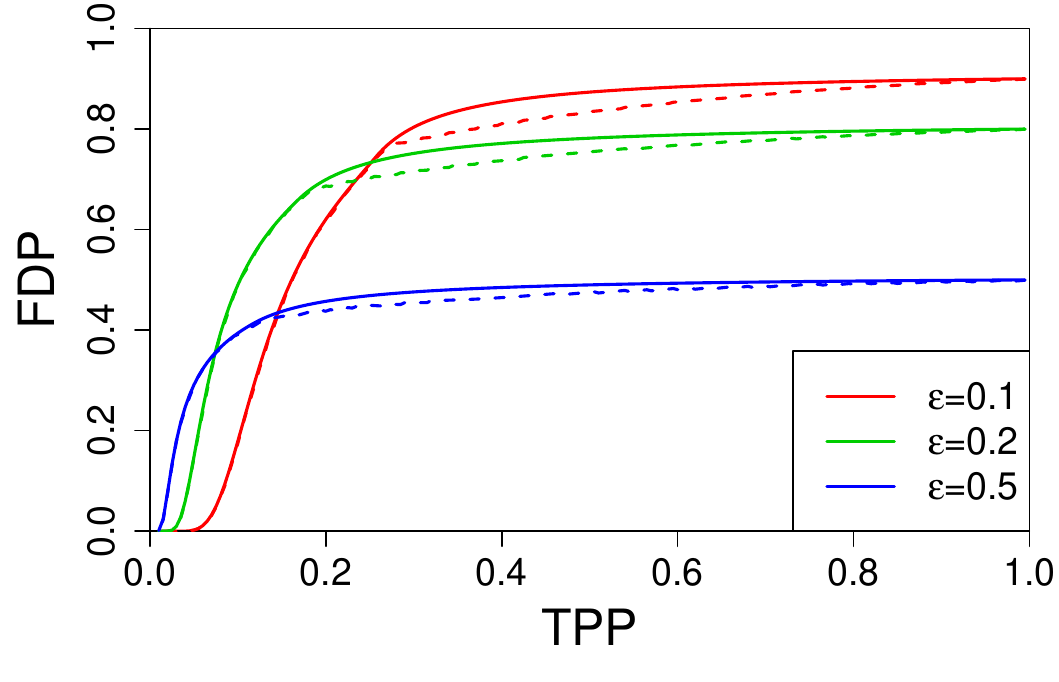}
	\caption{Examples of the SLOPE trade-off bounds $q^{\star}$ and $q_{\star}$ for different $(\delta,\epsilon)$ pairs. Top-left: $\epsilon=0.2$; top-right: $\epsilon=0.1$; bottom-left: $\delta=0.9$; bottom-right: $\delta=0.1$. For a given $\delta$, note that the trade-off for SLOPE is not monotone with respect to $\epsilon$, which is a departure from the Lasso counterpart (see \citet[Figure 4]{lassopath}). Numerically, the upper and lower bounds seem to coincide when the TPP is below a threshold (see more details in \Cref{fig:lasso_phase}). To give more details, in one regime with $\delta=0.1, \epsilon=0.5$, the maximum gap between the upper and lower bounds $\max_{u}[q^\star(u)-q_\star(u)]$ is less than 0.0235; whereas in another regime with $\delta=0.5,\epsilon=0.1$, the maximum gap is always less than 0.0056.}
	\label{fig:SLOPEtradeoffExamples}
\end{figure}

\subsection{Breaking the Donoho--Tanner power limit}
\label{sec:comp-with-lasso-1}

To better appreciate the trade-off results presented in \Cref{thm:upper trade-off-curve-no-q-form} for SLOPE, it is instructive to compare them with the TPP and FDP trade-off for the Lasso, which is arguably the most popular method leveraging $\ell_1$ regularization.

To put it into perspective, first recall some results concerning the optimal trade-off between the TPP and FDP for the Lasso. A surprising fact is that under the working assumptions,\footnote{Note that, in the case of the Lasso, (A3) is replaced by the assumption that $\lambda > 0$ is a constant.} the Lasso cannot achieve full power even with an arbitrarily large signal-to-noise ratio when $\delta < 1$ (that is, $\X$ is ``fat'') and the sparsity ratio $\epsilon$ is above a threshold, which we denote by $\epsilon^\star(\delta)$. The dependence of this value on $\delta$ is specified by the parametric equations
\begin{equation}\label{eq:eps_star}
	\delta = \frac{2\phi(s)}{2\phi(s) + s(2\Phi(s) - 1)}, \qquad \epsilon^\star = \frac{2\phi(s) - 2s \Phi(-s) }{2\phi(s) + s(2\Phi(s) - 1)},
\end{equation}
for $s > 0$.\footnote{In the compressed sensing literature, $\epsilon^\star$ corresponds to the sparsity level where the Donoho--Tanner phase transition occurs~\citep{donoho2009observed,donoho2009counting}.}  For simplicity, henceforth $(\delta,\epsilon)$ is said to be in the \textit{supercritical} regime if $\delta < 1, \epsilon > \epsilon^\star(\delta)$. Otherwise, it is in the \textit{subcritical} regime when $\delta <1, \epsilon \le \epsilon^\star(\delta)$, or $\delta \ge 1$ (that is, $\X$ is ``thin''). In the supercritical regime, \citet{lassopath} proved that the highest achievable TPP of the Lasso, denoted $\tppmax$, takes the form
\be
\label{eq:DT_power_limit}
\tppmax(\delta, \epsilon) := 1 - \frac{(1-\delta)(\epsilon-\epsilon^\star)}{\epsilon(1-\epsilon^\star)}  < 1.
\ee

Throughout the paper, $\tppmax$ is referred to as the \textit{DT power limit}. For completeness, in the subcritical regime the Lasso can achieve any power level. As such, we formally set $\tppmax(\delta, \epsilon) = 1$ when $\delta <1, \epsilon \le \epsilon^\star(\delta)$, or $\delta \ge 1$.

This existing result, in conjunction with \Cref{thm:upper trade-off-curve-no-q-form}, immediately gives the following contrasting result concerning the Lasso and SLOPE. We use $\TPP_{\textnormal{\tiny Lasso}}(\bet,\lambda)$ and $\FDP_{\textnormal{\tiny Lasso}}(\bet,\lambda)$ to denote, respectively, the TPP and FDP of the Lasso with penalty parameter $\lambda$. Likewise, we use $\TPP_{\textnormal{\tiny SLOPE}}(\bet,\bm\lambda)$ and $\FDP_{\textnormal{\tiny SLOPE}}(\bet, \bm\lambda)$ to denote those of SLOPE as $\GAM\to 0$. 

\begin{corollary}[SLOPE breaks the DT power limit]\label{cor:achievability}
	In the supercritical regime, the following conclusions hold under the working assumptions:
	\begin{enumerate}
		\item[(a)]
		The power of the Lasso satisfies $\TPP_{\textnormal{\tiny Lasso}}(\bet,\lambda) < \tppmax$ with probability tending to one.
		
		
		\item[(b)] 
		For any $0 \le u < 1$, there exists a SLOPE regularization prior $\Lambda$ and a signal prior $\Pi$ such that $\TPP_{\textnormal{\tiny SLOPE}}(\bet,\bm\lambda) > u$ with probability tending to one. 
	\end{enumerate}
\end{corollary}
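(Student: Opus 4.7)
The plan is to derive this corollary as a direct consequence of two existing results: Theorem~\ref{thm:upper trade-off-curve-no-q-form} (already proved in this excerpt and deferred to \Cref{sec:mobius}) and the asymptotic Lasso trade-off result from \citet{lassopath} that establishes the DT power limit \eqref{eq:DT_power_limit}. Since both ingredients are available, there is no new analytic content to develop; the task is to perform two short logical deductions and confirm that the target regime and quantifier structure match.

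For part (a), I would simply invoke the main theorem of \citet{lassopath}, which characterizes the asymptotic TPP--FDP trade-off along the full Lasso solution path under (A1), (A2), and a deterministic $\lambda > 0$. In the supercritical regime $\delta < 1$ and $\epsilon > \epsilon^\star(\delta)$, that trade-off diagram is truncated at $\tppmax$ defined in \eqref{eq:DT_power_limit}, so that for every fixed penalty $\lambda$ the limiting TPP is strictly less than $\tppmax$. Recording this convergence together with the concentration rate guarantees $\TPP_{\textnormal{\tiny Lasso}}(\bet,\lambda) < \tppmax$ with probability tending to one, which is exactly the desired claim.

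For part (b), I would apply Theorem~\ref{thm:upper trade-off-curve-no-q-form} directly. Given any $0 \le u < 1$, I pick an interior target $u' := (1+u)/2 \in (u, 1]$, and invoke the theorem with this $u'$ and with the error tolerance (the ``$0.0001$'' made arbitrary by the Remark) chosen to be $\eta := (u'-u)/2 = (1-u)/4 > 0$. The theorem then produces a signal prior $\Pi$ and a SLOPE regularization prior $\Lambda$ satisfying the working assumptions such that, with probability tending to one,
\begin{equation*}
\TPP_{\textnormal{\tiny SLOPE}}(\bet,\blam) \;\ge\; u' - \eta \;=\; \frac{u+u'}{2} \;>\; u.
\end{equation*}
This establishes (b). Note that supercriticality is not needed for this half; the construction works for every admissible $(\delta,\epsilon)$.

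The only potential subtlety, rather than an obstacle, is purely bookkeeping: one must verify that the priors delivered by the constructive proof of Theorem~\ref{thm:upper trade-off-curve-no-q-form} in \Cref{sec:mobius} indeed satisfy (A2) and (A3) in the supercritical regime, and that the $o(1)$ probability terms can be aggregated over the finitely many events involved. Both points are immediate from the statement of that theorem, so once Theorem~\ref{thm:upper trade-off-curve-no-q-form} and the \citet{lassopath} result are in hand, the corollary follows in a few lines.
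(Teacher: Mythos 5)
Your proposal is correct and matches the paper's approach exactly: the paper itself states that the corollary follows ``immediately'' by combining the \citet{lassopath} DT power limit result for part (a) with \Cref{thm:upper trade-off-curve-no-q-form} for part (b), which is precisely what you do. The quantifier bookkeeping you carry out for part (b) (choosing an interior target $u'$ and a tolerance small enough to guarantee $\TPP > u$) is the right way to make the ``immediately'' rigorous, and your observation that supercriticality is only needed for (a) is consistent with the paper's remarks.
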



For illustration, \Cref{fig:simulation_overcome_ques} in the introduction reflects this distinction between SLOPE and the Lasso with $\tppmax(0.4, 0.7) = 0.4401$ in the right plot.
Another illustration is the left plot of \Cref{fig:simulation_overcome_ques} and \Cref{sharp_pic}, which is vertically truncated at $\tppmax(0.3, 0.2) = 0.5676$. Notice that SLOPE breaks the DT power limit, i.e.\ there are $(\bet,\bm\lambda)$ pairs for which $\tppmax<\TPP_{\textnormal{\tiny SLOPE}}<1$, while still preserving non-trivial FDP, i.e.\ $\FDP_{\textnormal{\tiny SLOPE}}<1-\epsilon$, where $1-\epsilon$ would be the FDP associated with the trivial procedure that selects all predictors.

\Cref{cor:achievability} highlights the benefit of using sorted $\ell_1$ regularization over the less flexible $\ell_1$ regularization in terms of power. This sharp distinction persists no matter how large the effect sizes are and, therefore, it must be attributed to the flexibility of the SLOPE regularization sequence. As is well-known, the Lasso selects no more than $n$ variables. Worse, a significant proportion of false variables are always interspersed on the Lasso path in the linear sparsity regime and, therefore, even though the Lasso can select up to $n > k$ variables, it would always miss a fraction of true variables, thereby imposing a limit on the power. In contrast, SLOPE does not bear the constraint that $\|\widehat{\bm\beta}\|_0 \le n$ owing to the flexibility of its regularization sequence. In fact, the corresponding constraint for SLOPE is that the number of \textit{unique} non-zero entries is no more than $n$~\citep{SLOPE2}. This flexibility allows SLOPE to have
arbitrarily high power regardless of the regime that $(\delta, \epsilon)$ belongs to.

\begin{figure}[!htp]
	\centering
	\includegraphics[width=8cm,height=5.5cm]{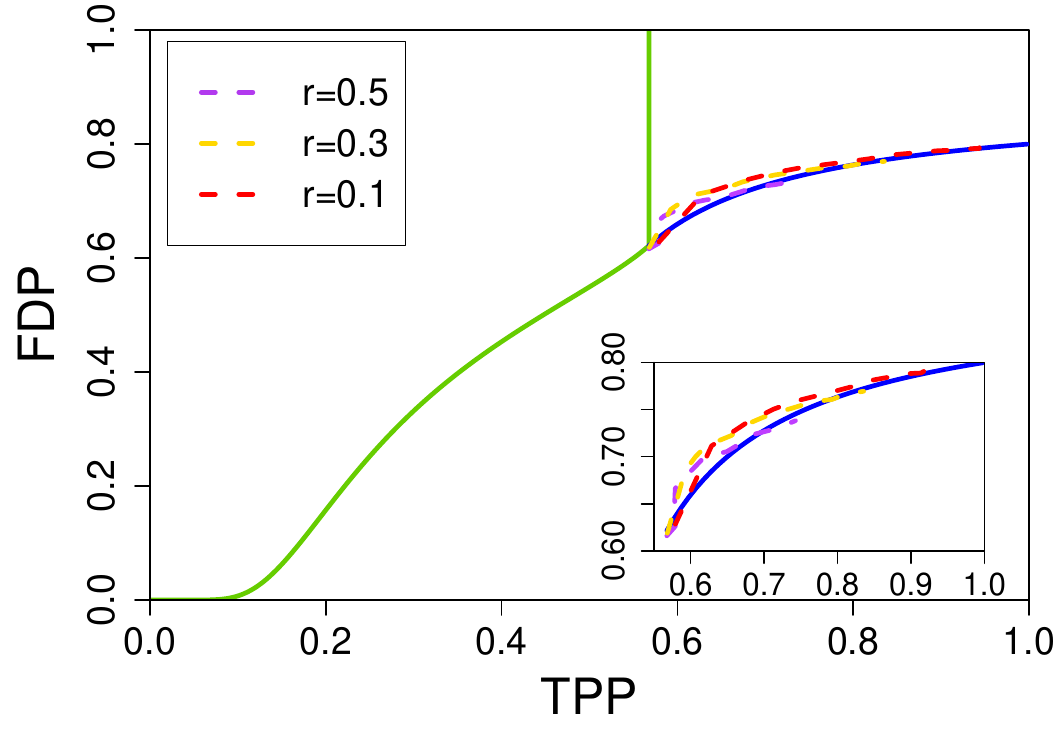}
	\caption{The \Mobius part of the SLOPE trade-off upper bound $q^\star$. The solid curve denotes the upper bound specified by $(\delta,\epsilon)=(0.3,0.2)$. The green line is the Lasso part of $q^\star$ and the blue line is the \Mobius part. The numerical pairs of the TPP and FDP are obtained from experiments that are specified by the following parameters: $n = 300, p=1000, \sigma^2= 0$, signal prior $\Pi_M(\epsilon^\star/\epsilon)$ with $M=10000$ in \eqref{eq:sharp_prior} (note that $\epsilon^\star(0.3) = 0.087$), and regularization prior $\blam_{\sqrt{M}, r\sqrt{M}, w}$ in \eqref{eq:theta_func} with varying $w$. Each pair is averaged over 50 independent trials. }
	\label{sharp_pic}
\end{figure}


Moving forward, we ask which regularization prior $\Lambda$ and signal prior $\Pi$ are ``flexible'' enough to enable SLOPE to break the DT power limit. To achieve desired flexibility, interestingly, it only requires a simple two-level regularization sequence for SLOPE. Consider the following \textit{two-level} SLOPE regularization prior: given constants $a > b \ge 0$ and $0 < \omega < 1$, let $\Lambda_{a, b, \omega} = a$ with probability $\omega$ and otherwise $\Lambda_{a, b, \omega} = b$. The SLOPE regularization sequence drawn from this prior takes the form
\be
\blam_{a, b, \omega} : = \big( \underbrace{a, a, \cdots, a}_{\text{around}~\omega p}, \underbrace{b, b, \cdots, b}_{\text{around}~(1-\omega)p} \big).
\label{eq:theta_func}
\ee

Next, for any $M> 0$ and $0\leq\epsilon' \leq 1$, define the following signal prior:
\begin{align}
	\Pi_M(\epsilon') :=
	\begin{cases}
		M, & \text{w.p.} \quad \epsilon\epsilon',	\\
		M^{-1}, & \text{w.p.} \quad \epsilon - \epsilon\epsilon'	,\\
		0, & \text{w.p.} \quad 1 - \epsilon.
	\end{cases}
	\label{eq:sharp_prior} 
\end{align}

Henceforth in this paper, denote by $\bet_M(\epsilon')$ the regression coefficients sampled from $\Pi_M(\epsilon')$. In the following result, we take $M \goto \infty$, rendering the nonzero entries of $\bet_M(\epsilon')$ either very large or small.

Now we are ready to state the following result, which shows that SLOPE with the two-level regularization sequence can approach any point on the \Mobius transformation \eqref{eq:mob_part} arbitrarily closely. This result also specifies the upper bound $q^\star$ in \Cref{thm:upper trade-off-curve-no-q-form} in the supercritical regime:
\begin{equation}\label{eq:mob_part}
	q^{\star}(u;\delta,\epsilon) = \dfrac{\epsilon(1-\epsilon) u-\epsilon^{\star}(1-\epsilon)}{\epsilon(1-\epsilon^{\star}) u-\epsilon^{\star}(1-\epsilon)},
\end{equation}
for $\tppmax \le u \le 1$ (above the DT power limit). Note that this function takes the form of a \textit{\Mobius transformation}. Notably, taking $u = 1$ gives $q^{\star}(1; \delta,\epsilon) = \dfrac{(\epsilon -\epsilon^{\star})(1-\epsilon)}{\epsilon(1-\epsilon^{\star}) -\epsilon^{\star}(1-\epsilon)}   = 1 - \epsilon$, which is the FDP achieved by the trivial procedure that simply selects all predictors.

\begin{proposition}\label{prop:whole_sharp}
	For any $\tppmax \le u \le 1$ in the supercritical regime, there exists a $w$ such that $\blam_{a, b, \omega}$ and $\bet_M(\epsilon^\star/\epsilon)$ (defined via the prior in \eqref{eq:sharp_prior}) make SLOPE approach the point $(u,q^{\star}(u))$ in the sense that
		\[
		\lim_{M\to\infty} \,\lim_{\GAM\to 0}\, \lim_{n,p\to\infty}\left( \TPP_\GAM(\bet_M(\epsilon^\star/\epsilon),\blam_{a, b, w}), \FDP_\GAM(\bet_M(\epsilon^\star/\epsilon),\blam_{a, b, w}) \right) \rightarrow (u,q^{\star}(u)),
		\]
	where $a = \sqrt{M}, b = r\sqrt{M}$ for a certain value $0 \leq r \leq 1$.
\end{proposition}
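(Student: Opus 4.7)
The strategy is to invoke the AMP-based characterization of SLOPE developed in \citet{ourAMP}: under (A1)--(A3), the empirical joint distribution of $(\beta_j,\widehat{\beta}_j)$ converges weakly to the law of $(\Pi,\eta(\Pi+\tau Z;\,\alpha\tau\,\Lambda))$ where $Z\sim\N(0,1)$ is independent of $\Pi$, $\eta$ is the sorted-$\ell_1$ proximal operator, and $(\tau,\alpha)$ is the unique solution of the SLOPE state-evolution fixed point. Consequently, $\TPP$ and $\FDP$ concentrate around deterministic limits that can be written entirely in terms of the conditional probabilities $\P(\eta\neq 0\mid \Pi=\pi)$ for each atom $\pi$ of $\Pi$. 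Thus it suffices to pin down three probabilities for the prior $\Pi_M(\epsilon^\star/\epsilon)$: the ``large-signal'' detection probability $p_L := \P(\eta\neq 0\mid \Pi=M)$, the ``small-signal'' probability $p_S := \P(\eta\neq 0\mid \Pi=M^{-1})$, and the ``null'' probability $p_Z := \P(\eta\neq 0\mid \Pi=0)$.

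Next, I would analyze the two-level regularization $\blam_{\sqrt{M},r\sqrt{M},w}$ and take $M\to\infty$ holding $(r,w)$ fixed. The key scaling claim is that the state-evolution fixed-point effective noise satisfies $\tau=O(\sqrt{M})$ while the signal atom $M$ grows strictly faster; this can be established from the fixed-point equation by noting that the variance contribution $\E[(\eta-\Pi)^2]$ is dominated by the $\epsilon^\star M^2$ piece from missed large signals, which forces $\tau\to\infty$ only at rate $\sqrt{M}$ since the regularization is also at this scale. It then follows that (i) large signals dwarf the prox threshold, so $p_L\to 1$; and (ii) the small atoms $\pm M^{-1}$ are vanishingly small compared to $\tau Z$, so the prox acts on $M^{-1}+\tau Z$ and on $\tau Z$ identically in the limit, yielding $p_S,p_Z\to v$ for a common quantity $v=v(r,w)\in[0,1]$. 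This is the analytical heart of the argument.

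Assuming this limit, the algebra is immediate. One computes
\[
\TPP\ \to\ \frac{\epsilon^\star\cdot 1 + (\epsilon-\epsilon^\star)\,v}{\epsilon}
=\frac{\epsilon^\star}{\epsilon}+\Bigl(1-\frac{\epsilon^\star}{\epsilon}\Bigr)v,
\quad
\FDP\ \to\ \frac{(1-\epsilon)v}{\epsilon^\star+(1-\epsilon^\star)v},
\]
and eliminating $v=(\epsilon u-\epsilon^\star)/(\epsilon-\epsilon^\star)$ yields exactly the \Mobius expression in \eqref{eq:mob_part}. To realize every $u\in[\tppmax,1]$, I would vary $(r,w)$: the endpoint $v=1$ (achieved by taking regularization weak enough that the prox rarely zeroes entries) gives $u=1$ and $\FDP\to 1-\epsilon$, while the endpoint $v=v_0:=(\delta-\epsilon^\star)/(1-\epsilon^\star)$ gives $u=\tppmax$ after simplification using \eqref{eq:eps_star}--\eqref{eq:DT_power_limit}. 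Continuity of $v(r,w)$ in its arguments (inherited from smoothness of the state-evolution map away from degenerate points) plus the intermediate value theorem then sweeps out every intermediate $u$.

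The principal obstacle is the delicate limit $M\to\infty$: the sorted-$\ell_1$ prox is piecewise linear with kinks depending on the regularization sequence, and one must verify that the SLOPE state-evolution fixed point exists for each finite $M$, that the scaling $\tau\asymp\sqrt{M}$ with $\tau/M\to 0$ and $\tau\cdot\alpha\to$ a nondegenerate constant both hold, and that the collapse $p_S,p_Z\to v$ is uniform enough to interchange the two limits $M\to\infty$ and $n,p\to\infty$ as demanded by the nested-limit statement of the proposition. Once this scaling analysis is in place, together with a continuity argument that the map $(r,w)\mapsto v(r,w)$ is surjective onto $[v_0,1]$, the proposition follows. A minor additional technical point is that $\Pi_M$ and $\blam_{\sqrt M,r\sqrt M,w}$ must be checked to satisfy (A2) and (A3) at each fixed $M$, which is immediate since both distributions have bounded support.
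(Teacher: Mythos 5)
The algebraic core of your proposal is correct and in fact matches the paper's endpoint calculation: if $p_L\to 1$ and $p_S,p_Z\to v$, then eliminating $v$ from the pair $(\TPP,\FDP)$ yields the \Mobius formula \eqref{eq:mob_part}. Your identification of $v_0 = (\delta-\epsilon^\star)/(1-\epsilon^\star)$ with the value $2\Phi(-t^\star(\tppmax))$, and the verification that this gives $u=\tppmax$, are also right. However, the plan has a genuine gap at its ``analytical heart.''

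The gap is the claim that ``continuity of $v(r,w)$'' plus the intermediate value theorem sweeps $v$ over all of $[v_0,1]$. This does not come for free: the state evolution fixed point determines the zero-threshold $\alpha$ as a highly nonlinear functional of $(r,w)$, and what must be shown is that the \emph{infimum} of $v(r,w)=2\Phi(-\alpha)$ over feasible $(r,w)$ equals $v_0$, i.e.\ that $\alpha$ can be driven all the way up to $t^\star(\tppmax)$. The Lasso ($r=1$) achieves $\alpha=t^\star(\tppmax)$ only with a specific scalar tuning; your blind choice $\lambda=\sqrt M$ need not hit it, and for $r<1$ the zero-threshold is the \emph{smaller} level $r\cdot(\text{first level})$, so on its face lowering $r$ lowers $\alpha$, which pushes $v$ up, not down toward $v_0$. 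What the paper's proof actually does is exploit the non-separable ``averaging'' step of the SLOPE prox to decouple two roles of the threshold: with $\A=\A_{t^\star(\tppmax),\,r t^\star(\tppmax),\,w}$ and a carefully chosen $w(u)$ (via \eqref{r and w}), the estimation-error side of state evolution behaves as if the threshold were $t^\star(\tppmax)$ (so the constraint $E(\Pi,\Lambda)\le\delta$ is met with equality, see \eqref{eq:epsilon star smaller delta}--\eqref{eq:special t u star}), while the support-recovery side sees the smaller zero-threshold $r(u)\,t^\star(\tppmax)$ (so the sparsity hits the minimum in \eqref{eq:kappa smallest}). This two-faced behavior is precisely what lets SLOPE pass the Lasso's DT barrier, and it requires the explicit construction of $r(u)$ in \eqref{u and r} and $w(u)$ in \eqref{r and w} together with the ``flat averaged region shrinks to an $\varepsilon$-slab'' argument illustrated in \Cref{fig:approach proof}. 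Your outline never identifies the averaging mechanism, so it cannot reach $v_0$; a generic continuity argument would, at best, show $v$ covers a neighborhood of $1$, not the entire interval down to $v_0$.

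Two smaller issues. First, the scaling $\tau\asymp\sqrt M$ with $p_L\to1$, $p_S-p_Z\to0$ is plausible but is asserted rather than derived from the fixed-point equation; the paper avoids this by working directly in the normalized $(\pi,\A)$ regime and only afterwards calibrating back to $\Lambda$ (hence the $\sqrt M$ appears in the statement but the proof never needs to track $\tau_M$ quantitatively). Second, your proposal never invokes the constraint $\PP(\Lambda=\max\Lambda)\ge\epsilon\epsilon'$ (equivalently $w\ge\epsilon^\star$), which is what \Cref{lem:inf-nothing eps' upper} needs to bound $\epsilon'\le\epsilon^\star/\epsilon$ and thereby pin down the \Mobius curve as a \emph{lower} envelope; without it, the TPP/FDP pair you construct need not lie \emph{on} $q^\star$ rather than merely above it.
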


\Cref{sharp_pic} provides a numerical example that corroborates this proposition.

This result in fact implies \Cref{thm:upper trade-off-curve-no-q-form} for $\tppmax \le u \le 1$ in the supercritical regime. Note that the first limit $\lim_{n,p\to\infty}$ is taken in the sense of convergence in probability. See more details in its proof in \Cref{sec:approach Mobius}. It is worthwhile to mention that the three-component mixture \eqref{eq:sharp_prior} is considered in \cite{lassopath} for the construction of favorable priors under sparsity constraint (see a generalization in \citet{wang2020price}).  
This mixture prior is used to ensure that the effect sizes are either very strong or very weak. In particular, \Cref{prop:whole_sharp} remains true if $M$ and $1/M$ are replaced by any value diverging to infinity and any value converging to 0, respectively.

\subsection{Below the Donoho–Tanner power limit}


Next, we continue to interpret \Cref{thm:lower trade-off-curve-no-q-form} and \Cref{thm:upper trade-off-curve-no-q-form}, but with a focus on the regime below the DT power limit.

\begin{figure}[!htp]
	\centering
	\includegraphics[width=7cm,height=5cm]{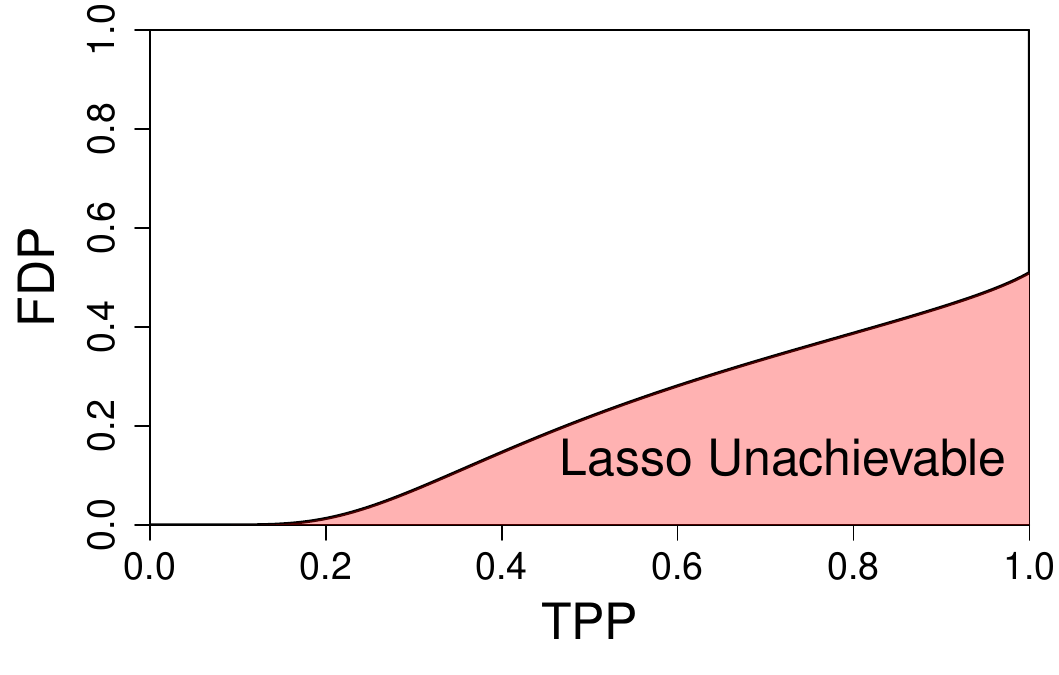}
	\includegraphics[width=7cm,height=5cm]{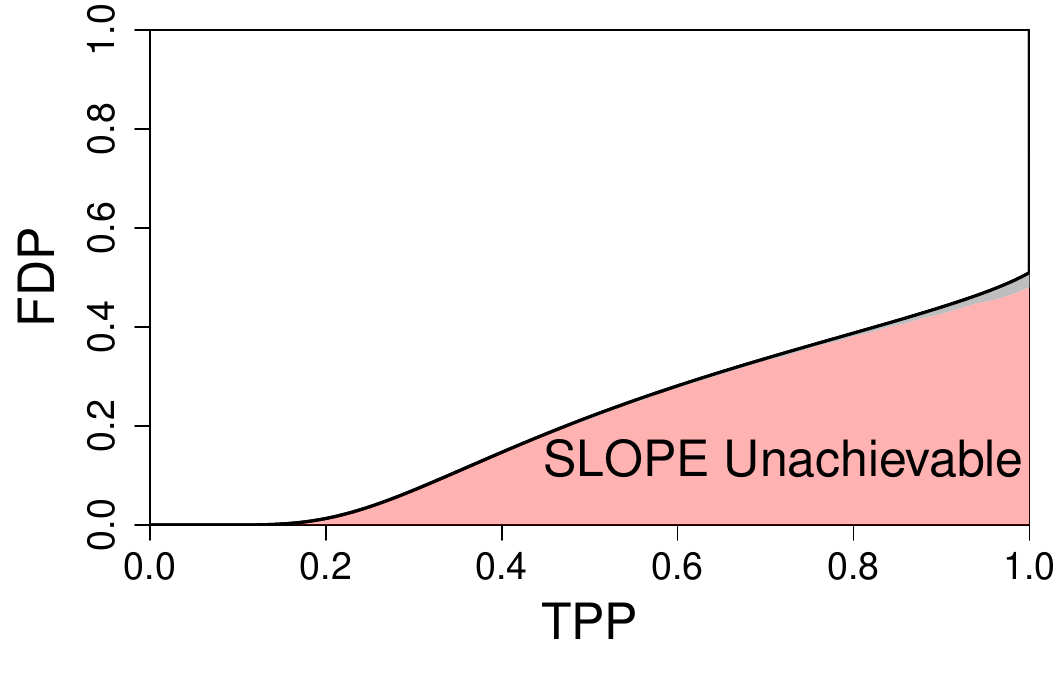}
	\includegraphics[width=7cm,height=5cm]{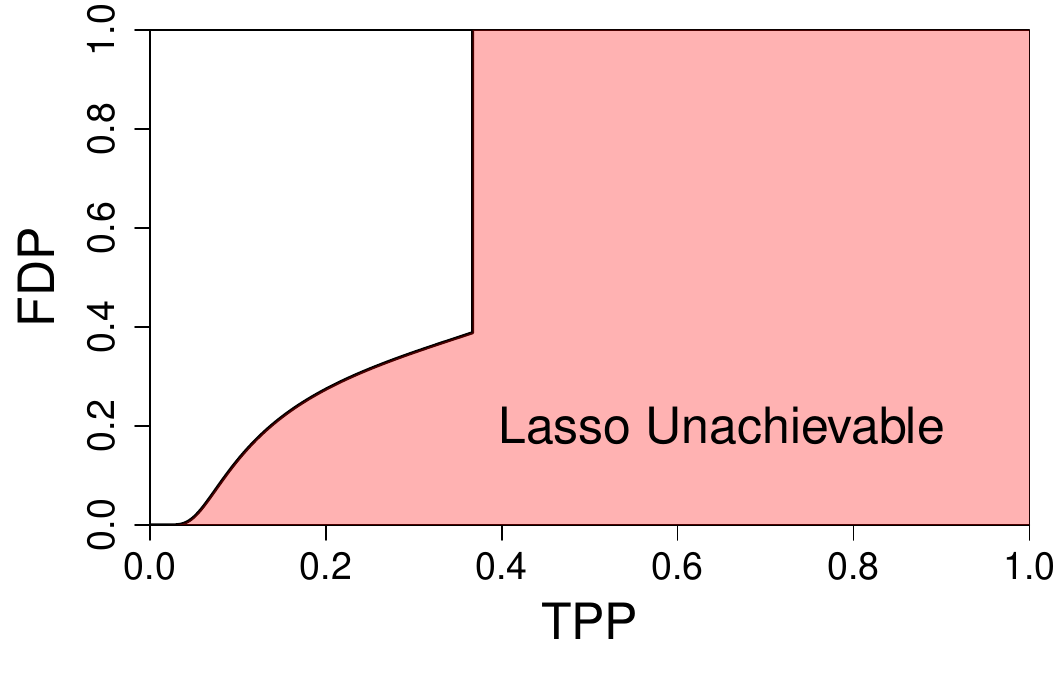}
	\includegraphics[width=7cm,height=5cm]{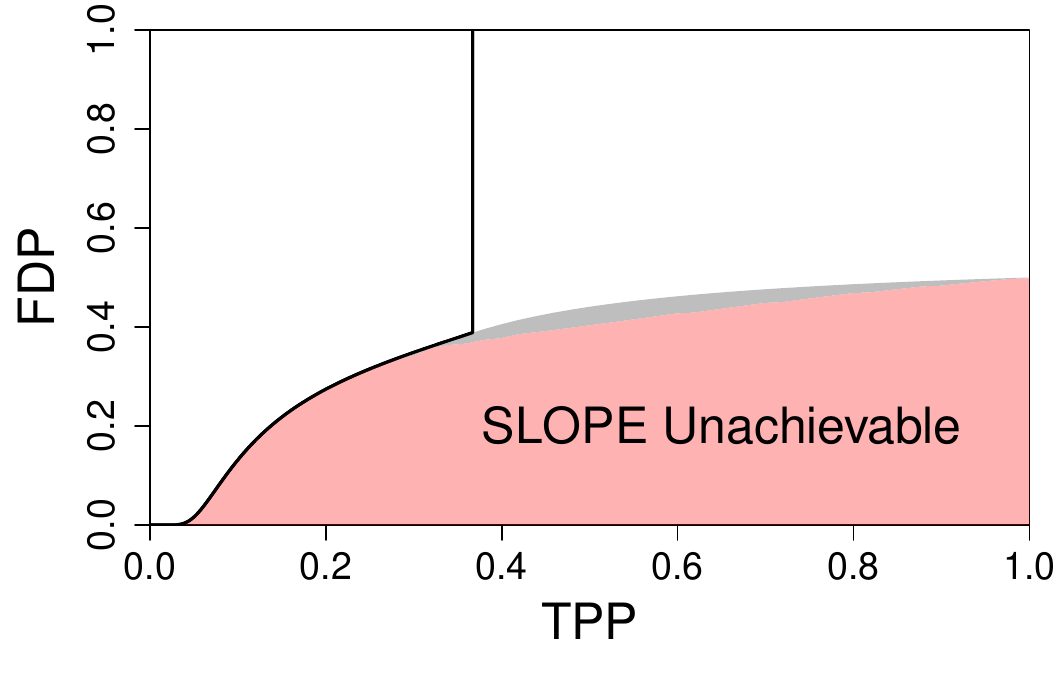}
	\caption{Examples of the TPP--FDP trade-off curve, with $(\delta,\epsilon)=(0.3,0.2)$ on the top panel and $(0.3,0.5)$ on the bottom. The left plot is the Lasso trade-off curve and the right plot describes the SLOPE trade-off gain. Neither the Lasso nor SLOPE can approach the red regions. The gray regions are sandwiched by the upper and lower bounds on the SLOPE trade-off.}
	\label{fig:lasso_phase}
\end{figure}


First of all, the two right plots of \Cref{fig:lasso_phase} show that the lower bound and the upper bound for $q_{_{\textnormal{\tiny SLOPE}}}$ are very close to each other when $0 \le \TPP \le \tppmax$ (recall that $\tppmax = 1$ in the subcritical regime). As a matter of fact, the upper bound in this regime is given by \citet{lassopath}, which showed that, under the working assumptions, there exists a function $q^{\star}_{\textnormal{\tiny Lasso}}(\cdot; \delta, \epsilon)$ such that
\begin{equation*}
	\FDP_{\textnormal{\tiny Lasso}}(\bet,\lambda) \geq q_{\textnormal{\tiny Lasso}}^\star(\TPP_{\textnormal{\tiny Lasso}}(\bet,\lambda);\delta,\epsilon) - 0.0001,
\end{equation*}
holds with probability tending to one as $n, p \goto \infty$. Here 0.0001 can be replaced by any arbitrarily small positive constant. Moreover, $q_{\textnormal{\tiny Lasso}}^\star$ is tight in the sense that the Lasso can come arbitrarily close to any point on this curve by specifying a prior and a penalty parameter (see refined results in \cite{wang2020price}). Recognizing that the Lasso is an instance of SLOPE, the tightness of $q_{\textnormal{\tiny Lasso}}^\star$ allows us to set $q^\star(u) = q_{\textnormal{\tiny Lasso}}^\star(u)$ for $0 \le u \le \tppmax$. To be more precise, letting $t^\star(u)$ be the largest positive root of the equation
\begin{equation}\label{eq:t_star_eq}
	\frac{2(1 - \epsilon)\left[ (1+x^2)\Phi(-x) - x\phi(x) \right] + \epsilon(1 + x^2) - \delta}{\epsilon\left[ (1+x^2)(1-2\Phi(-x)) + 2x\phi(x) \right]} = \frac{1 - u}{1 - 2\Phi(-x)},
\end{equation}
we have
\begin{align}
	q^{\star}(u;\delta,\epsilon)&=
	\begin{cases}
		q_{\textnormal{\tiny Lasso}}^\star(u;\delta,\epsilon)= \dfrac{2(1-\epsilon)\Phi(-t^{\star}(u))}{2(1-\epsilon)\Phi(-t^{\star}(u))+\epsilon u}, &\text{if $u \le \tppmax(\delta,\epsilon)$,} \\[20pt]
		\dfrac{\epsilon(1-\epsilon) u-\epsilon^{\star}(1-\epsilon)}{\epsilon(1-\epsilon^{\star}) u-\epsilon^{\star}(1-\epsilon)}, &\text{if $u > \tppmax(\delta,\epsilon)$}.
	\end{cases}
	\label{cor:whole tradeoff}
\end{align}
In the above expressions, $\phi(\cdot)$ and $\Phi(\cdot)$ are the probability density function and cumulative distribution function of the standard normal distribution, respectively.


Returning to the lower bound, in stark contrast, the situation becomes much more challenging. To be sure, to obtain a lower bound requires a good understanding of the superiority of sorted $\ell_1$ regularization over its usual $\ell_1$ counterpart. From a theoretical viewpoint, a major difficulty in the analysis of SLOPE arises from the \textit{non-separability} of sorted $\ell_1$ regularization. Note that the non-separability results from the sorting operation in the penalty term $\sum_{i=1}^p \lambda_i|b|_{(i)}$ in the SLOPE optimization program~\eqref{eq:SLOPE_cost}. To tackle this technical issue, in this paper we formulate the SLOPE trade-off as a calculus of variations problem and further cast it into infinite-dimensional convex optimization problems (see more details in \Cref{sec:lower bound}).


In a nutshell, the flexibility of the SLOPE regularization sequence seems to only bring up limited improvement on the trade-off between the TPP and FDP below the DT power limit. However, the two right plots of \Cref{fig:lasso_phase} present a noticeable departure between the two bounds when the TPP is slightly below $\tppmax$. This departure is not an artifact of our analysis. Indeed, in \Cref{sec:difference upper lower} we provide a problem instance whose asymptotic TPP and FDP trade-off falls strictly between the upper bound and the lower bound:
\[
q_\star(u) + 0.0001 < \FDP < q^\star(u) - 0.0001,
\]
and $\TPP \approx u < \tppmax$ with probability tending to one. 



\subsection{On model selection and estimation}


An important but less-emphasized point is that the above-mentioned comparison between the two methods is over the \textit{lower envelope} of all the instance-specific problems. In this regard, it would be too quick to conclude that the flexibility of the penalty sequence does not gain any benefits for SLOPE, even at points where $q_\star(u)$ may be very close to $q^\star_{\textnormal{\tiny Lasso}}(u)$. Under the working hypotheses, indeed, we can formally prove that SLOPE is superior to the Lasso in the sense that we can always find a SLOPE regularization prior that strictly improves the Lasso on the same linear regression problem in terms of both model selection and estimation. Below, we let $\widehat\bet$ denote the SLOPE or the Lasso estimate, and use the subscript to distinguish between the two methods.


\begin{theorem}\label{thm:instance better}
	Under the working assumptions, namely (A1), (A2), and (A3), given any bounded signal prior $\Pi$ and any Lasso regularization parameter $\lambda > 0$, there exists a SLOPE regularization $\Lambda$ such that the following inequalities hold simultaneously with probability tending to one:
	\begin{enumerate}
		\item[(a)] $\TPP_{\textnormal{\tiny SLOPE}}(\bet,\blam) > \TPP_{\textnormal{\tiny Lasso}}(\bet,\lambda)$;
		\item[(b)] $\FDP_{\textnormal{\tiny SLOPE}}(\bet, \blam)<\FDP_{\textnormal{\tiny Lasso}}(\bet,\lambda)$;
		\item[(c)] $\|\widehat\bet_{\textnormal{\tiny SLOPE}}(\bet, \blam)-\bet\|^2 < \|\widehat\bet_{\textnormal{\tiny Lasso}}(\bet,\lambda)-\bet\|^2$.
	\end{enumerate}
\end{theorem}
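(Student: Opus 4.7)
My plan is to argue by a local perturbation around the Lasso, exploiting the observation that the Lasso at penalty $\lambda$ is exactly SLOPE with $\Lambda$ equal to a point mass at $\lambda$, and that the two-level family $\Lambda_{a,b,w}$ from \eqref{eq:theta_func} contains this point mass along its diagonal $a=b=\lambda$ for every $w\in(0,1)$. The first step is to invoke the SLOPE AMP state evolution reviewed in \Cref{sec:prelim} to express the asymptotic TPP, FDP, and normalized squared error $\|\widehat\bet-\bet\|^2/p$ as deterministic continuous functionals of $\Pi$, $\sigma^2$, $\delta$, and $\Lambda$; under (A1)--(A3) each of these three empirical quantities concentrates to its asymptotic counterpart with probability tending to one. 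The $\ell_2$ risk is governed by the effective-noise fixed point $\tau_\star^2$, while the TPP and FDP are determined by how the SLOPE proximal map driven by $\Lambda$ acts on the effective observation $\bet+\tau_\star Z$.

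Next, I would view the asymptotic TPP, FDP, and $\tau_\star^2$ as smooth functions of $(a,b)$ in a neighborhood of $(\lambda,\lambda)$ with $w\in(0,1)$ held fixed, and compute their directional derivatives at the Lasso point by implicit differentiation of the state-evolution fixed-point equation. The two-level prior exposes two independent dials at the Lasso: the mean shrinkage $(a+b)/2$, which after recalibration controls $\tau_\star^2$ and hence the estimation error, and the shrinkage spread $a-b$, which differentially penalizes the top-rank and bottom-rank coordinates of $\widehat\bet$ and thereby reshapes the set of selected variables in favor of true positives relative to false positives. Combining an infinitesimal spread move with a compensating mean-shrinkage move should yield an explicit direction $(da,db)$ in which the asymptotic TPP strictly increases, the asymptotic FDP strictly decreases, and $\tau_\star^2$ strictly decreases. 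Taking a sufficiently small step along this direction produces a two-level SLOPE prior $\Lambda_{a^\star,b^\star,w}$ that strictly beats the Lasso on all three indicators in the large-system limit; the AMP concentration statements then transfer the strict inequalities to the finite-$(n,p)$ quantities with probability tending to one.

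The main obstacle is the last step: rigorously verifying that the $2\times 3$ Jacobian of $(\TPP, -\FDP, -\tau_\star^2)$ with respect to $(a,b)$ at $(a,b)=(\lambda,\lambda)$ admits a vector with strictly positive image, for every bounded prior $\Pi$ and every Lasso penalty $\lambda>0$. This reduces to two ingredients: the strict positivity of the density of $\bet+\tau_\star Z$ at the calibrated Lasso threshold (guaranteed by $\Pi$ being bounded and $\tau_\star>0$), and the non-separable coupling between ranks and penalties in the SLOPE proximal map, which is precisely what makes the spread direction have a nontrivial first-order effect on both TPP and FDP. A secondary technical point is that the SLOPE prox for two-level $\Lambda_{a,b,w}$ reduces to a pair of soft-thresholdings followed by pool-adjacent-violator-style pooling at the boundary between the top $wp$ and bottom $(1-w)p$ entries; the derivatives of the three functionals can then be expressed as explicit Gaussian integrals against the law of $\bet+\tau_\star Z$, which is where the final strict-improvement verification is carried out.
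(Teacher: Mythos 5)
Your high‑level strategy is the same as the paper's: stay inside the two‑level family $\Lambda_{a,b,w}$, view the Lasso as the degenerate case $a=b$, and detect a strict first‑order improvement in TPP, FDP and $\ell_2$ risk via the AMP state evolution. But the specific perturbation you propose has two gaps, one of which is genuinely load‑bearing.

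First, you hold $w\in(0,1)$ fixed at an arbitrary value and only perturb $(a,b)$. The paper's computation of $\partial\tau/\partial\ell$ at the Lasso point (its \eqref{eq:d tau d alpha}) shows that the sign of this derivative hinges on the numerator being positive, which is exactly the inequality \eqref{eq:EZsign}: $\E\bigl(Z\,\sgn(\eta)\,\big|\,|\eta|\ge q_w\bigr)>\alpha_L$. This inequality is proved only for sufficiently small $w$, by letting the top‑quantile threshold $q_w\to\infty$ as $w\to 0$. For a generic fixed $w$ the sign can flip, and then there may be no direction $(da,db)$ at $(\lambda,\lambda)$ that improves all three indicators. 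So you cannot freeze $w$; you must shrink it (depending on $\Pi$ and $\lambda$) before the Jacobian argument has any chance of producing a strictly positive direction. Stating that such a direction ``should'' exist, with $w$ fixed, is exactly where the proof would fail without this additional degree of freedom.

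Second, and more structurally, you search over a two‑dimensional perturbation $(da,db)$ and invoke ``spread up, mean shrinkage down'' as the tentative winning direction, whereas the paper's working direction is $(da,db)=(+,0)$ with the lower level pinned at $b=\alpha_L$ and $w$ small: \emph{both} the mean and the spread increase. Pinning $b$ is not a cosmetic choice. It keeps the zero‑threshold of the SLOPE proximal map equal to the Lasso's $\alpha_L$ (by \Cref{lambda meets} and \Cref{fact:lambda meet}, the zero‑threshold is one of the two levels and stays at the lower one as long as the sparsity $\kappa$ exceeds $w$). Once the zero‑threshold is fixed, parts (a) and (b) drop out mechanically from $\tau_S<\tau_L$ via \eqref{eq:tpp fdp zero-threshold}: the normalized signal $\Pi/\tau$ grows, so TPP increases, and with the same threshold FDP must decrease. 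Your plan of implicitly differentiating TPP, FDP, and $\tau_\star^2$ jointly with respect to both $a$ and $b$ re‑couples quantities that the paper deliberately decouples; you would have to separately track how the zero‑threshold responds to $db\ne 0$, which is where the ``non‑separable coupling'' you gesture at becomes an obstacle rather than a help. In short, the ingredients you list are the right ones, but the proposal as written misses the two choices --- small $w$ and $db=0$ --- that turn the remaining ``main obstacle'' you acknowledge into the one‑dimensional sign check that the paper actually carries out.
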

This theorem shows that SLOPE can outperform the Lasso from both the model selection and estimation viewpoints. We stress, however, that the result is \emph{non-constructive} in that it does not provide the actual SLOPE penalty vector $\boldsymbol{\lambda}$ giving the good performance—it only claims that one \emph{exists}. In practice, one would likely want to find a SLOPE sequence to optimize performance along one attribute only, depending on the goal (i.e., by considering model selection or estimation separately). The task of finding optimally performing SLOPE penalty sequences for any given fixed prior is an important open question, which we leave for future work.

The proof strategy of Theorem \ref{thm:instance better} leverages a simple form of SLOPE regularization sequences that admits two distinct values (see \eqref{eq:theta_func}). Due to space constraints, we relegate the proof of this theorem to \Cref{sec:slope-instance-better}. It is somewhat surprising that such a simple two-level sequence can already exploit the benefits of using SLOPE over the Lasso. 

As an aside, we remark that SLOPE has been shown to achieve the asymptotically exact minimax estimation when the sparsity level is much lower than considered in the present paper, largely owing to the adaptivity of sorted $\ell_1$ regularization~\citep{SLOPE2}. When it comes to the Lasso, however, cross validation is needed to select a penalty parameter that enables the Lasso to achieve similar estimation performance, which however is not exact as the constant is not sharp~\citep{bellec2018slope}.

\section{Preliminaries for Proofs}
\label{sec:prelim}
In this section, we collect some preliminary results about SLOPE and AMP theory that allow us to get analytic expressions of the TPP and FDP asymptotically. Informally speaking, the AMP theory given in \citet[Theorem 3]{ourAMP} characterizes the \emph{asymptotic} joint distribution of the SLOPE estimator $\widehat\bet$ and the true regression coefficients $\bet$ (similar results are given in \citet[Theorem 1]{SLOPEasymptotic} using the convex Gaussian minimax theory (CGMT) instead of AMP). Notably, since $\widehat\bet$ depends on $(\bet,\blam)$, when studying asymptotic properties of $\widehat\bet$, we will work with their asymptotic distributions $(\Pi,\Lambda)$. In this way, we drop the dependence on finite-sample quantities like $n, p$ and the sparsity level $|\{j:\beta_j \neq 0\}|$ and instead work with asymptotic quantities such as $(\delta,\epsilon)$ henceforth. 

To be specific, under pseudo-Lipschitz functions (see \citet[Definition 3.1]{ourAMP}) on $(\widehat{\bet},\bet)$, the asymptotic distribution of the SLOPE (including the Lasso) estimator  $\widehat{\bet}$, which we denote as $\widehat{\Pi}$, can be described as
\begin{align}
	\label{eq:beta_dist}
	\widehat{\Pi}\overset{\mathcal{D}}{=}\h{\Pi+\tau Z,\A\tau}(\Pi+\tau Z),
\end{align}
where $Z$ is an independent standard normal and the superscript $\mathcal{D}$ means "in distribution". We will refer to $\eta$ (to be introduced in \eqref{eq:yue_limit}) as the \textit{limiting scalar function} in \citet{SLOPEasymptotic}, and $(\tau,\A)$ is the unique solution to the \textit{state evolution} and the \textit{calibration} equations
\Align{
	\tau^2&=\sigma^2+\frac{1}{\delta}\E\left(\h{\Pi+\tau Z, \A\tau}(\Pi+\tau Z)-\Pi\right)^2,
	\label{eq:SE}
	\\
	\Lambda&\overset{\mathcal{D}}{=}\A\tau\left(1- \frac{1}{\delta}\E\Big(\hprime{\Pi+\tau Z, \A\tau}(\Pi+\tau Z)\Big) \right).
	\label{eq:cali}
}

In order to discuss properties of the limiting scalar function $\h{}$, we first introduce the SLOPE proximal operator on $(\bm y,\thet)\in\R^\p\times\R^\p$, where $\thet$ is proportional to $\blam$ and $\theta_1\geq\theta_2\geq\cdots\geq\theta_\p\geq 0$ with at least one inequality. We define the proximal operator as
\begin{align}
	\label{eq:slope_prox}
	\prox_J(\bm y;\thet)&:=\arg\min_{\bm  b \in \mathbb{R}^p}\Big\{\frac{1}{2}\|\bm y - \bm b\|^2+ J_{\thet}(\bm b)\Big\},
\end{align}
where $J_{\thet}(\bm b)=\sum_{i=1}^p \theta_i|b|_{(i)}$. In the Lasso case when the penalty parameter is a constant, the proximal operator reduces to the soft-thresholding function:
$$\prox_J(\y;\theta)=\eta_{\text{soft}}(\y;\theta):=\text{sign}(\y)\cdot \max\{|\y|-\theta,0\}.$$

Generally speaking, the SLOPE proximal operator in \eqref{eq:slope_prox} is \textit{adaptive} and \textit{non-separable}, in the sense that an element of the output generally will depend on all elements of the input. As a concrete example, we obtain via \Cref{alg:prox} that the proximal operator for SLOPE is given by
\begin{align*}
	&\prox_J([20,13,10,6,4];[12,10,5,5,5]) =  \eta_{\textnormal{soft}}([20,13,10,6,4];[12,9,6,5,5])
	= [8,4,4,1,0].
\end{align*}

On the one hand, the adaptivity arises from the fact that larger penalties are applied to larger elements of the input. On the other hand, for example, two elements of input $[13,10]$ are not directly thresholded by the penalty $[10,5]$, but rather an averaging step is triggered by the existence of the other inputs, which gives an effective threshold of $[9,6]$. This is illustrated in Figure~\ref{fig:prox}.

\begin{figure}[!htb]
	\centering
	\includegraphics[width=0.3\linewidth]{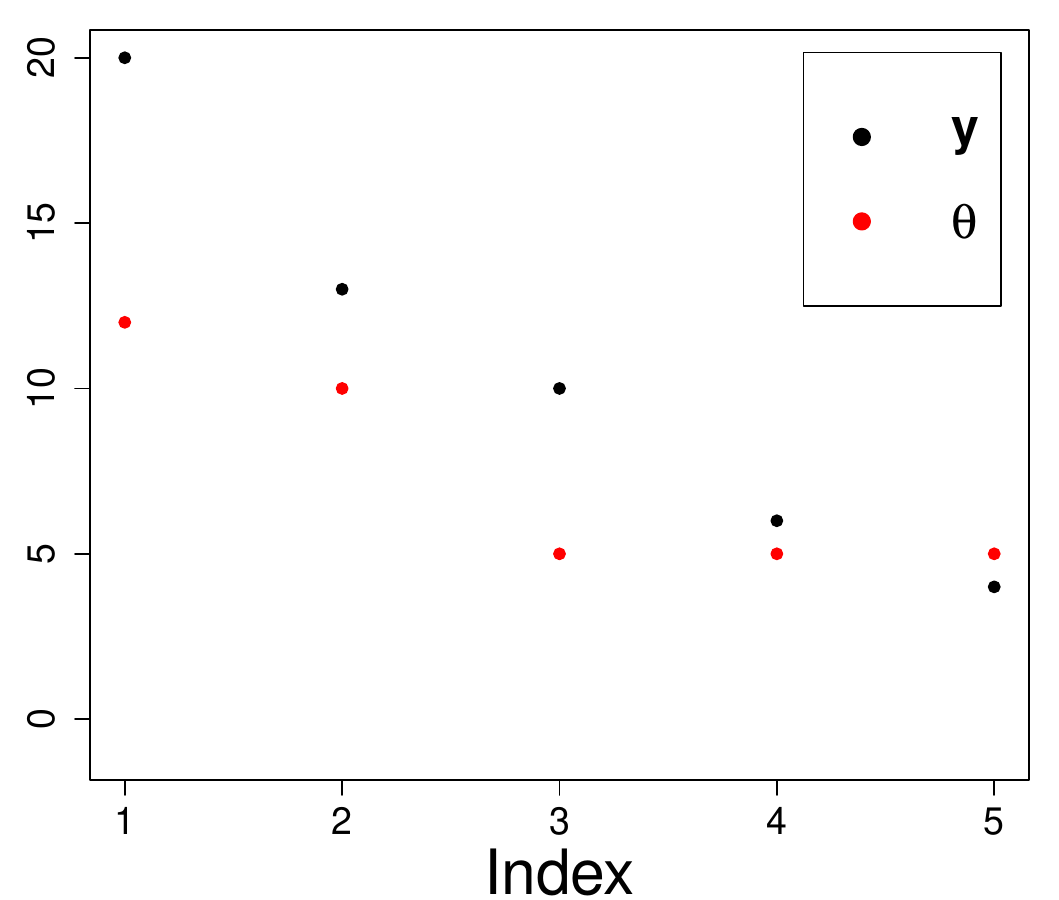}
	\includegraphics[width=0.3\linewidth]{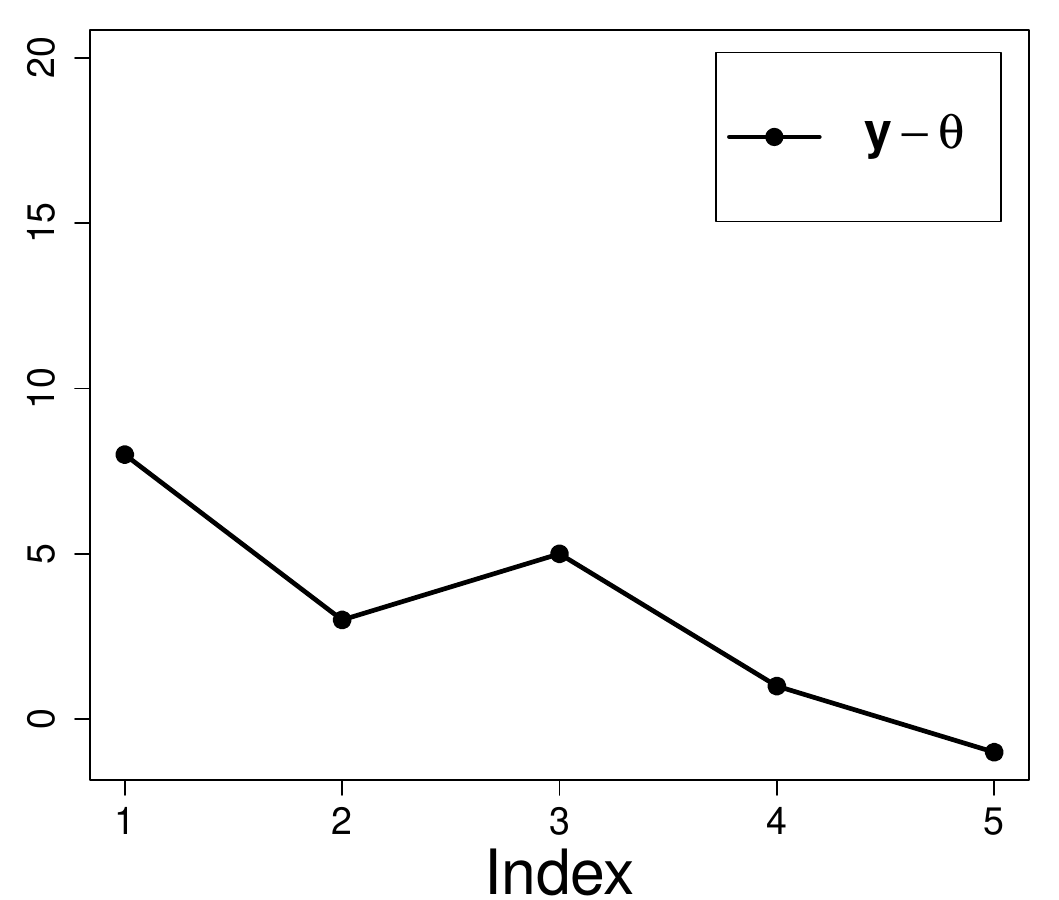}
	\includegraphics[width=0.3\linewidth]{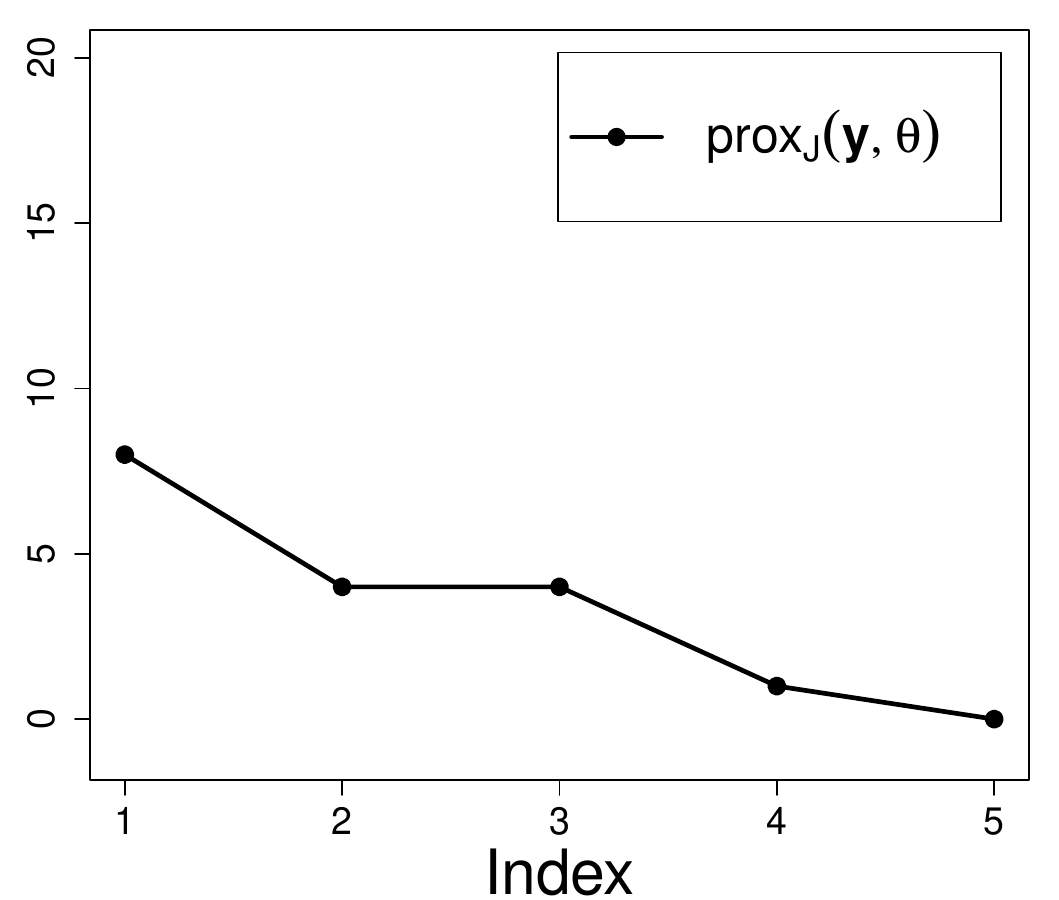}
	\caption{Illustration of how the SLOPE proximal operator can be interpreted as using an effective threshold. The leftmost figure plots two vectors $\mathbf{y}$ and $\boldsymbol{\theta}$. The middle image plots their difference $\mathbf{y} - \boldsymbol{\theta}$ and the rightmost image plots the output of the proximal operator $\prox_J(\mathbf{y}; \boldsymbol{\theta})$.}
	\label{fig:prox}
\end{figure}

Although the SLOPE proximal operator given in \eqref{eq:slope_prox} is non-separable, nevertheless, as introduced in \citet[Proposition 1]{SLOPEasymptotic}, the SLOPE proximal operator is \textit{asymptotically separable}: for sequences $\{\thet(p)\}$ and $\{\bm v(p)\}$ growing in $p$ with empirical distributions that weakly converge to distributions $\Theta$ and $V$, respectively, there exists a limiting scalar function $\h{}$ (determined by $\Theta$ and $V$) such that as $\p\goto\infty$,
\be
\frac{1}{p}\norm{\prox_{J}(\bm v{(p)};{\thet{(p)}}) - \h{V, \Theta} (\bm v{(p)})}^2 \rightarrow 0.
\label{eq:yue_limit}	\ee
The work in \cite{SLOPEasymptotic} discusses many properties of this limiting scalar function, $\h{}$. Indeed, it is shown to be odd, increasing, Lipschitz continuous with constant 1 and applied coordinate-wise to $\bm v{(\p)}$ (hence it is separable; see \citet[Proposition 2]{SLOPEasymptotic}). In more details, $\h{V, \Theta}(x)$ takes a scalar input, $x$, and performs soft-thresholding with a penalty adaptive to $x$ in a way that depends on $V$ and $\Theta$,  meaning there is an input-dependent penalty $\lambda_{V,\Theta}(x)$ such that $\h{V, \Theta}(x)=\eta_{\textnormal{soft}}(x;\lambda_{V,\Theta}(x))$. More details on the adaptive penalty function that relates the SLOPE proximal operator to the soft-thresholding function can be found in \Cref{bridge}.  

We now discuss in more detail the so-called state evolution and calibration equations given in \eqref{eq:SE} and \eqref{eq:cali}. We refer to $\A$, which is defined implicitly via \eqref{eq:cali}, as the \textit{normalized penalty} distribution. Notice that $\A$ only differs from the original penalty distribution $\Lambda$ by a constant factor. In fact, there exists a one-to-one mapping between $\A$ and $\Lambda$ by \citet[Proposition 2.6]{ourAMP}, allowing one to analyze in either regime flexibly. Moreover, for a fixed $\Pi$, the quantity $\tau(\A)$ can be uniquely derived from \eqref{eq:SE} and, as shown in \citet[Corollary 3.4]{ourAMP}, it can be used to characterize the estimation error via $\|\widehat{\bet}-\bet\|^2/p\to\delta(\tau^2-\sigma^2)$. In this work, we will use $\tau := \tau(\Pi,\Lambda)$ as a factor to define the \textit{normalized prior},
\begin{align*}
	\pi(\Pi,\A):=\Pi/\tau(\Pi,\Lambda),
\end{align*}
and, in particular, when it is clear from the context, we will use $(\Pi,\Lambda)$ and $(\pi,\A)$ interchangeably since there exists a bijective calibration between the original problem instance and the normalized one provided by the fixed point recursion for the state evolution and the calibration mappings, \eqref{eq:SE} and \eqref{eq:cali}. We refer the interested readers to \Cref{app:prelim} for a discussion of many nice properties of this fixed point recursion, such as the explicit form of the divergence $\h{}'$.

Under the characterization of the asymptotic SLOPE distribution given in \eqref{eq:beta_dist}, we define $\fdpinf(\Pi, \Lambda)$ and $\tppinf(\Pi, \Lambda)$ as the large system limits  of FDP and TPP. The proof of convergence in probability is given in the next lemma.  We will eventually let $\GAM \rightarrow 0$, and in order for the FDP and TPP to converge, we consider $\FDP_\GAM$ and $\TPP_\GAM$ in \eqref{eq:new_defs} with $\GAM$ in the set
	\begin{align}
		\Xi:=\{\GAM:\PP(\widehat{\Pi}(\Pi,\Lambda)=\GAM)=0\},
		\label{eq:valid xi}
	\end{align}
	where $\widehat\Pi$ is the limiting distribution of $\widehat{\beta}_j$, defined in \eqref{eq:beta_dist}.

	\begin{lemma}\label{lem:A1}
		Under the working assumptions, namely (A1), (A2), and (A3), for $\GAM\in\Xi$ in \eqref{eq:valid xi}, the SLOPE estimator $\widehat{\bet}(\blam)$ with the penalty sequence $\blam$ satisfies
		\begin{equation*}
			\begin{split}
				\FDP_\GAM(\bet,\blam)&=\frac{|\{j:|\widehat\beta_j|>\GAM,\beta_j=0\}|}{|\{j:|\widehat\beta_j|>\GAM\}|}\overset{P}{\to}\FDP_\GAM^\infty(\Pi,\Lambda):=\frac{(1-\epsilon)\PP\left(\left|\h{\Pi+\tau Z,\A\tau}(\tau Z)\right|>\GAM\right)}{\PP\left(\left|\h{\Pi+\tau Z,\A\tau}(\Pi+\tau Z)\right|>\GAM\right)},
				\\
				\TPP_\GAM(\bet,\blam)&=\frac{|\{j:|\widehat\beta_j|>\GAM,\beta_j\neq 0\}|}{|\{j:\beta_j\neq 0\}|}\overset{P}{\to}\TPP_\GAM^\infty(\Pi,\Lambda):=\PP\left(\left|\h{\Pi+\tau Z,\A\tau}(\Pi^\star+\tau Z)\right|>\GAM\right),	
			\end{split}
		\end{equation*}
		where superscript $P$ denotes convergence in probability,
		$Z$ is a standard normal independent of $\Pi$, and $(\tau,\A)$ is the unique solution to the state evolution \eqref{eq:SE} and calibration \eqref{eq:cali}. Furthermore, $\Pi^{\star}:=\left(\Pi|\Pi\neq 0\right)$ is the signal prior distribution of the non-zero elements.
	\end{lemma}
	
	By the continuity of the probability measure, we obtain
	\begin{equation}
		\begin{split}
			\lim_{\GAM\to 0}\FDP_\GAM^\infty(\Pi,\Lambda)&=\fdpinf(\Pi,\Lambda):=\frac{(1-\epsilon)\PP\left(\h{\pi+Z,\A}(Z)\neq 0\right)}{\PP\left(\h{\pi+Z,\A}(\pi+Z)\neq 0\right)},
			\\
			\lim_{\GAM\to 0}\TPP_\GAM^\infty(\Pi,\Lambda)&=\tppinf(\Pi,\Lambda):=\PP\left(\h{\pi+Z,\A}(\pi^\star+Z)\neq 0\right).	
			\label{eq:inf_props}
		\end{split}
	\end{equation}
	Here, $\pi=\Pi/\tau$ is the normalized prior distribution and $\pi^{\star}:=\Pi^{\star}/\tau$. We give the proof of \Cref{lem:A1} in \Cref{app:lemma31proof} by extending  \citet[Theorem B.1]{bogdan2013statistical}.

Following the notions of $\fdpinf$ and $\tppinf$ given in Lemma~\ref{lem:A1}, we mathematically define the SLOPE trade-off curve as the envelope of all achievable SLOPE $(\tppinf,\fdpinf)$ pairs:
\begin{align*}
	q_{_{\textnormal{\tiny SLOPE}}}(u;\delta,\epsilon) 
	:=\inf_{(\Pi,\Lambda): \, \tppinf(\Pi,\Lambda)=u}\fdpinf(\Pi,\Lambda).
\end{align*}

To study the SLOPE trade-off, we will make use of a critical concept, the \textit{zero-threshold} $\alpha(\Pi,\Lambda)$, which will be defined in \Cref{zero threshold}. Using the zero threshold, the limiting values in \eqref{eq:inf_props} can be simplified to
\begin{align}
	\begin{split}
		\tppinf(\Pi,\Lambda)
		&=\PP(|\pi^\star+Z|>\alpha(\Pi,\Lambda)),
		\\
		\fdpinf(\Pi,\Lambda)
		&=\dfrac{2(1-\epsilon)\Phi(-\alpha(\Pi,\Lambda))}{2(1-\epsilon)\Phi(-\alpha(\Pi,\Lambda))+\epsilon\cdot\tppinf(\Pi,\Lambda)}.
	\end{split}
	\label{eq:tpp fdp zero-threshold}
\end{align}
Note from the equations above that for fixed $\tppinf=u$, the formula of $\fdpinf$ is decreasing in $\alpha$. Therefore we consider the maximum of feasible zero-thresholds, $$\alpha^\star(u):=\sup_{(\Pi,\Lambda): \, \TPP^\infty=u}\alpha(\Pi,\Lambda),$$
in order to derive the minimum $\fdpinf$ on the SLOPE trade-off
\begin{equation}
	q_{_{\textnormal{\tiny SLOPE}}}(u;\delta,\epsilon) 
	:=\dfrac{2(1-\epsilon)\Phi(-\alpha^{\star}(u))}{2(1-\epsilon)\Phi(-\alpha^{\star}(u))+\epsilon u}.
	\label{eq:maximize zero threshold}
\end{equation}

	\section{Lower bound of SLOPE trade-off}
\label{sec:lower bound}


The main purpose of this section is to provide a lower bound $q_\star$ on $q_{_{\textnormal{\tiny SLOPE}}}$. We accomplish this by (equivalently) giving an upper bound for $\alpha^\star(u)$ for \emph{fixed} $u$, which we denote as $t_\star(u)$. As we shall see, in contrast to Lasso, our derivation for SLOPE requires non-standard tools from the calculus of variations and quadratic optimization programming. The optimization problem is a constrained one involving the SLOPE penalty and the probability density function of the normalized prior $\pi$ as the decision variables, subject to the fixed $\TPP=u$ and the monotonicity of the penalty.

To construct the upper bound $t_\star(u)$, we examine the state evolution \eqref{eq:SE}, which gives
$$\tau^2\geq\frac{1}{\delta}\E\left(\h{\Pi+\tau Z,\A\tau}(\Pi+\tau Z)-\Pi\right)^2=\frac{\tau^2}{\delta}\E\left(\h{\pi+Z,\A}(\pi+Z)-\pi\right)^2.$$
Rearranging the above inequality yields the state evolution condition
\begin{align}
\label{eq:SE contraint_quote}
E(\Pi,\Lambda):=\E\left(\h{\pi+Z,\A}(\pi+Z)-\pi\right)^2\leq \delta.
\end{align}
Here the quantity $E(\Pi,\Lambda)$ can be viewed as the asymptotic mean squared error between the SLOPE estimator and the truth, scaled by $1/\tau^2$, since $\|\widehat\bet-\bet\|/p\to\tau^2 E(\Pi,\Lambda)$ in probability by \citet[Corollary 3.4]{ourAMP}.

Before we proceed, we first introduce an important (scalar) quantity that governs the sparsity, the TPP, and the FDP of the SLOPE estimator and will be used throughout the paper.

\begin{definition}\label{zero threshold}
Let $ (\Pi,\Lambda) $ be a pair of prior and penalty distributions (or, equivalently, the normalized $(\pi,\A)$) and suppose $ \alpha(\Pi,\Lambda) $ is a positive number such that $\h{\pi+Z,\A}(x)= 0$ if and only if $|x|\leq \alpha(\Pi,\Lambda)$. Then we say that $\alpha = \alpha(\Pi,\Lambda)$ is the \emph{zero-threshold}.
\end{definition}

Intuitively, the zero-threshold is a positive threshold, below which, the input is mapped to zero. Note that the necessary condition \eqref{eq:SE contraint_quote} sets the feasible domain of $(\pi,\A)$ pairs and thus prescribes limits to the zero-threshold $ \alpha $. In the Lasso case, the zero-threshold is indeed equivalent to the normalized penalty scalar $\A$; but in SLOPE, it is a quantity derived from the normalized penalty distribution $\A$ in a highly nontrivial manner (see \Cref{lambda meets} for details).

Next, we state another useful definition. Recall from \Cref{sec:prelim} that the limiting scalar function $\h{}$ of SLOPE is separable and assigns a different penalty to different inputs. We therefore define the \textit{effective penalty function} accordingly.
\begin{definition}
\label{def:essential penalty}
Given a normalized pair of prior and penalty $(\pi,\A)$, the effective penalty function $\widehat{\mathrm{A}}_\textnormal{eff}:\R\rightarrow\R_+$ is a function such that
$$\eta_{\textnormal{soft}}(x;\widehat{\mathrm{A}}_\textnormal{eff}(x))=\h{\pi+Z,\A}(x).$$
\end{definition}
It is not hard to show that $\widehat{\mathrm{A}}_\text{eff}$ is well-defined. In fact, given $\h{\pi+Z,\A}$, we can represent $\widehat{\mathrm{A}}_\text{eff}$ via the zero-threshold from \Cref{zero threshold}, namely,
\begin{align*}
\widehat{\mathrm{A}}_\text{eff}(x)=
\begin{cases}
	x-\h{\pi+Z,\A}(x) & \text{ if $x>\alpha(\pi,\A)$},
	\\
	-x+\h{\pi+Z,\A}(x) & \text{ if $x<-\alpha(\pi,\A)$},
	\\
	\alpha(\pi,\A) & \text{ if $|x|<\alpha(\pi,\A)$}.
\end{cases} 
\end{align*}

Equipped with this effective penalty function, we can rewrite the state evolution condition \eqref{eq:SE contraint_quote} as
\begin{align*}
F_{\alpha}[\widehat{\mathrm{A}}_\textnormal{eff},p_{\pi^\star}]:=\E\left(\eta_{\textnormal{soft}}(\pi+Z;\widehat{\mathrm{A}}_\textnormal{eff}(\pi+Z))-\pi\right)^2\leq \delta,
\end{align*}
in which the functional objective $F_\alpha$ is defined on the effective penalty function $\widehat{\mathrm{A}}_\textnormal{eff}$ as well as the probability density function of $\pi^\star$. Note here that $\pi^\star$ and $\pi$ determine each other uniquely since $\pi^\star:=\pi|\pi\neq 0$. We provide an explicit expression for $ F_{\alpha}[\widehat{\mathrm{A}}_\textnormal{eff},p_{\pi^\star}] $ in \eqref{eq:F_alpha definition}.


Since the constraint \eqref{eq:SE} remains the same if $\pi$ is replaced by $|\pi|$, we assume $\pi\geq 0$ without loss of generality. We minimize $F_{\alpha}[\widehat{\mathrm{A}}_\textnormal{eff},p_{\pi^\star}]$ over the functional space of $(\widehat{\mathrm{A}}_\text{eff}, p_{\pi^\star})$ through a relaxed variational problem: 
\begin{align}
\begin{split}
	&\min_{\AA,\rho\geq 0} \quad F_\alpha[\AA,\rho]
	\\
	&\text{s.t.} \quad \AA(\alpha)\geq \alpha, \AA'(z)\geq 0 \text{ for all } z\geq\alpha,
	\\
	&\int_{0}^\infty \rho(t)dt=1,\int_{0}^\infty [\Phi(t-\alpha)+\Phi(-t-\alpha)]\rho(t)dt=u.
\end{split}
\label{eq:variational optimization}
\end{align}

Here the function $\AA$ is implicitly defined on $[\alpha,\infty)$ as $\AA(z)=\alpha$ for $0\leq z<\alpha$ and $\rho$ is a probability measure defined on $[0,\infty)$. 
We remark that the constraints for $\AA$ in problem \eqref{eq:variational optimization} are derived from the properties of $\widehat{\mathrm{A}}_\text{eff}$ in \Cref{bridge}, i.e. $\AA'\geq 0$ comes from \Cref{fact:hat lam decrease} and the boundary condition $\AA(\alpha)\geq\alpha$ comes from \Cref{lambda meets}. Because some additional properties of $\widehat{\mathrm{A}}_\text{eff}$ may have been excluded in the relaxation, these constraints are only necessary and may not be sufficient. Therefore,
$$\min_{(\widehat{\mathrm{A}}_\text{eff},p_{\pi^\star})} F_\alpha[\widehat{\mathrm{A}}_\text{eff},p_{\pi^\star}]\geq \min_{(\AA,\rho)} F_\alpha[\AA,\rho],$$
with the inequality possibly being strict, provided the left optimization problem above is solved subject to (i) $\widehat{\mathrm{A}}_\text{eff}$ corresponds to the effective penalty in the limiting scalar function; and (ii) $p_{\pi^\star}$ is a probability density function such that $\tppinf=\P(|\pi^\star+Z|>\alpha)=u$.

Leveraging the above relaxation \eqref{eq:variational optimization}, in order to lower bound $q_{_{\textnormal{\tiny SLOPE}}}$ in \eqref{eq:maximize zero threshold}, we can analogously define the maximum feasible zero-threshold $\alpha^\star(u)$ and upper bound it with $t_\star(u)$ as follows:
\begin{align}
\alpha^\star(u):=\sup\left\{\alpha: \min_{(\Pi,\Lambda)} F_\alpha[\widehat{\mathrm{A}}_\text{eff},p_{\pi^\star}]\leq \delta\right\}\leq t_\star(u):=\sup\left\{\alpha: \min_{(\AA,\rho)} F_\alpha[\AA,\rho]\leq \delta\right\}.
\label{eq:t_star maybe}
\end{align}

With these definitions in place, we are now in a position to describe the procedure to find the optimal prior and the optimal penalty in problem \eqref{eq:variational optimization}, given $\tppinf=u$ and $\alpha(\Pi,\Lambda)=\alpha$.

\subsection{Optimal prior is three-point prior}
To solve problem \eqref{eq:variational optimization}, we must search over all possible distributions $\pi^\star$, which is generally infeasible. To overcome this obstacle, we use the concept of extreme points (i.e. points that do not lie on the line connecting any other two points of the same set)
to show that the optimal $\pi^\star$ for problem \eqref{eq:variational optimization} is a two-point distribution, having probability mass at only two non-negative (and possibly infinite) values $(t_1,t_2)$. In doing so, we significantly reduce the search domain, from infinite dimensional to two-dimensional.
Because $\pi$ has an additional point mass at 0, the optimal prior $\pi$ (that can achieve minimum FDP when accompanied with the properly chosen penalty) is a three-point prior taking values at $(0, t_1, t_2)$. We recall that the two-point $\pi^\star$ is consistent to the Lasso result in \citet[Section 2.5]{lassopath}, where the optimal $\pi^\star$ is the infinity-or-nothing distribution with $t_1=0^+, t_2=\infty$.

To see that $\pi^\star$ admits a two-point form, suppose that $(\AA^*, \rho^*)$ is the global minimum of problem \eqref{eq:variational optimization}. Then clearly $\rho^*$ is also the global minimum of the following linear problem \eqref{eq:linear programming} with linear constraints.
\begin{align}
\begin{split}
&\min_{\rho\geq 0} \quad F_\alpha[\AA^*,\rho]
\\
&\text{s.t.} \quad 
\int_{0}^\infty \rho(t)dt=1, \int_{0}^\infty [\Phi(t-\alpha)+\Phi(-t-\alpha)]\rho(t)dt=u.
\end{split}
\label{eq:linear programming}
\end{align}

Intuitively, since there are two constraints, we need two parameters (which will be $t_1,t_2$) to characterize the minimum. We formalize this intuition in the next lemma (proved in Appendix \ref{all_other_proofs}) and show that $\rho^*$ indeed takes the form of a sum of two Dirac delta functions.

\begin{lemma}
\label{lem:two point vertex}
If $\rho^*$ is a global minimum of problem \eqref{eq:linear programming}, then 
$$\rho^*(t)=p_1\delta(t-t_1)+p_2\delta(t-t_2)$$ 
for some constants $p_1, p_2, t_1, t_2$, and $p_1+p_2=1$, $p_1,p_2\geq 0$.
\end{lemma}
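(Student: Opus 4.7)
The plan is to recognize problem \eqref{eq:linear programming} as an infinite-dimensional linear program over probability measures on $[0,\infty)$ with one additional linear moment constraint, and then invoke the standard principle that a linear functional on a convex set attains its minimum at an extreme point, whose support must, by a dimension-counting argument, consist of at most two points.

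First I would verify that the objective $F_\alpha[\AA^*,\rho]$ is linear in $\rho$. Unpacking the definition of $\pi$, which is zero with probability $1-\epsilon$ and distributed as $\pi^\star\sim\rho$ with probability $\epsilon$, one obtains
\[
F_\alpha[\AA^*,\rho] \;=\; (1-\epsilon)\, C_0 \;+\; \epsilon \int_0^\infty h(t)\,\rho(t)\,dt,
\]
where $C_0 := \E[\eta_{\textnormal{soft}}(Z;\AA^*(Z))^2]$ and $h(t) := \E[(\eta_{\textnormal{soft}}(t+Z;\AA^*(t+Z))-t)^2]$, both depending only on the fixed $\AA^*$. The feasible set
\[
K \;=\; \Bigl\{\rho \geq 0 : \int_0^\infty \rho(t)\,dt = 1,\; \int_0^\infty g(t)\,\rho(t)\,dt = u\Bigr\}, \qquad g(t) := \Phi(t-\alpha)+\Phi(-t-\alpha),
\]
is convex, so the standard fact that linear functionals on convex sets attain their minima at extreme points reduces the lemma to characterizing the extreme points of $K$.

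Next, I would show by a perturbation argument that every extreme point of $K$ is supported on at most two points. Suppose for contradiction that $\rho^*\in K$ places positive mass on three disjoint neighborhoods of distinct points $t_1,t_2,t_3\in [0,\infty)$. Choose bump functions $\phi_i$ supported in those neighborhoods and dominated by $\rho^*$, and select coefficients $(c_1,c_2,c_3)\neq 0$ in the (non-trivial) kernel of the $2\times 3$ linear map
\[
(c_1,c_2,c_3) \;\mapsto\; \Bigl(\,\textstyle\sum_i c_i \int \phi_i\,dt,\;\; \sum_i c_i \int g\,\phi_i\,dt\,\Bigr).
\]
Then $\mu := \sum_i c_i \phi_i$ satisfies $\int\mu\,dt = 0$ and $\int g\,\mu\,dt = 0$, and for $\eta>0$ small enough $\rho^*\pm\eta\mu \geq 0$ so that $\rho^*\pm\eta\mu\in K$. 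Writing $\rho^* = \tfrac{1}{2}[(\rho^*+\eta\mu)+(\rho^*-\eta\mu)]$ contradicts extremality, and iterating this reduction on the support yields the announced form.

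The main technical obstacle is making the bump construction work when $\rho^*$ is not purely atomic, since one must verify $\rho^*\pm\eta\mu\geq 0$ in a measure-theoretic sense; the domination $\phi_i\leq\rho^*$ combined with small $\eta$ handles this cleanly, but some care is needed if $\rho^*$ has both atomic and absolutely continuous parts. A more conceptual alternative is to invoke the Richter--Rogosinski theorem on extreme points of moment-constrained probability measures, which directly yields that under a single non-trivial moment constraint the extreme points are supported on at most two points. Either route produces a minimizer of the form $\rho^*(t) = p_1\delta(t-t_1) + p_2\delta(t-t_2)$ with $p_1+p_2 = 1$ and $p_1,p_2\geq 0$, allowing the degenerate cases $p_i=0$ or $t_1=t_2$ which correspond to a single point mass.
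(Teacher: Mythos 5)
Your proposal is essentially the same argument as the paper's: both recognize \eqref{eq:linear programming} as a linear program over probability measures with a single extra moment constraint, invoke the extreme-point (vertex) principle, and derive the two-point structure by exhibiting a nonzero direction $\mu$ in the kernel of the $2\times 3$ constraint map so that $\rho^* = \tfrac12[(\rho^*+\eta\mu)+(\rho^*-\eta\mu)]$, contradicting extremality. The only difference is stylistic: the paper first (informally) replaces $\rho^*$ by a finite sum of Dirac masses and works with the resulting finite-dimensional LP, whereas you perturb with dominated bump functions to handle non-atomic $\rho^*$ directly and also point to the Richter--Rogosinski theorem as a ready-made shortcut, which is a somewhat cleaner treatment of the measure-theoretic side; note, however, that like the paper you glide over the fact that the vertex principle guarantees only that \emph{some} minimizer is an extreme point, not that every minimizer is, so the lemma is most safely read as an existence statement.
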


The above specific form of the optimal $\rho^*$ allows us to search over all $(t_1,t_2)$, each pair of which uniquely corresponds to either a single-point prior $\rho(t;t_1,t_2)=\delta(t-t_1)$ if $t_1=t_2$, or a two-point prior by
\begin{align}
\begin{split}
\rho(t;t_1,t_2)&=p_1\delta(t-t_1)+p_2\delta(t-t_2),
\\
p_1(t_1,t_2)&=\frac{u-[\Phi(t_2-\alpha)+\Phi(-t_2-\alpha)]}{[\Phi(t_1-\alpha)+\Phi(-t_1-\alpha)]-[\Phi(t_2-\alpha)+\Phi(-t_2-\alpha)]},
\\
p_2(t_1,t_2)&=1-p_1(t_1,t_2),
\end{split}
\label{eq:dirac delta prior}
\end{align}
where the last two equations come from the constraints in problem \eqref{eq:linear programming}. 

In light of \Cref{lem:two point vertex}, each pair $(t_1,t_2)$ forms a different instantiation of problem \eqref{eq:variational optimization}, which will be problem \eqref{eq:quadratic programming functional} and whose optimal penalty is denoted by $\AA^*(\cdot;t_1,t_2)$ so as to be explicitly dependent on $(t_1,t_2)$. Before we proceed to optimize the penalty $\AA(\cdot;t_1,t_2)$, we assure the skeptical reader that our procedure -- doing a grid search on $(t_1,t_2)$ and considering the minimal value of all programs \eqref{eq:quadratic programming functional} parameterized by $(t_1,t_2)$ to be equivalent to the minimal value of problem \eqref{eq:variational optimization} -- is indeed a valid approach. This claim is theoretically grounded by noting that $F_\alpha[\AA^*(\cdot;t_1,t_2),\rho(\cdot;t_1,t_2)]$ is continuous in $(t_1,t_2)$. Continuity can be seen from a perturbation analysis of the optimal value in problem \eqref{eq:quadratic programming functional}. In our case, the perturbation analysis is not hard since the constraint is independent of $(t_1,t_2)$ and $F_\alpha$ depends on $\AA^*$ in a strongly-convex manner: a small perturbation in $(t_1,t_2)$ only results in a small perturbation in $\AA^*$ and thus in $F_\alpha[\AA^*(\cdot;t_1,t_2),\rho(\cdot;t_1,t_2)]$. We refer the curious reader to a line of perturbation analysis for such optimization problems in \cite{bonnans2013perturbation,shapiro1992perturbation,bonnans1998optimization}.

\subsection{Characterizing the optimal penalty analytically}
By \Cref{lem:two point vertex}, we reduce the multivariate non-convex problem \eqref{eq:variational optimization} to a set of univariate convex problems \eqref{eq:quadratic programming functional} over $\AA$. In this section, we describe the optimal penalty function $\AA^*(\cdot;t_1,t_2)$, which is the solution to the problem below:
\begin{align}
\begin{split}
&\min_{\AA} \quad F_\alpha[\AA,\rho(\cdot;t_1,t_2)]
\\
&\text{s.t.} \quad \AA(\alpha)\geq \alpha,\quad \AA'(z)\geq 0 \text{ for all } z\geq\alpha.
\end{split}
\label{eq:quadratic programming functional}
\end{align}

This is a quadratic problem with a non-holonomic constraint. To see this, we can expand the objective functional $F_\alpha$ from \eqref{eq:F_alpha definition} and split it into a functional integral that involves $\AA$ and other terms which do not, i.e.
\begin{align*}
F_\alpha[\AA,\rho(\cdot;t_1,t_2)]=\int_{\alpha}^{\infty}L(z,\AA)dz&+\epsilon p_1t_1^2\Big[\Phi(\alpha-t_1)-\Phi(-\alpha-t_1)\Big]
\\
&+\epsilon p_2t_2^2\Big[\Phi(\alpha-t_2)-\Phi(-\alpha-t_2)\Big].
\end{align*}


This split changes our objective functional from $F_\alpha[\AA,\rho(\cdot;t_1,t_2)]$ to the new functional $\int_{\alpha}^{\infty}L(z,\AA)dz$ with
\begin{align}
\begin{split}
L(z,\AA):&=2(1-\epsilon)(z-\AA(z))^2\phi(z)
\\
&+\epsilon p_1\left(\big(z-t_1-\AA(z)\big)^2\phi(z-t_1)+\big(-z-t_1+\AA(z)\big)^2\phi(-z-t_1)\right)
\\
&+\epsilon p_2\left(\big(z-t_2-\AA(z)\big)^2\phi(z-t_2)+\big(-z-t_2+\AA(z)\big)^2\phi(-z-t_2)\right).
\end{split}
\label{eq:functional split}
\end{align}

We will numerically optimize the functional $\int_{\alpha}^{\infty}L(z,\AA)dz$ together with the constraints in problem \eqref{eq:quadratic programming functional}. In addition, although we cannot derive the analytic form of $\AA^*(\cdot;t_1,t_2)$ from problem \eqref{eq:quadratic programming functional}, we can still analytically characterize it at points $ z $ where the monotonicity constraint is non-binding (that is, when $\AA^*(\cdot;t_1,t_2)$ is strictly increasing in a neighborhood of $ z $), as shown in \Cref{app:Euler-Lagrange for three point prior}.


\subsection{Searching over the optimal penalty numerically}\label{subsec:4.3}

To solve the functional optimization problem \eqref{eq:quadratic programming functional}, we approximate it by a discrete optimization problem via Euler's finite difference method. Specifically, we approximate the function $L(z,\AA)$ (and hence $F_{\alpha}$) on a discretized uniform grid of $z$ and solve the resulting quadratic programming problem with linear constraints.

To this end, we denote vectors $\bm z=[\alpha,\alpha+\Delta z,\alpha+2\Delta z,\cdots,\alpha+m\Delta z]$ and $\bm\A=[\AA(\alpha),\AA(\alpha+\Delta z),\cdots,\AA(\alpha+m\Delta z)]$ for some small $\Delta z$ and large $m$. Then problem \eqref{eq:quadratic programming functional} is discretized into the convex quadratic program
\begin{align}
\begin{split}
&\min_{\bm\AA} \quad \bar{F}_{\alpha}(\bm\A;t_1,t_2)
\\
&\text{s.t.}\quad
\begin{pmatrix}
	1&0&0&\cdots&0
	\\
	-1&1&0&\cdots&0
	\\
	0&-1&1&\cdots&0
	\\
	\cdots&\cdots&\cdots&\cdots&\cdots
	\\
	0&\cdots&0&-1&1
\end{pmatrix}\bm\A\geq
\begin{pmatrix}
	\alpha
	\\
	0
	\\
	\vdots
	\\
	0
\end{pmatrix},
\end{split}
\label{eq:quadratic programming vectorized}
\end{align}
in which the new objective $\bar{F}_{\alpha}(\bm\A;t_1,t_2)$ (derived in \eqref{eq:F bar} and also presented below) is the discretized objective of $F_\alpha[\AA,\rho(\cdot;t_1,t_2)]$ from problem \eqref{eq:quadratic programming functional}.

As $\Delta z\to 0$ and $m\to\infty$, problem \eqref{eq:quadratic programming vectorized} recovers problem \eqref{eq:quadratic programming functional} by well-known convergence theory for Euler's finite difference method. To simplify the exposition, we write the objective of problem \eqref{eq:quadratic programming vectorized} in matrix and vector notation as follows:
\begin{align*}
\bm{\mathrm{Q}}&=\textup{diag}\left(2(1-\epsilon)\phi(\bm z)+\epsilon\sum_{j=1,2} p_j\Big[\phi(\bm z-t_j)+\phi(-\bm z-t_j)\Big]\right),
\\
\bm{\mathrm{d}}&=2(1-\epsilon)\bm z\phi(\bm z)+\epsilon\sum_{j=1,2} p_j\Big[(\bm z-t_j)\phi(\bm z-t_j)+(\bm z+t_j)\phi(\bm z+t_j)\Big],
\end{align*}
and observe that
$$\bar{F}_{\alpha}(\bm\A;t_1,t_2)=(\bm\A^\top\bm{\mathrm{Q}}\bm\A-2\bm \A^\top\bm{\mathrm{d}})\Delta z+\epsilon\sum_{j=1,2} p_j t_j^2 \Big[\Phi(\alpha-t_j)-\Phi(-\alpha-t_j)\Big].$$

The discretized problem \eqref{eq:quadratic programming vectorized} is equivalent to a standard quadratic programming problem, whose objective is the discrete version of $\int_{\alpha}^{\infty}L(z,\AA)dz$ in \eqref{eq:functional split},
\begin{align}
\begin{split}
&\min_{\bm\A}\quad\frac{1}{2}\bm \A^\top\bm{\mathrm{Q}}\bm \A-\bm \A^\top\bm{\mathrm{d}}
\\
&\text{s.t.}\quad
\begin{pmatrix}
	1&0&0&\cdots&0
	\\
	-1&1&0&\cdots&0
	\\
	0&-1&1&\cdots&0
	\\
	\cdots&\cdots&\cdots&\cdots&\cdots
	\\
	0&\cdots&0&-1&1
\end{pmatrix}\bm\A\geq
\begin{pmatrix}
	\alpha
	\\
	0
	\\
	\vdots
	\\
	0
\end{pmatrix}.
\end{split}
\label{eq:quadratic programming standard}
\end{align}


\subsection{Solving the quadratic program}

Here we briefly discuss our numerical approach to solving the quadratic program \eqref{eq:quadratic programming standard}. Generally speaking, quadratic programming problems do not admit closed-form solutions. However, they can be efficiently solved by classical numerical methods, including the interior point method \citep{dikin1967iterative,sra2012optimization}, active set method \citep{murty1988linear,ferreau2014qpoases} and other dual methods \citep{goldfarb1983numerically,frank1956algorithm}. In this work, we use the dual method in \cite{goldfarb1983numerically}, as implemented in the R library \texttt{quadprog}, to solve \eqref{eq:quadratic programming standard}.

We remark that problem \eqref{eq:quadratic programming standard} is not the only way to discretize problem \eqref{eq:quadratic programming functional} and we now mention other approaches, which can result in better discretization accuracy. The discretization of problem \eqref{eq:quadratic programming functional} contains two parts: (i) a numerical integration to approximate the objective and (ii) a numerical differentiation to approximate the constraints. 

When formulating the quadratic programming problem \eqref{eq:quadratic programming standard}, we chose to apply the left endpoint rule to approximate the objective integral $\int_{\alpha}^{\infty}L(z,\AA)dz$ in \eqref{eq:functional split} by $(\bm\A^\top\bm{\mathrm{Q}}\bm\A-2\bm \A^\top\bm{\mathrm{d}})\Delta z$, as well as the backward finite difference (with first-order accuracy) to describe the constraint $\AA'(z)\geq 0$.
Alternatively, one can use different numerical quadratures to approximate the integral $\int_{\alpha}^{\infty}L(z,\AA)dz$ or use a change of variable to approximate a different integral. We can also apply different finite differences to discretize the monotonicity constraint in problem \eqref{eq:quadratic programming functional}.

\subsubsection{Numerical integration to approximate the objective}
More specifically, for the approximation of the objective in problem \eqref{eq:quadratic programming functional}, we can alternatively apply numerical quadratures such as the trapezoid rule, Simpson's rule, or Gauss-Laguerre quadrature \citep{salzer1949table} to improve the numerical integration for $\int_{\alpha}^{\infty}L(z,\AA)dz$. On the other hand, we may use a change of variable $z=\frac{x}{1-x}+\alpha$ to transform the integral $\int _{\alpha}^{\infty }L(z)dz$ over an infinite interval $[\alpha,\infty)$ to the integral $\int _{0}^{1}L\left({\frac {x}{1-x}}+\alpha\right){\frac {dx}{(1-x)^{2}}}$ over a finite interval $[0,1]$. This new integral can then be approximated by the same left endpoint rule (or other rules) but with different $\bm{\mathrm{Q}}$ and $\bm{\mathrm{d}}$.

\subsubsection{Numerical differentiation to approximate the constraints}
As for the monotonicity constraint $\AA'(z)\geq 0$, we may alternatively use other difference methods, e.g. the central difference, or higher-order accuracies. Doing so will result in a different matrix that left-multiplies $\bm\A$ in the constraint of \eqref{eq:quadratic programming standard}.

In conclusion, different numerical integration and differentiation schemes will lead to other formulations of the quadratic programming that are different from \eqref{eq:quadratic programming standard}. We do not pursue these additional numerical aspects in the present work.

\subsection{Summary}

To summarize everything so far, the procedure of finding the lower bound $q_\star(u)$ involves the following steps: fixing $\tppinf=u$, we search over a line of zero-thresholds $\{\alpha\}$; for each $\alpha$, we search over a two-dimensional finite grid of $(t_1,t_2)$, each pair defining a standard quadratic programming problem \eqref{eq:quadratic programming standard}; we then solve the quadratic problem and reject $(t_1,t_2)$ if the minimal value of the equivalent problem \eqref{eq:quadratic programming vectorized} is larger than $\delta$; if all $(t_1,t_2)$ are rejected, then the current zero-threshold $\alpha$ is too large to be valid. We set the largest valid zero-threshold as $t_\star(u)$ in \eqref{eq:t_star maybe} and write the lower bound of the $\fdpinf$ as $q_\star(u)=\frac{2(1-\epsilon)\Phi(-t_\star(u))}{2(1-\epsilon)\Phi(-t_\star(u))+\epsilon u}$. Note that $q_\star(u)>0$ for any possible $t_\star(u)$.

We finally mention that, in addition to minimizing $\FDP$ at a fixed $\TPP$ over all penalty-prior pairs, our quadratic programming approach also works when the prior $\Pi$ is fixed. The fixed prior scenario has been extensively studied in \cite{SLOPEasymptotic}, who optimize over the limiting scalar function $\eta$ while we are optimizing over the penalty function $\A_\textnormal{eff}$. Our approach adds a new angle that can be algorithmically more efficient. We defer the details of the procedure to \Cref{app:quadratic programming}.

\subsection{Differences between SLOPE and Lasso}

We end this section by discussing why deriving the SLOPE trade-off is fundamentally more complicated than the Lasso case. We highlight that the variational problem \eqref{eq:variational optimization} is \textit{non-convex}, even though it is convex with respect to each variable $\AA$ and $\rho$ (i.e. it is bi-convex but non-convex). Generally speaking, approximate solutions to non-convex problems are not accompanied by theoretical guarantees, except for some special cases. Our bi-convex problem \eqref{eq:variational optimization} cannot be solved by alternating descent, namely, fixing one variable, optimizing over the other and then alternating. 
Furthermore, our constraints only add another layer of complexity to the problem: in particular, the monotonicity constraint of $\AA$ is non-holonomic (i.e. the constraint $\AA'\geq 0$ does not depend explicitly on $\AA$).

More precisely, the difficulty in directly solving the problem \eqref{eq:variational optimization} is two-fold. The first difficulty lies in the search for the optimal penalty. For the Lasso case, the penalty distribution $\A$ and the penalty function $\widehat\A_\textnormal{eff}$ are not adaptive to the input and hence they both equal the zero-threshold $\alpha$. Therefore, we can perform a grid search on $\A\in\R$ and simply optimize over $\rho$. However, for SLOPE, the penalty $\widehat\A_\textnormal{eff}$ is a \emph{function} and hence it is intractable to search over the SLOPE penalty function space. The functional form of the penalty is the reason we must rely on the calculus of variations to study the associated optimization problem.

To demonstrate the second difficulty, we again consider the convex problem \eqref{eq:linear programming}, which is over the probability density function $\rho$, assuming the optimal penalty $\AA^*$ has been obtained. In the Lasso case, it was shown in \citet[Equation (C.2)]{lassopath} that the optimal $\pi^\star$ is the infinity-or-nothing distribution: $\PP(\pi^\star=0)=1-\epsilon' $ and $\PP(\pi^\star=\infty)=\epsilon'$. In other words, given $\AA^*$, we can easily derive the optimal $\rho$. 
However, a key concavity result in \citet[Lemma C.1]{lassopath}, which holds for Lasso and determines the optimal $\pi^\star$, unfortunately breaks in SLOPE. Therefore, the optimal form of $\pi^\star$ is inaccessible for SLOPE with existing tools, even if the optimal penalty $\AA^*$ is known. 
	

\section{Upper bound of SLOPE trade-off}
\label{sec:mobius}
In this section, we rigorously analyze the  SLOPE trade-off upper boundary curve $q^\star$ (defined in \eqref{cor:whole tradeoff}). As stated in \Cref{thm:upper trade-off-curve-no-q-form}, $q^\star$ takes two forms: below the DT power limit, i.e. when $\tppinf<\tppmax$ for $\tppmax$ defined in \eqref{eq:DT_power_limit}, we have $q^\star=q^{\star}_{\textnormal{\tiny Lasso}}$, and beyond the DT power limit, $q^\star$ is a \Mobius curve.

We start by giving some intuition for why the domain of $q^\star$ is the entire interval $[0,1]$, whereas, the Lasso trade-off curve is only defined on $[0,\tppmax)$. Intuitively, SLOPE is capable of overcoming the DT power limit and achieving 100\% $\TPP$ since it is possible for SLOPE estimators to select all $p$ features, hence, by the definition of TPP (see \Cref{sec:main-results}), one can find a completely dense SLOPE estimator whose TPP is automatically $1$. This is not true for the Lasso, since it can select \emph{at most} $n$ out of $p$ features. The corresponding constraint for the SLOPE estimator follows from the AMP calibration in \eqref{eq:cali} (discussed in detail in \Cref{app:prelim}), namely it says that the number of \emph{unique absolute values} in the entries of the SLOPE estimator is at most $n$ out of $p$. However, this does not directly constrain the sparsity of SLOPE estimator, and thus it can still be dense. In other words, the SLOPE estimator always satisfies
the following:
\begin{align}
	\text{the number of unique non-zero magnitude }|\widehat\beta_i| \text { in } \widehat\bet(\p)  \leq \n.
	\label{eq:uniqueness quota}
\end{align}
Notice that, in the Lasso sub-case, the above implies a direct sparsity constraint
$|\{i:\widehat\beta_i\neq 0\}| \leq\n$ as just discussed, since all non-zero entries in Lasso have unique magnitudes. We also remark that the asymptotically \eqref{eq:uniqueness quota} is a necessary and sufficient condition to satisfy the constraint \eqref{eq:cali}.

With this intuition, we are prepared to prove \Cref{thm:upper trade-off-curve-no-q-form} and show that $q^\star$ indeed serves as an upper bound of $q_{_{\textnormal{\tiny SLOPE}}}$. Following \Cref{prop:whole_sharp}, we have the tightness of $q^\star$ when $u\geq\tppmax$. We will further discuss the proof of \Cref{prop:whole_sharp} in \Cref{sec:approach Mobius}, but leave the full details for \Cref{app:achieving q upper star}. The tightness of $q^\star$ when $u<\tppmax$ follows from the existing tightness result on the Lasso trade-off (see \citet[Section 2.5]{lassopath}), since the Lasso is a sub-case of SLOPE and $q^\star$ matches the Lasso trade-off curve for $u<\tppmax$. Hence, we have the corollary below.
\begin{corollary}
	\label{approach all q upper}
	For any $0\leq u\leq 1$, there exists an $\epsilon'\in[0,\epsilon^\star/\epsilon]$, and values $r(u) \in [0,1]$ and $w(u) \in [0,1]$, both depending on $u$, such that the penalty $\bm\lambda=\bm\lambda_{\sqrt{M},r(u)\sqrt{M},w(u)}$ (defined in \eqref{eq:theta_func}) and the prior $\bet_M(\epsilon')$ (defined in \eqref{eq:sharp_prior}) make SLOPE approach the point $(u,q^\star(u))$ in the sense

		\begin{align*}
			\lim_{M\to\infty}\,\lim_{\GAM\to 0} \lim_{n,p\to\infty} \, \left(\TPP_\GAM(\bet_M(\epsilon'),\bm\lambda), \, \FDP_\GAM(\bet_M(\epsilon'),\bm\lambda)\right)\rightarrow(u,q^\star(u)).
		\end{align*}
	
	Moreover, when $u<\tppmax$, we can set $r(u)=1$, without specifying $w(u)$, and $\epsilon' = \epsilon'(u)$ will also depend on $u$. When $u\geq\tppmax$, we fix $\epsilon'=\epsilon^\star/\epsilon$ and set $r(u)$ via \eqref{u and r} and $w(u)$ via \eqref{r and w} below.
\end{corollary}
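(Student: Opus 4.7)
The argument naturally splits at the Donoho--Tanner threshold $\tppmax$, and in each regime I would identify parameters that realize the curve $q^\star$ to arbitrary precision.

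\emph{Regime $u \ge \tppmax$ (super-DT).} Here $q^\star(u)$ equals the \Mobius transformation in \eqref{eq:mob_part}, and the conclusion is exactly the content of \Cref{prop:whole_sharp}: fixing $\epsilon' = \epsilon^\star/\epsilon$, setting $a = \sqrt{M}$, $b = r(u)\sqrt{M}$, and choosing $w(u)$ appropriately, the asymptotic $(\TPP, \FDP)$ pair of SLOPE converges to $(u, q^\star(u))$ under the double limit $n, p \to \infty$ followed by $M \to \infty$. The map $u \mapsto r(u)$ is precisely the relation \eqref{u and r}, and $w(u)$ is then pinned down by \eqref{r and w}. So this case is disposed of by invoking \Cref{prop:whole_sharp} directly.

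\emph{Regime $u < \tppmax$ (sub-DT).} When $r(u) = 1$, the two-level penalty $\blam_{\sqrt{M}, \sqrt{M}, w}$ collapses to the constant sequence $(\sqrt{M}, \ldots, \sqrt{M})$ regardless of $w$, so SLOPE reduces to the Lasso with penalty $\sqrt{M}$. Since $q^\star(u) = q^\star_{\textnormal{\tiny Lasso}}(u)$ in this range by \eqref{cor:whole tradeoff}, it suffices to exhibit an $\epsilon'(u) \in [0, \epsilon^\star/\epsilon]$ so that the Lasso with prior $\bet_M(\epsilon'(u))$ and penalty $\sqrt{M}$ approaches $(u, q^\star_{\textnormal{\tiny Lasso}}(u))$ in the double limit. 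This is precisely the achievability direction of the Lasso trade-off in \citet[Section 2.5]{lassopath}: the three-point ``large-or-small-or-zero'' prior, which coincides with $\Pi_M(\epsilon')$ in our notation, traces out every point of $q^\star_{\textnormal{\tiny Lasso}}$ as $\epsilon'$ varies and $M \to \infty$. Applying the AMP characterization in \Cref{lem:A1} (specialized to $\Lambda$ a point mass) for the inner limit and then sending $M \to \infty$ yields the desired approach.

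\emph{Main obstacle.} The principal technical subtlety is the ordering of the two limits and continuity in $M$. For each fixed $M$, the prior $\Pi_M(\epsilon')$ satisfies the moment condition in (A2), so the inner limit $n, p \to \infty$ is governed by the state evolution and calibration equations \eqref{eq:SE}--\eqref{eq:cali}, and $(\tppinf, \fdpinf)$ admits the zero-threshold representation \eqref{eq:tpp fdp zero-threshold}. One must then verify that the resulting expressions have well-defined limits as $M \to \infty$ and that these limits recover the closed-form $(u, q^\star(u))$. For $u < \tppmax$ this continuity is inherited from \citet{lassopath}; for $u \ge \tppmax$ it is carried out inside the proof of \Cref{prop:whole_sharp} (see \Cref{sec:approach Mobius} and \Cref{app:achieving q upper star}), where the two-level penalty structure is shown to produce, in the $M \to \infty$ limit, a bimodal effective threshold whose parameters can be tuned continuously in $r$ and $w$ to interpolate between $(\tppmax, q^\star(\tppmax))$ and $(1, 1-\epsilon)$ along the \Mobius curve. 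Concatenating the two regimes then yields the corollary.
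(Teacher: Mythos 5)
Your proposal is correct and follows essentially the same route as the paper: the paper likewise splits at $\tppmax$, dispatching the super-DT regime $u \ge \tppmax$ by direct appeal to \Cref{prop:whole_sharp} (with prior fixed at $\epsilon' = \epsilon^\star/\epsilon$ and the two-level penalty tuned via \eqref{u and r}--\eqref{r and w}), and the sub-DT regime $u < \tppmax$ by observing that $r(u)=1$ makes SLOPE collapse to the Lasso, for which achievability of $q^\star_{\textnormal{\tiny Lasso}}$ with the three-point prior $\Pi_M(\epsilon')$ is exactly the content of \citet[Section 2.5]{lassopath}. Your discussion of the order of limits and continuity in $M$ is a valid elaboration of what the paper handles implicitly via \Cref{lem:A1} and the appendix proof of \Cref{prop:whole_sharp}.
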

An interesting aspect of this result is that there are two different strategies for attaining $q^\star(u)$, depending on whether $\tppinf=u$ is above or below the DT power limit. 
In both cases, we use a two-level penalty $\bm\lambda_{\sqrt{M},r(u)\sqrt{M},w(u)}$ and a sparse prior (see \eqref{eq:sharp_prior}) with very small and very large non-zeros.
However, when $\tppinf=u<\tppmax$, the strategy for attaining $q^\star(u)$ is to vary the proportion of strong signals (which equals $\epsilon\epsilon'$ and $\epsilon'$ varies with $u$), but when $u \geq \tppmax$,  sharpness in the \Mobius part of $q^\star$ is attained by 
keeping the sequence of priors fixed
and instead tuning the ratio between strong and weak penalties.

The sharpness result of \Cref{approach all q upper} shows that over the entire domain, $q^\star$ is arbitrarily closely achievable, thus, $q^\star(u)$ must serve as the upper bound of the minimum $\fdpinf$, $q_{_{\textnormal{\tiny SLOPE}}}(u)$,  hence we have completed the proof of \Cref{thm:upper trade-off-curve-no-q-form}.

\subsection{\Mobius upper bound is achievable}
\label{sec:approach Mobius}
In this section, we will sketch the proof of  \Cref{prop:whole_sharp}, which is used to prove  \Cref{approach all q upper} in the regime $u\geq\tppmax$. To complement \Cref{prop:whole_sharp} and \Cref{approach all q upper}, for concreteness, we give a specific prior and penalty pair in \eqref{eq:mob_part} that approaches $q^\star(u)$ when  $u\geq\tppmax$.  The fully rigorous proof of \Cref{prop:whole_sharp}, together with the derivation of $(r,w)$, is given in \Cref{app:achieving q upper star}.

Before we sketch the proof, we will provide some intuition for what makes the specific choice of priors and penalties behave effectively in terms of reducing the $\fdpinf$ while still driving $\tppinf$ to $1$, in order that we are able to approach $q^\star(u)$ for all  $u\geq\tppmax$.  
We remind the reader that, because there is a one-to-one correspondence between original instance $(\Pi,\Lambda)$ and the normalized $(\pi,\A)$, we will use the two notations interchangeably.

First, for fixed $\tppinf = u$, we can reduce the $\fdpinf$ through a smart use of the priors defined in \eqref{eq:sharp_prior}, where many elements equal $0$ exactly, while some non-zero elements are small (equal to $1/M$) and others large (equal to $M$) with $M$ tending to $\infty$. This is the same strategy as was used for demonstrating the achievability of the Lasso curve in \cite{lassopath}, and the intuition that we present here is based on this analysis. At a high level, extremely strong signals are unlikely to be missed, and thus the $\tppinf$ can be high at the cost of rendering the constraint \eqref{eq:SE contraint_quote} tight. On the other hand, weak signals help reduce the FDP because they are not counted toward the number of false positives and have little influence on \eqref{eq:SE contraint_quote}. Mathematically speaking, for the Lasso, \citet[Lemma C.1]{lassopath} revealed a concave relationship in $\Pi$ between the normalized estimation error $E(\Pi,\Lambda) = \mathbb{E}(\h{\pi + Z, \A}(\pi + Z) - \pi)^2$ in \eqref{eq:SE contraint_quote} and the sparsity $\PP(\h{\pi + Z, \A}(\pi + Z)\neq 0)$, which also depends on the pair $(\Pi,\Lambda)$. 
We remind the reader that, because there is a one-to-one correspondence between original instance $(\Pi,\Lambda)$ and the normalized $(\pi,\A)$, we will use the two notations interchangeably.
The idea is that minimizing $\fdpinf$ corresponds to minimizing the sparsity (this can be seen, for example, by the relationship in \eqref{eq:fdp_inf} where $\kappa(\Pi,\Lambda)$ denotes the sparsity).
Therefore, to find a prior $\Pi$ that satisfies the state evolution condition \eqref{eq:SE contraint_quote}, while minimizing the sparsity, the optimal (normalized) distribution for the non-zero elements, $\pi^\star$, for the Lasso case has probability masses concentrated at the endpoints of the domain, namely $0^+$ and $\infty$. In this way, the form of the signal prior $\Pi$ contributes to reducing the $\fdpinf$ by mixing the weak effects $\beta_i$ with the zero effects. 

Combining the priors discussed above, with a special subset of the possible penalties, namely the two-level penalties defined in \eqref{eq:theta_func}, we are able to reduce the $\fdpinf$ while still increasing the $\tppinf$ to its maximum value of $1$, hence attaining $q^\star(u)$ for all  $u\geq\tppmax$.
Interestingly, the fact that SLOPE can do this, is through its penalty, which mixes the weak predictors $\widehat\beta_i$ and the zero predictors (see \Cref{fig:approach proof}). This mix-up is in fact triggered by the averaging step in the SLOPE proximal operator (see \Cref{alg:prox}; the averaging is determined by the sorted $\ell_1$ norm in the SLOPE problem), which creates non-zero magnitudes that are shared by some predictors and hence maintains the quota of unique magnitudes in \eqref{eq:uniqueness quota}. As a consequence, the SLOPE estimator can overcome the DT power limit (and reach higher $\tppinf$) without violating the uniqueness constraint \eqref{eq:uniqueness quota} on its magnitudes. 

When constructing the two-level penalties just discussed, we must choose a pair $(r,w)$ that, respectively, defines the downweighting of the $\sqrt{M}$ used for the smaller penalty and the proportion of penalties getting each value. Concretely speaking, in \Cref{prop:whole_sharp} and \Cref{approach all q upper}, we set
\Align{
	r(u)=\Phi^{-1}\left(\frac{2\epsilon-\epsilon^\star-\epsilon u}{2(\epsilon-\epsilon^\star)}\right)/t^\star(\tppmax).
	\label{u and r}
}
where $\epsilon^\star$ and $\tppmax$ define the DT power limit and are given in \eqref{eq:eps_star}-\eqref{eq:DT_power_limit} and $t^\star$ is defined in \eqref{eq:t_star_eq}. Moreover, 
\Align{
	w(u)=\epsilon^\star+\frac{2(1-\epsilon^\star)}{1-r}\left[\Phi(-t^\star(\tppmax))-r\Phi(-rt^\star(\tppmax))-\frac{\phi(-t^\star(\tppmax))-\phi(-rt^\star(\tppmax))}{t^\star(\tppmax)}\right],
	\label{r and w}
}
where $r$ in the above is shorthand for the $r(u)$ from \eqref{u and r}. 

Without going into details, the key reason for choosing such pair $(r,w)$ is so that the sequence of two-level penalties have two different penalization effects: for one, the SLOPE estimator $\h{\pi+Z,A}(\pi+Z)$ is equivalent to a Lasso estimator $\eta_{\text{soft}}(\pi+Z;t^\star(\tppmax))$ in the sense of \eqref{eq:er1}; for the other, the SLOPE estimator is equivalent to a different Lasso estimator $\eta_{\text{soft}}(\pi+Z;rt^\star(\tppmax))$ in the sense of \eqref{eq:er2}.

To be precise, it can be shown that
$$\h{\pi+Z,\A}(\pi+Z)\overset{P}{=}\eta_{\text{soft}}(\pi+Z;t^\star(\tppmax)),$$
and 
\begin{align}
	\E(\h{\pi+Z,\A}(\pi+Z)-\pi)^2=\E(\eta_{\text{soft}}(\pi+Z;t^\star(\tppmax))-\pi)^2,
	\label{eq:er1}
\end{align} 
so when considering the asymptotic magnitude of the elements of the SLOPE estimator, or its asymptotic estimation error \eqref{eq:SE contraint_quote}, we can analyze the limiting scalar function instead using a soft-thresholding function with threshold given by $t^\star(\tppmax)$. Moreover, this implies that SLOPE satisfies the state evolution constraint \eqref{eq:SE contraint_quote} in a similar way to how the Lasso satisfies its corresponding state evolution constraint.

However, analysis of the asymptotic sparsity of the SLOPE estimator or of its asymptotic TPP and FDP, relies on the fact that one can prove
\begin{align}
	\PP(\h{\pi+Z,\A}(\pi+Z)\neq 0)=\PP(\eta_{\text{soft}}(\pi+Z;r t^\star(\tppmax))\neq 0),
	\label{eq:er2}
\end{align} 

Hence, again, instead of analyzing the limiting scalar function one can analyze a soft-thresholding function, but now with a smaller threshold given by $r t^\star(\tppmax)$ for some $0\leq r\leq 1$ defined in \eqref{u and r}. Reducing the threshold in this way functions to improve the attainable TPP--FDP over the comparable Lasso problem by allowing more elements in the estimate with non-zero values.  We visualize the above claims in \Cref{fig:approach proof}(d). 

Essentially, the state evolution condition \eqref{eq:SE contraint_quote} must always hold, but it uses the larger pseudo zero-threshold $t^\star(\tppmax)$,  while inference is conducted on the true, but smaller, zero-threshold $r t^\star(\tppmax)$. In this way, we can extend attainability of $q_\text{Lasso}^\star$ to attainability $q^\star$, while still working within the state evolution constraint \eqref{eq:SE contraint_quote}.

\subsection{Infinity-or-nothing prior has FDP above upper bound}
The goal of this section is to provide some intuition for the \Mobius form of the curve $q^\star(u)$ when $u$ is larger than the DT power limit. This will be done by demonstrating that, in the case of infinity-or-nothing priors, with a special subset of penalties, the SLOPE $\fdpinf$ is always above $q^\star$ in Proposition~\ref{thm:inf-nothing q upper star}. This also motivates the achievability results of Section~\ref{sec:approach Mobius}, as the proof given in Section~\ref{sec:approach Mobius} essentially tries to construct prior penalty pairs such that the inequality in Proposition~\ref{thm:inf-nothing q upper star} becomes an equality. While we only consider infinity-or-nothing priors here, we remark that in the Lasso case  these are actually the \emph{optimal} priors (ses \citet[Section 2.5]{lassopath}), meaning that they achieve the minimum $\fdpinf$ given $\tppinf$.

\begin{proposition}
	\label{thm:inf-nothing q upper star}
		Under the working assumptions, namely (A1), (A2), and (A3), for $\GAM\in\Xi$ in \eqref{eq:valid xi}, assuming that $\bet$ is sampled i.i.d.\ from \eqref{eq:sharp_prior} for any $\epsilon'\in [0,1]$, $M\to\infty$, and that $\blam$ is the order statistics of i.i.d.\ realization of a non-negative $\Lambda$ with $\PP(\Lambda=\max\Lambda)\geq\epsilon\epsilon'$, the following inequality holds with probability tending to one:
		\begin{align*}
			\FDP_\GAM(\bet_M(\epsilon'),\blam) \geq q^\star \left( \TPP_\GAM(\bet_M(\epsilon'),\blam); \delta,\epsilon \right) - c_\GAM(\Pi_M(\epsilon'),\Lambda),
		\end{align*}
		for some positive constant $c_\GAM$ which tends to 0 as $\GAM\to 0$.
\end{proposition}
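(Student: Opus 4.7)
First, by \Cref{lem:A1} and continuity of $q^\star$, it suffices to establish $\fdpinf(\Pi_M(\epsilon'),\Lambda)\geq q^\star(\tppinf(\Pi_M(\epsilon'),\Lambda))$ asymptotically (the $0.0001$ slack absorbs the convergence-in-probability error). Let $\alpha:=\alpha(\Pi_M(\epsilon'),\Lambda)$ denote the zero-threshold. Passing to the limit $M\to\infty$ in \eqref{eq:tpp fdp zero-threshold}---the atoms at $\pm M/\tau$ are detected almost surely while those at $\pm 1/(M\tau)$ behave identically to the zero atoms---yields the explicit asymptotics
\[
\tppinf\;\longrightarrow\;\epsilon'+(1-\epsilon')\cdot 2\Phi(-\alpha),\qquad
\fdpinf\;\longrightarrow\;\frac{(1-\epsilon)\cdot 2\Phi(-\alpha)}{\epsilon\epsilon'+(1-\epsilon\epsilon')\cdot 2\Phi(-\alpha)}.
\]

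The central step is to show that the state evolution constraint \eqref{eq:SE contraint_quote} forces $\epsilon\epsilon'\leq \epsilon^\star$. Expanding $E(\Pi_M,\Lambda)$ over the three atoms of $\Pi_M(\epsilon')$ and sending $M\to\infty$ gives
\[
(1-\epsilon\epsilon')\,\E\!\left[\h{\pi+Z,\A}(Z)^2\right]\;+\;\epsilon\epsilon'\!\left(1+a^2\right)\;\leq\;\delta,\qquad a:=\widehat{\mathrm{A}}_\textnormal{eff}(\infty).
\]
The penalty hypothesis $\PP(\Lambda=\max\Lambda)\geq\epsilon\epsilon'$ ensures that the diverging strong-signal positions fully occupy the top-sorted entries of the SLOPE proximal output, forcing $a=\max\Lambda/\tau$; combined with the monotonicity of $\widehat{\mathrm{A}}_\textnormal{eff}(|x|)$ from \Cref{bridge}, we obtain $\widehat{\mathrm{A}}_\textnormal{eff}(Z)\leq a$ for all $Z$, and hence $|\h{\pi+Z,\A}(Z)|\geq(|Z|-a)_+$ and $\E[\h{\pi+Z,\A}(Z)^2]\geq I(a):=2(1+a^2)\Phi(-a)-2a\phi(a)$. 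Plugging this into the previous display yields the Lasso-type minorant $(1-\epsilon\epsilon')I(a)+\epsilon\epsilon'(1+a^2)\leq \delta$. A direct calculation using the parametric form \eqref{eq:eps_star} shows that $\min_{a\geq 0}\bigl[(1-\kappa)I(a)+\kappa(1+a^2)\bigr]$ equals $\delta$ precisely at $\kappa=\epsilon^\star$ and is strictly increasing in $\kappa$, so indeed $\epsilon\epsilon'\leq\epsilon^\star$ must hold.

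To conclude, substituting $\epsilon'=\epsilon^\star/\epsilon$ into the closed form of $\fdpinf$ from the first step reproduces exactly the \Mobius expression \eqref{eq:mob_part}; denote it by $q^\star_{\textnormal{M\"ob}}(u)$. A short differentiation shows that $\partial\fdpinf(u;\epsilon')/\partial\epsilon'\leq 0$ at every fixed $u\in[0,1]$, so $\fdpinf$ is non-increasing in $\epsilon'$, and the bound $\epsilon'\leq\epsilon^\star/\epsilon$ therefore yields $\fdpinf\geq q^\star_{\textnormal{M\"ob}}(\tppinf)$. For $\tppinf\geq\tppmax$ this \Mobius quantity equals $q^\star(\tppinf)$ by definition, closing the argument. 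For $\tppinf<\tppmax$, one has $q^\star(u)=q^\star_{\textnormal{Lasso}}(u)$, and since the Lasso trade-off curve is the lower envelope of Lasso-achievable $\fdpinf$ while $q^\star_{\textnormal{M\"ob}}$ corresponds to the specific Lasso-achievable $\epsilon'=\epsilon^\star/\epsilon$, we have $q^\star_{\textnormal{Lasso}}\leq q^\star_{\textnormal{M\"ob}}$ on $[0,\tppmax]$, and the conclusion follows.

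The main obstacle will be rigorously establishing $\widehat{\mathrm{A}}_\textnormal{eff}(\infty)=\max\Lambda/\tau$ together with the global monotonicity of $\widehat{\mathrm{A}}_\textnormal{eff}$ in the $M\to\infty$ limit; both rest on controlling the combinatorial structure of the SLOPE proximal operator once its top-sorted entries are dominated by the diverging strong atoms, and an auxiliary argument is needed to ensure the stability of $\tau$ as $M\to\infty$. I expect this can be handled either through a direct AMP analysis combining \eqref{eq:beta_dist} with the structural results in \Cref{bridge}, or by invoking the two-level penalty equivalences \eqref{eq:er1}--\eqref{eq:er2} from \Cref{sec:approach Mobius} to sidestep direct manipulation of the effective penalty function.
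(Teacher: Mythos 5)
Your core argument correctly establishes that $\fdpinf$ is bounded below by the M\"obius formula in \eqref{eq:mob_part}, which is exactly what the paper's proof shows, and you follow essentially the same two-step strategy: first use the state-evolution constraint \eqref{eq:SE contraint_quote}, the effective-penalty structure from \Cref{bridge}, and the hypothesis $\PP(\Lambda=\max\Lambda)\geq\epsilon\epsilon'$ to deduce $\epsilon\epsilon'\leq\epsilon^\star$ (this is \Cref{lem:inf-nothing eps' upper}); then optimize the FDP over the remaining free parameter $\epsilon'$. For the second step you differentiate $\fdpinf(u;\epsilon')$ directly in $\epsilon'$ at fixed $u$, whereas the paper reparameterizes by the asymptotic sparsity $\kappa$ via \Cref{lem:SLOPEphase} and \eqref{invTPP}--\eqref{eq:fdp_inf}. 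These are equivalent routes; your calculation $\partial_{\epsilon'}\fdpinf(u;\epsilon')\propto u\epsilon(u-1)\leq 0$ is a clean shortcut to the same conclusion.

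Your last paragraph, however, contains an error. You assert $q^{\star}_{\textnormal{\tiny Lasso}}(u)\leq$ the M\"obius value on $[0,\tppmax]$ because the M\"obius value is ``the specific Lasso-achievable $\epsilon'=\epsilon^\star/\epsilon$.'' It is not Lasso-achievable for $u<\tppmax$: for the Lasso the state evolution holds with equality, so with $\epsilon'=\epsilon^\star/\epsilon$ the unique feasible normalized threshold is $\alpha=t^\star(\tppmax)$, yielding only the single point $(\tppmax,q^\star(\tppmax))$; to hit the M\"obius formula at $u<\tppmax$ with $\epsilon'=\epsilon^\star/\epsilon$ would require a strictly larger threshold, which the Lasso state evolution forbids. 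The inequality is in fact reversed: \Cref{lem:SLOPEphase} gives $\kappa\geq 1-\epsilon(1-u)(1-\epsilon^\star)/(\epsilon-\epsilon^\star)$ for every infinity-or-nothing prior, and since $\fdpinf=1-\epsilon u/\kappa$ is increasing in $\kappa$, the Lasso optimum satisfies $q^{\star}_{\textnormal{\tiny Lasso}}(u)\geq$ M\"obius value for $u<\tppmax$, with equality only at $\tppmax$. The paper's own proof makes no attempt to show $\fdpinf\geq q^{\star}_{\textnormal{\tiny Lasso}}(\tppinf)$ below the DT power limit; the proposition is there to justify the M\"obius branch of $q^\star$, and your first three paragraphs already establish exactly that. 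You should simply drop the last paragraph.
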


\begin{proof}[Proof of \Cref{thm:inf-nothing q upper star}]
	As in \Cref{sec:lower bound}, we assume $\pi\geq 0$ without loss of generality since the analysis holds if we replace $\pi$ by $|\pi|$. Consider a \textit{subset of priors}, namely the infinity-or-nothing priors: for some $\epsilon'\in[0,1]$,
	\begin{align}\label{infinity or nothing prior}
		\pi_{\infty}(\epsilon')=
		\begin{cases}
			\infty & \text{w.p. } \epsilon\epsilon',
			\\
			0 &\text{w.p. } 1-\epsilon\epsilon'.
		\end{cases}
	\end{align}
	Although the infinity-or-nothing prior in \eqref{infinity or nothing prior} does not satisfy the assumption (A2) that $\PP(\Pi\neq 0)=\PP(\pi\neq 0)=\epsilon$, this does not affect our discussion\footnote{The infinity-or-nothing prior can be approximated arbitrarily closely by a sequence of priors that satisfy the assumption. For example, let $M\goto\infty$ and consider $\pi_{M}(\epsilon')$ defined in \eqref{eq:sharp_prior}.}.
	
	In fact, as demonstrated by \Cref{lem:inf-nothing eps' upper} below, for infinity-or-nothing priors, the state evolution constraint \eqref{eq:SE contraint_quote} guarantees that $\epsilon'\leq \epsilon^\star/\epsilon$. Since $\epsilon^\star$ is the same for the Lasso and SLOPE, this means that the maximum proportion of $\infty$ signals in the infinity-or-nothing prior is the same for both as well.
	
	\begin{lemma}\label{lem:inf-nothing eps' upper}
		Under assumptions in \Cref{thm:inf-nothing q upper star}, 
		we must have $\epsilon'\in[0,\epsilon^\star/\epsilon]$.
	\end{lemma}

	The proof of \Cref{lem:inf-nothing eps' upper} is given in \Cref{app:proof lemma53}. It turns out that the DT threshold $\epsilon^\star$ plays an important role in understanding the relationship between the sparsity and $\tppinf$.
	Before illustrating this relationship, we introduce the concept of \textit{sparsity}. In a finite dimension, the sparsity of SLOPE estimator is $|\{j:\widehat\beta_j\neq 0\}|$. However, as $\p\goto\infty$, the count of non-zeros will also go to infinity, meaning a quantity like $\lim_p |\{j: \widehat{\beta}_j \neq 0\}|$ is not well-defined. Therefore we introduce the \textit{asymptotic sparsity} of the SLOPE estimator via the distributional characterization in \eqref{eq:beta_dist}, denoting the limit in probability by $\plim$,
	\begin{align}
		\kappa(\Pi,\Lambda)&:=\PP\big(\h{\pi+Z,\A}(\pi+Z)\neq 0\big)=\PP\big(\widehat{\Pi}\neq 0\big)=\plim|\{j:\widehat\beta_j\neq 0\}|/p. 
		\label{eq:chi_def}
	\end{align}
	
	Making use of the DT threshold $\epsilon^\star(\delta)$, we show in \Cref{lem:SLOPEphase} that the sparsity $\kappa(\Pi,\Lambda)$ sets an upper bound on achievable $\tppinf$.
	
	\begin{lemma}\label{lem:SLOPEphase}
		Consider SLOPE based on the pair $(\Pi,\Lambda)$ with $\Pi$ from \eqref{eq:sharp_prior} and set $M\to\infty$. Then with the asymptotic sparsity $0\leq \kappa(\Pi,\Lambda)\leq 1$\footnote{To distinguish from the Lasso, we note that SLOPE can reach $\kappa=1$ and thus gives a dense solution whose TPP is 1.}, we have $\tppinf(\Pi,\Lambda)\leq u^{\star}(\kappa(\Pi,\Lambda);\epsilon,\delta)$
		where 	
		\begin{align}
			\label{ustar}
			u^{\star}(\kappa;\epsilon,\delta):=
			\begin{cases}
				1-\frac{(1-\kappa)(\epsilon-\epsilon^{\star})}{\epsilon(1-\epsilon^{\star})}, & \text{ if }\delta<1 \text{ and } \epsilon>\epsilon^{\star}(\delta), \\
				1, & \text{ otherwise. }
			\end{cases}
		\end{align}
	\end{lemma}
	
	\begin{proof}[Proof of Lemma \ref{lem:SLOPEphase}]
		We will only prove $\tppinf(\Pi,\Lambda)\leq 1-\frac{(1-\kappa)(\epsilon-\epsilon^{\star})}{\epsilon(1-\epsilon^{\star})}$ when $\delta<1 \text{ and } \epsilon>\epsilon^{\star}(\delta)$. 
		We note that the bound on $u^{\star}$ given in \eqref{ustar} when $\delta\geq1 \text{ or } \epsilon \leq \epsilon^{\star}(\delta)$ is trivial since, by definition, $\tppinf(\Pi,\Lambda)\leq 1$.
		
		As $M\to\infty$ in \eqref{eq:sharp_prior}, the prior $\pi$ converges to the infinity-or-nothing priors $\pi_\infty(\epsilon')$ in \eqref{infinity or nothing prior}. In addition, $\pi^\star=\pi_\infty(\epsilon'/\epsilon)$. By the intermediate value theorem, there must exist some $\epsilon'\in [0,1]$ such that 
		\begin{align*}
			\tppinf(\Pi,\Lambda)=\PP(|\pi^{\star}+Z|>\alpha) &=(1-\epsilon^{\prime}) \PP(|Z|>\alpha)+\epsilon^{\prime}\PP(|\infty+Z|>\alpha) \\
			&=2(1-\epsilon^{\prime}) \Phi(-\alpha)+\epsilon^{\prime}.
		\end{align*}
		Here the first equality is given by \eqref{eq:tpp fdp zero-threshold} and $\alpha \equiv \alpha(\Pi,\Lambda)$ is the zero-threshold in \Cref{zero threshold}. The second equality follows from substituting the infinity-or-nothing $\pi^\star$. Therefore, the asymptotic sparsity in \eqref{eq:chi_def} is 
		\[\kappa(\Pi,\Lambda)=\PP(|\pi+Z|>\alpha)=(1-\epsilon)\PP(|Z|>\alpha)+\epsilon \tppinf=2(1-\epsilon\epsilon')\Phi(-\alpha)+\epsilon\epsilon',\]
		where the first equality follows by the definition of the zero-threshold in \Cref{zero threshold}, the second uses that $\tppinf(\Pi,\Lambda)=\PP(|\pi^{\star}+Z|>\alpha) $, and the third is the result from the previous equation. 
		
		Some rearrangement gives
		\begin{align}
			\Phi(-\alpha)=\frac{\kappa(\Pi,\Lambda)-\epsilon \epsilon^{\prime}}{2(1-\epsilon \epsilon^{\prime})},
			\quad \text{	and } \quad \tppinf(\Pi,\Lambda)&=\frac{(1-\epsilon^{\prime})(\kappa(\Pi,\Lambda)-\epsilon \epsilon^{\prime})}{1-\epsilon \epsilon^{\prime}}+\epsilon^{\prime}.
			\label{eq:TPP_equiv}
		\end{align}
		Simple calculus shows that the $\tppinf(\Pi,\Lambda)$ in \eqref{eq:TPP_equiv} is an increasing function of $ \epsilon^{\prime}$. To see this, notice that the derivative is $\frac{(1-\epsilon)(1-\kappa)}{(1-\epsilon\epsilon')^2}\geq 0$. Given that $\epsilon'\leq\epsilon^\star/\epsilon$ by \Cref{lem:inf-nothing eps' upper}, we have	\begin{align*}
			\tppinf(\Pi,\Lambda) 		\leq\frac{(1-\frac{\epsilon^{\star}}{\epsilon})(\kappa(\Pi,\Lambda)-\epsilon \cdot\frac{\epsilon^{\star}}{\epsilon})}{1-\epsilon \cdot \frac{\epsilon^{\star}}{\epsilon}}+\frac{\epsilon^{\star}}{\epsilon} 
			=1-\frac{(1-\kappa)(\epsilon-\epsilon^{\star})}{\epsilon(1-\epsilon^{\star})}. 
		\end{align*}
	\end{proof}
	
	In fact, \Cref{lem:SLOPEphase} is an extension of \citet[Lemma C.2]{lassopath} (restated in \Cref{cor:achievability}(a)), which claims that, in the Lasso case, for all priors including those are not infinity-or-nothing, $\tppinf\leq u^\star(\delta;\epsilon,\delta)$. In particular, we remark that $u^{\star}(\delta;\epsilon,\delta)$ is equivalent to $\tppmax(\delta,\epsilon)$, since any Lasso estimator has an asymptotic sparsity no larger than $\delta$. 
	
	
	As an immediate consequence of \Cref{lem:SLOPEphase}, we can reversely set a lower bound on the sparsity $\kappa(\Pi,\Lambda)$ given $\tppinf(\Pi,\Lambda)$. This is achieved by inverting the mapping in \eqref{ustar} and setting $u^\star=\tppinf$:
	\begin{align}
		\kappa(\Pi,\Lambda)\geq 1-\frac{\epsilon(1-\tppinf(\Pi,\Lambda))(1-\epsilon^{\star})}{\epsilon-\epsilon^{\star}}.
		\label{invTPP}
	\end{align}
	Finally, leveraging the lower bound on the sparsity, we can minimize the $\fdpinf$ by minimizing the sparsity $\kappa(\Pi,\Lambda)$, since by definition
	\begin{equation}
		\fdpinf(\Pi,\Lambda)=1-\frac{\epsilon\cdot \tppinf(\Pi,\Lambda)}{\kappa(\Pi,\Lambda)}.
		\label{eq:fdp_inf}
	\end{equation}
	
	Plugging \eqref{invTPP} into \eqref{eq:fdp_inf}, we finish the proof that $\fdpinf\geq q^\star(\tppinf)$ for the SLOPE when we restrict the priors to be infinity-or-nothing: with $\tppinf=u$,
	\begin{align*}
		\fdpinf(\Pi,\Lambda) \geq q^\star(u;\delta,\epsilon):=1-\frac{\epsilon u}{1-\frac{\epsilon(1-u)(1-\epsilon^{\star})}{\epsilon-\epsilon^{\star}}}
		=\frac{\epsilon u(1-\epsilon) -\epsilon^{\star}(1-\epsilon)}{\epsilon u (1-\epsilon^{\star}) -\epsilon^{\star}(1-\epsilon)}.
	\end{align*}
\end{proof}



\subsection{Gap between upper and lower bounds}
\label{sec:difference upper lower}
Considering \Cref{fig:intro_result}, we observe that the upper and lower boundary curves, $q_\star$ and $q^\star$, can be visually and numerically close to each other, especially when $\tppinf<\tppmax$. One may wonder whether these boundaries actually coincide below the DT power limit. We answer this question in the negative and show analytically that there may exist pairs of $(\tppinf,\fdpinf)$ with the $\fdpinf$ strictly below $q^\star(\tppinf)$ when $\tppinf<\tppmax$. In other words, there are instances where $(\tppinf,\fdpinf)$ points lie between the boundary curves $q_\star$ and $q^\star$.

	\begin{proposition}
		\label{prop:gap exists}
		For some $(\delta,\epsilon)$, there exists $\tppinf<\tppmax(\delta,\epsilon)$ defined in \eqref{eq:DT_power_limit} such that
		$$q_\star(\tppinf)<\fdpinf<q^\star(\tppinf).$$
	\end{proposition}
	In the following, we prove \Cref{prop:gap exists} by constructing a specific problem instance $(\Pi,\Lambda)$ which has $\fdpinf$ falling between the bounds. By showing that the gap between $q^\star(u)$ and $q_\star(u)$ indeed exists, we rigorously demonstrate a gap between $q^\star(u)$ and the unknown SLOPE trade-off $q_\text{\tiny SLOPE}$.

We note that, for the Lasso trade-off at $(u,q^\star(u))$, the zero-threshold $\alpha(\Pi,\lambda)=t^\star(u)$ (defined in \eqref{eq:t_star_eq}) exactly and the state evolution constraint \eqref{eq:SE contraint_quote} is binding, i.e. $E(\Pi,\lambda)=\delta$ (see \citet[Lemma C.4, Lemma C.5]{lassopath}). 

Fixing $\tppinf=u$, our strategy (detailed in \Cref{app:upper lower different}) is to construct $(\pi,\A)$ for SLOPE such that $\alpha(\pi,\A)=t^\star(u)$ as well but the state evolution constraint \eqref{eq:SE contraint_quote} is not binding, i.e. $E(\Pi,\Lambda)<\delta$. If such a construction succeeds, we can use a strictly larger zero-threshold than $t^\star(u)$ that can increase until $E(\Pi,\Lambda) > \delta$. Then, by using a larger zero-threshold, the SLOPE $\fdpinf$ is guaranteed to be strictly smaller than $q^\star(\tppinf)$ by \eqref{eq:tpp fdp zero-threshold}. Thus we will complete the proof that $q_\star(u)<q^\star(u)$ for some $u<\tppmax$.

To construct $(\pi,\A)$ satisfying $\alpha(\pi,\A)=t^\star(u)$ with $E(\Pi,\Lambda)<\delta$, we leverage our empirical observation that the optimal priors $\pi^\star$, in the sense of problem \eqref{eq:variational optimization}, which achieves the lower bound $q_\star$, are oftentimes either infinity-or-nothing or constant. This motivates us to consider constant priors $\pi^\star=t_1$, for some constant $t_1$ (i.e. $p_1=1, t_1=t_2$ in \eqref{eq:dirac delta prior}), and hence
\begin{align*}
	\pi=\begin{cases}
		t_1& \text{ w.p. } \epsilon,
		\\
		0 & \text{ w.p. } 1-\epsilon.
	\end{cases}
\end{align*}
In fact, conditioning on $\alpha(\Pi,\Lambda)=t^\star$ and $\tppinf=u$, the constant $t_1(u)$ is uniquely determined by \eqref{eq:tpp fdp zero-threshold}:
\begin{align*}
	\PP\left(|t_1+Z|>t^\star(u)\right)=u,
\end{align*}
where $Z$ is a standard normal.

Next, we use a common tool in the calculus of variations, known as the Euler-Lagrange equation (detailed in \Cref{app:two point analytic}), to construct an effective penalty function $\AA(z)$ analytically on the interval $[0,\infty)$. The explicit form of $\AA(z)$ is defined in \eqref{eq:A_S form} with $\alpha=t^\star$. We emphasize that the constructed $\AA$ may not be a feasible SLOPE penalty function in the sense that it may violate the constraints in problem \eqref{eq:quadratic programming functional}; however, if $\AA$ is increasing, then the optimal SLOPE effective penalty must be $\AA$, as it is the minimizer of the unconstrained version of problem \eqref{eq:quadratic programming functional} and clearly satisfies the constraints. In the case that $\AA$ is feasible, we compare $E(\Pi,\Lambda)=F_{t^\star(u)}[\AA,p_{t_1}]$ with $\delta$ to determine whether $q^\star(u)>q_\star(u)$.

We now give a concrete example, which is elaborated in \Cref{app:concrete example}. When $\delta=0.3, \epsilon=0.2, \Pi^\star= 4.9006, \tppinf=\tppmax=0.5676$, the maximum Lasso zero-threshold $t^\star(\tppmax)=1.1924$ and the minimum Lasso $\fdpinf=0.6216$. We can construct the SLOPE penalty $\AA$ that has the same zero-threshold and achieves $E(\Pi,\Lambda)=0.2773<\delta$. We can further construct the SLOPE penalty with larger zero-threshold, up to 1.2567, eventually have the SLOPE $\fdpinf=0.5954$, which is much smaller than the minimum Lasso $\fdpinf$. In fact, our method can construct SLOPE penalty that outperforms the Lasso trade-off for any $\tppinf\in (0.5283,1]$, as shown in \Cref{fig:u dagger visualization}.

\section{Discussion}
\label{sec:discussion}

In this paper, we have investigated the possible advantages of employing sorted $\ell_1$ regularization in model selection instead of the usual $\ell_1$ regularization. Focusing on SLOPE, which instantiates sorted $\ell_1$ regularization, our main results are presented by lower and upper bounds on the trade-off between false and true positive rates. On the one hand, the two tight bounds together demonstrate that type I and type II errors cannot both be small simultaneously using the SLOPE method with any regularization sequences, no matter how large the effect sizes are. This is the same situation as the Lasso~\citep{lassopath}, which instantiates $\ell_1$ regularization. More importantly, our results on the other hand highlight several benefits of using sorted $\ell_1$ regularization. First, SLOPE is shown to be capable of achieving arbitrarily high power, thereby breaking the DT power limit. For comparison, the Lasso cannot pass the DT power limit in the supercritical regime, no matter how strong the effect sizes are. Second, moving to the regime below the DT power limit, we provide a problem instance where the SLOPE TPP and FDP trade-off is strictly better than the Lasso. Third, we introduce a comparison theorem which shows that any solution along the Lasso path can be dominated by a certain SLOPE estimate in terms of both the TPP and FDP and the estimation risk. In other words, the flexibility of sorted $\ell_1$ regularization can always improve on the usual $\ell_1$ regularization in the instance-specific setting.

The assumptions underlying the above-mentioned results include the random designs that have independent Gaussian entries and linear sparsity. In the venerable literature on high-dimensional regression, however, a more common sparsity regime is sublinear regimes where $k/p$ tends to zero. As such, it is crucial to keep in mind the distinction in the sparsity regime when interpreting the results in this paper. From a technical viewpoint, our assumptions here enable the use of tools from AMP theory and in particular a very recent technique for tackling non-separable penalties. To obtain the lower bound, moreover, we have introduced several novel elements that might be useful in establishing trade-offs for estimators using other penalties.

In closing, we propose several directions for future research. Perhaps the most pressing question is to obtain the exact optimal trade-off for SLOPE. Regarding this question, a closer look at \Cref{fig:SLOPEtradeoffExamples} and \Cref{fig:lasso_phase} suggests that our lower and upper bounds seem to coincide exactly when the TPP is small. If so, part of the optimal trade-off would already be specified. Having shown the advantage of SLOPE over the Lasso, a question of practical importance is to develop an approach to selecting regularization sequences for SLOPE to realize these benefits. Next, we would welcome extensions of our results to other methods using sorted $\ell_1$ regularization, such as the group SLOPE~\citep{groupslope}. For this purpose, our optimization-based technique for the variational
calculus problems would likely serve as an effective tool. Recognizing that we have made heavy use of the two-level regularization sequences in many of our results, one is tempted to examine the possible benefits of using multi-level sequences for SLOPE \citep{zhang2021efficient}. Finally, a challenging question is to investigate the SLOPE trade-off under correlated design matrices; the recent development by \citet{celentano2020lasso} can be a stepping stone for this highly desirable generalization.

	\section*{Acknowledgments}
	
	
	Weijie Su was supported in part by NSF through CAREER DMS-1847415 and CCF-1934876, an Alfred Sloan Research Fellowship, and the Wharton Dean's Research Fund.
	Cynthia Rush was supported by NSF through CCF-1849883, the Simons Institute for the Theory of Computing, and NTT Research. Jason M.~Klusowski was supported in part by NSF through DMS-2054808 and HDR TRIPODS DATA-INSPIRE DCCF-1934924.

	\bibliographystyle{abbrvnat}
	\bibliography{FDRref.bib}

\begin{thebibliography}{43}
\providecommand{\natexlab}[1]{#1}
\providecommand{\url}[1]{\texttt{#1}}
\expandafter\ifx\csname urlstyle\endcsname\relax
  \providecommand{\doi}[1]{doi: #1}\else
  \providecommand{\doi}{doi: \begingroup \urlstyle{rm}\Url}\fi

\bibitem[Abramovich et~al.(2006)Abramovich, Benjamini, Donoho, and
  Johnstone]{abramovich2006adapting}
F.~Abramovich, Y.~Benjamini, D.~L. Donoho, and I.~M. Johnstone.
\newblock Adapting to unknown sparsity by controlling the false discovery rate.
\newblock \emph{The Annals of Statistics}, 34\penalty0 (2):\penalty0 584--653,
  2006.

\bibitem[Bayati and Montanari(2011)]{amp2}
M.~Bayati and A.~Montanari.
\newblock The dynamics of message passing on dense graphs, with applications to
  compressed sensing.
\newblock \emph{IEEE Transactions on Information Theory}, 57\penalty0
  (2):\penalty0 764--785, 2011.

\bibitem[Bellec et~al.(2018)Bellec, Lecu{\'e}, and Tsybakov]{bellec2018slope}
P.~C. Bellec, G.~Lecu{\'e}, and A.~B. Tsybakov.
\newblock {SLOPE} meets lasso: improved oracle bounds and optimality.
\newblock \emph{The Annals of Statistics}, 46\penalty0 (6B):\penalty0
  3603--3642, 2018.

\bibitem[Bogdan et~al.(2013{\natexlab{a}})Bogdan, Berg, Su, and
  Cand{\`e}s]{bogdan2013statistical}
M.~Bogdan, E.~v.~d. Berg, W.~Su, and E.~Cand{\`e}s.
\newblock Statistical estimation and testing via the sorted l1 norm.
\newblock \emph{arXiv preprint arXiv:1310.1969}, 2013{\natexlab{a}}.

\bibitem[Bogdan et~al.(2013{\natexlab{b}})Bogdan, van~den Berg, Su, and
  Candes]{bogdan2013supplementary}
M.~Bogdan, E.~van~den Berg, W.~Su, and E.~J. Candes.
\newblock Supplementary materials for statistical estimation and testing via
  the sorted l1 norm.
\newblock \emph{\textnormal{Available at}
  \url{https://statweb.stanford.edu/~candes/publications/downloads/SortedL1_SM.pdf}},
  2013{\natexlab{b}}.

\bibitem[Bogdan et~al.(2015)Bogdan, Van Den~Berg, Sabatti, Su, and
  Cand{\`e}s]{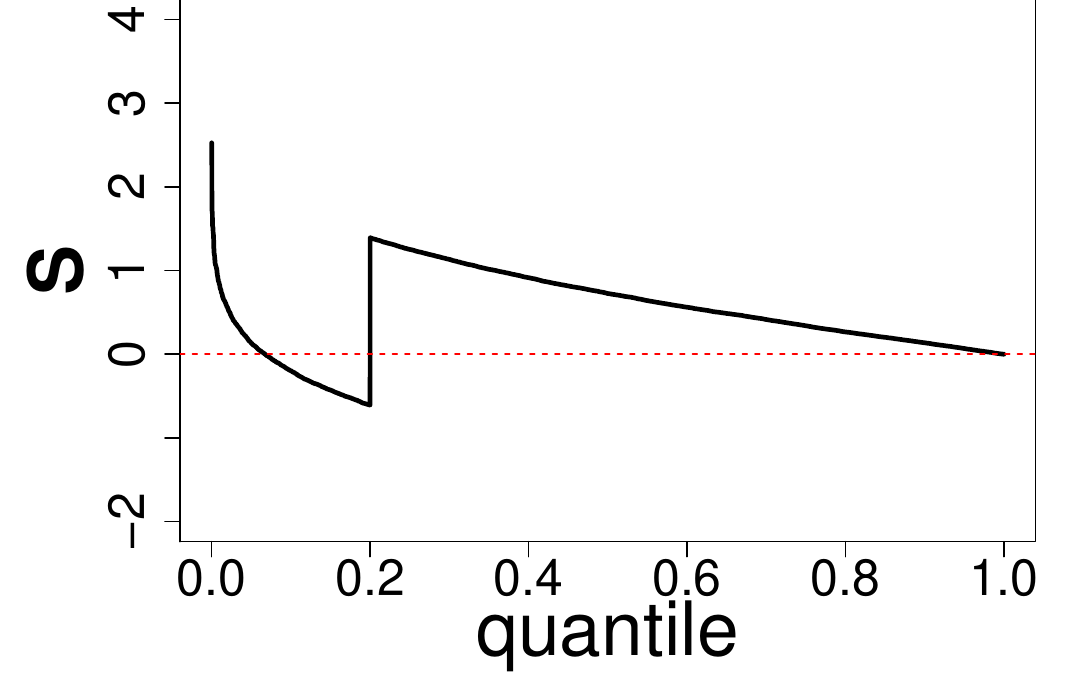}
M.~Bogdan, E.~Van Den~Berg, C.~Sabatti, W.~Su, and E.~J. Cand{\`e}s.
\newblock {SLOPE}---{A}daptive variable selection via convex optimization.
\newblock \emph{The Annals of Applied Statistics}, 9\penalty0 (3):\penalty0
  1103, 2015.

\bibitem[Bonnans and Shapiro(1998)]{bonnans1998optimization}
J.~F. Bonnans and A.~Shapiro.
\newblock Optimization problems with perturbations: A guided tour.
\newblock \emph{SIAM review}, 40\penalty0 (2):\penalty0 228--264, 1998.

\bibitem[Bonnans and Shapiro(2013)]{bonnans2013perturbation}
J.~F. Bonnans and A.~Shapiro.
\newblock \emph{Perturbation analysis of optimization problems}.
\newblock Springer Science \& Business Media, 2013.

\bibitem[Brzyski et~al.(2019)Brzyski, Gossmann, Su, and Bogdan]{groupslope}
D.~Brzyski, A.~Gossmann, W.~Su, and M.~Bogdan.
\newblock Group {SLOPE}---{A}daptive selection of groups of predictors.
\newblock \emph{Journal of the American Statistical Association}, 114\penalty0
  (525):\penalty0 419--433, 2019.

\bibitem[Bu et~al.(2020)Bu, Klusowski, Rush, and Su]{ourAMP}
Z.~Bu, J.~M. Klusowski, C.~Rush, and W.~J. Su.
\newblock Algorithmic analysis and statistical estimation of {SLOPE} via
  approximate message passing.
\newblock \emph{IEEE Transactions on Information Theory}, 67\penalty0
  (1):\penalty0 506--537, 2020.

\bibitem[Celentano et~al.(2020)Celentano, Montanari, and
  Wei]{celentano2020lasso}
M.~Celentano, A.~Montanari, and Y.~Wei.
\newblock The lasso with general gaussian designs with applications to
  hypothesis testing.
\newblock \emph{arXiv preprint arXiv:2007.13716}, 2020.

\bibitem[Dikin(1967)]{dikin1967iterative}
I.~Dikin.
\newblock Iterative solution of problems of linear and quadratic programming.
\newblock In \emph{Doklady Akademii Nauk}, volume 174, pages 747--748. Russian
  Academy of Sciences, 1967.

\bibitem[Donoho and Tanner(2009{\natexlab{a}})]{donoho2009counting}
D.~Donoho and J.~Tanner.
\newblock Counting faces of randomly projected polytopes when the projection
  radically lowers dimension.
\newblock \emph{Journal of the American Mathematical Society}, 22\penalty0
  (1):\penalty0 1--53, 2009{\natexlab{a}}.

\bibitem[Donoho and Tanner(2009{\natexlab{b}})]{donoho2009observed}
D.~Donoho and J.~Tanner.
\newblock Observed universality of phase transitions in high-dimensional
  geometry, with implications for modern data analysis and signal processing.
\newblock \emph{Philosophical Transactions of the Royal Society A:
  Mathematical, Physical and Engineering Sciences}, 367\penalty0
  (1906):\penalty0 4273--4293, 2009{\natexlab{b}}.

\bibitem[Donoho(2005)]{donoho2005neighborly}
D.~L. Donoho.
\newblock Neighborly polytopes and sparse solutions of underdetermined linear
  equations.
\newblock 2005.

\bibitem[Donoho(2006)]{donoho2006high}
D.~L. Donoho.
\newblock High-dimensional centrally symmetric polytopes with neighborliness
  proportional to dimension.
\newblock \emph{Discrete \& Computational Geometry}, 35\penalty0 (4):\penalty0
  617--652, 2006.

\bibitem[Donoho et~al.(2009)Donoho, Maleki, and Montanari]{amp1}
D.~L. Donoho, A.~Maleki, and A.~Montanari.
\newblock Message-passing algorithms for compressed sensing.
\newblock \emph{Proceedings of the National Academy of Sciences}, 106\penalty0
  (45):\penalty0 18914--18919, 2009.

\bibitem[Ferreau et~al.(2014)Ferreau, Kirches, Potschka, Bock, and
  Diehl]{ferreau2014qpoases}
H.~J. Ferreau, C.~Kirches, A.~Potschka, H.~G. Bock, and M.~Diehl.
\newblock qpoases: A parametric active-set algorithm for quadratic programming.
\newblock \emph{Mathematical Programming Computation}, 6\penalty0 (4):\penalty0
  327--363, 2014.

\bibitem[Figueiredo and Nowak(2016)]{figueiredo2016ordered}
M.~Figueiredo and R.~Nowak.
\newblock Ordered weighted l1 regularized regression with strongly correlated
  covariates: {T}heoretical aspects.
\newblock In \emph{Artificial Intelligence and Statistics}, pages 930--938.
  PMLR, 2016.

\bibitem[Frank and Wolfe(1956)]{frank1956algorithm}
M.~Frank and P.~Wolfe.
\newblock An algorithm for quadratic programming.
\newblock \emph{Naval research logistics quarterly}, 3\penalty0 (1-2):\penalty0
  95--110, 1956.

\bibitem[Goldfarb and Idnani(1983)]{goldfarb1983numerically}
D.~Goldfarb and A.~Idnani.
\newblock A numerically stable dual method for solving strictly convex
  quadratic programs.
\newblock \emph{Mathematical programming}, 27\penalty0 (1):\penalty0 1--33,
  1983.

\bibitem[G'Sell et~al.(2013)G'Sell, Hastie, and Tibshirani]{g2013false}
M.~G. G'Sell, T.~Hastie, and R.~Tibshirani.
\newblock False variable selection rates in regression.
\newblock \emph{arXiv preprint arXiv:1302.2303}, 2013.

\bibitem[Hu and Lu(2019)]{SLOPEasymptotic}
H.~Hu and Y.~M. Lu.
\newblock Asymptotics and optimal designs of {SLOPE} for sparse linear
  regression.
\newblock In \emph{2019 IEEE International Symposium on Information Theory
  (ISIT)}, pages 375--379. IEEE, 2019.

\bibitem[Johnsonbaugh and Pfaffenberger(2012)]{johnsonbaugh2012foundations}
R.~Johnsonbaugh and W.~E. Pfaffenberger.
\newblock \emph{Foundations of mathematical analysis}.
\newblock Courier Corporation, 2012.

\bibitem[Kolmogorov and Fomin(1975)]{kolmogorov1975introductory}
A.~N. Kolmogorov and S.~V. Fomin.
\newblock \emph{Introductory real analysis}.
\newblock Courier Corporation, 1975.

\bibitem[Kos and Bogdan(2020)]{kos2020asymptotic}
M.~Kos and M.~Bogdan.
\newblock On the asymptotic properties of {SLOPE}.
\newblock \emph{Sankhya A}, 82\penalty0 (2):\penalty0 499--532, 2020.

\bibitem[Mousavi et~al.(2018)Mousavi, Maleki, and
  Baraniuk]{mousavi2018consistent}
A.~Mousavi, A.~Maleki, and R.~G. Baraniuk.
\newblock Consistent parameter estimation for lasso and approximate message
  passing.
\newblock \emph{The Annals of Statistics}, 46\penalty0 (1):\penalty0 119--148,
  2018.

\bibitem[Murty and Yu(1988)]{murty1988linear}
K.~G. Murty and F.-T. Yu.
\newblock \emph{Linear complementarity, linear and nonlinear programming},
  volume~3.
\newblock Citeseer, 1988.

\bibitem[Rudin et~al.(1976)]{rudin1976principles}
W.~Rudin et~al.
\newblock \emph{Principles of mathematical analysis}, volume~3.
\newblock McGraw-hill New York, 1976.

\bibitem[Salzer and Zucker(1949)]{salzer1949table}
H.~E. Salzer and R.~Zucker.
\newblock Table of the zeros and weight factors of the first fifteen laguerre
  polynomials.
\newblock \emph{Bulletin of the American Mathematical Society}, 55\penalty0
  (10):\penalty0 1004--1012, 1949.

\bibitem[Shapiro(1992)]{shapiro1992perturbation}
A.~Shapiro.
\newblock Perturbation analysis of optimization problems in banach spaces.
\newblock \emph{Numerical Functional Analysis and Optimization}, 13\penalty0
  (1-2):\penalty0 97--116, 1992.

\bibitem[Sra et~al.(2012)Sra, Nowozin, and Wright]{sra2012optimization}
S.~Sra, S.~Nowozin, and S.~J. Wright.
\newblock \emph{Optimization for machine learning}.
\newblock MIT Press, 2012.

\bibitem[Su and Cand\`es(2016)]{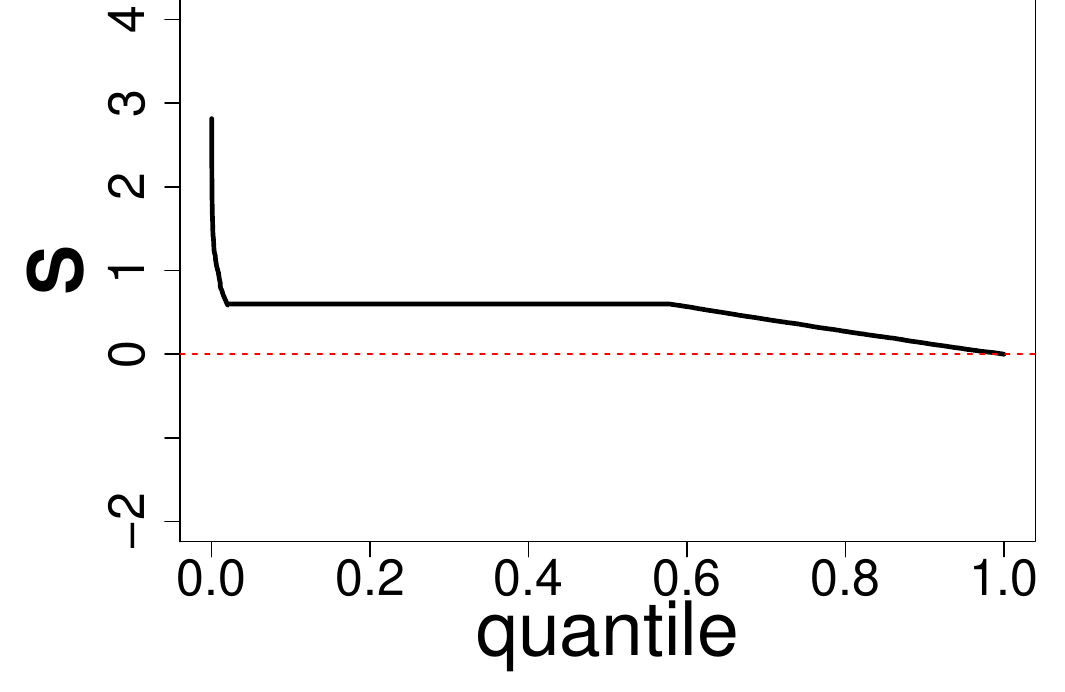}
W.~Su and E.~J. Cand\`es.
\newblock {SLOPE} is adaptive to unknown sparsity and asymptotically minimax.
\newblock \emph{The Annals of Statistics}, 44\penalty0 (3):\penalty0
  1038--1068, 2016.

\bibitem[Su et~al.(2017)Su, Bogdan, and Cand\`es]{lassopath}
W.~Su, M.~Bogdan, and E.~J. Cand\`es.
\newblock False discoveries occur early on the lasso path.
\newblock \emph{The Annals of Statistics}, 45\penalty0 (5):\penalty0
  2133--2150, 2017.

\bibitem[Su(2018)]{su2018first}
W.~J. Su.
\newblock When is the first spurious variable selected by sequential regression
  procedures?
\newblock \emph{Biometrika}, 105\penalty0 (3):\penalty0 517--527, 2018.

\bibitem[Sur et~al.(2019)Sur, Chen, and Cand{\`e}s]{sur2019likelihood}
P.~Sur, Y.~Chen, and E.~J. Cand{\`e}s.
\newblock The likelihood ratio test in high-dimensional logistic regression is
  asymptotically a rescaled chi-square.
\newblock \emph{Probability Theory and Related Fields}, 175\penalty0
  (1-2):\penalty0 487--558, 2019.

\bibitem[Tibshirani(1996)]{lasso_paper}
R.~Tibshirani.
\newblock Regression shrinkage and selection via the lasso.
\newblock \emph{Journal of the Royal Statistical Society: Series B (Statistical
  Methodology)}, 58\penalty0 (1):\penalty0 267--288, 1996.

\bibitem[Wang et~al.(2020{\natexlab{a}})Wang, Yang, Bu, and
  Su]{wang2020complete}
H.~Wang, Y.~Yang, Z.~Bu, and W.~Su.
\newblock The complete lasso tradeoff diagram.
\newblock \emph{Advances in Neural Information Processing Systems}, 33,
  2020{\natexlab{a}}.

\bibitem[Wang et~al.(2020{\natexlab{b}})Wang, Yang, and Su]{wang2020price}
H.~Wang, Y.~Yang, and W.~J. Su.
\newblock The price of competition: Effect size heterogeneity matters in high
  dimensions.
\newblock \emph{arXiv preprint arXiv:2007.00566}, 2020{\natexlab{b}}.

\bibitem[Wang et~al.(2019)Wang, Weng, and Maleki]{wang2019does}
S.~Wang, H.~Weng, and A.~Maleki.
\newblock Does {SLOPE} outperform bridge regression?
\newblock \emph{arXiv preprint arXiv:1909.09345}, 2019.

\bibitem[Wang et~al.(2020{\natexlab{c}})Wang, Weng, and Maleki]{wang2017bridge}
S.~Wang, H.~Weng, and A.~Maleki.
\newblock Which bridge estimator is the best for variable selection?
\newblock \emph{The Annals of Statistics}, 48\penalty0 (5):\penalty0
  2791--2823, 2020{\natexlab{c}}.

\bibitem[Weng et~al.(2018)Weng, Maleki, and Zheng]{weng2018overcoming}
H.~Weng, A.~Maleki, and L.~Zheng.
\newblock Overcoming the limitations of phase transition by higher order
  analysis of regularization techniques.
\newblock \emph{Annals of Statistics}, 46\penalty0 (6A):\penalty0 3099--3129,
  2018.

\bibitem[Zhang and Bu(2021)]{zhang2021efficient}
Y.~Zhang and Z.~Bu.
\newblock Efficient designs of slope penalty sequences in finite dimension.
\newblock \emph{The 24th International Conference on Artificial Intelligence
  and Statistics}, 2021.

\end{thebibliography}
	
	\clearpage
	
	\appendix
	

\section{When does SLOPE outperform Lasso?}
\label{sec:slope-instance-better}
When studying the SLOPE tradeoff curve, we consider results that hold true for all combinations of signal prior distribution and penalty distribution, $(\Pi, \Lambda)$.  In this section, we will instead look at instances of \emph{fixed} bounded signal prior distributions.

Although the SLOPE trade-off upper bound $q^\star$ is no better than the Lasso one $q^{\star}_{\textnormal{\tiny Lasso}}$ when $\tppinf<u_{\textnormal{DT}}^\star(\delta)$, as has been studied extensively in the previous sections of the paper, it is still possible that for a \emph{fixed} prior distribution $\Pi$, the SLOPE can outperform Lasso using a smart choice of penalty vector. We emphasize that such cases are important, since in the real-world, the ground truth prior of the signal is indeed unknown but fixed. In fact, we will demonstrate that the SLOPE can always outperform Lasso in terms of the TPP, the FDP, the mean squared error (MSE).

\begin{figure}[!htb]
	\centering
	\includegraphics[width=7cm,height=5cm]{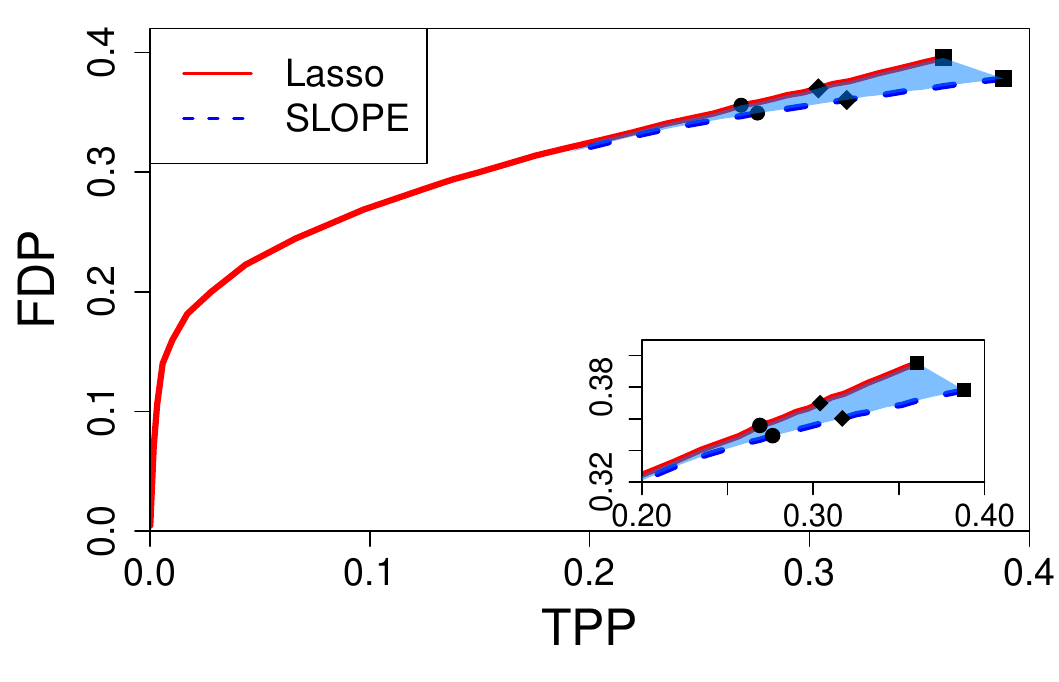}
	\includegraphics[width=7cm,height=5cm]{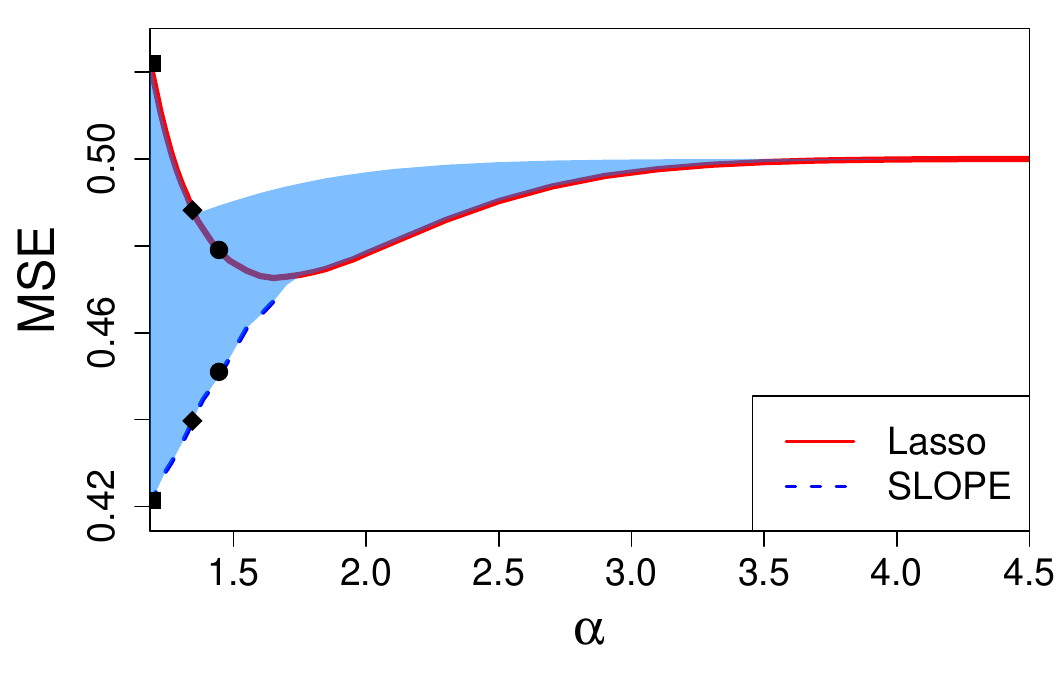}
	\caption{SLOPE outperforms the Lasso below the DT power limit. The red line is the Lasso paths when $\Pi$ is Bernoulli($\epsilon$), $\delta=0.3, \epsilon=0.5,$ and $\sigma=0$. The blue region is the SLOPE $(\tppinf,\fdpinf)$, produced by $\Theta(\ell,\alpha_L,0.1)$ where $\alpha_L\in(\alpha_0,\infty)$ is the zero-threshold shared by the Lasso and the SLOPE for all $\ell\geq\alpha_L$. The blue dashed line is the boundary of blue region. The black dots on the red line are specific $(\tppinf,\fdpinf$, MSE) by the Lasso, while the dots on the blue dashed line correspond to the Lasso dots by shape.}
	\label{fig:instance}
\end{figure}

The proofs we provide only consider the two-level SLOPE penalty sequences of the form $\blam=\bm\theta_{\lambda_1,\lambda_2, w}$ as in \eqref{eq:theta_func}. Despite the simplicity of the penalty sequence, we are already able to leverage the advantages of the flexibility of the SLOPE penalty relative to the Lasso. We moreover believe that the advantages of the SLOPE over the Lasso could be even greater when more general SLOPE penalty sequences are considered, though we leave this to future work.

To be specific, we consider a Lasso pair $(\Pi_L,\Lambda_L)$ and aim to construct a corresponding SLOPE pair $(\Pi_S, \Lambda_S)$ that outperforms the Lasso, under the requirement that $\Pi_L=\Pi_S=\Pi$. We will demonstrate that, for any fixed bounded $\Pi$, each Lasso penalty $\Lambda_L$ can be dominated by some two-level SLOPE penalty distributions  $\Lambda_S$, in the sense that the SLOPE produces strictly better $(\tppinf,\fdpinf)$ and MSE. We further demonstrate a method to search for such dominating SLOPE penalties $\Lambda_S$ and then we reinforce these ideas with simulation results.

The theoretical result of this section can be found in \Cref{thm:instance better}. In specific, we demonstrate that switching from Lasso to the simple two-level SLOPE can achieve better $\tppinf$, better $\fdpinf$ and better MSE at the same time. The full proof is in Appendix \ref{proof_instance_better} and we discuss the ideas of proof here. 

In the following, we work in the normalized $\A$ or $\alpha$ regime (given by the AMP calibration \eqref{eq:cali}; see also the interpretations below that equation) instead of the $\Lambda$ or $\lambda$ regime. The minimum $\alpha\in\R_+$ such that the corresponding the Lasso penalty $\lambda(\alpha)$ is non-negative, is denoted $\alpha_0$. We denote the normalized prior $\pi_L:=\Pi/\tau_L<\Pi/\tau_S:=\pi_S$ and their non-zero conditional distribution as $\pi_L^\star,\pi_S^\star$ respectively. Here $\tau_L,\tau_S$ are computed from the state evolution \eqref{eq:SE} of the Lasso and the SLOPE.

The high-level idea of the proof is, for any Lasso penalty $\A_L$, to find a SLOPE penalty distribution $\A_S$ which has 
\begin{enumerate}[label=(\arabic*)]
	\item the same zero-threshold $\alpha(\pi_S,\A_S)=\alpha(\pi_L,\A_L)$ (defined in \Cref{zero threshold}); 
	\item a smaller $\tau_S$ than the Lasso $\tau_L$ (from \Cref{eq:SE contraint_quote}); 
	\item a larger sparsity $\kappa(\Pi,\A_S)$ than $\kappa(\Pi,\A_L)$ (defined in \Cref{eq:chi_def}).
\end{enumerate}

To see why such $\A_S$ is dominating, we can show (2) together with \citet[Corollary 3.4]{ourAMP}, restated in \eqref{eq:my corollary34}, implies that the SLOPE MSE is strictly smaller than the Lasso MSE.

Further results follow from the definitions of $\TPP$ and $\FDP$: by \eqref{eq:tpp fdp zero-threshold}, we get
$$\tppinf(\Pi,\Lambda_S)=\PP(|\pi_S^\star+Z|>\A_L)>\PP(|\pi_L^\star+Z|>\A_L)=\tppinf(\Pi,\Lambda_L),$$
where we have used the equal zero-threshold condition (1). Finally, we finish the proof for the $\fdpinf$ result by using \eqref{eq:fdp_inf} as well as the sparsity condition (3).

Using the above conditions as the searching criteria, we have designed an algorithm that, for any fixed prior $\Pi$ and for each Lasso penalty $\A_L=\alpha_L$, finds a superior two-level SLOPE penalty $\A_S=\Theta_{\ell,\alpha_L,w}$ by searching over $(\ell,w)$. As presented in Figure \ref{fig:instance}, the SLOPE $(\tppinf,\fdpinf)$ with two-level penalties outperforms the Lasso path.


 
\section{Detailed preliminary results of SLOPE AMP}
\label{app:prelim}
In this section, we introduce the proximal operator of SLOPE, its limiting form (known as the limiting scalar function, on which the SLOPE AMP algorithm is based), and the SLOPE AMP theory relating to the state evolution and calibration equations.

\subsection{SLOPE proximal operator}
We start with the definition of the proximal operator. For input $\bm y \in \mathbb{R}^p$, define the \textbf{proximal operator} of a function $f:\mathbb{R}^p \rightarrow \mathbb{R}$ as
\begin{align*}
	\prox_f(\bm y)&:=\arg\min_{\bm  b \in \mathbb{R}^p}\Big\{\frac{1}{2}\|\bm y - \bm b\|^2+ f(\bm b)\Big\}.
\end{align*}

For SLOPE, the proximal operator uses $f(\bm b)=J_{\thet}(\bm b):= \sum_{i=1}^p \theta_i|b|_{(i)}$ for some penalty vector $\thet$ and as discussed in \cite{SLOPE1}, the SLOPE proximal operator can be computed by \Cref{alg:prox}\footnote{The SLOPE proximal operator can be computed by R library \texttt{SLOPE}.}. 

\begin{algorithm}[!htb]
	\caption{Solving $\prox_J(\bm s;\thet)$ by \cite[Algorithm 3]{SLOPE1}}
	\begin{algorithmic}
		\STATE (1). \textbf{Sorting:} Sort $|\bm s|$ in decreasing order, returning $\text{sort}(|\bm s|)$; 
		\STATE (2). \textbf{Differencing:} Calculate a difference sequence, $\bm S$, defined as
		\begin{center}
			$\bm S =\text{sort}(|\bm s|)-\thet$;
		\end{center}
		\STATE (3). \textbf{Averaging:} Repeatedly average out strictly increasing subsequences in $\bm S$ until none remains. We refer to the decreasing sequence after all the averaging as $\AVE(\bm S)$, and reassign 
		\begin{center} 
			$\bm S=\AVE(\bm S)$;
		\end{center}
		\STATE (4). \textbf{Truncating:} Set negative values in the difference sequence to $0$ and reassign 
		\begin{center}
			$\bm S =\max(\bm S,0)$;
		\end{center}
		\STATE (5). \textbf{Restoring:} Restore the order and the sign of $\bm s$ from step (1) to $\bm S$. Now $\bm S$ with the restored order and sign is the final output.
	\end{algorithmic}
	\label{alg:prox}
\end{algorithm}

For the Lasso, the relevant proximal operator uses $f(\bm b)=\theta\|\bm b\|_1$ and is known as the soft-thresholding function, which we will denote as $\eta_{\text{soft}}: \mathbb{R}^{p} \times \mathbb{R}_+ \rightarrow \mathbb{R}^p$.  Namely, for any index $i \in \{1, 2, \ldots, p\}$, the soft-thresholding function is defined as
\begin{align*}
	[\prox_{\theta\|\cdot\|_1}(\bm y)]_i = [\eta_{\text{soft}}(\bm y;\theta)]_i :=
	\begin{cases}
		y_i-\theta,&\text{ if } y_i>\theta,
		\\
		y_i+\theta,&\text{ if } y_i<-\theta,
		\\
		0,&\text{ otherwise. }
	\end{cases}
\end{align*}

Note that the Lasso proximal operator is indeed \emph{separable}, meaning that any element of its output depends only on the corresponding element of its input. This generally does not hold for the SLOPE proximal operator, which renders the analysis of SLOPE much more difficult. Nevertheless, the SLOPE proximal operator is an \textit{asymptotically separable} function (as discussed in \eqref{eq:yue_limit}) and enables the analysis of the input-dependent penalty, which is detailed in \Cref{bridge}.

In what follows, we denote $\prox_J(\bm v;\blam)$ as the SLOPE proximal operator $\prox_{J_{\blam}}(\bm v)$.

\subsection{SLOPE AMP algorithm}

Under the working assumptions (see $(A1)\sim(A3)$ in \Cref{sec:extend-donoho-tann}) and using the SLOPE proximal operator, the SLOPE optimization problem \eqref{eq:SLOPE_cost} can be solved by the following AMP algorithm with any intial conditions (\cite{ourAMP}):
\begin{align*}
\bet^{t+1}&=\prox_{J}(\X^\top\z^t+\bet^t;\bfalph\tau_t),
\\
\z^{t+1}&=\y-\X\bet^{t+1}+\frac{\z^t}{\delta}\left[ \nabla\prox_{J} (\X^\top\z^t+\bet^t;\bfalph\tau_t)\right],
\end{align*}
where $\nabla$ denotes the divergence and $(\bfalph,\tau_t)$ is defined in the equations known as the state evolution and the calibration, which we will describe shortly. 

It has been shown in \cite[Theorem 2]{ourAMP} that asymptotically $\bet^t$ converges to the true minimizer $\widehat{\bet}$. In addition, for uniformly pseudo-Lipschitz sequence functions $\phi_\p$, we have from \cite[Theorem 3]{ourAMP} that
$$\plim_\p \phi_p(\widehat\bet,\bet)=\lim_t\plim_\p \E_{Z}[\phi_\p(\prox_{J}(\bet+\tau_t \bm{Z};\bfalph(\p)\tau_t),\bet)].$$


Loosely speaking, AMP theory characterizes the SLOPE estimator by
\begin{align*}
	\widehat\bet\overset{\mathcal{D}}{\approx}\prox_{J}(\bet+\tau \bm Z;\bfalph\tau),
\end{align*}
whose empirical distribution weakly converges to $\h{\Pi+\tau Z,\A\tau}(\Pi+\tau Z)$ and we describe $(\A,\tau)$ below.

\subsection{State evolution of SLOPE AMP}
Rigorously speaking, the \textbf{state evolution} for SLOPE is
\begin{align}
\label{SE_formula}
\tau^2&=\sigma^2+\frac{1}{\delta}\E\big(\h{\Pi+\tau Z;\A\tau}(\Pi+\tau Z)-\Pi\big)^2=\sigma^2+\frac{\tau^2}{\delta}\E\big(\h{\pi+Z;\A}(\pi+Z)-\pi\big)^2,
\end{align}
which can be solved iteratively via
\begin{align*}
\tau_{t+1}^2&=F(\tau_t,\A\tau_t)=\sigma^2+\frac{1}{\delta}\E\big(\h{\Pi+\tau_t Z;\A\tau_t}(\Pi+\tau_t Z)-\Pi\big)^2,
\end{align*}

From the algorithmic perspective of AMP, we use the finite approximation of the state evolution,
\begin{align}
\tau^2&=F(\tau,\bm\alpha \tau):=\sigma^2+\frac{1}{\delta}\E\left\langle[\prox_{J}(\bet+\tau \Z;\bm\alpha\tau)-\bet]^2\right\rangle,
\label{finite_SE}
\end{align}
which can be recursively solved from the fixed point recursion
$\tau_{t+1}^2(p)=F(\tau_t(p),\bm\alpha \tau_t(p))$ for each vector $\bfalph\in\R^\p$. Here $\langle \bm u\rangle=\sum_{i=1}^\p u_i/\p$. Furthermore, this state evolution enjoys nice convergence properties: it is shown in \cite[Theorem 1]{ourAMP} that $\{\tau_t\}$ converges monotonically to a unique fixed point $\tau$, under any initial condition.

\subsection{Calibration of SLOPE AMP}
For finite $\p$, we have seen that the state evolution term $\tau$ depends on $\Pi$ and $\bfalph$. Therefore fixing $\Pi$, we can view $\tau(\bfalph)$ as a function of $\bfalph$ and then gives the \textbf{calibration} mapping of the SLOPE penalty $\blam\in\R^\p$ through \cite[Lemma 2.2]{ourAMP},
\begin{align}
\label{calibration}
\blam(\bfalph)&=\bfalph\tau\left(1-\frac{1}{\delta\p}\E( \nabla\prox_{J}(\bm\Pi+\tau \Z;\bfalph\tau))\right),
\end{align}
where the divergence of the proximal operator is defined as
\begin{align*}
\nabla\prox_{J}(\bm v;\bfalph\tau)&:=\text{diag}\left(\frac{\partial}{\partial v_1},\frac{\partial}{\partial v_2},...\right)\cdot \prox_{J}(\bm v;\bfalph\tau).
\end{align*}

There are certain critical observations given by \cite[Lemma 2.1]{ourAMP} and \cite[Proofs of Fact 3.2 and 3.3]{SLOPE2} that explain the divergence term as
\begin{align*}
\nabla\prox_{J}(\bm v;\bfalph\tau) =\|\prox_{J}(\bm v;\bfalph\tau)\|_0^*,
\end{align*}
where $\|\bm{x}\|_0^*$ counts the \textit{unique} non-zero magnitudes in the vector $\bm{x}$. E.g. $\|(2,-1,1,0)\|_0^*=2$. This norm reduces to $\ell_0$ norm in the Lasso case, where all non-zero elements in $\prox_{J}(\bm v;\bfalph\tau)$ have unique magnitudes. Therefore we can express \eqref{calibration} as
\begin{align}
\blam(\bfalph)&=\bfalph\tau\big(1-\frac{1}{n}\E\|\prox_{J}(\bm\Pi+\tau \Z;\bfalph\tau)\|_0^*\big).
\label{finite_calibration}
\end{align}

In the asymptotic case, the calibration lies between the original penalty distribution $\Lambda$ and the normalized penalty $\A$, to which the empirical distributions of $\{\blam\}$ and $\{\bfalph\}$ converge weakly:
\Align{
	\begin{split}
	\Lambda&\overset{\mathcal{D}}{=}\A\tau\left(1-\frac{1}{\delta}\lim_p\frac{1}{p}\E\|\prox_{J}(\bm\Pi+\tau\Z;\bm A(\p)\tau)\|_0^*\right)
	\\
	&=\A\tau\left(1-\frac{1}{\delta}\PP\Big(\h{\pi+Z;\A}(\pi+Z)\in\mathcal{U}(\h{\pi+Z;\A}(\pi+Z))\Big)\right),
	\end{split}
	\label{asym calibration}
}
where $\mathcal{U}(\cdot)$ is defined by
\begin{align*}
	\mathcal{U}(\h{})&:= \left \{h\in \R_+: \PP(|\h{}|=h)= 0 \right\}.
\end{align*}

This quantity represents the portion of the probability space on which $|\h{\pi+Z,\A}(\pi+Z)|$ has zero probability mass. In addition, $\PP(\h{}\in \mathcal{U}(\h{}))$ is the asymptotic proportion of \emph{unique} non-zeros in the SLOPE estimator $\h{}$. For example, if $\h{}$ follows a Bernoulli-Gaussian distribution with 30\% probability being zero, then $\mathcal{U}=(0,\infty)$, since the only point mass is concentrated at $0$ and $\PP(\h{}\in \mathcal{U}(\h{}))=0.7$.



\section{Bridging SLOPE and soft-thresholding}\label{bridge}
In this section we describe a connection between the SLOPE proximal operator and the Lasso proximal operator, i.e. the soft-thresholding function. This connection is built on top of the concept of \textbf{effective penalty} in \Cref{def:essential penalty}, which allows one to \textbf{reduce} the SLOPE proximal operator to the soft-thresholding function with an input-dependent penalty. In analyzing both bounds of the SLOPE trade-off, $q_\star$ and $q^\star$, we use this technique so that we can study the much more amenable soft-thresholding function in place of the SLOPE proximal operator. 

Here we use `$\prox_J(\bm v;\bfalph)$' to denote the SLOPE proximal operator and $\eta_{\textnormal{soft}}$ to denote the soft-thresholding function, both defined in \Cref{sec:prelim}. Note that unlike the soft-thresholding function, the SLOPE proximal operator does not have an explicit formula (nor does its limiting form given by the limiting scalar function $\h{}$), however it can be efficiently computed by \Cref{alg:prox}. Recall that the SLOPE penalty vector $\bfalph$ is decreasing and non-negative. The first result we present in this section says that in finite dimension, we can always design an effective penalty $\widehat\bfalph(\bm v,\bfalph)$, such that applying $\prox_J$ on penalty $\bfalph$ is equivalent to applying elementwise soft-thresholding on $\widehat\bfalph$. 

\begin{fact}\label{fact:sl}
	For any $\bfalph,\bm v\in \R^\p$, there exists $\widehat\bfalph\in\R^\p$ such that
	\begin{align*}
	\prox_J(\bm v;\bfalph)=\eta_{\textnormal{soft}}(\bm v;\widehat\bfalph).
	\end{align*}
\end{fact}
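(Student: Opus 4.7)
The plan is to construct $\widehat{\bm\alpha}$ explicitly from the output $\bm w := \prox_J(\bm v;\bm\alpha)$ and then verify the claimed identity coordinate by coordinate. Specifically, I would set
\[
\widehat{\alpha}_i \;:=\; \begin{cases} |v_i| - |w_i| & \text{if } w_i \neq 0, \\ |v_i| & \text{if } w_i = 0,\end{cases}
\]
so that by the definition of $\eta_{\textnormal{soft}}$, the identity $\eta_{\textnormal{soft}}(v_i;\widehat{\alpha}_i) = w_i$ reduces to two elementary checks: (i) whenever $w_i \neq 0$, we have $\sgn(w_i) = \sgn(v_i)$ and $0 \le |w_i| \le |v_i|$, which is precisely what is needed to make $\widehat{\alpha}_i \ge 0$ and produce the correct sign after soft-thresholding; and (ii) whenever $w_i = 0$, the choice $\widehat{\alpha}_i = |v_i|$ gives $\eta_{\textnormal{soft}}(v_i;|v_i|) = 0 = w_i$.

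Thus the whole task reduces to establishing the two sign/magnitude properties of the SLOPE proximal operator. I would deduce these directly from the variational characterization \eqref{eq:slope_prox} rather than from Algorithm~\ref{alg:prox}. For sign preservation, suppose $\sgn(w_i) \neq \sgn(v_i)$ for some $i$ with $w_i \neq 0$; replacing $w_i$ by $-w_i$ leaves the sorted $\ell_1$ penalty $J_{\bm\alpha}(\bm w)$ unchanged (it depends only on the absolute values) while strictly decreasing the quadratic term $\tfrac{1}{2}\|\bm v - \bm w\|^2$, contradicting optimality. For the magnitude bound, suppose $|w_i| > |v_i|$ for some $i$; then replacing $w_i$ by $v_i$ strictly decreases $\tfrac{1}{2}(v_i - w_i)^2$ and (after re-sorting) does not increase $J_{\bm\alpha}(\bm w)$, because shrinking one coordinate in absolute value can only decrease a sorted $\ell_1$ norm with nonnegative nonincreasing weights. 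This again contradicts optimality.

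With the two properties established, plugging into the definition of $\widehat{\alpha}_i$ immediately gives $\eta_{\textnormal{soft}}(\bm v;\widehat{\bm\alpha}) = \bm w = \prox_J(\bm v;\bm\alpha)$, completing the proof. The main (and only nontrivial) obstacle is the monotonicity-under-shrinkage argument for the sorted $\ell_1$ penalty; this is the place where one actually uses that $\bm\alpha$ is nonnegative and nonincreasing, via the rearrangement-type observation that decreasing a single coordinate of $|\bm w|$ cannot increase $\sum_i \alpha_i |w|_{(i)}$. Note that the $\widehat{\bm\alpha}$ produced is \emph{not} unique (any value $\ge |v_i|$ also works when $w_i = 0$) and need not be sorted, which is consistent with its interpretation as an input-dependent effective penalty rather than as a SLOPE penalty sequence itself.
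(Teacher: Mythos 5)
Your proof is correct and follows essentially the same strategy as the paper's: observe that the SLOPE prox output preserves signs and is a coordinate-wise shrinkage of the input, then define the effective threshold as the pointwise gap in magnitude. Where the paper simply asserts these two facts (nonnegativity preservation and $[\prox_J(\bm v;\bfalph)]_i \leq v_i$, restricting to $\bm v \geq 0$ and extending by oddness), you supply short variational proofs of them directly from the minimization characterization \eqref{eq:slope_prox} — flip the sign of an offending coordinate to decrease the quadratic while leaving $J$ unchanged; shrink an offending coordinate to $v_i$ to decrease the quadratic while not increasing $J$ by the elementwise-monotonicity of the sorted $\ell_1$ norm. That makes your argument slightly more self-contained; note that the sorted-$\ell_1$ monotonicity you invoke needs only nonnegativity of the weights $\alpha_i$, not that they are nonincreasing. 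Your case split at $w_i=0$ also correctly subsumes the edge case $v_i=0$, since the magnitude bound forces $w_i=0$ there. The construction is the same one the paper writes as $\widehat\bfalph = |\bm v| - \prox_J(|\bm v|;\bfalph)$.
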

\begin{proof}[Proof of \Cref{fact:sl}]
For $\bm v\geq 0$, the soft-thresholding operator is $\eta_{\textnormal{soft}}(\bm v;\widehat\bfalph)=\max(\bm v-\widehat\bfalph,0)$. Note that $\bm v\geq 0$ implies $\prox_J(\bm v;\bfalph)\geq 0$ and $[\prox_J(\bm v;\bfalph)]_i\leq v_i$ for every $i$. Then we can simply design $\widehat\bfalph$ by setting $\widehat\bfalph=\bm v-\prox_J(\bm v;\bfalph)$. More generally, for any $\bm v$, we can set $\widehat{\bfalph}=|\bm v|-\prox_J(|\bm v|;\bfalph)$ (c.f. \cite[Proposition 2]{SLOPEasymptotic}).
\end{proof}

We notice that there are possibly multiple valid designs of $\widehat\bfalph$. An example would be 
\begin{equation}
\label{eq:prox_example}
\prox_J([6,5,3,2,1];[5,2,2,2,2]) = [2,2,1,0,0] =\eta_{\textnormal{soft}}([6,5,3,2,1];\widehat\bfalph),
\end{equation}
and both $\widehat\bfalph=[4,3,2,2,1]$ or $[4,3,2,2,2]$ give the desired result.

We remark that the asymptotic version of the above fact is established in \cite[Proposition 1 and Algorithm 1]{SLOPEasymptotic}. However, we emphasize that although the construction of $\widehat{\bfalph}$ is trivial once $\prox_J(\bm v;\bfalph)$ is known beforehand, it is difficult to derive $\widehat\bfalph$ in general: $\prox_J(\bm v;\bfalph)$ has no explicit form and its computation is complicated, as can be seen in \Cref{alg:prox}. Nevertheless, certain useful properties of the effective penalty $\widehat\bfalph$ can be extracted.

\begin{fact}\label{fact:lambda meet}
Suppose $\bm v$ is sorted in decreasing absolute values, then $\widehat\bfalph$ agrees with $\bfalph$ at the non-zero entries of $\prox_J(\bm v;\bfalph)$ where the proximal operation takes no averaging.
\end{fact}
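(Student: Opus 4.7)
My plan is to prove the claim by directly unpacking \Cref{alg:prox} and using the canonical construction of $\widehat\bfalph$ given in the proof of \Cref{fact:sl}. Without loss of generality I will assume $\bm v \geq 0$ with entries in decreasing order, since the sign-restoration and absolute-value steps of the algorithm (steps (1) and (5)) reduce the general case to this one, and by the formula $\widehat\bfalph = |\bm v| - \prox_J(|\bm v|;\bfalph)$ the effective penalty is invariant under this reduction. Under this assumption, the sorting step is trivial and the difference sequence in step (2) is exactly $S_i = v_i - \alpha_i$.

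Next, I would formalize the notion of ``no averaging at index $i$''. In step (3), the algorithm iteratively identifies maximal strictly increasing consecutive blocks of $\bm S$ and replaces each such block by its mean; it terminates when the resulting sequence is monotone non-increasing. Call an index $i$ \emph{unaveraged} if throughout every iteration of step (3), the entry at position $i$ is never part of a strictly increasing block being averaged, so that $\AVE(\bm S)_i = S_i = v_i - \alpha_i$. (Equivalently, these are precisely the fixed coordinates of the pool-adjacent-violators procedure.) After truncation in step (4) and restoration in step (5), we have at any unaveraged index $i$ the closed-form expression
\[
\prox_J(\bm v;\bfalph)_i = \max(v_i - \alpha_i, \, 0).
\]

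Now I would combine this with the hypothesis that $\prox_J(\bm v;\bfalph)_i \neq 0$: this forces $v_i - \alpha_i > 0$, so that $\prox_J(\bm v;\bfalph)_i = v_i - \alpha_i$ exactly. Using the canonical construction $\widehat\alpha_i = v_i - \prox_J(\bm v;\bfalph)_i$ from the proof of \Cref{fact:sl}, I get
\[
\widehat\alpha_i = v_i - (v_i - \alpha_i) = \alpha_i,
\]
which is the desired agreement. As a sanity check, this matches the example in \eqref{eq:prox_example} above as well as the averaging example $\prox_J([20,13,10,6,4];[12,10,5,5,5]) = [8,4,4,1,0]$ given in the introduction: the unaveraged and non-zero indices are $\{1,4\}$, and indeed $\widehat\alpha_1 = 20-8 = 12 = \alpha_1$, $\widehat\alpha_4 = 6-1 = 5 = \alpha_4$, whereas at the averaged indices $\{2,3\}$ the effective penalty $\widehat\bfalph$ differs from $\bfalph$.

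There is no substantive obstacle here; the only care needed is the precise bookkeeping of which indices survive step (3) of \Cref{alg:prox} unchanged, and the observation that on those indices steps (2)--(4) collapse to scalar soft-thresholding with threshold $\alpha_i$. The mild subtlety about non-uniqueness of $\widehat\bfalph$ on zero outputs (illustrated in \eqref{eq:prox_example}) is irrelevant because the statement only concerns non-zero entries, where the canonical choice $\widehat\alpha_i = v_i - \prox_J(\bm v;\bfalph)_i$ is forced by $\eta_{\textnormal{soft}}(v_i; \widehat\alpha_i) = v_i - \widehat\alpha_i$.
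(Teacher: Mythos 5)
Your proof is correct and takes essentially the same route as the paper, which dispatches the claim in one sentence by observing that at unaveraged indices the algorithm reduces to scalar soft-thresholding by $\alpha_i$; you simply supply the bookkeeping (the canonical construction $\widehat\alpha_i = v_i - [\prox_J(\bm v;\bfalph)]_i$, the reduction to sorted nonnegative $\bm v$, and the role of the non-zero hypothesis in discharging the truncation step) that the paper leaves implicit.
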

\begin{proof}[Proof of \Cref{fact:lambda meet}]
From \Cref{alg:prox}, for each entry of $\bm v$, one may think of $\prox_J$ as either applying a soft-thresholding or applying a soft-thresholding followed by an averaging. 
\end{proof}

In the example given in \eqref{eq:prox_example}, the subsequence $[3,2,1]$ of $\bm v$ experiences the soft-thresholding with respect to the penalty subsequence $[2,2,2]$; on the other hand, the subsequence $[6,5]$ experiences the soft-thresholding with respect to the penalty subsequence $[5,2]$ (resulting in $[1,3]$) then the averaging (resulting in $[2,2]$); this output is equivalent to $[6,5]$ experiencing the soft-thresholding with respect to the effective penalty subsequence $[4,3]$ instead of the actual penalty subsequence $[5,2]$. 

In other words, if $v_i$ is indeed penalized by $\alpha_i$ without averaging, then the effective penalty $\widehat\alpha_i$ agrees with the actual penalty $\alpha_i$.

The above result generally does not hold when $v$ is not sorted in decreasing magnitudes. For instance,
$$\prox_J([3,5,-6];[5,2,2]) = [1,2,-2] =\eta_{\textnormal{soft}}([3,5,-6];[2,3,4]).$$
Nevertheless, we show that larger input (in magnitude) matches with larger penalties.

\begin{fact}\label{fact:hat lam decrease}
	Suppose $\bm v$ is sorted in decreasing absolute values, so is $\widehat\bfalph$. Then larger input will have larger effective penalty.
\end{fact}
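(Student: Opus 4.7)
The plan is to exploit the pool-adjacent-violators (PAV) structure embedded in \Cref{alg:prox}. By oddness of $\prox_J$ under sign flips we may assume $\bm v\ge \bm{0}$, so that $\bm b^\star := \prox_J(\bm v;\bfalph)=\max(\bm T,\bm{0})$ where $\bm T$ is the non-increasing isotonic regression of $\bm S:=\bm v-\bfalph$. PAV partitions $\{1,\dots,p\}$ into maximal blocks $B_1,\dots,B_m$ on each of which $T_i$ equals the block-wise average $\bar S_{B_k}$, with $\bar S_{B_k}$ non-increasing in $k$. Using \Cref{fact:sl}, I will take $\widehat\alpha_i=v_i-\bar S_{B_k}$ for $i\in B_k$ with $\bar S_{B_k}>0$, and $\widehat\alpha_i=\max(v_i,\alpha_i)$ otherwise; this assignment is consistent with \Cref{fact:lambda meet} at every no-averaging index.

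Monotonicity within any single block is immediate: $v_i-\bar S_{B_k}$ inherits the ordering of $\bm v$ on a positive block, and $\max(v_i,\alpha_i)$ is the coordinate-wise maximum of two non-increasing sequences. The heart of the proof is the block-boundary inequality for two consecutive positive blocks $B_k$ and $B_{k+1}$. My plan is to invoke the PAV block-maximality inequalities $S_{r_k}\ge \bar S_{B_k}$ and $S_{l_{k+1}}\le \bar S_{B_{k+1}}$, which must hold because otherwise PAV could peel off a boundary element and strictly improve the isotonic fit. Substituting $S_i=v_i-\alpha_i$, they become
\[
v_{r_k}-\bar v_{B_k}\ \ge\ \alpha_{r_k}-\bar\alpha_{B_k},\qquad \bar v_{B_{k+1}}-v_{l_{k+1}}\ \ge\ \bar\alpha_{B_{k+1}}-\alpha_{l_{k+1}}.
\]
Summing these two inequalities and then adding $\bar\alpha_{B_k}-\bar\alpha_{B_{k+1}}$ to both sides yields exactly
\[
\widehat\alpha_{r_k}-\widehat\alpha_{l_{k+1}}\ =\ (v_{r_k}-v_{l_{k+1}})-(\bar S_{B_k}-\bar S_{B_{k+1}})\ \ge\ \alpha_{r_k}-\alpha_{l_{k+1}}\ \ge\ 0,
\]
where the last step uses that $\bfalph$ itself is non-increasing.

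The only remaining case is a transition from a positive block to a zero block; a zero-to-positive transition is ruled out by $\bar S_{B_k}$ being non-increasing in $k$. Here the same endpoint inequality applied to the zero block gives $S_{l_{k+1}}\le \bar S_{B_{k+1}}\le 0$ and hence $v_{l_{k+1}}\le \alpha_{l_{k+1}}\le \alpha_{r_k}$; combined with $v_{r_k}-\bar S_{B_k}\ge \alpha_{r_k}$ extracted from $S_{r_k}\ge \bar S_{B_k}$ on the positive block, this delivers $\widehat\alpha_{r_k}\ge \alpha_{r_k}\ge\max(v_{l_{k+1}},\alpha_{l_{k+1}})=\widehat\alpha_{l_{k+1}}$. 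The main conceptual obstacle is recognizing that this ostensibly obvious monotonicity claim really hinges on the sharp PAV block-maximality inequalities $S_{l_k}\le \bar S_{B_k}\le S_{r_k}$, which encode the very reason each block refuses to split: without them the averaging step in \Cref{alg:prox} could in principle redistribute thresholds across a block boundary in a way that assigns a larger effective penalty to a smaller input.
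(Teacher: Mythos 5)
Your proof is correct and follows the same broad strategy as the paper's---reduce to $\bm v \ge 0$, interpret the output of \Cref{alg:prox} as a truncated isotonic (PAV) regression on $\bm S=\bm v-\bfalph$, and verify that the induced effective penalty $\widehat\bfalph$ is non-increasing---but you carry it out with noticeably more care. The paper's proof checks monotonicity only \emph{within} each averaged block (where $\widehat\alpha_i = v_i - c$ inherits the order of $\bm v$) and, separately, over the non-averaged coordinates (where $\widehat\alpha_i = \alpha_i$ by \Cref{fact:lambda meet}); it never compares the right endpoint of one block against the left endpoint of the next. That cross-block comparison is exactly where the claim could in principle fail, and your PAV endpoint inequalities $S_{l_k}\le \bar S_{B_k}\le S_{r_k}$ are precisely the tool that closes the gap: the algebra you give (summing the two endpoint inequalities, re-centering by $\bar\alpha_{B_k}-\bar\alpha_{B_{k+1}}$, and invoking $\alpha_{r_k}\ge\alpha_{l_{k+1}}$) is right, your positive-to-zero transition is handled correctly, and the observation that zero-to-positive transitions are ruled out by the block averages being non-increasing is what lets you stop there. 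Two minor remarks. Your choice $\widehat\alpha_i = \max(v_i,\alpha_i)$ on zero-output coordinates differs from the canonical $\widehat\alpha_i = v_i$ coming from \Cref{fact:sl}, but any $\widehat\alpha_i \ge v_i$ is a valid design and your choice makes the within-zero-region monotonicity (including across consecutive zero blocks) immediate, so it is a net simplification. And the phrase ``peel off a boundary element'' is an informal but standard way to motivate the PAV endpoint inequalities; a fully rigorous version would cite the min--max characterization of isotonic regression or give a one-coordinate perturbation argument, but the inequalities themselves are correct.
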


\begin{proof}[Proof of \Cref{fact:hat lam decrease}]
For the simplicity of discussion, we assume $\bm v\geq 0$. Then we have $\prox_J(\bm v;\bfalph)=\max(\AVE(\bm v-\bfalph),0)$ where $\AVE(\cdot)$ is the averaging operator in \Cref{alg:prox}. For indices where the averaging does not take place on the sequence $\bm v-\bfalph$, we have $\widehat\alpha_i=\alpha_i$ from \Cref{fact:lambda meet}. Clearly $\widehat\bfalph$ is decreasing on these indices as $\bfalph$ is decreasing by the definition of the sorted $\ell_1$ norm. For indices where the averaging does take place, say the averaged magnitude is $c:=[\prox_J(\bm v;\bfalph)]_I$ for some set of indices $I$, then $\widehat\alpha_i=v_i-[\prox_J(\bm v;\bfalph)]_i=v_i-c$ (by \Cref{fact:sl}), which is decreasing in $i$ since $\bm v_I$ is a decreasing subsequence.
\end{proof}

Now that we have derived some properties of the sequence $\widehat{\bfalph}$ as a whole, we will focus on a particular point of the sequence. Before we move on, we introduce a quantile-related concept.
\begin{definition}
	\label{def:upper quantile}
	For a vector $\bm v\in\R^\p$, we denote the $k$-th largest element in absolute values as $\bm v_{(k)}$. For a distribution $V$, we denote $V_{(\mathtt{k})}$ as the \textbf{upper $\mathtt{k}$-quantile} with $\mathtt{k}\in[0,1]$:
	$$\PP(|V|\geq V_{(\mathtt{k})})=\mathtt{k}.$$
\end{definition}
For example, $V_{(0.25)}$ is the upper quartile of $|V|$; $V_{(0.5)},V_{(0)},V_{(1)}$ are the median, maximum and minimum of $|V|$ respectively. 

We show an asymptotic result that $\bfalph$ and $\widehat\bfalph$ agree at a specific point closely related to the zero-threshold defined in \Cref{zero threshold}.

\begin{proposition}\label{lambda meets}
	Suppose $\bm v(\p),\bfalph(\p),\widehat\bfalph(\p)$ converge weakly to distributions $V,\A,\widehat\A$ respectively, with $V$ being a continuous distribution whose support contains 0. Then 
	$$\widehat\A_{(\kappa)}=\A_{(\kappa)}=|V|_{(\kappa)},$$	
	where $\kappa$ is the asymptotic sparsity of the SLOPE estimator, defined in \eqref{eq:chi_def}. 
\end{proposition}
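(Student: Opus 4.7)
The plan is to identify each of the three quantities $|V|_{(\kappa)}$, $\widehat\A_{(\kappa)}$, and $\A_{(\kappa)}$ with the zero-threshold $\alpha(V,\A)$ from \Cref{zero threshold}. Since $\h{V,\A}$ from \eqref{eq:yue_limit} sends $x$ to zero exactly when $|x|\leq\alpha(V,\A)$, and the continuity of $V$ gives $\PP(|V|=\alpha(V,\A))=0$, the sparsity definition \eqref{eq:chi_def} yields
\begin{equation*}
\kappa \;=\; \PP(\h{V,\A}(V)\neq 0) \;=\; \PP(|V|>\alpha(V,\A)),
\end{equation*}
which via \Cref{def:upper quantile} translates to $\alpha(V,\A)=|V|_{(\kappa)}$. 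It therefore suffices to show $\widehat\A_{(\kappa)}=\A_{(\kappa)}=\alpha(V,\A)$.

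First I would establish $\widehat\A_{(\kappa)}=|V|_{(\kappa)}$ by a sandwich argument on sorted sequences. Permute the coordinates of $\bm v(p)$ so that $|v_{(1)}|\geq\cdots\geq|v_{(p)}|$; by \Cref{fact:hat lam decrease} the reordered effective penalty sequence $\widehat\bfalph(p)$ is then also non-increasing, and by \Cref{fact:sl} together with \Cref{def:essential penalty} the proximal output at position $i$ is non-zero iff $|v_{(i)}|>\widehat\alpha_{(i)}$. Because the output has asymptotically $\kappa p$ non-zero entries occupying the top positions, the last non-zero index $k=k(p)$ satisfies $k/p\to\kappa$ and
\begin{equation*}
|v_{(k+1)}| \;\leq\; \widehat\alpha_{(k+1)} \;\leq\; \widehat\alpha_{(k)} \;<\; |v_{(k)}|.
\end{equation*}
Weak convergence of the empirical distributions of $|\bm v(p)|$ and $\widehat\bfalph(p)$ to $|V|$ and $\widehat\A$, combined with continuity of the quantile function of $|V|$ at $\kappa$, then squeezes $\widehat\A_{(\kappa)}$ to $|V|_{(\kappa)}$.

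For the remaining identity $\A_{(\kappa)}=\widehat\A_{(\kappa)}$ I would invoke \Cref{fact:lambda meet}, which gives $\widehat\alpha_{(i)}=\alpha_{(i)}$ at every sorted index $i$ outside an averaging block of \Cref{alg:prox}. The boundary index $k$ cannot lie strictly inside an averaging block, since every entry of a block carries a common averaged value $\bar S$ and the block therefore lies entirely in the non-zero region (when $\bar S>0$) or entirely in the zero region (when $\bar S\leq 0$). Moreover, if a plateau with $\bar S>0$ were to end exactly at $k$, then the difference $d_j:=|v_{(j)}|-\alpha_{(j)}$ would satisfy $d_{k+1}\geq d_k-(|v_{(k)}|-|v_{(k+1)}|)\geq\bar S-o(1)$ by continuity of $V$ and non-increasingness of $\bfalph$, contradicting $d_{k+1}\leq 0$ which is required for the zero output at $k+1$. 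Hence $\widehat\alpha_{(k)}=\alpha_{(k)}$ in the limit and $\A_{(\kappa)}=\widehat\A_{(\kappa)}$.

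The main obstacle is the boundary-plateau analysis just sketched: when $\A$ has atoms, plateaus from the pool-adjacent-violators step in \Cref{alg:prox} can have non-vanishing asymptotic width, and their interaction with the zero-crossing of the PAVA output must be handled carefully. A cleaner route is to pass to the deterministic continuous-time isotonic regression of the limiting difference $d(t):=|V|_{(t)}-\A_{(t)}$ on $t\in[0,1]$, exploit that $d(\cdot)$ only has upward jumps since $\A_{(\cdot)}$ is non-increasing, and verify that its zero-crossing is attained at a continuity point of $d$ where the isotonic envelope and $d$ coincide. This reduces the finite-sample averaging phenomenon to a deterministic quantile identity and closes the proof.
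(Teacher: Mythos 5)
Your overall strategy parallels the paper's---both decompose the claim into (i) $\widehat\A_{(\kappa)}=|V|_{(\kappa)}$ via the zero-threshold and (ii) $\A_{(\kappa)}=\widehat\A_{(\kappa)}$ via a ``no averaging at $\kappa$'' argument---but your route to each piece is genuinely different. For (i), the paper works directly with the limiting object: $\widehat\A_{(\kappa)}=|V|_{(\kappa)}-\h{V,\A}(|V|_{(\kappa)})$ combined with $\h{V,\A}(|V|_{(\kappa)})=0$ gives the equality in one line, whereas you pass through a finite-$p$ sandwich $|v_{(k+1)}|\leq\widehat\alpha_{(k+1)}\leq\widehat\alpha_{(k)}<|v_{(k)}|$ at the last non-zero index $k$; both are correct, and yours is perhaps more transparent about where the continuity of $|V|$ is used.

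For (ii), however, your finite-$p$ PAVA contradiction has a genuine gap that you yourself flag. The inequality chain $d_{k+1}\geq d_k-(|v_{(k)}|-|v_{(k+1)}|)\geq\bar S-o(1)$ is fine (the step $d_k\geq\bar S$ is the standard PAVA property that a decreasing-isotonic block's last raw entry lies above its average, and $d_{k+1}\leq 0$ follows from the dual property on the first entry of the next block), but the contradiction requires $\bar S$ bounded away from $0$. At the boundary index $k/p\to\kappa$ the block average $\bar S$ is precisely the estimator magnitude $\max(\bar S,0)$, and this converges to $\h{V,\A}(|V|_{(\kappa)})=0$. So $\bar S=o(1)$ and no contradiction is forced. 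The paper avoids this by working in the limit: $\h{V,\A}$ is continuous, vanishes at $|V|_{(\kappa)}$, and is strictly positive immediately to the right, so no averaging plateau can straddle $|V|_{(\kappa)}$, and \Cref{fact:lambda meet} then applies. Your proposed ``cleaner route''---pass to the least concave majorant $\widehat D$ of $D(t)=\int_0^t\bigl(|V|_{(s)}-\A_{(s)}\bigr)\,ds$, note that $d(t)=|V|_{(t)}-\A_{(t)}$ has only upward jumps since $|V|_{(\cdot)}$ is continuous and $\A_{(\cdot)}$ is non-increasing, and conclude that at the maximizer $\kappa$ of $\widehat D$ the one-sided derivatives satisfy $d(\kappa^-)\geq 0\geq d(\kappa^+)\geq d(\kappa^-)$, forcing $d(\kappa)=0$ and $\widehat D'(\kappa)=D'(\kappa)$---is indeed a valid and self-contained alternative to the paper's argument, but you sketch it rather than carry it out. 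If you complete that isotonic-regression step, your proof closes; as written, the finite-sample branch does not.
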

To see how this asymptotic result relates to the zero-threshold $\alpha(\pi,\A)$ in \Cref{zero threshold}, it is helpful to consider $V=\pi+Z$ (which is continuous even if $\pi$ is discrete), since the SLOPE estimator's distribution is $\widehat{\Pi}=\h{\pi+Z,\A}(\pi+Z)$.

\begin{proof}[Proof of \Cref{lambda meets}]
Then the asymptotic sparsity is 
\begin{align*}
	\kappa:=\plim|\{i: [\prox_J(\bm v;\bfalph)]_i\neq 0\}|/p=\PP\big(\h{\pi+Z,\A}(\pi+Z)\neq 0\big)=\PP\big(|\pi+Z|> \alpha(\pi,\A)\big).
\end{align*}
On the other hand, from the soft-thresholding effect of $\h{V,\A}$, we have 
$$\h{V,\A}(V)=\eta_{\textnormal{soft}}(V;\widehat{\A}),$$
and equivalently 
$$\kappa=\PP\big(|\h{V,\A}(V)|\neq 0\big)=\PP\big(\eta_{\textnormal{soft}}(|V|;\widehat{\A})\neq 0\big)=\PP\big(|V|>\widehat{\A}\big),$$
which indicates
$$|V|_{(\kappa)}=\widehat{\A}_{(\kappa)}=\alpha(\pi,\A).$$

From the proof of \Cref{fact:sl} (also from \cite[Proposition 2]{SLOPEasymptotic}), we know $\widehat{\A}_{(\kappa)}=|V|_{(\kappa)}-\h{V,\A}(|V|_{(\kappa)})$. Together with the above, it holds that $\h{V,\A}(|V|_{(\kappa)})=0$.

Notice that $\h{V,\A}$ is continuous, thus there must exist some interval $[|V|_{(\kappa)},x]$ where $\h{V,\A}$ is not constant (i.e. penalties are not averaged), because $\h{V,\A}(|V|_{(\kappa)})=0$ but $\h{V,\A}(x)>0$. Hence by \Cref{fact:lambda meet}, we obtain $\widehat{\A}_{(\kappa)}=\A_{(\kappa)}$. 
\end{proof}
	
To summarize, we can reduce the non-separable SLOPE proximal operator to some separable soft-thresholding, asymptotically. In this way, we can alternatively study the effective penalty used in the soft-thresholding, instead of the implicit SLOPE proximal operator. We emphasize that \Cref{bridge} is the key to study the SLOPE TPP-FDP trade-off bounds $q_\star$ and $q^\star$ in \Cref{sec:lower bound} and \Cref{sec:mobius}.


\section{SLOPE trade-off and \Mobius upper bound}

In this section we provide some useful results that describe the SLOPE TPP--FDP trade-off curve beyond the Lasso phase transition. In particular, we show that the SLOPE state evolution and calibration constraints can be translated to analogous constraints based on the soft-thresholding function.

\subsection{Using AMP to characterize the asymptotic TPP and FDP}
\label{app:lemma31proof}
In this section, we give a sketch of the proof of \Cref{lem:A1}, which consists of justifying the use of AMP to characterize the FDP and TPP of SLOPE asymptotically.

It has been rigorously proven in \cite[Theorem 3]{ourAMP} that $\frac{1}{n} \sum_{i=1}^n \psi ([\h{\Pi+\tau Z,\A\tau}(\bet+\tau Z)]_i,\beta_i)$ is asymptotically equal in distribution to that of $\frac{1}{n} \sum_{i=1}^n \psi (\widehat\beta_i,\beta_i)$, when $\psi: \mathbb{R}^2 \rightarrow \mathbb{R}$ is a pseudo-Lipschitz continuous function. We would like to use this result to analyze the $\FDP$ and $\TPP$, where from \Cref{lem:A1} we see that

	\begin{equation}
		\FDP_\GAM(\bet, \blam) = \frac{|\{j: |\widehat{\beta}_j|>\GAM, \beta_j = 0\}|}{|\{j: |\widehat{\beta}_j|>\GAM\}|} = \frac{\sum_j  \varphi_{V,\GAM}(\widehat{\beta}_j ,\beta_j )}{\sum_{j} \varphi_{V,\GAM}(\widehat{\beta}_{j}, \beta_{j})+ \sum_{j} \varphi_{T,\GAM}(\widehat{\beta}_{j}, \beta_{j})},
		\label{eq:FDP_lemD1}
	\end{equation}
	and 
	\begin{equation}
		\TPP_\GAM(\bet, \blam) = \frac{|\{j: |\widehat{\beta}_j|>\GAM, \beta_j \neq 0\}|}{|\{j: \beta_j \neq 0\}|} = \frac{\sum_j  \varphi_{T,\GAM}(\widehat{\beta}_j ,\beta_j )}{\sum_j  1(\beta_j \neq 0)},
		\label{eq:TPP_lemD1}
	\end{equation}
	are determined by sums of discontinuous functions, $ \varphi_{V,\GAM}(x, y)=1(|x|>\GAM) 1(y=0) $ and $ \varphi_{T,\GAM}(x, y)=1(|x|>\GAM)1(y \neq 0)$, and not pseudo-Lipschitz functions. Therefore \cite[Theorem 3]{ourAMP} does not apply directly. Nevertheless, we are still able to use the characterization  given by AMP, as is demonstrated in  \Cref{lem:A2}. The proof of \Cref{lem:A2} is an extension of the analogous result for the Lasso case given in  \cite[Lemma A.1]{lassopath}. We notice that the result of \Cref{lem:A1} is just that given in \eqref{eq:lemD1_res}.
	
	\begin{lemma}\label{lem:A2}
		Under the working assumptions, namely (A1), (A2), and (A3), for $\GAM$ such that $\PP(\h{\pi+Z,\A}(\pi+Z)=\GAM)=0$, the SLOPE estimator $\widehat{\bet}(\blam)$ obeys
		\begin{align}
			\frac{ V_\GAM(\blam)}{\p}&:= \sum_j \frac{\varphi_{V,\GAM}(\widehat{\beta}_j,\beta_j)}{\p}  = \frac{  |\{j:|\widehat\beta_j|>\GAM,\beta_j=0\}|}{\p}\overset{P}{\to}\PP(|\h{\pi+Z,\A}(\pi+Z)|>\GAM,\pi=0), \label{eq:Vlam}\\
			\frac{T_\GAM(\blam)}{\p}  &:= \sum_j \frac{\varphi_{T,\GAM}(\widehat{\beta}_j,\beta_j) }{\p} = \frac{|\{j:|\widehat\beta_j|>\GAM,\beta_j\neq 0\}|}{\p}  \overset{P}{\to}\PP(|\h{\pi+Z,\A}(\pi+Z)|>\GAM,\pi\neq 0), \label{eq:Tlam}
		\end{align}
		where $Z$ is a standard normal independent of $\Pi$, $(\tau,\A)$ is the unique solution to the state evolution \eqref{eq:SE} and the calibration \eqref{eq:cali}, and $\pi=\Pi/\tau$. Consequently, we have using the representations in \eqref{eq:FDP_lemD1} and \eqref{eq:TPP_lemD1} and the definitions of $V_\GAM(\blam)$ and $T_\GAM(\blam)$ above, that
		\begin{align}
			\FDP_\GAM=\frac{V_\GAM(\blam)}{V_\GAM(\blam)+T_\GAM(\blam)}\overset{P}{\to}\FDP_\GAM^\infty,
			\qquad \text{ and } \qquad
			\TPP_\GAM=\frac{T_\GAM(\blam)}{\sum_j  1(\beta_j \neq 0)}\overset{P}{\to}\TPP_\GAM^\infty.
			\label{eq:lemD1_res}
		\end{align}
	\end{lemma}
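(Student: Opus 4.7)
Proof proposal. The plan is to extend the pseudo-Lipschitz AMP convergence of \cite[Theorem 3]{ourAMP} from smooth test functions of $(\widehat\beta_j, \beta_j)$ to the discontinuous indicators $\varphi_V, \varphi_T$, by reformulating the indicator $1(\widehat\beta_j \neq 0)$ as a threshold event on the AMP pseudo-data and using the structure of the limiting scalar function $\h{}$ from \Cref{bridge}. I argue \eqref{eq:Vlam}; \eqref{eq:Tlam} is symmetric, and the FDP/TPP statements in \eqref{eq:lemD1_res} then follow by the continuous mapping theorem, after verifying that $(V+T)/p$ and $p^{-1}\sum_j 1(\beta_j \neq 0)$ converge to strictly positive limits (the second being immediate from the LLN and $\PP(\Pi \neq 0) = \epsilon > 0$).

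First, as in the Lasso analogue \citet[Lemma~A.1]{bogdan2013statistical}, I exploit the exact atom of $\Pi$ at $0$: with $I_0 := \{j : \beta_j = 0\}$ and $|I_0|/p \to 1-\epsilon$ a.s.\ by the LLN,
\[
V(\blam)/p \;=\; p^{-1}\sum_{j \in I_0} 1(\widehat\beta_j \neq 0).
\]
The central structural input, furnished by \Cref{lambda meets}, is the existence of a finite zero-threshold $\alpha > 0$ such that the limiting scalar function satisfies $\h{\pi+Z, \A}(x) = 0$ if and only if $|x| \leq \alpha$. Combined with the asymptotic separability of the SLOPE proximal operator recorded in \eqref{eq:yue_limit}, this allows me to replace $\{\widehat\beta_j \neq 0\}$ by a threshold event on the AMP pseudo-data $\bar u_j := [\X^\top \z^t + \bet^t]_j$, which at the AMP fixed point is close to $\beta_j + \tau Z_j$ with $Z_j$ approximately i.i.d.\ standard Gaussian. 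Concretely, for any $\eta > 0$, with probability tending to one all but $o(p)$ of the indices obey the sandwich
\[
1(|\bar u_j/\tau| > \alpha + \eta) \;\leq\; 1(\widehat\beta_j \neq 0) \;\leq\; 1(|\bar u_j/\tau| > \alpha - \eta).
\]

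Once this identification is in place, the problem reduces to counting $|\{j \in I_0 : |\bar u_j|/\tau > c\}|$ for $c$ near $\alpha$, which I handle by a Lipschitz sandwich. Introduce bounded Lipschitz hats $k_{c,\eta'}(u) \uparrow 1(|u|/\tau > c)$ as $\eta' \to 0$ and $g_\delta(y) := \max(1 - |y|/\delta, 0)$, which satisfies $g_\delta(0) = 1$, $g_\delta \geq 1(y=0)$, and $|g_\delta(y) - 1(y=0)| \leq 1(0 < |y| \leq \delta)$. Applying \cite[Theorem 3]{ourAMP} to the bounded pseudo-Lipschitz product $k_{c,\eta'}(u)\, g_\delta(y)$ (in its extension to the joint AMP iterate $(\bar u_j, \beta_j)$), using that $\Pi^\star$ has no atom at $0$ so that $\E[g_\delta(\Pi)\,1(\Pi \neq 0)] \leq \epsilon\,\PP(|\Pi^\star| \leq \delta) \to 0$, and invoking the portmanteau theorem at the boundary $|u|/\tau = c$ (which has measure zero under the continuous law of $\pi + Z$), the iterated limits $p \to \infty$, then $\eta', \delta \to 0$ yield
\[
p^{-1}|\{j \in I_0 : |\bar u_j|/\tau > c\}| \;\overset{P}{\to}\; (1-\epsilon)\,\PP(|Z| > c) \;=\; 2(1-\epsilon)\,\Phi(-c).
\]
Combining with the approximate identification above and sending $\eta \to 0$ gives
\[
V(\blam)/p \;\overset{P}{\to}\; 2(1-\epsilon)\,\Phi(-\alpha) \;=\; \PP(\h{\pi+Z, \A}(\pi+Z) \neq 0,\ \pi = 0),
\]
which is exactly \eqref{eq:Vlam}.

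The proof of \eqref{eq:Tlam} is identical after swapping $I_0 \leftrightarrow I_1$ and using that for $j \in I_1$ the pseudo-data $\bar u_j$ is close to $\beta_j + \tau Z_j$ with $\beta_j \sim \Pi^\star$, so the same Lipschitz sandwich delivers $T(\blam)/p \overset{P}{\to} \PP(\h{\pi+Z,\A}(\pi+Z) \neq 0,\, \pi \neq 0)$. The main technical obstacle, and what I expect to be the heart of the argument, is making rigorous the approximate identification $\{\widehat\beta_j \neq 0\} \approx \{|\bar u_j/\tau| > \alpha\}$: the asymptotic separability in \eqref{eq:yue_limit} provides only $L^2$ closeness between the SLOPE output and the scalar thresholding by $\h{}$, and translating $L^2$ closeness into a \emph{count} closeness at the zero-threshold requires controlling how many inputs $\bar u_j/\tau$ lie within $\eta$ of $\pm \alpha$. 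This is ensured by the continuity of the limit law of $\bar u_j/\tau = \pi + Z$: Gaussian convolution gives zero mass to $\{|u| = \alpha\}$, so for each $\eta > 0$ the empirical fraction in $\{\alpha - \eta < |\bar u_j/\tau| < \alpha + \eta\}$ converges to a quantity vanishing with $\eta$, and the $o(p)$ atypical indices cannot disturb the limit.
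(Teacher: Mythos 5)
Your proposal takes a genuinely different route from the paper's. The paper (Appendix~D.1), following \cite{bogdan2013supplementary}, applies Lipschitz surrogates $\varphi_{V,h}(x,y)=(1-Q(x/h))Q(y/h)$ and $\varphi_{T,h}(x,y)=(1-Q(x/h))(1-Q(y/h))$ with $Q(x)=\max\{1-|x|,0\}$ directly to the pair $(\widehat{\beta}_j,\beta_j)$, invokes \citet[Theorem 3]{ourAMP}, and then sends $h\to 0$; you instead replace $\{\widehat{\beta}_j\neq 0\}$ by a threshold event on the AMP pseudo-data $\bar u_j$ via the zero-threshold of \Cref{lambda meets}. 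Both strategies must ultimately bound the fraction of indices where the approximation flips the indicator $1(\widehat{\beta}_j\neq 0)$, and it is there that your argument has a gap.

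The problematic step is the second inequality of your sandwich: that, up to $o(p)$ indices, $\widehat{\beta}_j\neq 0$ implies $|\bar u_j/\tau|>\alpha-\eta$. The $L^2$ inputs you invoke (AMP convergence and the asymptotic separability \eqref{eq:yue_limit}) only give $p^{-1}\sum_j\big(\widehat{\beta}_j-\tau\,\h{\pi+Z,\A}(\bar u_j/\tau)\big)^2=o_P(1)$. That does force the first inequality --- for fixed $\eta>0$, if $|\bar u_j/\tau|>\alpha+\eta$ then $|\h{\pi+Z,\A}(\bar u_j/\tau)|\geq c(\eta)>0$, so having $\widehat{\beta}_j=0$ on more than $o(p)$ such indices would contradict the $L^2$ bound. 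But it does not force the second: a $\Theta(p)$ set of indices may have $|\bar u_j/\tau|\le\alpha-\eta$ (so $\h{\pi+Z,\A}(\bar u_j/\tau)=0$) while $\widehat{\beta}_j$ is nonzero but arbitrarily small, contributing negligibly to the $L^2$ error while keeping the false-discovery count $V(\blam)$ of order $p$. Continuity of the law of $\pi+Z$ controls the mass of inputs near $\pm\alpha$, but says nothing about how many outputs $\widehat{\beta}_j$ sit near, yet off, zero --- and that is exactly what $V(\blam)/p$ counts. What closes the gap is the optimality characterization $\widehat{\bet}=\prox_J\big(\widehat{\bet}+\X^\top(\y-\X\widehat{\bet});\blam\big)$: the sort--difference--average--truncate structure of \Cref{alg:prox} equips this proximal map with an exact, data-dependent threshold in the pseudo-data separating zero from nonzero coordinates with no slack, so a small nonzero $\widehat{\beta}_j$ forces its exact pseudo-datum near the threshold, after which one transfers to $\bar u_j$ and to $\alpha\tau$. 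This is precisely what the paper's closing remark about ``the subgradient vector from the partial KKT condition'' refers to, and it is the ingredient your sketch omits.
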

	
	\begin{proof}[Proof of \Cref{lem:A2}]
		The analogous result for when $\widehat{\bet}$ is a Lasso solution is proven rigorously in \cite{bogdan2013supplementary}.  Here we adapt their proof for SLOPE. The high level idea for the proof of \eqref{eq:Vlam} and \eqref{eq:Tlam} is to construct two series of pseudo-Lipschitz continuous functions $$\varphi_{V, \GAM,h}(x, y)=(1-R_h(x)) Q_h(y) \quad  \text{ and } \quad \varphi_{T, \GAM,h}(x, y)=(1-R_h(x)) (1-Q_h(y)),$$ that approach $\varphi_{V,\GAM}, \varphi_{T,\GAM}$ as $h\to 0^+$. Here $Q_h(y)=\max\{1-|y/h|,0\}$ and 
		\begin{align*}
			R_h(x)=\begin{cases}
				0&\text{ if }|x|>\GAM+h
				\\
				\frac{\GAM+h-|x|}{2h}&\text{ if }\GAM-h<|x|<\GAM+h
				\\
				1&\text{ if }|x|<\GAM-h
			\end{cases}.
		\end{align*}
		Since for small $h$,
		$$
		\left|\varphi_{V,\GAM, h}(x, y)-\varphi_{V,\GAM}(x, y)\right| \leq 1(\GAM-h<|x|<\GAM+h)+1(0<|y|<h),
		$$
		for any $c>0$,
		\begin{align*}
			&\mathbb{P}\left(\left|\frac{1}{p} \sum_{i=1}^{p} \varphi_{V,\GAM}\left(\hat{\beta}_{i}, \beta_{i}\right)-\frac{1}{p} \sum_{i=1}^{p} \varphi_{V,\GAM, h}\left(\hat{\beta}_{i}, \beta_{i}\right)\right|>c\right) 
			\\
			\leq &\mathbb{P}\left(\frac{1}{p} \sum_{i=1}^{p} 1\left(\GAM-h<\left|\hat{\beta}_{i}\right|<\GAM+h\right)>\frac{c}{2}\right)+\mathbb{P}\left(\frac{1}{p} \sum_{i=1}^{p} 1\left(0<\left|\beta_{i}\right|<h\right)>\frac{c}{2}\right) 
		\end{align*}
		We will show that both terms on the right hand side converge to zero as $p\to\infty$ and then $h\to 0$. The second term converges to zero by the weak Law of Large Numbers. To deal with the first term, we introduce another pseudo-Lipschitz continuous function
		\begin{align*}
			G_h(x)=
			\begin{cases}
				1& \text{ if }\GAM-h<x<\GAM+h
				\\
				0& \text{ if }x>\GAM+2h \text{ or }x<\GAM-2h
				\\
				\frac{x-(\GAM-2h)}{h}& \text{ if }\GAM-2h<x<\GAM-h
				\\
				\frac{(\GAM+2h)-x}{h}& \text{ if }\GAM+h<x<\GAM+2h
			\end{cases}
		\end{align*}
		which upper bounds the function $1\left(<\GAM-h<|x|<\GAM+h\right)$. Then the AMP theory in \citet[Theorem 3]{ourAMP} gives
		$$\lim_{p\to\infty}\frac{1}{p} \sum_{i=1}^{p} 1\left(\GAM-h<\left|\hat{\beta}_{i}\right|<\GAM+h\right)\leq\mathbb{P}\left(\GAM-h<\left|\hat{\Pi}\right|<\GAM+h\right)\to 0$$
		as $h\to 0$, where $\hat\Pi$ is defined in \eqref{eq:beta_dist}. Hence, one can then argue
		\begin{align*}
			\lim _{p \rightarrow \infty} \frac{1}{p} \sum_{i=1}^{p} \varphi_{V,\GAM}(\hat{\beta}_{i}, \beta_{i})
			&\stackrel{P}{=} \lim _{h \rightarrow 0} \lim _{p \rightarrow \infty} \frac{1}{p} \sum_{i=1}^{p} \varphi_{V,\GAM, h}(\hat{\beta}_{i}, \beta_{i}) 
			\\ 
			&\stackrel{P}{=} \lim _{h \rightarrow 0} \mathbb{E} \varphi_{V,\GAM, h}\left(\h{\Pi+\tau Z,\A \tau}(\Pi+\tau Z), \Pi\right) \\
			&=\mathbb{E} \varphi_{V,\GAM}\left(\eta_{\Pi+\tau Z,\A \tau}(\Pi+\tau Z), \Pi\right),
		\end{align*}
		where the second equality in the above employs the AMP results for the pseudo-Lipschitz continuous function $\varphi_{V,\GAM, h}(\cdot, \cdot)$. The technical aspects of the proof involve making this argument rigorous. The final result follows by noticing that
		\begin{align*}
			\mathbb{E} \varphi_{V,\GAM}\left(\eta_{\Pi+\tau Z,\A \tau}(\Pi+\tau Z), \Pi\right) &= \mathbb{E} \left[1\left(|\eta_{\Pi+\tau Z,\A \tau}(\Pi+\tau Z)|>\GAM\right)  1\left(\Pi = 0\right) \right] \\
			&= \PP\left(|\h{\Pi+\tau Z,\A\tau}(\Pi+\tau Z)|>\GAM,\Pi=0\right) \\
			&=\PP(|\h{\pi+Z,\A}(\pi+Z)|>\GAM,\pi=0).
		\end{align*}
		
		Now leveraging results \eqref{eq:Vlam} and \eqref{eq:Tlam}, that give
		\begin{align*}
			V_\GAM(\blam)/\p&\overset{\PP}{\longrightarrow}\PP\left(|\h{\Pi+\tau Z,\A\tau}(\Pi+\tau Z)|>\GAM,\Pi=0\right)=\PP(|\h{\pi+Z,\A}(\pi+Z)|>\GAM,\pi=0),
			\\
			T_\GAM(\blam)/\p&\overset{\PP}{\longrightarrow}\PP\left(|\h{\Pi+\tau Z,\A\tau}(\Pi+\tau Z)|>\GAM,\Pi\neq 0\right)=\PP(|\h{\pi+Z,\A}(\pi+Z)|>\GAM,\pi\neq 0),
		\end{align*}
		and using that
		\[\PP(|\h{\pi+Z,\A}(\pi+Z)|>\GAM,\pi=0) = (1- \epsilon)\PP(|\h{\pi+Z,\A}(Z)|>\GAM),\]
		and
		\[\PP(|\h{\pi+Z,\A}(\pi+Z)|>\GAM,\pi\neq 0) = \epsilon  \PP(|\h{\pi+Z,\A}(\pi^\star+Z)|>\GAM), \]
		where we recall $\pi^\star$ is the distribution of the non-zero part of $\pi$, we finally obtain
		\begin{align*}
			\FDP_\GAM^\infty(\Pi,\Lambda)&=\plim\frac{V_\GAM(\blam)}{V_\GAM(\blam)+T_\GAM(\blam)} \\
			&=\frac{(1-\epsilon)\PP(|\h{\pi+Z,\A}(Z)|>\GAM)}{(1-\epsilon)\PP(|\h{\pi+Z,\A}(Z)|>\GAM)+\epsilon\PP\left(|\h{\pi+Z,\A}(\pi^\star+Z)|>\GAM\right)}, 
			\\
			\TPP_\GAM^\infty(\Pi,\Lambda)&=\plim\frac{T_\GAM(\blam)}{|\{j:\beta_j\neq 0\}|}
			=\PP\left(|\h{\pi+Z,\A}(\pi^\star+Z)|>\GAM\right).
		\end{align*}
	\end{proof}
	
	The result of \Cref{lem:A1} (and \Cref{lem:A2}) implies that when  studying the trade-off between the FDP and TPP asymptotically, we can work with the explicit and amenable quantities $\PP(\h{\pi+Z,\A}(Z)\neq 0)$ and $\PP(\h{\pi+Z,\A}(\pi^\star+Z)\neq 0)$, by considering $\GAM\to 0$.

\subsection{A better understanding the Donoho-Tanner threshold}
\label{understand DT threshold}
In this section, we introduce an equivalent definition of the DT threshold $\epsilon^\star$, originally defined in \eqref{eq:eps_star}, from a non-parametric viewpoint. This definition is necessary for our analysis of the SLOPE trade-off upper bound $q^\star$ discussed in \Cref{sec:mobius}. 

To specify the threshold $\epsilon^\star$ when $\delta < 1$, we consider the equation
\begin{align}\label{epsilon star}
	2(1 - \epsilon)[(1+x^2)\Phi(-x)-x\phi(x)]+\epsilon(1 + x^2) = \delta
\end{align}
in $x > 0$. Above, $\phi(\cdot)$ and $\Phi(\cdot)$ are the probability density function and cumulative distribution function of the standard normal distribution, respectively. We demonstrate the properties of \eqref{epsilon star} can be found in \Cref{fig:eps star} and \Cref{fig:eps_t}. 

The key point we will use is that this equation has a unique positive root in $x$ if and only if $0 < \epsilon < 1$ takes a certain value $\epsilon^\star(\delta)$ that depends only on $\delta$. This unique root is $x:=t^\star(\tppmax(\delta))$, as given by \citet[Appendix C]{lassopath}. Furthermore, \eqref{epsilon star} has two roots when $\epsilon\leq\epsilon^\star$ and no root otherwise. In fact, \eqref{epsilon star} originates from the state evolution \eqref{eq:SE contraint_quote} for the Lasso when we consider the infinity-or-nothing priors defined in \eqref{infinity or nothing prior}, and it can also be found in \citet[Equation (C.5)]{lassopath}.

In summary, \eqref{epsilon star} gives an equivalent representation of $\epsilon^\star(\delta)$ that we will find useful in the upcoming proofs. Namely, $\epsilon^\star(\delta)$ is the specific value of $0 < \epsilon < 1$ such that \eqref{epsilon star} has a unique root.

\begin{figure}[!htb]
	\includegraphics[width=0.25\linewidth,height=0.18\linewidth]{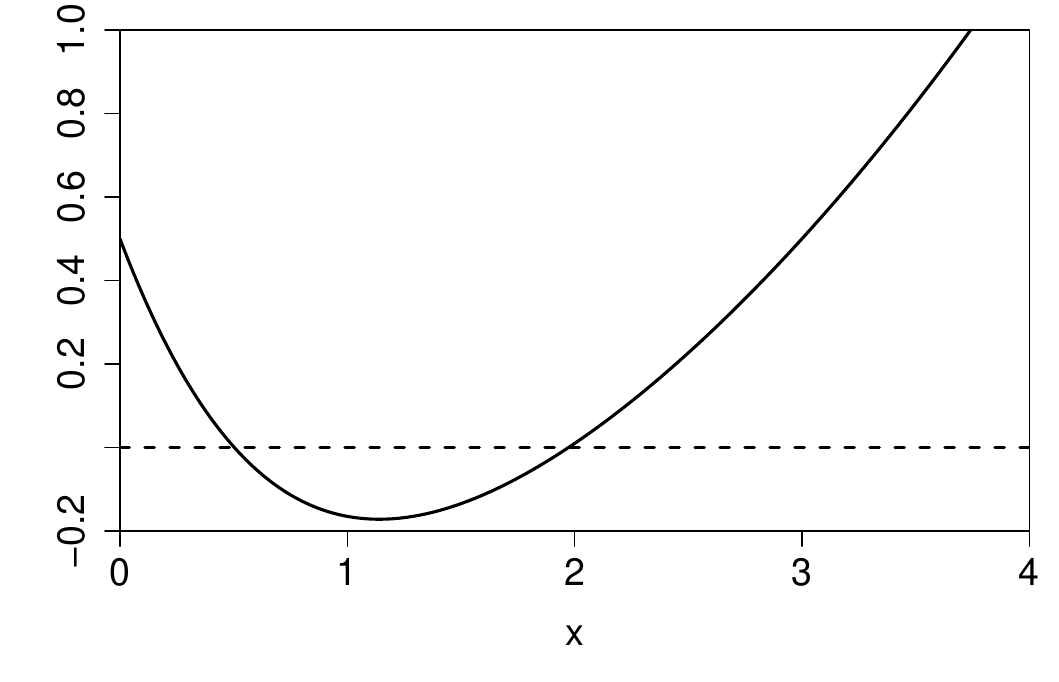}
	\hspace{-0.2cm}
	\includegraphics[width=0.25\linewidth,height=0.18\linewidth]{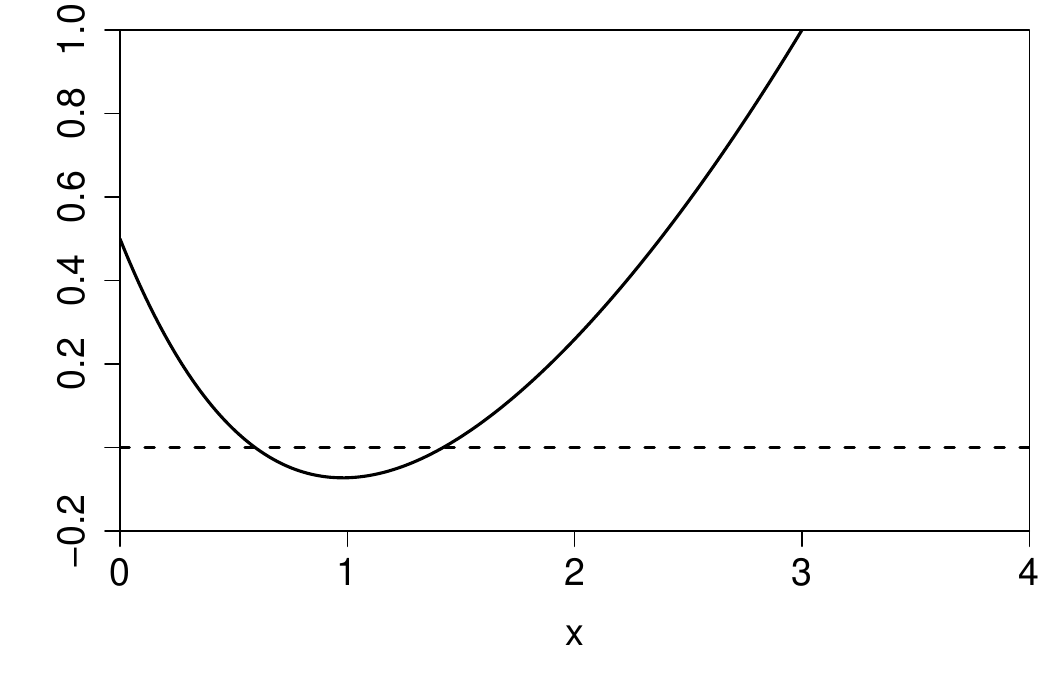}
	\hspace{-0.2cm}
	\includegraphics[width=0.25\linewidth,height=0.18\linewidth]{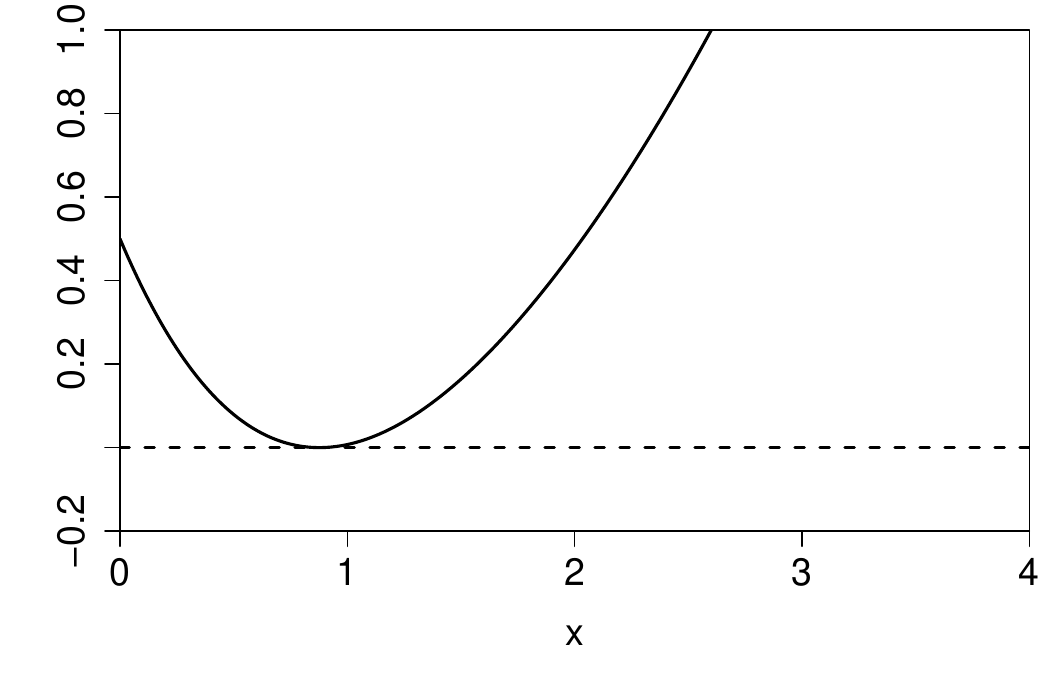}
	\hspace{-0.2cm}
	\includegraphics[width=0.25\linewidth,height=0.18\linewidth]{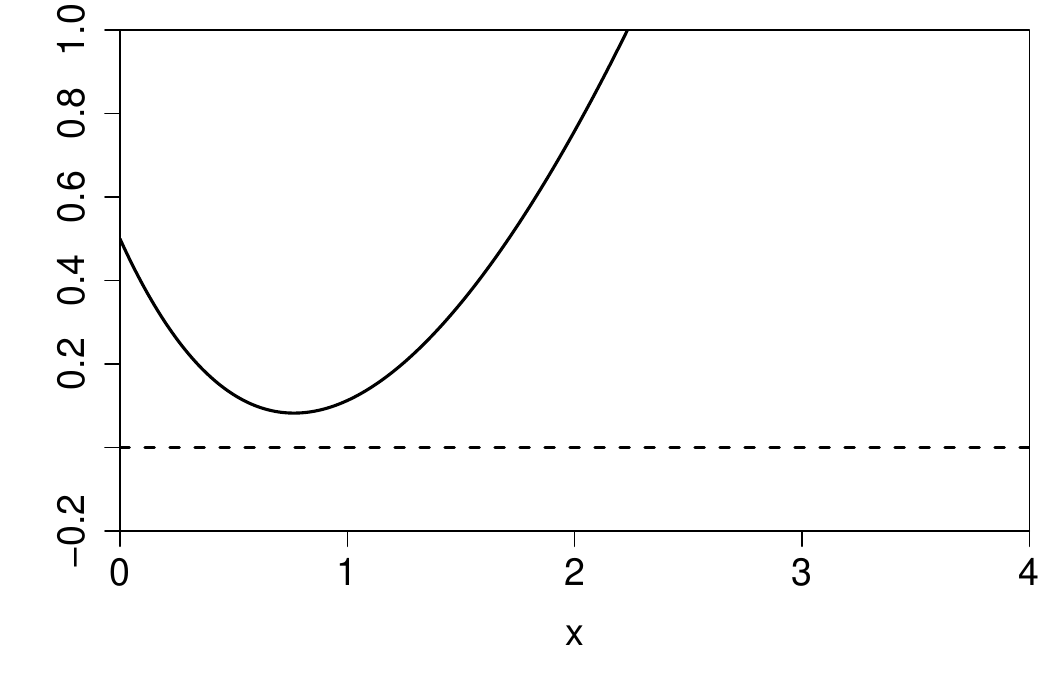}
	\caption{A plot of the function  $f(x) = 2(1 - \epsilon)[(1+x^2)\Phi(-x)-x\phi(x)]+\epsilon(1 + x^2)-\delta$ defined in \eqref{epsilon star} for $\delta=0.5$ and $\epsilon\in\{0.1, 0.15, \epsilon^\star(\delta)=0.1928, 0.25\}$ (from left to right).}
	\label{fig:eps star}
\end{figure}

\begin{figure}[!htb]
	\centering
	\includegraphics[width=8cm,height=5.5cm]{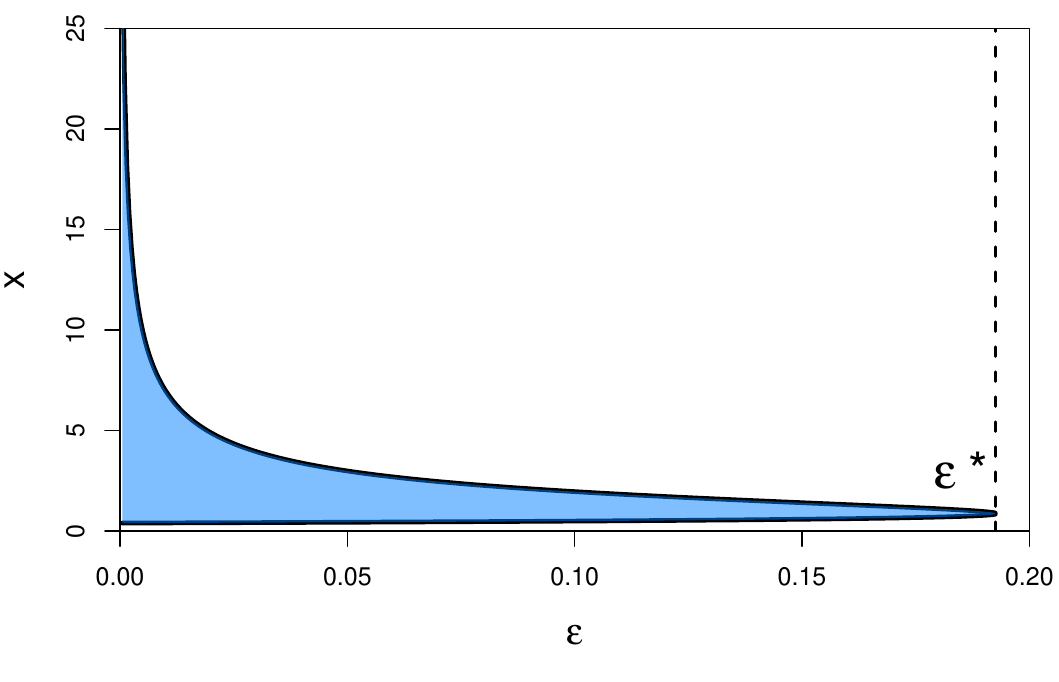}
	\caption{A plot to demonstration the roots of \eqref{epsilon star} with $\delta=0.5$. Here $\epsilon^\star=0.1928$ is the dashed line and the blue area contains valid $x$ for which the inequality in  \eqref{epsilon star} holds for each $\epsilon$. The black solid lines are the roots. Notice that the blue area corresponding to $\epsilon = 0.1$ corresponds to the area under the dashed line in leftmost plot of Figure~\ref{fig:eps star}.}
	\label{fig:eps_t}
\end{figure}


\subsection{Proof of \Cref{lem:inf-nothing eps' upper}}
\label{app:proof lemma53}
\begin{proof}[Proof of \Cref{lem:inf-nothing eps' upper}]
	For infinity-or-nothing priors where $\pi = \infty$ with probability $\epsilon\epsilon'$ or $\pi =0$ with probability $1-\epsilon\epsilon'$, the state evolution constraint \eqref{eq:SE contraint_quote} gives, 
	\begin{equation}
		\begin{split}
			\label{restricted penalty 0}
			\delta & \geq \E\left(\h{\pi+Z,\A}(\pi+Z)-\pi\right)^2\\
			&= \PP(\pi=\infty)\E\left(\h{\pi+Z,\A}(\pi^\star+Z)-\pi^\star\right)^2+\PP(\pi\neq\infty)\E\h{\pi+Z,\A}(Z)^2
			\\
			&=\epsilon\epsilon'\E\left(\h{\pi+Z,\A}(\pi^\star+Z)-\pi^\star\right)^2+(1-\epsilon\epsilon')\E\h{\pi+Z,\A}(Z)^2.
		\end{split}
	\end{equation}
	
	Using the effective penalty function $\widehat\A_\text{eff}$ defined in \Cref{def:essential penalty}, we can write the above as
	\begin{align*}
		\delta\geq\epsilon\epsilon'\E\left(\eta_{\text{soft}}(\pi^\star+Z;\widehat\A_\text{eff}(\pi^\star+Z))-\pi^\star\right)^2+(1-\epsilon\epsilon')\E\eta_{\text{soft}}(Z;\widehat\A_\text{eff}(Z))^2.
	\end{align*}
	
	Now, we denote the distribution $\widehat\A:\overset{\mathcal{D}}{=}\widehat\A_\text{eff}(\pi+Z)$ and in what follows we study the distribution of $\widehat\A_\text{eff}(\pi^\star+Z)$ in more detail. Using the fact that $\pi^\star+Z$ is almost surely larger than $Z$ (since $\pi^\star=\infty$) and \Cref{fact:hat lam decrease}, which states SLOPE assigns larger effective penalty to larger input, we conclude
	$$\widehat\A_\text{eff}(\pi^\star+Z)\overset{\mathcal{D}}{=}\widehat\A\Big|\widehat\A>\widehat\A_{(\epsilon\epsilon')}.$$
	which, we will shortly show, is a constant.	In the above, the quantity with a subscript, $\widehat\A_{(\epsilon\epsilon')}$, is a quantile-related scalar such that $\PP(\widehat\A>\widehat\A_{(\epsilon\epsilon')})=\epsilon\epsilon'$, defined in \Cref{def:upper quantile}. In words, the larger part of $\widehat\A$ $\left(\text{i.e. }\widehat\A\Big|\widehat\A>\widehat\A_{(\epsilon\epsilon')}\right)$ is assigned to the larger part of the input $\pi^\star+Z$; and $\widehat\A\big|\widehat\A\leq\widehat\A_{(\epsilon\epsilon')}$ is assigned to the input $Z$. 
	Furthermore, using the assumption that 
	\begin{equation}
		\label{eq:assumption_used}
		\epsilon \epsilon' \leq \PP(\Lambda = \max \Lambda)  = \PP(\A = \max \A),
	\end{equation}
	where the final equality follows since $\Lambda$ and $\A$ only differ by a constant (see the calibration equation \eqref{eq:cali}), we get
	$$\widehat\A\big|\widehat\A\geq\widehat\A_{(\epsilon\epsilon')}\overset{\mathcal{D}}{=}\A\big|\A\geq\A_{(\epsilon\epsilon')}=\A_{(\epsilon\epsilon')}\in\R.$$
	In the above, the first equality comes from the fact that the upper $\epsilon\epsilon'$ quantile of $\A$ is Lasso-like, following from \eqref{eq:assumption_used} (hence, there is no averaging in the SLOPE proximal operator and \Cref{fact:lambda meet} applies) and the second equality also follows from \eqref{eq:assumption_used} as well.
	
	Therefore, using that $\widehat\A_\text{eff}(\pi^{\star}+Z)$ is a constant equal to $\A_{(\epsilon\epsilon')}$, the state evolution constraint becomes
	\begin{equation}
		\begin{split}
			\delta&\geq\epsilon\epsilon'\E\left(\eta_{\text{soft}}(\pi^\star+Z;\A_{(\epsilon\epsilon')})-\pi^\star\right)^2+(1-\epsilon\epsilon')\E\eta_{\text{soft}}(Z;\widehat\A_\text{eff}(Z))^2
			\\
			&=\epsilon\epsilon'\E(\A_{(\epsilon\epsilon')}-Z)^2+(1-\epsilon\epsilon')\E\eta_{\text{soft}}(Z;\widehat\A_\text{eff}(Z))^2
			\\
			&=\epsilon\epsilon'(1+\A_{(\epsilon\epsilon')}^2)+(1-\epsilon\epsilon')\E\eta_{\text{soft}}(Z;\widehat\A_\text{eff}(Z))^2,
			\label{eq:lemm_delta_bound}
		\end{split}
	\end{equation}
	where the first equality follows by the definition of the soft-thresholding function and the fact that $\A_{(\epsilon\epsilon')}$ is constant and the second from the fact that $Z \sim \mathcal{N}(0,1)$.
	
	Notice that, again by \Cref{fact:hat lam decrease}, $\widehat\A_\text{eff}(z)$ is increasing in absolute value of $z$, hence $$\widehat\A_\text{eff}(Z)\leq\sup\left(\widehat\A\big|\widehat\A\leq\widehat\A_{(\epsilon\epsilon')}\right)=\widehat\A_{(\epsilon\epsilon')}=\A_{(\epsilon\epsilon')},$$
	in which the last equality holds from \Cref{fact:lambda meet}, as a consequence of
	$$\lim_{x\nearrow\epsilon\epsilon'}\left|\h{\pi+Z,\A}(\pi+Z)\right|_{(x)}=\infty>\sup_Z\left|\h{\pi+Z,\A}(Z)\right|=\lim_{x\searrow\epsilon\epsilon'}\left|\h{\pi+Z,\A}(\pi+Z)\right|_{(x)},$$
	i.e. no averaging takes place at the quantile $\epsilon\epsilon'$ (here the limits are one-sided limits). Additionally, we observe that $\eta_{\text{soft}}(z;x)^2$ is decreasing in the scalar $x$. Therefore, we get
	$$\E\eta_{\text{soft}}(Z;\widehat\A_\text{eff}(Z))^2\geq\E\eta_{\textnormal{soft}}(Z;\A_{(\epsilon\epsilon')})^2.
	$$
	
	Applying the above bound into \eqref{eq:lemm_delta_bound} and then using some simple algebra to express the soft-thresholding function, we find
	\begin{equation}
		\begin{split}
			\delta
			&\geq\epsilon\epsilon'(1+\A_{(\epsilon\epsilon')}^2)+(1-\epsilon\epsilon')\E\eta_{\textnormal{soft}}(Z;\A_{(\epsilon\epsilon')})^2
			\\
			&=\epsilon\epsilon'(1+\A_{(\epsilon\epsilon')}^2)+2(1-\epsilon\epsilon')\left[(1+\A_{(\epsilon\epsilon')}^{2}) \Phi(-\A_{(\epsilon\epsilon')})-\A_{(\epsilon\epsilon')} \phi(\A_{(\epsilon\epsilon')})\right].
		\end{split}
		\label{restricted penalty}
	\end{equation}
	
	Following the discussion around \eqref{epsilon star}, the above inequality can only possibly hold when $\epsilon\epsilon'\leq \epsilon^\star$, or when $\epsilon' \in [0, \epsilon^\star/\epsilon]$ as desired.
\end{proof}

\subsection{Achieving the \Mobius curve of $q^\star$}
\label{app:achieving q upper star}
In this section we prove \Cref{prop:whole_sharp}, or in other words, we show that with the special design of a two-level SLOPE penalty and infinity-or-nothing prior, we can approach the \Mobius part of $q^\star$ arbitrarily close.

\begin{proof}[Proof of \Cref{prop:whole_sharp}]
	To give the proof, we consider a specific prior $\Pi_M(\epsilon^\star/\epsilon)$ as in \eqref{eq:sharp_prior} and let $M\to\infty$. Here $\epsilon^\star$ is defined in \eqref{eq:eps_star}. This is equivalent to setting the normalized prior $\pi$ to the infinity-or-nothing prior $\pi_\infty(\epsilon^\star/\epsilon)$, defined in \eqref{infinity or nothing prior} as:
	\begin{equation}
		\begin{split}
			\pi_\infty(\epsilon^\star/\epsilon)=
			\begin{cases}
				\infty & w.p. \quad \epsilon^\star,
				\\
				0 & w.p. \quad 1-\epsilon^\star.
			\end{cases}
			\label{eq:appD_inf_prior}
		\end{split}
	\end{equation}
	
	As for the SLOPE penalty, we consider a sub-class of two-level penalty distributions $\Lambda$ that satisfy $\PP(\Lambda=\max\Lambda)\geq\epsilon^\star$, or in the notation of \eqref{eq:theta_func} we will have $w>\epsilon^\star$. By setting the penalty as such, we satisfy the assumption in \Cref{thm:inf-nothing q upper star} and consequently we can apply the results in \Cref{lem:inf-nothing eps' upper} and \Cref{lem:SLOPEphase}.
	
	Now we are ready to present the proof. For any $\tppinf=u\geq\tppmax(\delta)$, we recall from \eqref{eq:TPP_equiv} in the proof of \Cref{lem:SLOPEphase} that the asymptotic sparsity $\kappa(\Pi,\Lambda)$ (defined in \eqref{eq:chi_def}) satisfies
	\begin{align}
		\label{eq:appD_kappa}
		\kappa(\Pi,\Lambda)= 1-\frac{\epsilon(1-u)(1-\epsilon\epsilon')}{\epsilon-\epsilon\epsilon'}.
	\end{align}
	From \eqref{eq:fdp_inf}, minimizing $\fdpinf(\Pi,\Lambda)$ is equivalent to minimizing $\kappa(\Pi,\Lambda)$, which from \eqref{eq:appD_kappa} we see  is further equivalent to maximizing $\epsilon'$. Since \Cref{lem:inf-nothing eps' upper} states that $\epsilon'\leq\epsilon^\star/\epsilon$, we aim to achieve a sparsity with $\epsilon' = \epsilon^\star/\epsilon$, namely a sparsity of
	\begin{align}
		\kappa(\Pi,\Lambda)=1-\frac{\epsilon(1-u)(1-\epsilon^\star)}{\epsilon-\epsilon^\star},
		\label{eq:kappa smallest}
	\end{align}
	which is given in \eqref{invTPP} as the smallest sparsity for which $\tppinf\geq u$ is possible.
	
	Therefore, we consider a specific prior $\Pi_M(\epsilon^\star/\epsilon)$ as in \eqref{eq:sharp_prior} and let $M\to\infty$. Then the limiting normalized prior $\pi$ is the infinity-or-nothing prior defined in \eqref{eq:appD_inf_prior}.
	Next, we seek the penalty $\Lambda$ that can result in the desired sparsity $\kappa(\pi,\A)$ in \eqref{eq:kappa smallest}, or equivalently, we seek the normalized version of $\Lambda$ given by $\A$, defined via the calibration equation \eqref{eq:cali}. 
	
	To find such a penalty $\Lambda$, we consider the state evolution constraint \eqref{eq:SE contraint_quote}, and emphasize that when achieving the desired sparsity, \eqref{eq:SE contraint_quote} must be satisfied by the pair $(\pi,\A)$. We use the result of \eqref{restricted penalty} and more generally, the proof of \Cref{lem:inf-nothing eps' upper} in \Cref{app:proof lemma53}, to give for $\epsilon' = \epsilon^\star/\epsilon$,
	\begin{align}
		(1-\epsilon^\star)\E\eta_{\textnormal{soft}}(Z;\A_{(\epsilon^\star)})^2+\epsilon^\star(1+\A_{(\epsilon^\star)}^2)
		\leq\E\left(\h{\pi+Z,\A}(\pi+Z)-\pi\right)^2\leq \delta,
		\label{eq:epsilon star smaller delta}
	\end{align}
	where again $\A_{(\epsilon^\star)}$ is a scalar defined in \Cref{def:upper quantile}, i.e., it is chosen such that $\PP(\A>\A_{(\epsilon^\star)})=\epsilon^\star$. In particular, the first inequality above only holds with equality when $\h{\pi+Z,\A}(Z)\overset{\mathcal{D}}{=}\eta_{\textnormal{soft}}(Z;\A_{(\epsilon^\star)})$, which can be seen by comparing the bounds in \eqref{restricted penalty} and \eqref{restricted penalty 0}.
	
	From another direction, by the alternative definition of $\epsilon^\star$ in \eqref{epsilon star}, we have
	\begin{align}
		(1-\epsilon^\star)\E\eta_{\textnormal{soft}}(Z;x)^2+\epsilon^\star(1+x^2)\geq \delta,
		\label{eq:special t u star}
	\end{align}
	for all $x>0$, with the equality holding only when $x=t^\star(\tppmax(\delta))$, as has been discussed in \Cref{understand DT threshold}. We notice that \eqref{eq:special t u star} equals \eqref{epsilon star} since $\E\eta_{\textnormal{soft}}(Z;x)^2 = 2[(1+x^2)\Phi(-x)-x\phi(x)]$. Combining \eqref{eq:epsilon star smaller delta} and \eqref{eq:special t u star}, we obtain
	\begin{align*}	
		\delta&\overset{(a)}{\leq}(1-\epsilon^\star)\E\eta_{\textnormal{soft}}(Z;\A_{(\epsilon^\star)})^2+\epsilon^\star(1+\A_{(\epsilon^\star)}^2)\overset{(b)}{\leq}\E\left(\h{\pi+Z,\A}(\pi+Z)-\pi\right)^2\overset{(c)}{\leq}\delta,
	\end{align*}
	which is only valid when we meet the equality conditions for all the inequalities above, i.e., the penalty distribution $\A$ must be chosen to satisfy
	\begin{align*}
		(a)\quad&t^\star(\tppmax)=\A_{(\epsilon^\star)};
		\\
		(b)\quad&\h{\pi+Z,\A}(Z)\overset{\mathcal{D}}{=}\eta_{\textnormal{soft}}(Z;t^\star(\tppmax)).
	\end{align*}
	Notice that the condition (c) is automatically satisfied when the condition (a) is satisfied.
	
	To design such $\A$, it suffices to set a two-level penalty distribution $\A=\A_{t^\star(\tppmax),r t^\star(\tppmax),w}$ in \eqref{eq:theta_func} for carefully chosen $r(u)$ and $w(u)$, with $w(u)>\epsilon^\star$. Then the condition $w(u) > \epsilon^\star$ gives $\A_{(\epsilon^\star)}=t^\star(\tppmax)$ by design, thus we satisfy the condition (a). In words, the infinite input $\pi^\star+Z$ is assigned to match with the first level of the two-level SLOPE penalty $\A$.
	
	We now turn to the more difficult condition (b) and explicitly choose $r(u)$ and $w(u)$ so that it is satisfied.  Before giving the exact values of $r(u)$ and $w(u)$ and showing how they lead to satisfying condition (b), we take some time to further investigate the sparsity of the SLOPE estimator, $\kappa(\pi,\A)$. Recall that the zero-threshold, defined in \Cref{zero threshold}, is the value $\alpha(\pi,\A)$ such that $\h{\pi + Z, \A}(x) = 0$ if and only if $|x| \leq \alpha(\pi,\A)$ and by \Cref{lambda meets}, we know that the zero-threshold must be equal to one of the two levels of SLOPE penalty.
	
	Now, when $w$ is small, few input values are subjected to the larger level of the penalty and of those inputs, all will correspond to infinite signal prior elements. Thus, the zero-threshold will be the smaller level of the penalty, namely it equals $rt^\star(\tppmax)$ (visualized in \Cref{fig:approach proof}(a)(b)).  In more details, for small $w(u)$, the value $r(u)$ controls the sparsity in the sense that 
	$$\kappa(\pi,\A)=\PP\big(|\pi+Z|>r t^\star(\tppmax)\big)=\epsilon^\star+(1-\epsilon^\star)\PP(|Z|>rt^\star(\tppmax)).$$
	
	Following the above equation, there exists an one-to-one map between $\tppinf=u$ and $r(u)$ to achieve the desired sparsity of \eqref{eq:kappa smallest}:
	\begin{align*}
		1-\frac{\epsilon(1-u)(1-\epsilon^\star)}{\epsilon-\epsilon^\star}  =\kappa(\pi,\A)&=\epsilon^\star+(1-\epsilon^\star)\PP\big(|Z|>r t^\star(\tppmax)\big)
		\\
		&=\epsilon^\star+2(1-\epsilon^\star)\Phi\big(-r t^\star(\tppmax)\big).
	\end{align*}
	Explicitly, by rearranging the above, we conclude that the sparsity condition \eqref{eq:kappa smallest} is satisfied if one sets
	\begin{equation*}
		r(u)=\Phi^{-1}\left(\frac{2\epsilon-\epsilon^\star-\epsilon u}{2(\epsilon-\epsilon^\star)}\right)/t^\star(\tppmax),
	\end{equation*}
	given that $rt^\star(\tppmax)$ is the zero-threshold. In what follows, we always aim to keep the zero-threshold at $rt^\star(\tppmax)$.
	
	As $w$ increases, more and more input values are subjected to the larger level of the penalty. Thus, the zero-threshold and sparsity will remain the same, taking as values the second level of the penalty, $rt^\star(\tppmax)$, and that in \eqref{eq:kappa smallest}, respectively, until $w$ moves above a certain bound and forces the zero-threshold to increase to the larger level of $\A$ (again by \Cref{lambda meets} the zero-threshold can only take these two levels). 	
	\begin{figure}[!htb]
		\subfloat[quantile plot of $|Z|, \A$]{\includegraphics[width=7cm]{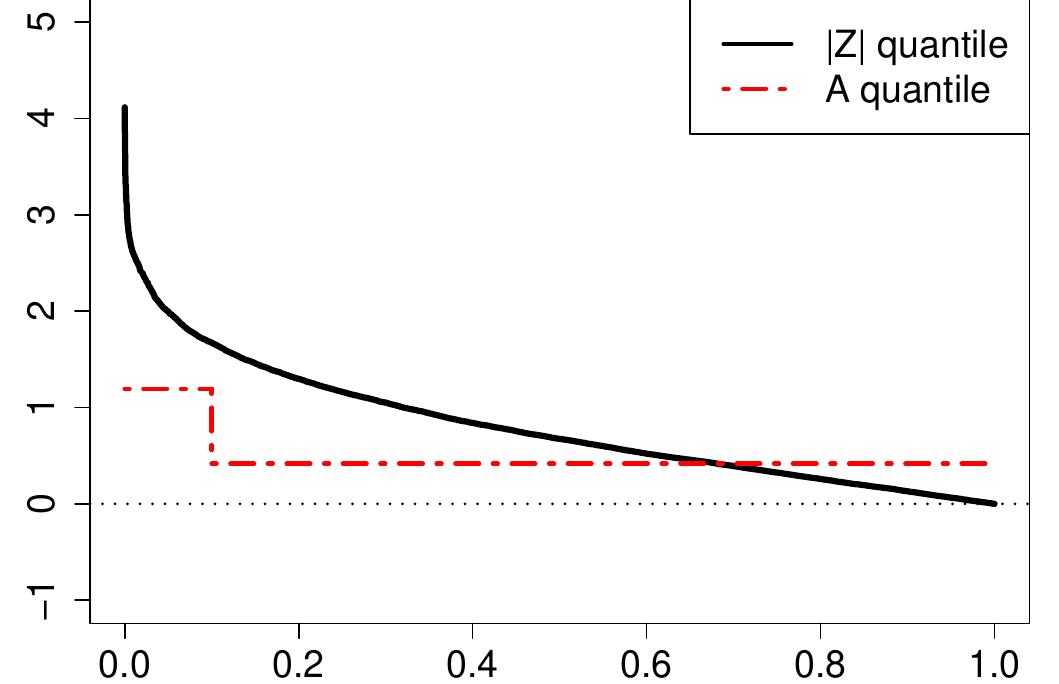}}
		\subfloat[difference of quantiles]{\includegraphics[width=7cm]{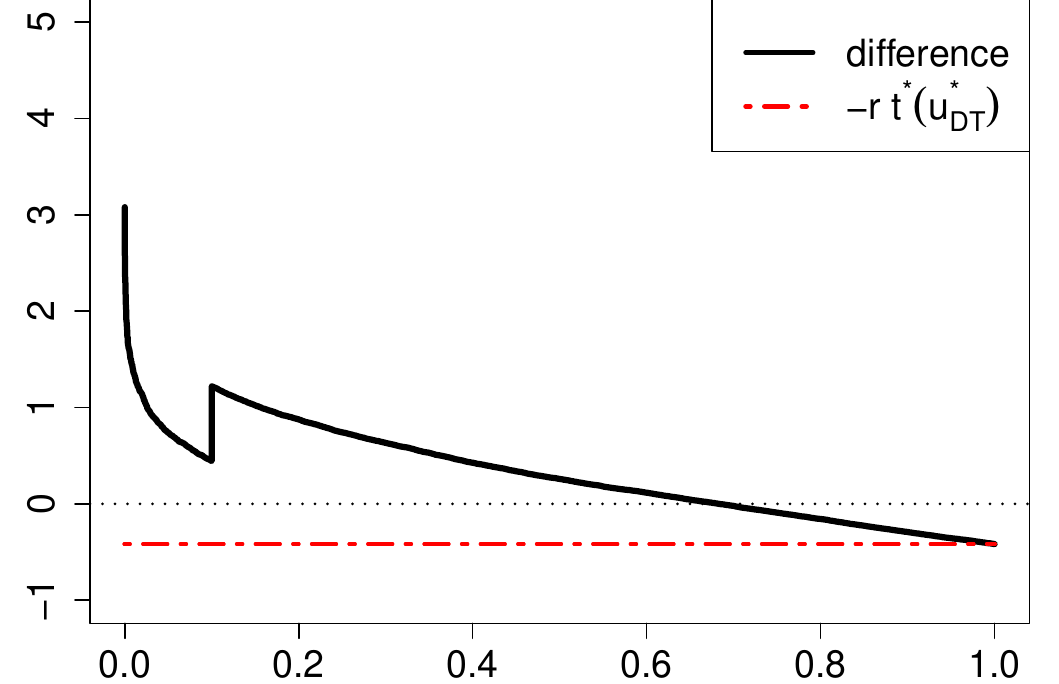}}
		\\
		\subfloat[averaging and truncating]{\includegraphics[width=7cm]{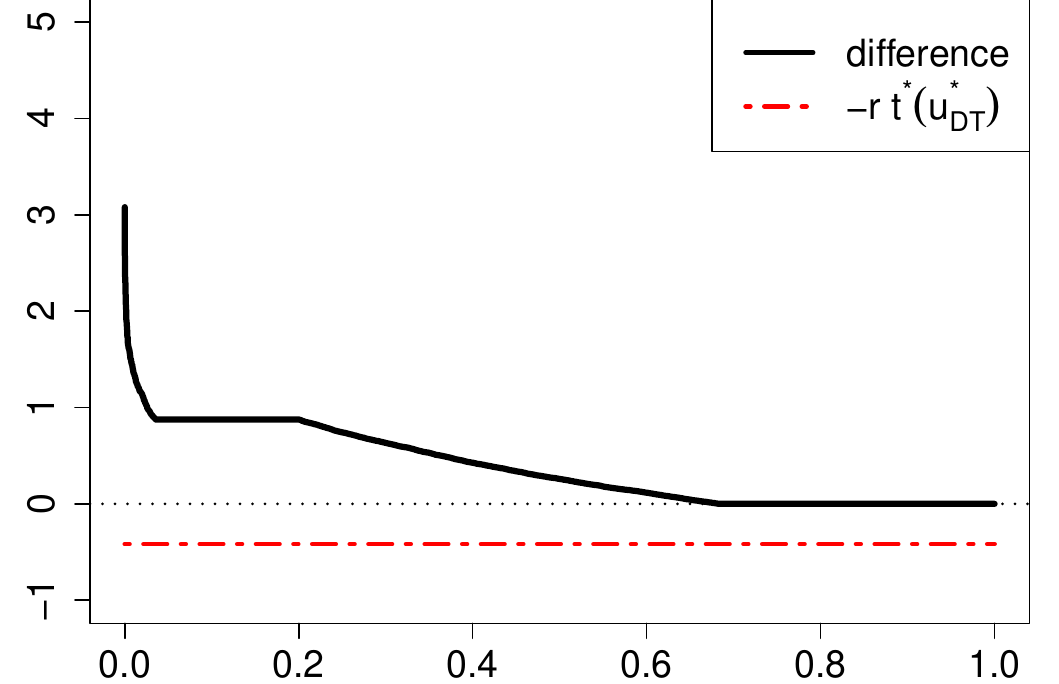}}
		\subfloat[larger $w$]{\includegraphics[width=7cm]{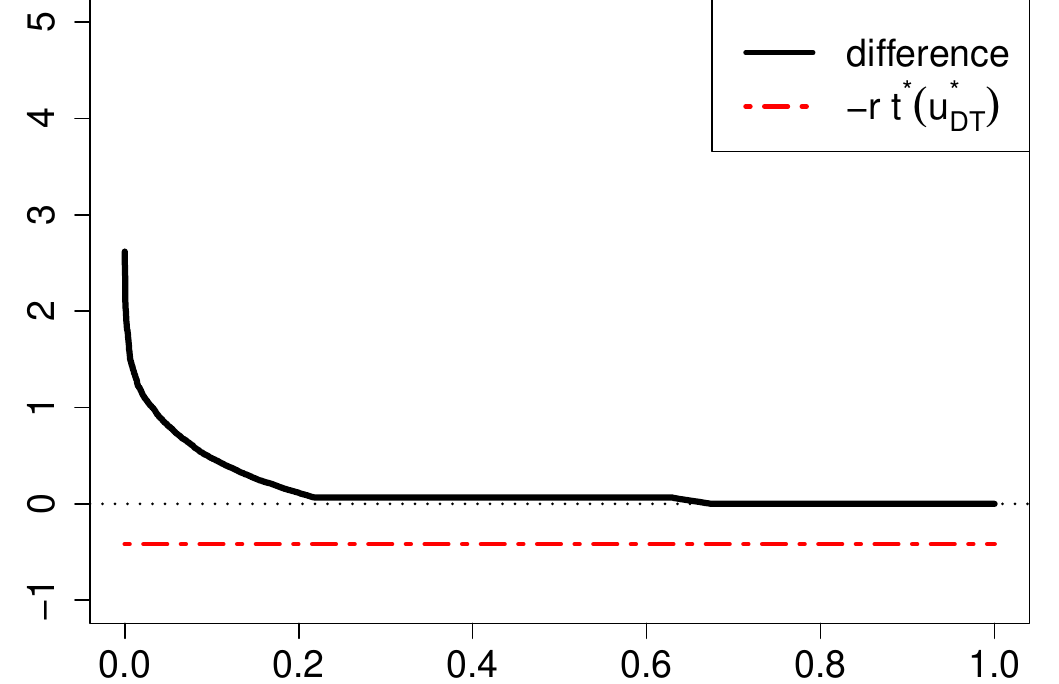}}
		\caption{Fixing $r$ and varying $w$ within some range remains the sparsity of the SLOPE estimator (around 0.6) but forces the SLOPE proximal operator to approach the soft-thresholding. In other words, as $w$ increases, the flat (averaged) quantile of $\mathlarger{\eta}_{ \pi+Z,\A}(\pi+Z)$ takes a magnitude converging to zero.  This implies that $\mathlarger{\eta}_{Z,\A^*}(Z)\to \eta_{\text{soft}}(Z;t^\star(\tppmax))$. Here $\pi=0, \A=\A_{t^\star(\tppmax),r t^\star(\tppmax),w}, \delta=0.3, \epsilon=0.2, \epsilon^\star=0.087, t^\star(\tppmax)=1.1924, u=0.8176, r(u)=0.3500, w(u)=0.4819$.}
		\label{fig:approach proof}
	\end{figure}
	
	Moreover, as $w$ increases to this bound, we observe that $\h{\pi+Z,A}(Z)$ becomes more similar to $\eta_{\text{soft}}(Z;t^\star(\tppmax))$, as demonstrated in \Cref{fig:approach proof}, and hence $\E\h{\pi+Z,A}(Z)^2$ becomes more similar to $\E\eta_{\text{soft}}(Z;t^\star(\tppmax))^2$. 
	To observe this similarity property rigorously, notice that the quantile function of $\pi+Z$ has a sharp drop at $\epsilon^\star$ since $\PP(\pi=\infty)=\epsilon^\star$, which splits the quantiles corresponding to $\infty+Z$ and to $Z$. Accordingly, \Cref{fact:hat lam decrease} says that for the input value corresponding to the `infinity' part of the signal, $\infty+Z$,  since $w>\epsilon^\star$, the SLOPE assigns a penalty given by the upper $\epsilon^\star$ quantiles of $\A$, namely $\left(\A|\A\geq \A_{(\epsilon^\star)}\right)=\A_{(\epsilon^\star)}=t^\star(\tppmax)$  and for the inputs corresponding to the `nothing' part of the signal, $Z$, SLOPE assigns a penalty given by the lower $1-\epsilon^\star$ quantiles of $\A$, denoted by 
	$$\A^*(w):=\left(\A|\A\leq \A_{(\epsilon^\star)}\right)=\A_{t^\star(\tppmax),r t^\star(\tppmax),\frac{w-\epsilon^\star}{1-\epsilon^\star}}.$$
	Notice that what the above says is that a fraction, $\frac{w-\epsilon^\star}{1-\epsilon^\star}$, of the `nothing' signals $Z$ match with the large penalty $t^\star(\tppmax)$ and the remaining fraction match with the smaller penalty $rt^\star(\tppmax)$.
	In this way, when considering just the `nothing' part of the signal, we can write $\h{\pi+Z,\A}(Z)\overset{\mathcal{D}}{=}\h{Z,\A^*(w)}(Z)$\footnote{Notice that, because no averaging takes place at the $wp$-th position, $\prox_{J}(\bm\pi+\bm Z;\bfalph)=\left[\prox_{J}(\bm\pi^\star+\bm Z;(\alpha_1,\cdots,\alpha_{wp})),\prox_{J}(\bm Z;(\alpha_{wp+1},\cdots,\alpha_p))\right]$, in which $[\cdot]$ means concatenation.}.
	
	We now determine the exact $w(u)$ such that $\h{Z,\A^*(w)}(Z)\overset{\mathcal{D}}{=}\eta_{\text{soft}}(Z;t^\star(\tppmax))$, so as to satisfy condition (b).  
	Our strategy is to select a $w(u)$ such that we are able to divide the output of $\h{Z,\A^*(w)}(Z)$ into clearly non-zero, arbitrarily close to zero and zero parts, so as to look like the soft-thresholding function as desired. That we can do this is visualized in Figure~\ref{fig:approach proof} and follows from the fact that with the two-level penalty, there will be only one flat averaged region in the output of $\h{Z,\A^*(w)}(Z)$, which we want to suppress to almost zero. Denoting 
	\begin{equation}
		\label{eq:Pdefs}
		P_1:=\PP(|Z|>t^\star(\tppmax)), \quad  P_2:=\frac{w-\epsilon^\star}{1-\epsilon^\star}, \quad  \text{ and } \quad P_3:=\PP(|Z|>rt^\star(\tppmax)),
	\end{equation}
	we quantitatively define these three parts (clearly non-zero, close to zero, and zero) as the quantiles of $\h{Z,\A^*(w)}(Z)$ on the probability intervals $(0,P_1)$, $(P_1,P_3)$ and $(P_3,1)$ respectively: i.e. we want $w(u)$ such that
	\begin{align*}
		|\h{Z,\A^*(w)}(Z)|\overset{\mathcal{D}}{=}
		\begin{cases}
			\eta_{\textnormal{soft}}(|Z|\big| |Z|>t^\star(\tppmax);t^\star(\tppmax)) &\text{ w.p. $P_1$}
			\\
			0.0001&\text{ w.p. $P_3-P_1$}
			\\
			0&\text{ w.p. $1-P_3$}
		\end{cases}
	\end{align*}
	Here 0.0001 can be an arbitrarily small positive constant, which tends to 0 as $w\nearrow w(u)$. By such a construction, we have met our goal: we have determined $w(u)$ such that $\h{Z,\A^*(w)}(Z)\overset{\mathcal{D}}{=}\eta_{\text{soft}}(Z;t^\star(\tppmax))$. For example, in Figure \ref{fig:approach proof}(d), $P_1\approx 0.23$ and $P_3\approx 0.68$. Given that the averaged sub-interval between $P_1$ and $P_3$ is arbitrarily close to zero, we can write the scaled conditional expectation of $|\h{Z,\A^*(w)}(Z)|$ being on the flat region as an integral of the quantile function:
	\begin{align*}
		&\int_{P_1}^{P_2}(|Z|_{(x)}-t^\star(\tppmax))dx+\int_{P_2}^{P_3}(|Z|_{(x)}-rt^\star(\tppmax)) dx
		\\
		=&\int_{P_1}^{P_3}\h{Z,\A^*(w)}(|Z|_{(x)})dx=0.0001(P_3-P_1).
	\end{align*}
	Setting $w=w(u)$ and thus the right hand side to 0, and rearranging the equation,
	$$\int_{P_1}^{P_3}|Z|_{(x)}dx=t^\star(\tppmax)(P_2-P_1)+rt^\star(\tppmax)(P_3-P_2)=t^\star(\tppmax)[(P_2-P_1)+r(P_3-P_2)],$$
	where the left hand side is the scaled conditional expectation of the random variable $|Z|$ given $rt^\star(\tppmax)<|Z|<t^\star(\tppmax)$, with an explicit form as 
	\begin{align*}
		\int_{P_1}^{P_3}|Z|_{(x)}dx&=\E\left(|Z|\Big|rt^\star(\tppmax)<|Z|<t^\star(\tppmax)\right)\PP\left(rt^\star(\tppmax)<|Z|<t^\star(\tppmax)\right)
		\\
		&=\E\left(Z\Big|rt^\star(\tppmax)<Z<t^\star(\tppmax)\right)\PP\left(rt^\star(\tppmax)<|Z|<t^\star(\tppmax)\right)
		\\
		&=2\phi(rt^\star(\tppmax))-2\phi(t^\star(\tppmax)),
	\end{align*}
	in which the last equality holds from a direct calculation of the expection of a two-sided truncated normal distribution. Hence, we have,
	$$2\phi(rt^\star(\tppmax))-2\phi(t^\star(\tppmax))=t^\star(\tppmax)[(P_2-P_1)+r(P_3-P_2)],$$
	which, upon rearrangement, gives
	$$
	P_2 = \frac{P_1 - r P_3}{1-r} - \frac{2}{(1-r)}\left[\frac{\phi(t^\star(\tppmax)) - \phi(rt^\star(\tppmax))}{t^\star(\tppmax)}\right].
	$$
	Then, plugging in the values in \eqref{eq:Pdefs}, the above simplifies to
	$$
	w(u)=\epsilon^\star+\frac{2(1-\epsilon^\star)}{1-r}\left[\Phi(-t^\star(\tppmax))-r\Phi(-rt^\star(\tppmax))-\frac{\phi(-t^\star(\tppmax))-\phi(-rt^\star(\tppmax))}{t^\star(\tppmax)}\right].
	$$
	
	We claim $w(u)$ can be uniquely determined by $r(u)$, and $w(u)$ is clearly larger than $\epsilon^\star$ as the second term is positive. To see this, we study the term in the bracket and claim that its derivative over $t^\star$ is $(\phi(-t^\star)-\phi(-rt^\star))/(t^\star)^2$, which is negative and hence the term is larger than when $t^\star=\infty$, i.e. 0. 
	
	Combining \eqref{u and r} for designing $r(u)$ and \eqref{r and w} for designing $w(u)$, we can design the two-level SLOPE penalty that, together with infinity-or-nothing prior $\pi_{\infty}(\epsilon^\star/\epsilon)$, approaches $(u,q^\star(u))$ arbitrarily close.
\end{proof}

On a side note, if the $w$ is larger than the specific choice in \eqref{r and w}, i.e. when the flat quantile in \Cref{fig:approach proof} drops below zero, the SLOPE proximal operator has the same effect as soft-thresholding and the analysis for the Lasso case follows. Consequently, $\tppinf$ reduces to the interval $[0,\tppmax)$. Graphically speaking, when one fixes $r$ and increases $w$ from 0 to 1 (similar to \Cref{sharp_pic}), the SLOPE $\tppinf$ will increase from $\tppmax$ to above, until $(\tppinf,\fdpinf)$ touches the \Mobius curve. Then $\tppinf$ will suddenly jump below $\tppmax$, once $w$ is larger than \eqref{r and w}, and then remain constant afterwards.

\subsection{Achievable TPP--FDP region by SLOPE}
The trade-off boundary curves $q_\star$ and $q^\star$ only provide information that splits the entire TPP--FDP region into two parts: the possibly achievable $(\tppinf,\fdpinf)$ and the unachievable ones. See the red and non-red regions in \Cref{fig:lasso_phase}. Although we have shown the achievability of the upper boundary $q^\star$ via \Cref{prop:whole_sharp}, such achievability of the curve does not directly distinguish the achievability of the regions, until the recent work on Lasso by \cite{wang2020complete} which gives a complete Lasso TPP--FDP diagram. 

\begin{figure}[!htbp]
	\centering
	\includegraphics[width=7cm,height=5cm]{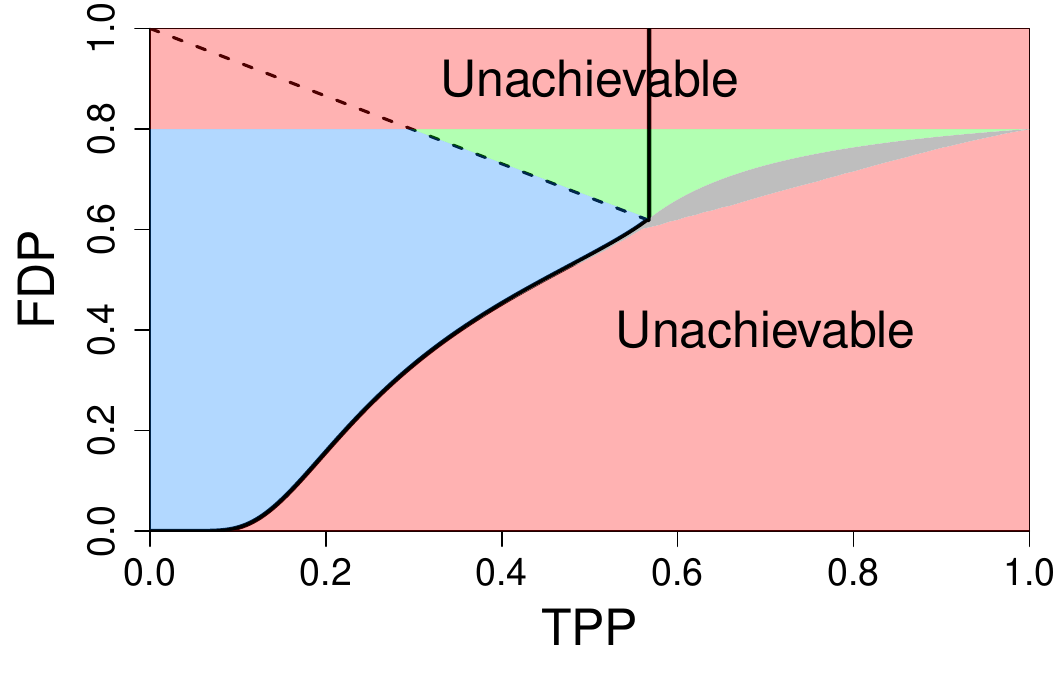}
	\includegraphics[width=7cm,height=5cm]{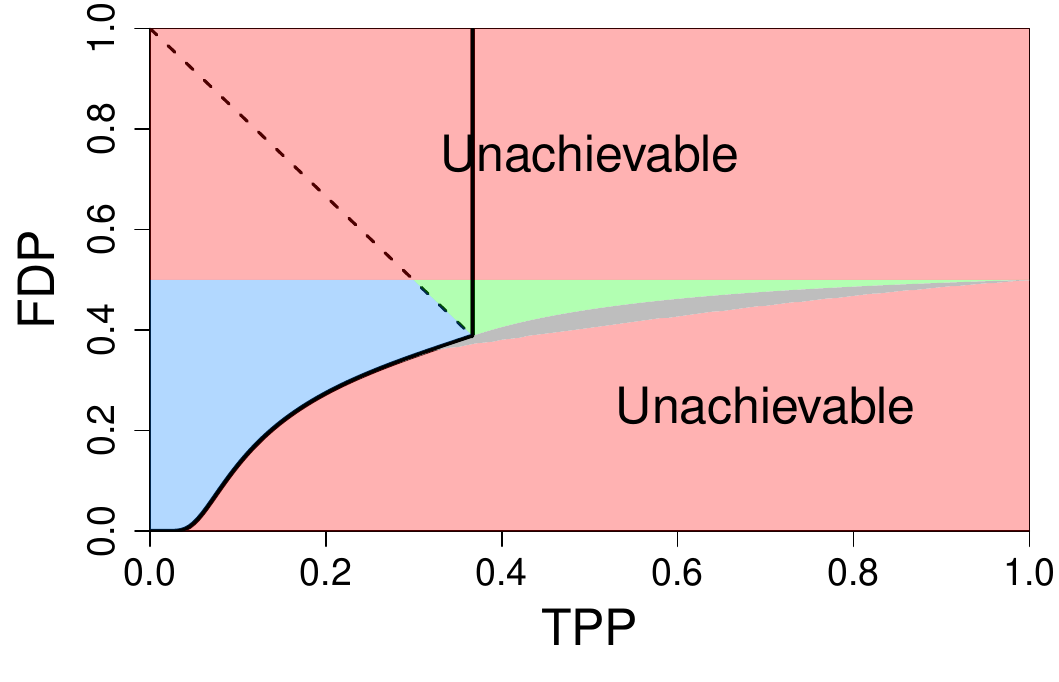}
	\caption{SLOPE TPP--FDP diagram by \Cref{thm:complete SLOPE diagram}. Left: $(\delta,\epsilon)=(0.3,0.2)$. Right: $(\delta,\epsilon)=(0.3,0.5)$. The red regions are $(\tppinf,\fdpinf)$ pairs not achievable by SLOPE nor by the Lasso, regardless of the prior distribution or the penalty tuning. The blue regions are achievable by both the SLOPE and by the Lasso. The green region is achievable only by the SLOPE but not by the Lasso. We note that the boundary between the blue region and the green one is a line segment connection (0,1) and $(u_\textnormal{DT}^\star,q^\star(u_\textnormal{DT}^\star))$, same as given by \cite{wang2020complete} for the Lasso case. The gray region is where the SLOPE trade-off lies in, and thus is possibly achievable by the SLOPE but not by the Lasso.}
	\label{fig:complete SLOPE diagram}
\end{figure}

Here we leverage the homotopy result in \cite[Lemma 3.8]{wang2020complete} to bridge from the achievability of the boundary curve to the achievability of the region. Thus we establish the actually achievable region by SLOPE.

The idea of homotopy is quite intuitive: suppose there are two curves, Curve $ A $ (our upper boundary curve $q^\star$) and Curve $ B $ (the horizontal line $\fdpinf=1-\epsilon$), and a continuous transformation $ f $ moving from Curve $ A $ to $ B$. During the movement, $f$ sweeps out a region whose boundaries include Curve $A$ and $B$, where every single point in this region is passed by the transforming curve during the transformation. Formally, we have a homotopy lemma below.
\begin{lemma}[Lemma 3.7, \cite{wang2020complete}]\label{lem:wanghua}
	If a continuous curve is parameterized by $ f:[0,1] \times[0,1] \rightarrow \mathbb{R}^{2} $ and if the four curves
	\begin{itemize}
		\item$\mathcal{C}_{1}=\{f(u, 0): 0 \leq u \leq 1\}$,
		\item$\mathcal{C}_{2}=\{f(u, 1): 0 \leq u\leq 1\}$,
		\item$\mathcal{C}_{3}=\{f(0, s): 0 \leq s \leq 1\}$,
		\item$\mathcal{C}_{4}=\{f(1, s): 0 \leq s \leq 1\}$,  
	\end{itemize}
	join together as a simple closed curve, $\mathcal{C}:=\mathcal{C}_{1} \cup \mathcal{C}_{2} \cup \mathcal{C}_{3} \cup \mathcal{C}_{4}, $ then $ \mathcal{C} $ encloses an interior area $ \mathcal{D}, $ and $ \forall(x, y) \in \mathcal{D}, \exists(u, s) \in [0,1] \times[0,1] $ such that $ f(u, s)=(x, y) . $ In other words, every point inside the region $\mathcal{D}$ enclosed by curve $ \mathcal{C}$ is realizable by some $ f(u, s)$.
\end{lemma}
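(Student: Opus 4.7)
The plan is to reduce the statement to a standard winding-number / Brouwer-degree argument. First, since $\mathcal{C}$ is by hypothesis a simple closed curve in $\mathbb{R}^2$, the Jordan curve theorem applies: $\mathbb{R}^2 \setminus \mathcal{C}$ has exactly two connected components, one bounded (the interior $\mathcal{D}$) and one unbounded. For any fixed $(x,y) \in \mathcal{D}$, the winding number of $\mathcal{C}$, traversed with either orientation, about $(x,y)$ equals $\pm 1$.

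Next, parameterize the boundary $\partial[0,1]^2$ as a single continuous loop $p:[0,1]\to\partial[0,1]^2$ by concatenating the four sides in order: $s=0$ with $u$ increasing (giving $\mathcal{C}_1$), $u=1$ with $s$ increasing (giving $\mathcal{C}_4$), $s=1$ with $u$ decreasing (giving $\mathcal{C}_2$ reversed), and $u=0$ with $s$ decreasing (giving $\mathcal{C}_3$ reversed). Because $\mathcal{C}_1,\mathcal{C}_2,\mathcal{C}_3,\mathcal{C}_4$ join into a simple closed curve, $\gamma:=f\circ p$ is a continuous loop in $\mathbb{R}^2$ whose image equals $\mathcal{C}$ and which winds $\pm 1$ times around $(x,y)$.

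Now argue by contradiction: suppose some $(x,y)\in\mathcal{D}$ fails to lie in $f([0,1]^2)$. Then $f$ factors as a continuous map $\tilde f:[0,1]^2\to\mathbb{R}^2\setminus\{(x,y)\}$ with $\gamma=\tilde f\circ p$. Since $[0,1]^2$ is contractible, the straight-line homotopy $H(\theta,t):=\tilde f\bigl((1-t)\,p(\theta)+t\,(\tfrac12,\tfrac12)\bigr)$ provides a null-homotopy of $\gamma$ inside $\mathbb{R}^2\setminus\{(x,y)\}$. By homotopy invariance of the winding number for loops in $\mathbb{R}^2\setminus\{(x,y)\}$, this forces the winding number of $\gamma$ about $(x,y)$ to be $0$, contradicting the Jordan-curve calculation of the previous paragraph. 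Hence $(x,y)\in f([0,1]^2)$, which is the desired conclusion.

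The main obstacle is bookkeeping rather than topology: one must verify that the hypothesis "join together as a simple closed curve" is strong enough to guarantee that $f\circ p$ is genuinely continuous at the four corners of $\partial[0,1]^2$ (so that the images of adjacent sides meet up) and that the resulting loop is simple, in order to invoke Jordan legitimately. An equivalent route is to appeal to the Brouwer degree $\deg(f,\mathrm{int}[0,1]^2,(x,y))$: degree is defined whenever $(x,y)\notin f(\partial[0,1]^2)=\mathcal{C}$, equals the winding number of $\gamma$ about $(x,y)$ (which is $\pm 1$ for $(x,y)\in\mathcal{D}$), and is non-zero only if $(x,y)\in f([0,1]^2)$, giving the same conclusion without an explicit homotopy. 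Either formulation makes the argument a few lines once the orientation issue is pinned down.
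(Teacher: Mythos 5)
The paper does not prove this lemma; it quotes it verbatim from \cite{wang2020complete} and uses it as a black box in the proof of Proposition~\ref{thm:complete SLOPE diagram}, so there is no in-paper proof to compare against. Your degree-theoretic argument is correct and is the standard route for statements of this kind: the straight-line null-homotopy of $\gamma=f\circ p$ inside $[0,1]^2$, pushed through $f$, forces the winding number of $\gamma$ about any omitted point to vanish, contradicting the Jordan-curve count of $\pm1$; equivalently $\deg(f,\mathrm{int}[0,1]^2,(x,y))=\pm1\neq 0$ so $(x,y)\in f([0,1]^2)$.

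The one point worth stressing, which you already flag, is that the hypothesis must be read as saying that the \emph{parametrized} boundary loop $\gamma=f\circ p$ is a simple closed curve, and not merely that the set-theoretic union $\mathcal{C}_1\cup\mathcal{C}_2\cup\mathcal{C}_3\cup\mathcal{C}_4$ happens to equal a Jordan curve. Under the weaker set-theoretic reading the statement is actually false. For instance, let $c:[0,1]\to\mathbb{R}^2$ be an injective loop with image a Jordan curve $\mathcal{C}$, and set $f(u,s)=c(u)$. Then $\mathcal{C}_1=\mathcal{C}_2=\mathcal{C}$ and $\mathcal{C}_3=\mathcal{C}_4=\{c(0)\}$, so the union of the four images is the Jordan curve $\mathcal{C}$; yet $\gamma$ traverses $\mathcal{C}$ forward along the side $s=0$ and backward along $s=1$, so its winding number about any interior point is $0$, and indeed $f([0,1]^2)=\mathcal{C}$ misses the interior entirely. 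With the natural stronger reading (the concatenation of the four sides, traversed in order around $\partial[0,1]^2$, is injective up to the basepoint), $\gamma$ is a genuine Jordan loop with winding number $\pm1$ about $\mathcal{D}$, and your argument closes without further issues.
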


Now we can show a region $\mathcal{D}_{\epsilon,\delta}$ defined below is indeed asymptotically achievable. This directly give the SLOPE TPP--FDP diagram in \Cref{fig:complete SLOPE diagram}.

\begin{proposition}\label{thm:complete SLOPE diagram}
	Any $(\tppinf,\fdpinf)$ in $\mathcal{D}_{\epsilon,\delta}$ is asymptotically achievable by the SLOPE. Here $\delta<1, \epsilon>\epsilon^\star(\delta)$ and $\mathcal{D}_{\epsilon,\delta}$ is enclosed by the four curves: $\fdpinf=1-\epsilon, \fdpinf=q^\star(\tppinf)$, $\tppinf=0$ and $\tppinf=1$.
\end{proposition}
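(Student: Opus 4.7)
The plan is to invoke the homotopy lemma \Cref{lem:wanghua} after constructing a continuous parameterization $f:[0,1]^2\to\mathbb{R}^2$ whose image consists entirely of asymptotically SLOPE--achievable $(\tppinf,\fdpinf)$ points and whose four edge--images $\mathcal{C}_1,\mathcal{C}_2,\mathcal{C}_3,\mathcal{C}_4$ join to form the boundary of $\mathcal{D}_{\epsilon,\delta}$. Once such an $f$ is exhibited, the homotopy lemma immediately yields that every point of $\mathcal{D}_{\epsilon,\delta}$ equals $f(u,s)$ for some $(u,s)\in[0,1]^2$ and is therefore asymptotically achievable. This is the same route followed by \cite{wang2020complete} for the Lasso diagram; the new work is to verify that the SLOPE flexibility lets us realise the four boundary curves.

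The four boundaries are handled as follows. The lower boundary $\mathcal{C}_1=\{(u,q^\star(u)):u\in[0,1]\}$ is achievable by \Cref{approach all q upper}, which exhibits explicit two-level penalties $\blam_{\sqrt{M},r(u)\sqrt{M},w(u)}$ and sparse priors $\bet_M(\epsilon'(u))$ attaining it as $M\to\infty$. The upper boundary $\mathcal{C}_2=\{(u,1-\epsilon):u\in[0,1]\}$ is obtained by shrinking the non-zero signal magnitudes: taking a prior $\Pi_\eta$ with $\PP(\Pi_\eta\ne 0)=\epsilon$ and non-zero entries equal to $\eta\to 0^+$, the normalised prior $\pi^\star$ and $Z$ share the same limiting distribution, so by \Cref{lem:A1},
\begin{align*}
\PP\!\left(\h{\pi+Z,\A}(\pi^\star+Z)\ne 0\right)=\PP\!\left(\h{\pi+Z,\A}(Z)\ne 0\right),
\end{align*}
which forces the sparsity to equal the TPP, i.e.\ $\kappa=\tppinf$, and hence $\fdpinf=1-\epsilon$ by \eqref{eq:fdp_inf}. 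Sweeping the SLOPE penalty magnitude then makes $\tppinf$ range continuously over $[0,1]$ along this line. The side $\mathcal{C}_4$ at $\tppinf=1$ collapses to the single point $(1,1-\epsilon)$ since $q^\star(1)=1-\epsilon$, and $\mathcal{C}_3$ at $\tppinf=0$ is traced by continuously interpolating between the two endpoints on $\mathcal{C}_1$ and $\mathcal{C}_2$ using, e.g., large penalties combined with shrinking signals.

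For the interior of $[0,1]^2$, I would define $f(u,s)=\left(\tppinf(\Pi_{u,s},\Lambda_{u,s}),\fdpinf(\Pi_{u,s},\Lambda_{u,s})\right)$ where $(\Pi_{u,s},\Lambda_{u,s})$ is a distributional mixture interpolating, in weight $s$, between the $(u,q^\star(u))$ pair on $\mathcal{C}_1$ and the $(u,1-\epsilon)$ pair on $\mathcal{C}_2$. The main obstacle is verifying continuity of $f$: one must show that the state-evolution fixed point $(\tau,\A)$ defined implicitly by \eqref{eq:SE}-\eqref{eq:cali} depends continuously on $(\Pi,\Lambda)$ under weak convergence, and that the derived zero-threshold $\alpha(\Pi,\Lambda)$, the sparsity $\kappa$, and hence $\tppinf,\fdpinf$ inherit this continuity. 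This is a standard perturbation argument of the type invoked around \eqref{eq:variational optimization}, combined with the strong-convexity of the AMP fixed-point map in the penalty variable. A secondary subtlety is that $\mathcal{C}_2$ is only achieved in the limit $\eta\to 0$; as in \Cref{thm:upper trade-off-curve-no-q-form}, ``asymptotically achievable'' is thus interpreted up to an arbitrarily small additive slack, and the argument delivers every point of $\mathcal{D}_{\epsilon,\delta}$ to within any prescribed tolerance.
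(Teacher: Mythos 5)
Your overall strategy — invoke the homotopy lemma (\Cref{lem:wanghua}) with the four edge-curves of $[0,1]^2$ mapped to the boundary of $\mathcal{D}_{\epsilon,\delta}$, using \Cref{approach all q upper} to realise $\mathcal{C}_1=\{(u,q^\star(u))\}$ — is exactly the paper's. The genuine divergence is in how you realise $\mathcal{C}_2=\{\fdpinf=1-\epsilon\}$ and the interior sweep. The paper takes a single, clean parameterization: it \emph{fixes} the boundary-achieving prior--penalty pair $(\Pi_*(u),\Lambda_*(u))$ for each $u$ and sweeps the noise variance $\sigma=\tan(\pi s/2)$ from $0$ to $\infty$. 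As $\sigma\to\infty$ the normalised signal $\pi^\star=\Pi^\star/\tau$ is driven to $0$ (since $\tau\to\infty$), which is precisely what forces $\kappa=\tppinf$ and hence $\fdpinf\to 1-\epsilon$; the continuity of $g(\Lambda,\sigma,\Pi)$ in $\sigma$ with both distributions fixed is comparatively transparent. You instead shrink the signal magnitude $\eta\to 0^+$, which achieves the same effect on $\pi^\star$ \emph{provided} the noise $\sigma$ is held strictly positive (with $\sigma=0$ the normalised pair $(\pi,\A)$ is scale-invariant and shrinking $\Pi$ does nothing). This creates a tension you don't resolve: the boundary $\mathcal{C}_1$ from \Cref{approach all q upper} and \Cref{prop:whole_sharp} is constructed in the noiseless setting $\sigma=0$, so your two edges of the square sit at incompatible noise levels and cannot be connected by a one-parameter family with the noise fixed.

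The more serious gap is the interior parameterization. You propose a ``distributional mixture interpolating in weight $s$'' between the $\mathcal{C}_1$-achieving and $\mathcal{C}_2$-achieving prior--penalty pairs, but the maps $\Pi\mapsto\tau(\Pi,\Lambda)\mapsto(\tppinf,\fdpinf)$ are highly nonlinear through the state-evolution fixed point, so a convex combination of distributions does not interpolate the resulting $(\tppinf,\fdpinf)$ pairs, and nothing in your sketch guarantees the image of this mixture path stays inside $[0,1]^2$ or even that the four edge-images close up into a simple closed curve (the hypothesis of \Cref{lem:wanghua}). The paper avoids this entirely: by keeping $(\Pi_*(u),\Lambda_*(u))$ frozen and tuning only $\sigma$, continuity of $f(u,s)$ in $s$ follows from the AMP state-evolution being continuous in the noise, and $\mathcal{C}_3,\mathcal{C}_4$ collapse correctly because $u=0$ corresponds to $\Lambda=\infty$ (so $\tppinf\equiv 0$) and $u=1$ to $\Lambda=0$ (so $\tppinf\equiv 1$). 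If you replace the ``mixture'' by the paper's single noise parameter you recover a complete argument; as written the interior step is not yet a proof.
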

\begin{proof}[Proof of \Cref{thm:complete SLOPE diagram}]
	Note that $ (\tppinf,\fdpinf) $ is a function of $ \Lambda, \sigma $ and $ \Pi$ and hence we can denote every TPP--FDP point in $[0,1]\times[0,1]$ as 
	$$ g=\left(\tppinf(\Lambda, \sigma, \Pi), \fdpinf(\Lambda, \sigma, \Pi)\right).$$ 
	To characterize the boundary of the achievable region, i.e. $\mathcal{C}_1$, we parameterize the (two-level) penalty distribution $\Lambda_*(u)$ and the (infinity-or-nothing) prior distribution $\Pi_*(u)$, empowered by the achievability result \Cref{approach all q upper} (which holds for finite noise, including the noiseless case), such that \begin{align*}
		\tppinf(\Pi_*(u),\Lambda_*(u))&=u,
		\\
		\fdpinf(\Pi_*(u),\Lambda_*(u))&=q^\star(u).
	\end{align*}
	
	Leveraging this parameterization, we define the transformation $$f(u,s)=g(\Lambda_*(u),\tan(\frac{\pi s}{2}),\Pi_*(u)),$$ 
	that is employed in \Cref{lem:wanghua}. Therefore, $\mathcal{C}_1$ is the curve described by $q^\star$.
	
	When the noise $\sigma=\infty$, clearly $\fdpinf(\Lambda,\infty,\Pi)=1-\epsilon$. It follows that $\mathcal{C}_2$ is $\fdpinf=1-\epsilon$. When $\tppinf=u=0$, this is the case that the penalty $\Lambda=\infty$ and we get $\mathcal{C}_3$ is $\tppinf=0$. When $u=1$, we have $f(1,s)=(1,1-\epsilon)$. This is the case that the penalty $\Lambda=0$ and $\mathcal{C}_4$ is $\tppinf=1$.
	
	We notice that $\{\mathcal{C}_1,\mathcal{C}_2,\mathcal{C}_3,\mathcal{C}_4\}$ indeed composes a closed curve. Therefore, $f$ sweeps from $\mathcal{C}_1$ to $\mathcal{C}_2$ and each point in $\mathcal{D}_{\epsilon,\delta}$ is achievable by some $f(u,s)$ by the homotopy lemma in \Cref{lem:wanghua}.
\end{proof}


	\section{Lower bound not equal to upper bound}
\label{app:upper lower different}
To complement \Cref{sec:difference upper lower}, we give concrete examples that the upper bound $q^\star$ does not equal the lower bound $q_\star$. Visually, in \Cref{fig:intro_result}, it is not difficult to distinguish the two bounds when $\tppinf\geq\tppmax$. However, when $\tppinf<\tppmax$, the difference can be rather small (see \Cref{fig:t star}), but we assert that, at least for some $\tppinf<\tppmax$, the difference indeed exists and is not a result of numerical errors.

\begin{figure}[!htb]
	\centering
	\includegraphics[width=8cm,height=5.5cm]{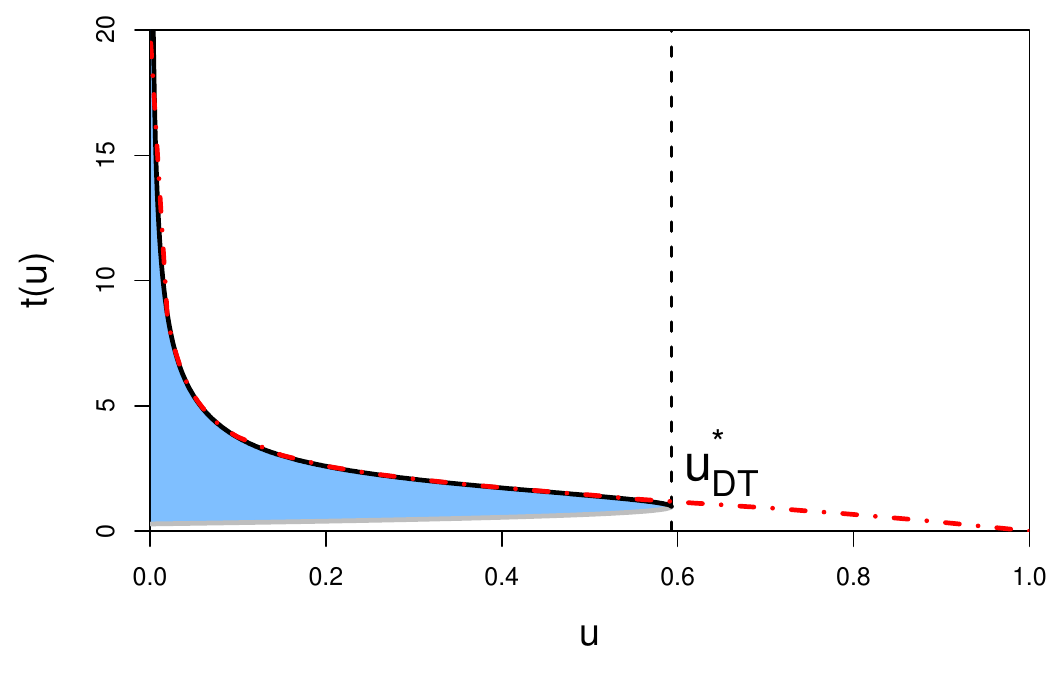}
	\caption{Demonstration of $t^\star(u)$ and $t_\star(u)$ with $\delta=0.3, \epsilon=0.2$. The blue area is valid $x$ if \eqref{eq:t_star_eq} is an inequality with the left hand side being smaller than the right one; the black solid line is the larger root of \eqref{eq:t_star_eq}, i.e. $t^\star(u)$, with the support $[0,\tppmax)$; the gray line is the smaller root. The red dotted line is $t_\star(u)$, with support $[0,1]$. Note that if $t^\star(u)>t_\star(u)$ at some $u$, then $q^\star(u)>q_\star(u)$.}
	\label{fig:t star}
\end{figure}

\subsection{Characterizing the analytic SLOPE penalty}
\label{app:Euler-Lagrange for three point prior}
In order to characterize the optimal SLOPE penalty analytically, we discuss the complementary slackness condition on the monotonicity constraint in problem \eqref{eq:quadratic programming functional}. We start with the case when the monotonicity constraint is \textbf{not binding} (i.e. when $\AA'(z)>0$ for all $z\geq\alpha$). We apply the Euler-Lagrange Multilplier Theorem to derive the following Euler-Lagrange equation on $L(z,\AA)$ defined in \eqref{eq:functional split}:
$$\frac{\partial L}{\partial \AA}-\frac{d}{dz}\frac{\partial L}{\partial \AA'}=0.$$
This is a necessary condition of the optimal SLOPE penalty function. Since $L$ does not explicitly depend on $\AA'$, the Euler-Lagrange equation gives, for the optimal SLOPE penalty function $\AA^*$ of problem \eqref{eq:quadratic programming functional},
\begin{align*}
	\frac{\partial L}{\partial\AA}&=4(1-\epsilon)(\AA^*(z)-z)\phi(z)
	\\
	&+2\epsilon \sum_{j=1,2} p_j\left[\big(\AA^*(z)-(z-t_j)\big)\phi(z-t_j)+\big(\AA^*(z)-(z+t_j)\big)\phi(z+t_j)\right]=0.
\end{align*}

This equation can be significantly simplified: if we denote a function
$$H(z):=4(1-\epsilon)\phi(z)+2\epsilon \sum_{j=1,2} p_j\left[\phi(z-t_j)+\phi(z+t_j)\right],$$
then the Euler-Lagrange equation above claims that
$$H(z)\AA^*(z)+H'(z)=0,$$
which is equivalent to
$$\AA^*(z)=-H'(z)/H(z).$$

On the other hand, when the monotonicity constraint is \textbf{binding} (i.e. when $\AA'(z)=0$ for all $z\geq\alpha$), clearly the penalty function $\AA^*$ is a constant. In short, the optimal penalty function $\AA^*$ coincides with the function $-H'/H$ in the interval $(\alpha,\infty)$ when $\AA^*$ is strictly increasing and stays (piecewise) constant elsewhere; in particular, $\AA^*(z)=\alpha$ on $[0,\alpha]$. 

Unfortunately, the conclusion so far only gives the necessary but not sufficient condition for any penalty function to be optimal. Putting differently, the condition is not specific enough to uniquely determine $\AA^*$ and thus we have to rely on the numerical approach to find $\AA^*$, as shown in \Cref{sec:lower bound}. Nevertheless, the condition we derived above will serve as an essential tool to build up analytic SLOPE penalty in the following sections.

\subsection{Analytic SLOPE penalty for two-point prior}
\label{app:two point analytic}
Here we derive the optimal SLOPE penalty analytically for a special two-point prior, which can be used to prove $q_\star(u)<q^\star(u)$ for some $u$, including those below the DT power limit. We review what is known for the Lasso trade-off: fixing $\delta, \epsilon$ and $\tppinf=u$, the maximum Lasso zero-threshold $t^\star(u)$ satisfies \eqref{eq:t_star_eq} and the minimum Lasso $\fdpinf$ is achieved at such threshold (see \eqref{cor:whole tradeoff}). If for SLOPE we can find a larger zero-threshold $\alpha$ than $t^\star(u)$, then by the definition in \eqref{eq:tpp fdp zero-threshold}:
$$\fdpinf(\Pi,\Lambda)
=\dfrac{2(1-\epsilon)\Phi(-\alpha)}{2(1-\epsilon)\Phi(-\alpha)+\epsilon u}$$ 
for the SLOPE must be smaller than the minimum Lasso $\fdpinf$.

We first determine the prior that we want to study. We focus on a zero-or-constant prior
\begin{align*}
	\pi=\begin{cases}
		t_1& \text{ w.p. } \epsilon
		\\
		0 & \text{ w.p. } 1-\epsilon,
	\end{cases}
\end{align*}
whose probability density function is $p_{\pi}(t)=(1-\epsilon)\delta(t)+\epsilon\delta(t-t_1)$ and clearly $\pi^\star=t_1$. For the Lasso, from \eqref{eq:tpp fdp zero-threshold}, we see that $\tppinf=u$ defines a unique $t_1$ by
$$\PP(|t_1+Z|>t^\star(u))=\Phi(t_1-t^\star)+\Phi(-t_1-t^\star)=u,$$
i.e. $t_1(u,\delta,\epsilon)$ only depends on $u, \delta$ and $\epsilon$. 

Now that we have determined the prior, we seek a feasible SLOPE penalty function $\A_S$ which allows a larger zero-threshold $\alpha$ with this prior: let
\begin{align}
	\A_S(z)=
	\begin{cases}
		\alpha & \text{ if } |z|<\alpha
		\\
		\max(\alpha,-H_{t_1}'(z)/H_{t_1}(z)) & \text{ if } |z|\geq\alpha,
	\end{cases}
	\label{eq:A_S form}
\end{align}
where $H_{t_1}(z)$ is the function $H(z)$ in the \Cref{app:Euler-Lagrange for three point prior} but specific to our new prior, i.e. $p_1=1, p_2=0, t_1=t_2$ in \eqref{eq:dirac delta prior}: we get
\begin{align*}
	H_{t_1}(z)=4(1-\epsilon)\phi(z)+2\epsilon[\phi(z-{t_1})+\phi(z+{t_1})].
\end{align*}

We remark that the SLOPE penalty $\A_S$ is clearly feasible for problems \eqref{eq:variational optimization} and \eqref{eq:quadratic programming functional} if it is monotonically increasing. Furthermore, this monotonicity condition indeed holds true for some ${t_1}$ and $\alpha$ (such that $\A_S$ is increasing in $z$; we will give examples shortly), for which we can show $q_\star(u)<q^\star(u)$.

In summary, fixing $(u,\delta,\epsilon)$, we can uniquely determine ${t_1}\in\R$ for the two-point zero-or-constant prior and the maximum Lasso penalty $t^\star\in\R_+$. Looking at ${t_1}$, we can construct $\A_S$ using \eqref{eq:A_S form} on the interval $(\alpha=t^\star,\infty)$. If furthermore $\A_S$ is increasing, then this non-constant penalty $\A_S$ is feasible and must outperform the constant penalty of the Lasso (which is $t^\star$), based on the Euler-Lagrange equation discussed in \Cref{app:Euler-Lagrange for three point prior}. In consequence, the SLOPE allows strictly larger zero-threshold $\alpha$ than the maximum Lasso zero-threshold $t^\star$, until for some $\alpha$ we saturate the state evolution condition \eqref{eq:SE contraint_quote} by having $F_\alpha[\A_S,p_{\pi^\star}]=\delta$.

We give an example as follows for the framework described above.

\subsection{An example of SLOPE FDP below the Lasso trade-off}\label{app:concrete example}
As a concrete example of SLOPE $\fdpinf$ being smaller than the minimum Lasso $\fdpinf$, i.e. $q^\star_\text{Lasso}$, we use $\delta=0.3, \epsilon=0.2, \sigma=1, u=\tppmax(\delta,\epsilon)= 0.56760$ by \eqref{eq:DT_power_limit}. Then the maximum Lasso zero-threshold $t^\star(u)$ (or equivalently the Lasso penalty $\A_L$) equals $1.19241$ by \eqref{eq:t_star_eq}. In this case, the Lasso $\fdpinf=0.62160$ by \eqref{eq:tpp fdp zero-threshold}. We can compute $t_1(u,\delta,\epsilon)= 1.34864$ by \eqref{eq:tpp fdp zero-threshold}.

One can check that the function $-H_{t_1}'/H_{t_1}$ as well as the penalty function $\A_S$ in \eqref{eq:A_S form} (with $\alpha$ set as $t^\star$) are indeed increasing. Hence $\A_S$ is the unique optimal SLOPE penalty that satisfies the Euler-Lagrange equation. We can analytically compute the state evolution condition in problem \eqref{eq:variational optimization} (see also \eqref{eq:F_alpha definition} for the formula):
\begin{align*}
	&F_{t^\star}[\A_S,p_{\pi^\star}]=2(1-\epsilon)\int_{t^\star}^{\infty}(z-\A_S(z))^2\phi(z)dz+\epsilon\Big[t^2(\Phi(t^\star-{t_1})-\Phi(-t^\star-{t_1}))
	\\
	&+\int_{t^\star}^{\infty}\left(\big(z-{t_1}-\A_S(z)\big)^2\phi(z-{t_1})+\big(-z-{t_1}+\A_S(z)\big)^2\phi(-z-{t_1})\right)dz\Big]
	\\
	=&\int_{t^\star}^{\infty}\left[\frac{1}{2}H_t(z)\A_S(z)^2+H_t'(z)\A_S(z)\right]dz+\epsilon {t_1}^2(\Phi(t^\star-{t_1})-\Phi(-t^\star-{t_1}))
	\\
	&+2(1-\epsilon)\int_{t^\star}^\infty z^2\phi(z)dz+\epsilon\int_{t^\star}^\infty (z-{t_1})^2\phi(z-{t_1})dz+\epsilon\int_{t^\star}^\infty (z+{t_1})^2\phi(z+{t_1})dz.
\end{align*}

Using the facts that $\ddot\phi(z)=(z^2-1)\phi(z)$ and $\dot\phi(z)=-z\phi(z)$, we get 
$\frac{d}{dz}\left(\Phi(z)-z\phi(z)\right)=z^2\phi(z),$
which can be used to simplify the last three integrals:
\begin{align*}
	&F_{t^\star}[\A_S,p_{\pi^\star}]=\int_{t^\star}^{\infty}\left[\frac{1}{2}H_t(z)\A_S(z)^2+H_t'(z)\A_S(z)\right]dz
	\\
	&+\epsilon {t_1}^2(\Phi(t^\star-{t_1})-\Phi(-t^\star-t))
	+2(1-\epsilon)\left[t^\star\phi(t^\star)+\Phi(-t^\star)\right]
	\\
	&+\epsilon\left[(t^\star-{t_1})\phi(t^\star-{t_1})+\Phi(-t^\star+{t_1})\right]
	+\epsilon\left[(t^\star+{t_1})\phi(t^\star+{t_1})+\Phi(-t^\star-{t_1})\right].
\end{align*}

Together with $\A_S(z)=\max\big(t^\star,-H_{t_1}'(z)/H_{t_1}(z)\big)$ on $(t^\star,\infty)$, this analytic quantity can be calculated by numerical integration to arbitrary precision, and it gives $E(\pi,\A_S)=0.27727<\delta$. In words, at the Lasso maximum zero-threshold $t^\star$, the SLOPE and the Lasso have the same $\fdpinf$, but the SLOPE has a smaller normalized estimation error $E$ in the state evolution condition \eqref{eq:SE contraint_quote}. Hence, this leaves a margin to further reduce the $\fdpinf$ before we use up the margin. 

\begin{figure}[!htb]
	\centering
	\includegraphics[width=7cm,height=5cm]{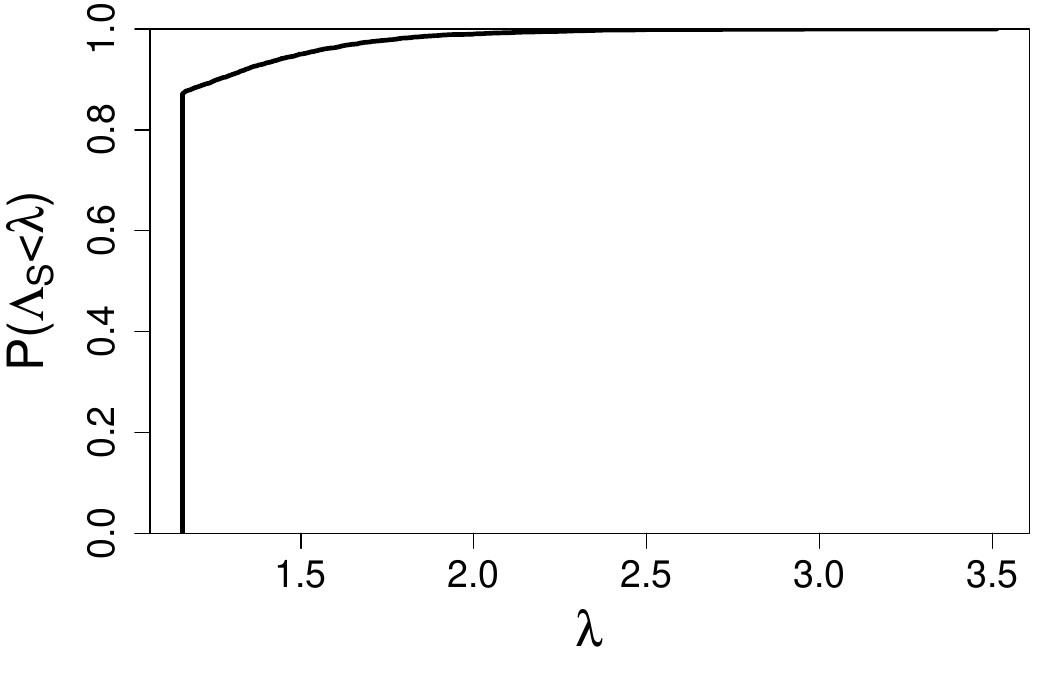}
	\includegraphics[width=7cm,height=5cm]{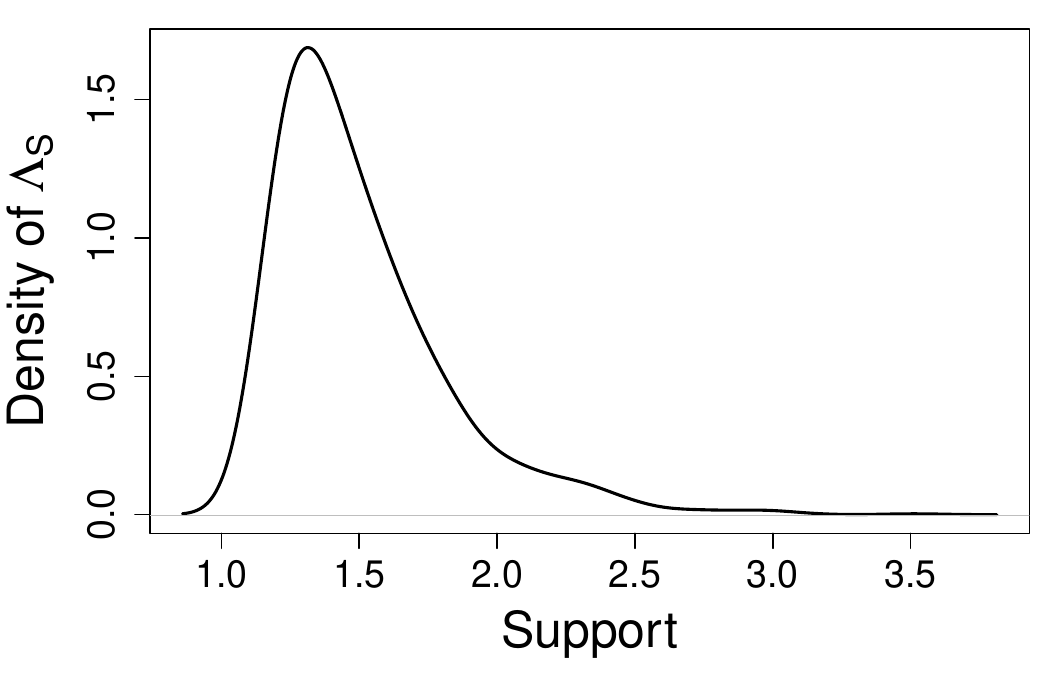}
	\caption{Cumulative distribution function of the optimal SLOPE penalty $\Lambda_S$ and probability density of its non-constant component, at $(\tppinf,\fdpinf)=(0.5676,0.6216)$. Here $\delta=0.3,\epsilon=0.2,\sigma=1,\Pi^\star= 4.9006$.}
	\label{fig:Lambda_S}
\end{figure}

Up until now, we are working in the normalized regime on $(\pi,\A)$ and we want to determine the original prior and penalty $(\Pi,\Lambda_S)$. To do so, we use the state evolution \eqref{SE_formula} to compute $\tau=\sqrt{\frac{\sigma^2}{1-E(\pi,\A_S)/\delta}}=3.6337$, which uniquely defines the two-point prior via $\Pi^\star={t_1}\cdot\tau= 4.9006$. We then apply the calibration \eqref{asym calibration} to derive $\Lambda_S$ and visualize the distribution in \Cref{fig:Lambda_S}.

To saturate the state evolution condition \eqref{eq:SE contraint_quote} so that $E(\pi,\A_S)=\delta$, while still fixing $\tppinf=0.5676$, we can increase the zero-threshold $\alpha$ from $t^\star$ (which is 1.19241) to $1.25672$ and derive ${t_1}=1.41748$ via \eqref{eq:tpp fdp zero-threshold}:
$$\PP(|{t_1}+Z|>\alpha)=\Phi({t_1}-\alpha)+\Phi(-{t_1}-\alpha)=u.$$

Again, $\A_S$ constructed by \eqref{eq:A_S form} is increasing and optimal. This new SLOPE zero-threshold $\alpha$ implies $\fdpinf=0.5954$ which is strictly smaller than the Lasso minimum $\fdpinf=0.6216$.

\subsection{A new TPP threshold $u^\dagger$}
In this section, we find the minimum $\tppinf$ such that we can leverage \Cref{app:two point analytic} to construct SLOPE $\fdpinf$ below the Lasso trade-off $q^\star_\text{Lasso}(\tppinf)$.

When $\tppinf=u$ and the zero-threshold $\alpha$ equals $t^\star(u)$ defined in \eqref{eq:t_star_eq}, the SLOPE penalty may have a normalized estimation error $E(\pi,\A_S)<\delta$. In the above example, we increase the zero-threshold $\alpha$ until the state evolution constraint is binding: $E(\pi,\A_S)=\delta$, thus obtaining smaller $\fdpinf$. From a different angle, we can decrease $u$ (and change $q^\star(u)$ and $t^\star(u)$ consequently) until $E(\pi,\A_S)=\delta$.

To be specific, we test a general $\tppinf=u$ and set the zero-threshold $\alpha$ at $t^\star(u)$. Then the single point $\pi^\star={t_1}$ can be computed via $\PP(|{t_1}+Z|>t^\star(u))=u$ and the SLOPE penalty function $\A_S$ is determined via \eqref{eq:A_S form}. Lastly, we compute the normalized estimation error $E(\pi,\A_S)$ if $\A_S$ is increasing. 

We define the smallest $u$ such that $E\leq\delta$ as our new TPP threshold $u^\dagger$:
\begin{align*}
	u^\dagger(\delta,\epsilon):=\inf\{u \text{ s.t. } &F_{t^\star(u)}[\max(t^\star(u),-H_{t_1}'/H_{t_1}),\rho_{t_1}]\leq\delta
	\\
	\text{ and }&\max(t^\star(u),-H_{t_1}'/H_{t_1}) \text{ is increasing}\}.
\end{align*}
Here the function $\rho_{t_1}(t)=\delta(t-{t_1})$ is the probability density function of $\pi^\star={t_1}$ and the functional $F$ is defined in \eqref{eq:F_alpha definition}.

\begin{figure}[!h]
	\centering
	\includegraphics[width=8cm,height=5.5cm]{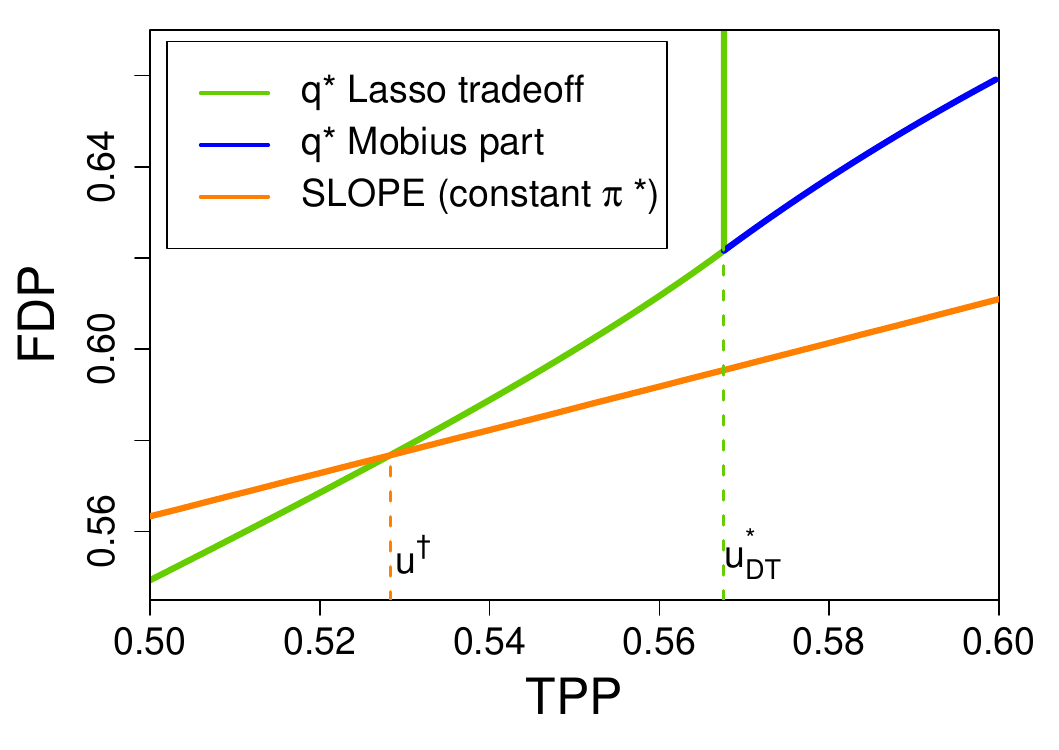}
	\caption{SLOPE $\tppinf$--$\fdpinf$ of constant $\pi^\star$ and the new TPP threshold $u^\dagger$ when $(\delta,\epsilon)=(0.3,0.2)$. The green line is the Lasso trade-off $q^\star$ and the blue line is the \Mobius part of $q^\star$. The orange line is the $\tppinf$--$\fdpinf$ realized by constant $\pi^\star$ and $\AA$ for the maximum zero-threshold $\alpha$. In this example, $u^\dagger=0.5283<\tppmax=0.5676$.}
	\label{fig:u dagger visualization}
\end{figure}

Under $\delta=0.3, \epsilon=0.2$, we find that $u^\dagger=0.5283<\tppmax=0.5676$ (visualized in \Cref{fig:u dagger visualization}). This indicates that we can show $q_\star<q^\star$ for a range of $u$ smaller than the DT power limit. We observe that below $u^\dagger$, the Lasso penalty and the infinity-or-nothing prior achieve smaller $\fdpinf$ than our SLOPE penalty and constant prior, and vice versa. We further offer graphical demonstration of the difference between $u^\dagger$ and $\tppmax$ in \Cref{fig:more u dagger}.

\begin{figure}[!h]
	\centering
	\includegraphics[width=7cm,height=5cm]{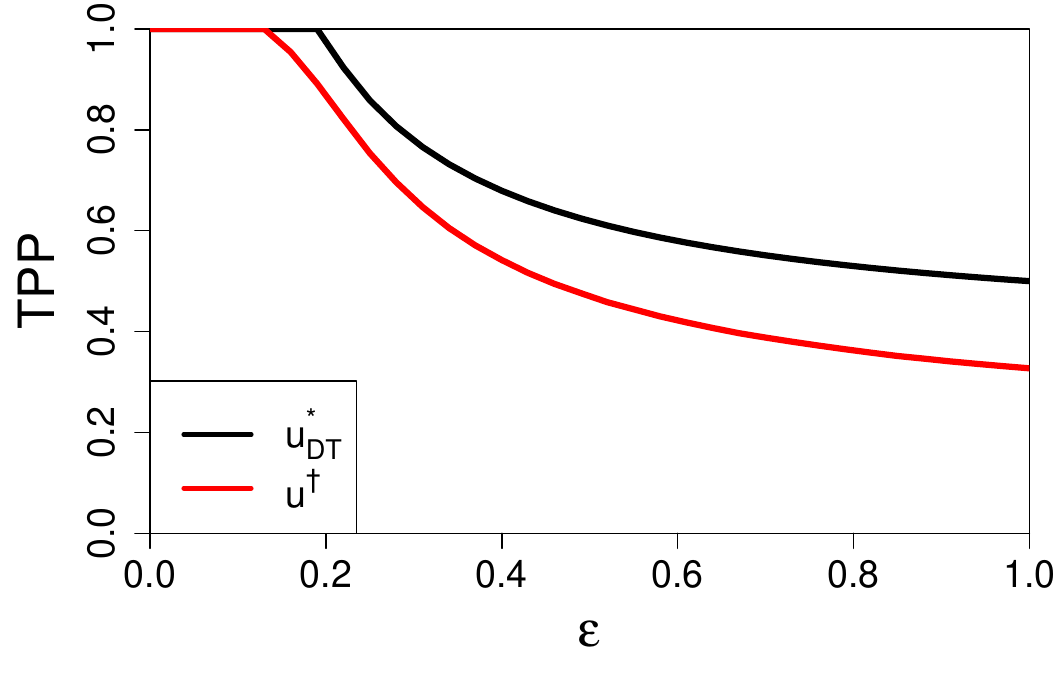}
	\includegraphics[width=7cm,height=5cm]{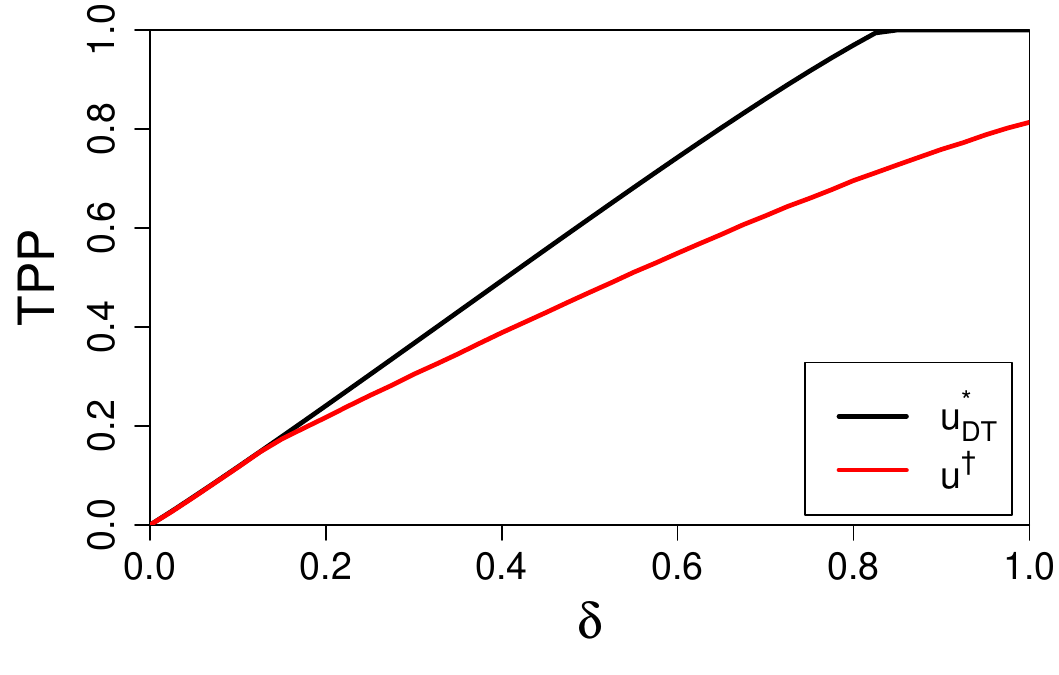}
	\caption{Comparison of $u^\dagger$ and $\tppmax$, fixing $\delta=0.5$ (left) or $\epsilon=0.5$ (right). The difference can be as large as 0.173 on the left plot and 0.282 on the right one.}
	\label{fig:more u dagger}
\end{figure}

	\section{Proving SLOPE outperforms the Lasso for fixed prior}\label{proof_instance_better}
\begin{proof}[Proof of \Cref{thm:instance better}]
	The proof is broken down into three pieces: we start with the MSE, then the asymptotic TPP and lastly the asymptotic FDP \footnote{\Cref{thm:instance better} can be generalized to further include certain unbounded signal prior $\Pi$ as long as \Cref{eq:EZsign} is satisfied. For example, for any Gaussian or Exponential $\Pi$, the SLOPE can outperform the Lasso.}.
	\\\\
	\textbf{SLOPE has smaller MSE}\qquad
	Fixing any bounded prior $\Pi$ and any scalar Lasso penalty $\lambda$, we can derive the corresponding $(\tau_L,\alpha_L)$ from the calibration \eqref{asym calibration} and the state evolution \eqref{SE_formula}, so as to work in the normalized regime $(\pi_L,\A_L)$. The quantity $\tau$ relates to the MSE by \citet[Corollary 3.4]{ourAMP}:
	\begin{align}
	\plim\|\widehat\bet-\bet\|^2/\p=\delta(\tau^2-\sigma^2).
	\label{eq:my corollary34}
	\end{align}

	Obviously, the SLOPE estimator has a smaller MSE than the Lasso one if and only if $\tau_S<\tau_L$, where $(\tau_S,\A_S)$ is the solution to the SLOPE calibration \eqref{asym calibration} and the state evolution \eqref{SE_formula}. 
	
	We now illustrate that this is always feasible by carefully designing the SLOPE penalty vector $\bfalph_S(p)$, or in the asymptotic sense, the SLOPE penalty distribution $\A_S$. 
	We directly work with the SLOPE AMP state evolution \eqref{SE_formula} instead of the Lasso AMP, since SLOPE covers the Lasso as a sub-case. In particular, we consider the two-level SLOPE of the form $\A_{\ell,\alpha_L,w}$ defined in \eqref{eq:theta_func}.
	
Our goal is to show that for any Lasso penalty $\A_L=\alpha_L=\A_{\alpha_L,\alpha_L,w}$, we can find a SLOPE penalty $\A_S=\A_{\ell,\alpha_L,w}$ for some sufficiently small $w$ and $\ell>\alpha_l$ such that $\tau_S<\tau_L$. In other words, among all the two-level SLOPE penalties $\A_{\ell,\alpha_L,w}$ with a zero-threshold $\alpha_L$, we show the optimal MSE is not achieved at $\ell=\alpha_L$.
	
To present a clear proof, we simplify the notation of $\prox_{J}(\bm a;{\bm b})$ by using $\bm\eta(\bm a,\bm b)$, or simply $\bm\eta$, where $\bm a:=\bm\Pi+\tau\Z$ and $\bm b:=\bfalph\tau$, where $\bfalph=\bfalph_{\ell,\alpha_L,w}$ (defined in \eqref{eq:theta_func}) with $\ell\geq\alpha_L$. On convergence of the state evolution \eqref{SE_formula}, we can differentiate both sides of
	$$\tau^2=\sigma^2+\frac{1}{\delta}\lim_{p\to\infty}\E\langle[\bm\eta(\bm\Pi+\tau \Z,\bfalph\tau)-\bm\Pi]^2\rangle=\sigma^2+\frac{1}{\delta}\lim_{p\to\infty}\E\langle[\bm\eta(\bm a,\bm b)-\bm\Pi]^2\rangle,$$
	with respect to $\ell\in\mathbb{R}$. Denoting $\tau'=\frac{\partial \tau}{\partial \ell}$, we obtain
	\begin{align*}
	2\tau\tau'&=\frac{\partial}{\partial \ell}\left(\sigma^2+\frac{1}{\delta}\lim_{\p\to\infty}\E\langle[\bm\eta(\bm a,\bm b)-\bm\Pi]^2\rangle\right)
	=\frac{1}{\delta}\lim_{\p\to\infty}\E\frac{\partial}{\partial \ell}\langle[\bm\eta(\bm a,\bm b)-\bm\Pi]^2\rangle.
	\end{align*}
	Then the chain rule leads to
	\begin{align}
	\tau\tau'=\lim_{\p\to\infty}\frac{1}{\n}\sum_j\E(\eta_j-\Pi_j)\frac{\partial\eta_j}{\partial \ell}
	=\lim_{\p\to\infty}\frac{1}{\n}\sum_j\E(\eta_j-\Pi_j)\sum_k\left[ \frac{d \eta_j}{d a_k} Z_k\frac{\partial \tau}{\partial \ell}+\frac{d \eta_j}{d b_k}\frac{\partial b_k}{\partial \ell}\right].
	\label{eq:tau'}
	\end{align}
To investigate the derivative terms, we copy some important facts in \citet[Appendix A]{ourAMP} here for reader's convenience:
\begin{align*}
	\frac{d}{da_k}  [\bm\eta(\mathbf{a}, \mathbf{b})]_j &= \mathbb{I} \{ |\bm\eta(\mathbf{a}, \mathbf{b})|_j = | \bm\eta(\mathbf{a}, \mathbf{b})|_k\}\sgn([ \bm\eta(\mathbf{a}, \mathbf{b})]_j [ \bm\eta(\mathbf{a}, \mathbf{b})]_k)  [\partial_1 \eta(\mathbf{a}, \mathbf{b})]_j,
	\\
	\frac{d}{d b_k}  [\bm\eta(\mathbf{a}, \mathbf{b})]_j  &= - \mathbb{I}\big\{ |\bm\eta(\mathbf{a}, \mathbf{b})|_j =|\bm\eta(\mathbf{a}, \mathbf{b})|_{\o(k)}\big\}\sgn \big([ \bm\eta(\mathbf{a}, \mathbf{b})]_j \big)    \big[\partial_1 \eta(\mathbf{a}, \mathbf{b})\big]_j.
\end{align*}
where the permutation $\o: i\to j$ finds the index of the $i$-th largest magnitude, i.e. $|\eta|_{\o(i)}:=|\eta|_{(i)}=|\eta|_j$, and its inverse function is the rank of the magnitudes. 
In the above notation, we have used (again from \citet[Appendix A]{ourAMP})
	$$[\partial_1\eta]_j=\frac{\partial\eta_j}{\partial a_j}=\frac{1}{\text{\Big|\{$1\leq k\leq \p: | \eta_k  |=| \eta_j  |$\}\Big|}},$$
	which converges to 1 as $\p\goto\infty$ if $\eta_j$ is unique in the magnitudes of $\bm\eta$ and to 0 otherwise. In the Lasso case (i.e. $\ell=\alpha_L$), each non-zero entry in $|\bm\eta|$ is indeed unique, and hence we can simplify $[\partial_1\eta]_j$ to $\mathbb{I}\{|\eta|_j\neq 0\}$. We now rewrite \eqref{eq:tau'} as
	\begin{align*}
	\tau\tau'&=\lim_{\p\to\infty}\frac{1}{\n}\sum_j\E(\eta_j-\Pi_j)\left[[\partial_1\eta]_j Z_j \tau'-\sgn(\eta_j)[\partial_1\eta]_j\frac{\partial b_{o^{-1}(j)}}{\partial \ell}\right]
	\\
	&=\lim_{\p\to\infty}\frac{1}{\n}\sum_{j:\eta_j\neq 0}\E(\eta_j-\Pi_j)\left[Z_j \tau'-\sgn(\eta_j)\frac{\partial b_{o^{-1}(j)}}{\partial \ell}\right]
	\\
	&=\lim_p\frac{1}{\n}\sum_{j:\eta_j\neq 0}\E(\eta_j-\Pi_j)\Big[Z_j\tau'-\sgn(\eta_j)(\alpha_{\o^{-1}(j)}\tau'+\mathbb{I}\{\o^{-1}(j)\leq wp\}\tau)\Big]
	\\
	&=\lim_p\frac{1}{\n}\sum_{j:\eta_j\neq 0}\E(\eta_j-\Pi_j)\left[\left(Z_j-\sgn(\eta_j)\alpha_L\right)\tau'-\sgn(\eta_j)\mathbb{I}\{\o^{-1}(j)\leq wp\}\tau\right].
	\end{align*}
	

	To summarize, we derive that
	\begin{equation}
	\begin{split}
	\frac{\partial\tau}{\partial\ell}&=\frac{\lim\limits_p\frac{1}{n} \sum\limits_{j:\o^{-1}(j)\leq wp}\E(\eta_j-\Pi_j)\sgn(\eta_j)\tau}{\lim\limits_p\frac{1}{\n}\sum\limits_{j:\eta_j\neq 0}\E(\eta_j-\Pi_j)\left(Z_j-\sgn(\eta_j)\alpha_L\right)-\tau}.
	\end{split}
	\label{eq:d tau d alpha}
	\end{equation}

	The rest of the proof contains two statements: (1) We will show that the numerator term is positive for sufficiently small $w$; (2) We also show that the denominator term is always negative for any Lasso penalty $\alpha_L$.
	
	To show that the numerator in \eqref{eq:d tau d alpha} is positive for small $w$, we write
	\begin{align*}
	\lim_p\frac{\tau}{n}\sum_{j:\o^{-1}(j)\leq wp}\E(\eta_j-\Pi_j)\sgn(\eta_j)
	=&\frac{w\tau}{\delta}\E\left[(\eta_j-\Pi_j)\sgn(\eta_j)\big|\o^{-1}(j)\leq wp\right]
	\\
	=&\frac{w\tau^2}{\delta}\E\left[(\eta_{\text{soft}}-\pi)\sgn(\eta_{\text{soft}})\big||\eta_{\text{soft}}|\geq q_w\right],
	\end{align*}
in which we slightly abuse the notation for the distribution $\eta\overset{\mathcal{D}}{:=}\eta_{\text{soft}}(\pi+ Z;\alpha_L)$ and define $q_w$ as the $w$-quantile of $|\eta_{\text{soft}}(\pi+ Z;\alpha_L)|$ such that $\PP\left(|\eta|\geq q_w\right)=w$.

Next, simple substitution gives that it is equivalent to show
\begin{align}
\E\left(Z\sgn(\eta)\Big| |\eta|\geq q_w\right)>\alpha_L.
\label{eq:EZsign}
\end{align}
We notice that as $w\to 0$, $q_w\to\infty$. Hence we can always consider $w$ small enough that the desired inequality above holds. The full proof of this fact is referred to \Cref{all_other_proofs}.

The next step is to show that the denominator in \eqref{eq:d tau d alpha} is negative, similar to the proof in \cite{zhang2021efficient}. By multiplying with the positive $\tau$, the denominator becomes
\begin{align*}
&\lim_p\frac{1}{\n}\sum_{j:\eta_j\neq 0}\E(\eta_j-\Pi_j)\left[Z_j-\sgn(\eta_j)\alpha_L\right]-\tau
\\
\propto&\lim_p\frac{1}{\n}\sum_{j:\eta_j\neq 0}\E(\eta_j-\Pi_j)\left[\tau Z_j-\tau\sgn(\eta_j)\alpha_L\right]-\tau^2
\\
=&\lim_p\frac{1}{\n}\sum_{j:\eta_j\neq 0}\E(\eta_j-\Pi_j)^2-\tau^2
\\
=&\lim_p\frac{1}{\n}\sum_{j:\eta_j}\E(\eta_j-\Pi_j)^2-\lim_p\frac{1}{n}\sum_{j:\eta_j=0}\E(\eta_j-\Pi_j)^2-\tau^2
\\
=&-\lim_p\frac{1}{\n}\sum_{j:\eta_j= 0}\E(\eta_j-\Pi_j)^2-\sigma^2<0,
\end{align*}
where the last equality follows from \eqref{eq:my corollary34}.

All in all, we finish the proof that $\frac{\partial\tau}{\partial\ell}$ in \eqref{eq:d tau d alpha} is negative for $\A_{\ell,\alpha_L,w}$ at $\ell=\alpha_L$ and small $w$. Along this negative gradient $\frac{\partial\tau}{\partial\ell}$, increasing the first argument of $\A_{\ell,\alpha_L,w}$ from $\ell=\alpha_L$ (the Lasso case) leads to a SLOPE penalty $\A_S$ and reduces $\tau_L$ to a smaller $\tau_S$. Equivalently, the SLOPE MSE is strictly smaller than the Lasso MSE.
\\\\
\textbf{SLOPE has higher TPP}\qquad
To prove the TPP result, we need the SLOPE to have smaller MSE (as shown previsouly) as well as the same zero-threshold as the Lasso. To achieve this, we claim that, for sufficiently small $w$ and some $\ell>\alpha_L$, the SLOPE zero-threshold $\alpha(\Pi,\Lambda_S)$ is the same as the Lasso zero-threshold $\alpha(\Pi,\Lambda_L)=\alpha_L$. 

In fact, the two-level SLOPE $\A_{\ell,\alpha_L,w}$ by its levels must have the zero-threshold as either $\ell$ or $\alpha_L$ (see \Cref{lambda meets}), and the zero-threshold will be $\alpha_L$ if and only if the sparsity $\kappa(\Pi,\Lambda_S)>w$ (see \Cref{fact:lambda meet}). Therefore it suffices to guarantee $\kappa(\Pi,\Lambda_S)>w$. From $\PP(|\Pi/\tau_S+Z|>\ell)\leq\kappa(\Pi,\Lambda_S)\leq\PP(|\Pi/\tau_S+Z|>\alpha_L)$, it is not hard to obtain that the sparsity $\kappa$ is continuous in $\ell$. Hence for any $w<\kappa(\Pi,\Lambda_L)$, there exists some $\ell>\alpha_L$ but close to $\alpha_L$ so that the SLOPE sparsity $\kappa(\Pi,\Lambda_S)>w$.

Now that we have $\tau_S<\tau_L$ and $\alpha(\Pi,\Lambda_S)=\alpha(\Pi,\Lambda_L)$, we can finish the proof by the definition of $\tppinf$: intuitively, $\pi_S:=\Pi/\tau_S>\pi_L:=\Pi/\tau_L$ and SLOPE $\tppinf=\PP(|\pi_S^\star+Z|>\alpha_L)>\PP(|\pi_L^\star+Z|>\alpha_L)=$ the Lasso $\tppinf$; formally, we show by \Cref{eq:tpp fdp zero-threshold} that
\begin{align*}
&\tppinf(\Pi,\Lambda_S)=\PP(|\pi^\star_S+Z|>\alpha(\Pi,\Lambda_S))=\int_{-\infty}^\infty\PP(|t/\tau_S+Z|>\alpha_L)p_{\Pi^\star}(t)dt
\\
>&\int_{-\infty}^\infty\PP(|t/\tau_L+Z|>\alpha_L)p_{\Pi^\star}(t)dt=\PP(|\pi^\star_L+Z|>\alpha(\Pi,\Lambda_L))=\tppinf(\Pi,\Lambda_L).
\end{align*}
\\\\
\textbf{SLOPE has lower FDP}\qquad
To prove the FDP result, we again use the fact that the SLOPE shares the same zero-threshold as the Lasso but has larger TPP. By \Cref{eq:tpp fdp zero-threshold}, 
\begin{align*}
\fdpinf(\Pi,\Lambda_S)
=&\dfrac{2(1-\epsilon)\Phi(-\alpha(\Pi,\Lambda_S))}{2(1-\epsilon)\Phi(-\alpha(\Pi,\Lambda_S))+\epsilon\tppinf(\Pi,\Lambda_S)}
\\
<&\dfrac{2(1-\epsilon)\Phi(-\alpha(\Pi,\Lambda_L))}{2(1-\epsilon)\Phi(-\alpha(\Pi,\Lambda_L))+\epsilon\tppinf(\Pi,\Lambda_L)}
=\fdpinf(\Pi,\Lambda_L).
\end{align*}
\end{proof}


	\section{Auxiliary proofs}
\label{all_other_proofs}
Here we give some technical proofs that have been used in this work.

\subsection{Derivation of $F_\alpha$ in \Cref{sec:lower bound}}
We are ready to give the explicit form of the functional $F_\alpha$ given that the zero-threshold is $\alpha\in\R_+$. Expanding the term in the integral form, we get
\begin{align*}
	F_{\alpha}[\AA,p_{\pi^\star}]
	&:=\E\left(\eta_{\textnormal{soft}}(\pi+Z;\AA(\pi+Z))-\pi\right)^2
	\\
	=&\int_{0}^{\infty}\int_{-\infty}^{\infty}\Big(\eta_{\text{soft}}\big(t+z;\AA(t+z)\big)-t\Big)^2\phi(z)dz p_\pi(t)dt,
\end{align*}
where $p_\pi$ is the probability density function of the normalized prior $\pi$, which is uniquely determined by $p_{\pi^\star}$ in a way to be explained shortly. 

We further expand the quadratic term in the inner integral, by using the fact that $\h{\pi+Z,\A}(t+z)=\eta_{\text{soft}}(t+z;\AA(t+z))=0$ for $-\alpha\leq t+z\leq \alpha$, since $\alpha$ is the zero-threshold in \Cref{zero threshold}. We obtain
\begin{align*}
	F_{\alpha}[\AA,p_{\pi^\star}]
	=&\int_{0}^{\infty}\Big[\int_{-\alpha-t}^{\alpha-t}t^2\phi(z)dz+\int_{\alpha-t}^{\infty}\big(z-\AA(t+z)\big)^2\phi(z)dz
	\\
	&+\int_{-\infty}^{-\alpha-t}\big(z+\AA(t+z)\big)^2\phi(z)dz\Big]p_\pi(t)dt
	\\	=&\int_{0}^{\infty}\Big[t^2(\Phi(\alpha-t)-\Phi(-\alpha-t))+\int_{\alpha}^{\infty}\big(z-t-\AA(z)\big)^2\phi(z-t)dz
	\\
	&+\int_{\alpha}^{\infty}\big(-z-t+\AA(z)\big)^2\phi(-z-t)dz\Big]p_\pi(t)dt.
\end{align*}

By the definition of $\pi^\star$ in \Cref{lem:A1}, we use $p_\pi(t)=(1-\epsilon)\delta(t)+\epsilon p_{\pi^\star}(t)$, in which $p_{\pi^\star}(t)$ is the probability density function of $\pi^\star$, to write
\begin{align}
	\begin{split}
		&F_{\alpha}[\AA,p_{\pi^\star}]
		\\
		=&2(1-\epsilon)\int_{\alpha}^{\infty}(z-\AA(z))^2\phi(z)dz
		+\epsilon\int_{0}^{\infty}\Big[t^2(\Phi(\alpha-t)-\Phi(-\alpha-t))
		\\
		&+\int_{\alpha}^{\infty}\big(z-t-\AA(z)\big)^2\phi(z-t)dz
		+\int_{\alpha}^{\infty}\big(-z-t+\AA(z)\big)^2\phi(-z-t)dz\Big]p_{\pi^\star}(t)dt	
		\\
		=&2(1-\epsilon)\int_{\alpha}^{\infty}(z-\AA(z))^2\phi(z)dz
		+\epsilon\int_{0}^{\infty}\Big[t^2(\Phi(\alpha-t)-\Phi(-\alpha-t))
		\\
		&+\int_{\alpha}^{\infty}\left(\big(z-t-\AA(z)\big)^2\phi(z-t)
		+\big(-z-t+\AA(z)\big)^2\phi(-z-t)\right)dz\Big]p_{\pi^\star}(t)dt.
	\end{split}
	\label{eq:F_alpha definition}
\end{align}

Since \Cref{lem:two point vertex} states that the optimal $p_{\pi^\star}(t)$ takes the form $\rho(t;t_1,t_2)=p_1\delta(t-t_1)+p_2\delta(t-t_2)$ in \eqref{eq:dirac delta prior}, the above functional turns into
\begin{align*}
	&F_\alpha[\AA,\rho(\cdot;t_1,t_2)]
	\\
	=&\int_{\alpha}^{\infty}\Big[2(1-\epsilon)(z-\AA(z))^2\phi(z)
	\\
	&+\epsilon p_1\left(\big(z-t_1-\AA(z)\big)^2\phi(z-t_1)+\big(-z-t_1+\AA(z)\big)^2\phi(-z-t_1)\right)
	\\
	&+\epsilon p_2\left(\big(z-t_2-\AA(z)\big)^2\phi(z-t_2)+\big(-z-t_2+\AA(z)\big)^2\phi(-z-t_2)\right)\Big]dz
	\\&+\epsilon p_1t_1^2\Big[\Phi(\alpha-t_1)-\Phi(-\alpha-t_1)\Big]
	+\epsilon p_2t_2^2\Big[\Phi(\alpha-t_2)-\Phi(-\alpha-t_2)\Big].
\end{align*}

To construct the quadratic programming in problem \eqref{eq:quadratic programming functional}, we can apply the left endpoint rule and approximate $F_\alpha[\AA,\rho(\cdot;t_1,t_2)]$ by
\begin{align}
	\begin{split}
		\bar{F}_{\alpha}(\bm\A;t_1,t_2)&=2(1-\epsilon)\sum_{i=1}^m(z_i-\A_i)^2\phi(z_i)\Delta z
		\\
		&+\epsilon p_1\sum_{i=1}^m\left(\big(z_i-t_1-\A_i\big)^2\phi(z_i-t_1)+\big(-z_i-t_1+\A_i\big)^2\phi(-z_i-t_1)\right)\Delta z
		\\
		&+\epsilon p_2\sum_{i=1}^m\left(\big(z_i-t_2-\A_i\big)^2\phi(z_i-t_2)+\big(-z_i-t_2+\A_i\big)^2\phi(-z_i-t_2)\right)\Delta z
		\\
		&+\epsilon p_1 t_1^2\Big[\Phi(\alpha-t_1)-\Phi(-\alpha-t_1)\Big]
		+\epsilon p_2 t_2^2\Big[\Phi(\alpha-t_2)-\Phi(-\alpha-t_2)\Big].
	\end{split}
	\label{eq:F bar}
\end{align}

\subsection{Proof of \Cref{lem:two point vertex}}
\begin{proof}
	In general, $\rho^*$ can always be approximated by a sum of Dirac delta functions, $\rho^*(t)=\sum_{i=1}^m p_i\delta(t-t_i)$. In particular, since $\rho^*$ is a probability density function, we require $0<p_i<1$: otherwise if for some $i$, $p_i=1$, then $m=1$ and we are done.
	
	We now show $m<3$ by contradiction. The vertex principle of linear programming states that the minimum value of the linear objective function occurs at the vertices of the feasible region. Hence it suffices to show that all vertices are two-point Dirac delta functions. 
	
	The constraints in problem \eqref{eq:linear programming} lead to
	\begin{align*}
		\sum_i p_i=1,\quad \sum_i p_i[\Phi(t_i-\alpha)+\Phi(-t_i-\alpha)]=u.
	\end{align*}
	
	Suppose $m\geq 3$, then there always exists $\rho'(t)=\sum_{i=1}^m p_i'\delta(t-t_i)$ such that
	\begin{itemize}
		\item $p_i=p_i'$ for $i>3$;
		\item $p_1+p_2+p_3=p_1'+p_2'+p_3'$;
		\item $p_1 h_\alpha(t_1)+p_2 h_\alpha(t_2)+p_3 h_\alpha(t_3)=p_1'h_\alpha(t_1)+p_2'h_\alpha(t_2)+p_3'h_\alpha(t_3)$;
	\end{itemize}
	where we denote $h_\alpha(t_i):=\Phi(t_i-\alpha)+\Phi(-t_i-\alpha)$. In other words, we can find $\rho'$ such that
	\begin{align*}
		\mathbf{0}=\begin{pmatrix}
			1&1&1
			\\
			h_\alpha(t_1)&h_\alpha(t_2)&h_\alpha(t_3)
		\end{pmatrix}
		\left[\begin{pmatrix}
			p_1\\p_2\\p_3
		\end{pmatrix}
		-
		\begin{pmatrix}
			p_1'\\p_2'\\p_3'
		\end{pmatrix}
		\right]
		.
	\end{align*}
	Since there are only two equations involving the three unknown variables $p'_1, p'_2$ and $p'_3$, in the generic case, we can represent the infinitely many $\rho'$ with one degree of freedom,
	\begin{align*}
		\begin{pmatrix}
			p_1'\\p_2'\\p_3'
		\end{pmatrix}
		=\begin{pmatrix}
			p_1\\p_2\\p_3
		\end{pmatrix}
		+
		s\begin{pmatrix}
			c_1\\c_2\\c_3
		\end{pmatrix},
	\end{align*}
	using the null vector $(c_1,c_2,c_3)^\top$ of the above matrix.
	
	As $0<p_i'<1$ for $i=1,2,3$, we claim that for all $s\in(-s_0,s_0)$ with some $s_0>0$, $(p_1',p_2',p_3')^\top$ defined above is feasible for problem \eqref{eq:linear programming}. In other words, suppose we explicitly define
	$$\rho_s(t)=\sum_{i=1}^3(p_i+c_i s)\delta(t-t_i)+\sum_{i>3}p_i\delta(t-t_i),$$
	then there exists a range of $s\in\R$ such that $\rho_s$ is feasible. However, one can easily check that
	$$\rho^*=\frac{1}{2}\left(\rho_s+\rho_{-s}\right)$$
	is also a feasible solution. Hence $\rho^*$ is not a vertex. Contradiction.
\end{proof}

\subsection{Proof of \Cref{eq:EZsign}}
\begin{proof}
	To see that for large enough $q_w$, $\E\left(Z\sgn(\eta)\Big| |\eta|>q_w\right)\geq \alpha_L$, we write $\tilde{q}_w:=q_w+\alpha_L$ and study
	$$\E\left(Z\sgn(\eta)\Big| |\eta|\geq q_w\right)=\E\left(Z\sgn(\eta)\Big| |\pi+Z|\geq \tilde{q}_w\right).$$
	
	We have
	\begin{align*}
		&\E\left(Z\sgn(\eta)\Big| |\pi+Z|\geq \tilde{q}_w\right)
		\\
		=&\E\left(Z\Big| \pi+Z\geq \tilde{q}_w\right)\frac{\PP(\pi+Z\geq \tilde{q}_w)}{\PP(|\pi+Z|\geq \tilde{q}_w)}
		-\E\left(Z\Big| \pi+Z\leq -\tilde{q}_w\right)\frac{\PP(\pi+Z\leq -\tilde{q}_w)}{\PP(|\pi+Z|\geq \tilde{q}_w)}
		\\
		=&\frac{\int_{-\infty}^\infty \left[\phi\left(t-\tilde q_w\right)+\phi\left(-\tilde q_w-t\right)\right]p_{\pi}(t)dt}{\int_{-\infty}^\infty \left[\Phi\left(t-\tilde q_w\right)+\Phi\left(-\tilde q_w-t\right)\right]p_{\pi}(t)dt},
	\end{align*}
	in which $p_{\pi}(t)$ is the unknown but fixed probability density function of $\pi$.
	
	Now we show cases where the above ratio of integrals goes to $\infty$ as $q_w\to\infty$ or equivalently $\tilde{q}_w\to\infty$. For bounded $\pi$ (in fact for priors with bounded essential infimum and essential supreme), denoting the minimum and maximum as $\pi_\text{min}$ and $\pi_\text{max}$, then the ratio is
	\begin{align*}
		\frac{\int_{\pi_\text{min}}^{\pi_\text{max}} \left[\phi\left(t-\tilde q_w\right)+\phi\left(-\tilde q_w-t\right)\right]p_{\pi}(t)dt}{\int_{\pi_\text{min}}^{\pi_\text{max}} \left[\Phi\left(t-\tilde q_w\right)+\Phi\left(-\tilde q_w-t\right)\right]p_{\pi}(t)dt}.
	\end{align*}
	
	Using the fact that $\phi(x)+x\Phi(x)>0$, we have
	\begin{align*}
		&\E\left(Z\sgn(\eta)\Big| |\pi+Z|\geq \tilde{q}_w\right)
		\\
		>&\frac{\int_{\pi_\text{min}}^{\pi_\text{max}} \left[(\tilde q_w-t)\Phi\left(t-\tilde q_w\right)+(\tilde q_w+t)\Phi\left(-\tilde q_w-t\right)\right]p_{\pi}(t)dt}{\int_{\pi_\text{min}}^{\pi_\text{max}} \left[\Phi\left(t-\tilde q_w\right)+\Phi\left(-\tilde q_w-t\right)\right]p_{\pi}(t)dt}
		\\
		>&\frac{\int_{\pi_\text{min}}^{\pi_\text{max}} \left[(\tilde q_w-\pi_\text{max})\Phi\left(t-\tilde q_w\right)+(\tilde q_w+\pi_\text{min})\Phi\left(-\tilde q_w-t\right)\right]p_{\pi}(t)dt}{\int_{\pi_\text{min}}^{\pi_\text{max}} \left[\Phi\left(t-\tilde q_w\right)+\Phi\left(-\tilde q_w-t\right)\right]p_{\pi}(t)dt}
		\\
		>&\min\{\tilde q_w-\pi_\text{max},\tilde q_w+\pi_\text{min}\}.
	\end{align*}
	
	In summary, when $w\to 0$, all $q_w,\tilde q_w$ and $\E(Z\sgn(\eta) \big| |\eta|\geq q_w)\to\infty$. Therefore we have $\E(Z\sgn(\eta) \big| |\eta|\geq q_w)>\alpha_L$ for sufficiently small $w$.
\end{proof}


	\section{Computation of SLOPE AMP quantities}

In order to compute $\bfalph$ and $\tau$, e.g. for the AMP calibration or for computing the estimation error, we need to estimate the SLOPE proximal operator in the state evolution \eqref{finite_SE}. Despite that the Monte Carlo method is easy to implement, it is often unstable nor efficient for high-dimensional SLOPE problems, say when $\p$ is in the order of thousands. Here we demonstrate how to approximate the normalized estimation error $E(\Pi,\Lambda)$ in a way that matches the truth asymptotically and has satisfactory approximation error in the finite dimension (see bottom-right plot in \Cref{fig:decompose SE}).

Notice in this section, the prior distribution $\Pi$ is general and does not necessarily satisfy the sparsity assumption $\PP(\Pi\neq 0)=\epsilon$.

\subsection{Approximating state evolution and calibration with quantiles}
In state evolution \eqref{finite_SE}, the expectation term can be difficult to evaluate because of the ordering and the non-separability of the sorted norm. In addition, the convolution between $\Pi$ and $Z$ also makes the estimation difficult.

We propose the following method for estimation: denote $q_D$ as discretized quantile function of distribution $D$ at $\{\frac{1}{p},\frac{2}{p},...,\frac{p-1}{p}\}$. Denote $\Pi$ as the true distribution of $\bet$, $\bm\Pi_p$ as p-variate $\Pi$ with i.i.d. entries; $\pi$ as $\Pi/\tau$ and $\bm\pi_p$ as p-variate $\pi$ with i.i.d. entries accordingly. Similarly denote standard normals $Z$ and $\bm Z_p$. Assume $\bfalph\in \mathbb{R}^p$ is $p$-variate $\A$ with i.i.d. entries (in decreasing order). We can decompose
\begin{align}
\begin{split}
&\E\langle [\prox_{J}(\bm\Pi_p+\tau \bm Z_p;{\bfalph\tau})-\bm\Pi_p]^2\rangle
\\
=&\E\langle[\prox_{J}(\bm\Pi_p+\tau \bm Z_p;{\bfalph\tau})]^2\rangle+\E\langle \bm\Pi_p^2\rangle-\frac{2}{p}\E[\prox_{J}(\bm\Pi_p+\tau \bm Z_p;{\bfalph\tau})^\top \bm\Pi_p]
\\
=&\E\langle[\prox_{J}(\bm\Pi_p+\tau \bm Z_p;{\bfalph\tau})]^2\rangle+\E\Pi^2-\frac{2}{p}\E[\prox_{J}(\bm\Pi_p+\tau\bm Z_p;{\bfalph\tau})^\top \bm\Pi_p]
\\
\approx&\langle[\prox_{J}(q_{\Pi+\tau Z};{q_{\A\tau}})]^2\rangle+\frac{1}{p}q_{\Pi}^\top q_{\Pi}-\frac{2}{p} \prox_{J}(q_{\Pi+\tau Z};{q_{\A\tau}})^\top \E[\bm\Pi_p|\bm\Pi_p+\tau\bm Z_p=q_{\Pi+\tau Z}].
\end{split}
\label{decompose}
\end{align}

Such approximation for \eqref{decompose} is consistent and can be visualized in \Cref{fig:decompose SE}. This is due to the fact that the ordering and the signs do not affect the sum of squares and the property of Riemann Stieltjes integral (see \cite{johnsonbaugh2012foundations,kolmogorov1975introductory,rudin1976principles}).
\begin{fact}[Existence of Riemann Stieltjes integral]
Suppose $f$ is continuous and $g$ is of bounded variation. For every $\epsilon>0$, there exists $\delta>0$ such that for every partition $P:=(a=x_0<x_1<\cdots<x_p=b)$ with mesh$(P)<\delta$, and for every choice of points $c_i$ in $[x_i, x_{i+1}]$, we have
\begin{align*}
\left|S(P,f,g)-\int_{a}^{b}f(x)dg(x)\right|<\epsilon,
\end{align*}
where $S(P,f,g):=\sum_{{i=1}}^{{p-1}}f(c_{i})(g(x_{{i+1}})-g(x_{i}))$.
\end{fact}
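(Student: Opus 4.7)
The plan is to reduce the stated claim to the case where $g$ is monotone and then exploit uniform continuity of $f$ on the compact interval $[a,b]$. First I would invoke the Jordan decomposition theorem: any $g$ of bounded variation on $[a,b]$ admits a representation $g = g_1 - g_2$ with $g_1, g_2$ non-decreasing. Since both the sum $S(P, f, g)$ and the Riemann--Stieltjes integral $\int_a^b f\,dg$ are linear in the integrator $g$, it suffices to prove the claim with $g$ replaced by a non-decreasing function. Discarding the trivial case $g(b) = g(a)$, I assume henceforth that $g$ is non-decreasing with $V := g(b) - g(a) > 0$.

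Next, because $f$ is continuous on the compact set $[a,b]$, it is uniformly continuous there. Given $\epsilon > 0$, choose $\delta > 0$ so that $|f(x) - f(y)| < \epsilon / V$ whenever $|x - y| < \delta$. For any partition $P = (a = x_0 < \cdots < x_p = b)$ with $\operatorname{mesh}(P) < \delta$, define the Darboux--Stieltjes upper and lower sums
\begin{align*}
U(P,f,g) = \sum_{i=0}^{p-1} M_i\bigl(g(x_{i+1}) - g(x_i)\bigr), \qquad L(P,f,g) = \sum_{i=0}^{p-1} m_i\bigl(g(x_{i+1}) - g(x_i)\bigr),
\end{align*}
where $M_i$ and $m_i$ denote the supremum and infimum of $f$ on $[x_i, x_{i+1}]$. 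Uniform continuity forces $M_i - m_i < \epsilon / V$, and telescoping gives
\begin{align*}
U(P,f,g) - L(P,f,g) < \frac{\epsilon}{V}\sum_{i=0}^{p-1}\bigl(g(x_{i+1}) - g(x_i)\bigr) = \epsilon.
\end{align*}

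Finally, for any choice of tag points $c_i \in [x_i, x_{i+1}]$, the tagged sum satisfies $L(P,f,g) \le S(P,f,g) \le U(P,f,g)$. The Riemann--Stieltjes integral $\int_a^b f\,dg$ exists under our hypotheses and also lies between these Darboux sums: indeed, the family $\{L(P,f,g)\}_P$ is bounded above by any $U(P',f,g)$, and the preceding estimate shows that $\sup_P L(P,f,g) = \inf_P U(P,f,g)$, with common value equal to the integral. Combining, $|S(P,f,g) - \int_a^b f\,dg| \le U(P,f,g) - L(P,f,g) < \epsilon$, which is the desired inequality. The only real subtlety is the tacit appeal to existence of $\int_a^b f\,dg$, but this is settled by the same uniform-continuity argument just used to squeeze the sums; no additional work beyond the Jordan decomposition plus the standard Darboux comparison is required.
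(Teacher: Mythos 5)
Your proof is correct, and it follows the standard textbook route: reduce to non-decreasing $g$ via the Jordan decomposition (using linearity of both the tagged sums and the integral in $g$, then applying the monotone case with $\epsilon/2$ to each of $g_1$, $g_2$ and taking the smaller $\delta$), then exploit uniform continuity of $f$ together with the Darboux--Stieltjes squeeze $L(P,f,g) \le S(P,f,g) \le U(P,f,g)$ and $L(P,f,g) \le \int_a^b f\,dg \le U(P,f,g)$. The paper itself does not prove this Fact; it simply quotes it from standard references (Johnsonbaugh--Pfaffenberger, Kolmogorov--Fomin, Rudin), so there is no author proof to compare against. Two minor remarks: the paper's index range $\sum_{i=1}^{p-1}$ in the definition of $S(P,f,g)$ is a typo for $\sum_{i=0}^{p-1}$, which you implicitly correct when writing the Darboux sums; and the final squeeze relies on the standard refinement fact that $L(P) \le L(P\cup P') \le U(P\cup P') \le U(P')$ for any two partitions, which justifies your claim that the integral lies between $L(P)$ and $U(P)$ for the specific small-mesh $P$ -- you allude to this but it is worth stating explicitly so the appeal to existence of the integral is not circular.
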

We start with the simplest second term in \eqref{decompose}. Set $f(x)=x^2$ and $g(x)$ to be cumulative distribution function of $\Pi$. Denote $q_i$ as $\frac{i}{p}$-th quantile. Setting $c_i=q_i$ and $P=(q_1,...,q_{p-1})$, then we have the approximate sum as
\begin{align*}
\frac{1}{p}q_{\Pi}^\top q_{\Pi}&=S(P,f,g)=q_1^2\PP(q_1<\Pi<q_2)+\cdots+q_{p-1}^2\PP(q_{p-1}<\Pi<q_{p})\\
&=\sum_i\int_{q_i}^{q_{i+1}} q_i^2 dg(x)\rightarrow \int_{-\infty}^\infty x^2 dg(x)=\E\Pi^2.
\end{align*}
Similarly, for the first term in \eqref{decompose}, denote the distribution to which the empirical distribution of $\prox_{J}(\bm\Pi+\tau\bm Z;\bfalph\tau)$ converges as $\widehat{\Pi}$. By approximating the Riemann Stieltjes integral twice, we have 
$$
\langle[\prox_{J}(q_{\Pi+\tau Z};q_{\A\tau})]^2\rangle
\to\E \widehat{\Pi}^2
=\plim\E\langle[\prox_{J}(\bm\Pi+\tau\bm Z;\bfalph\tau)]^2\rangle.
$$
For the last term in \eqref{decompose}, we transform the term via the law of total expectation,
\begin{align*}
&\plim\frac{1}{\p}\E[\prox_{J}(\bm\Pi+\tau\bm Z;\bfalph\tau)^\top \bm\Pi]
\\
=&\lim\frac{1}{\p}\E[\prox_{J}(\bm\Pi+\tau\bm Z;{q_{\A\tau}})^\top \bm\Pi]
\\
=&\lim\frac{1}{\p}\E_{\Pi+\tau Z}[\E_{\Pi,Z}[\prox_{J}(\bm\Pi+\tau\bm Z;{q_{\A\tau}})^\top \bm\Pi\big|\bm\Pi+\tau\bm Z]]
\\
=&\lim\frac{1}{\p}\E_{\Pi,Z}[\prox_{J}(q_{\Pi+\tau Z};{q_{\A\tau}})^\top \bm\Pi\big|\bm\Pi+\tau\bm Z=q_{\Pi+\tau Z}]
\\
=&\lim\frac{1}{\p}\left(\prox_{J}(q_{\Pi+\tau Z};{q_{\A\tau}})^\top \E_{\Pi,Z}[ \bm\Pi\big|\bm\Pi+\tau\bm Z=q_{\Pi+\tau Z}]\right).
\end{align*}


Before we move on to the next section where we look at the conditional expectation term above, we pause to remark that the approximation via quantiles can be also used for computing the calibration \eqref{finite_calibration}: the calculation of $\E\|\prox_{J}(\bm\Pi+\tau\bm Z;{\bm A\tau})\|_0^*$ can be approximated by the number of unique values in $\left|\prox_{J}(q_{\Pi+\tau Z};{q_{\A\tau}})\right|$.

\subsection{Closed-form of conditional expectation}
The challenge remains on computing the vector  $\E[\bm\Pi|\bm\Pi+\tau\bm Z=q_{\Pi+\tau Z}]$. We will derive its closed-form by applying the inverse transform sampling on each entry. The effect of approximation using our explicit form is demonstrated in \Cref{fig:decompose SE different priors}. 

\begin{figure}[!htp]
	\centering
	\hspace{-0.5cm}
	\includegraphics[width=0.33\linewidth]{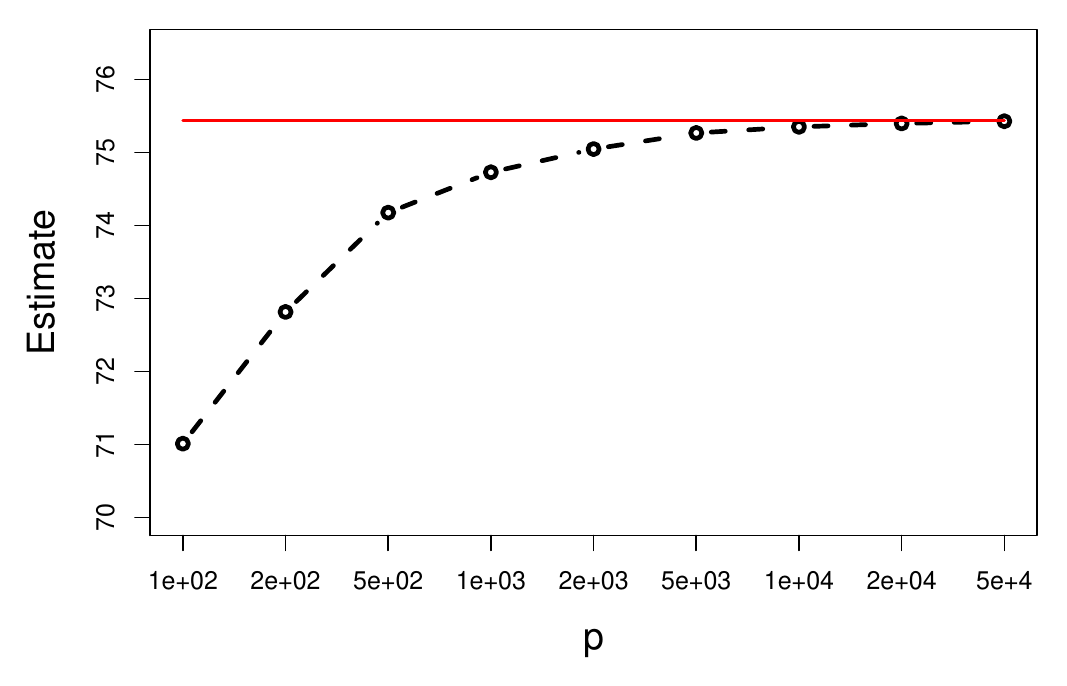}
	\includegraphics[width=0.33\linewidth]{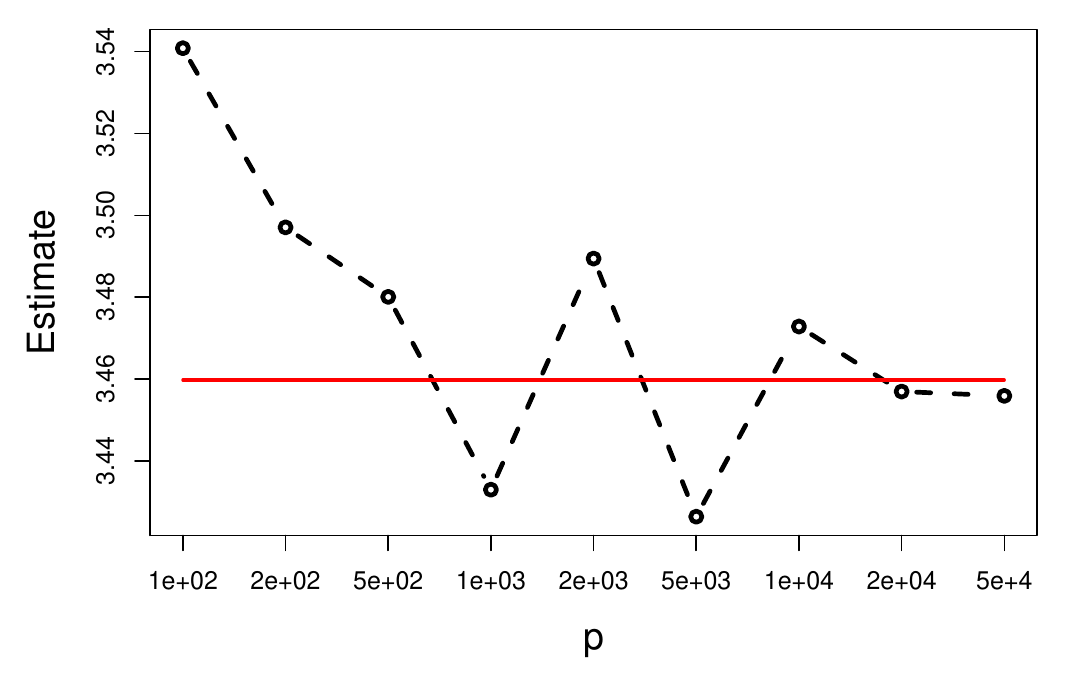}
	\includegraphics[width=0.33\linewidth]{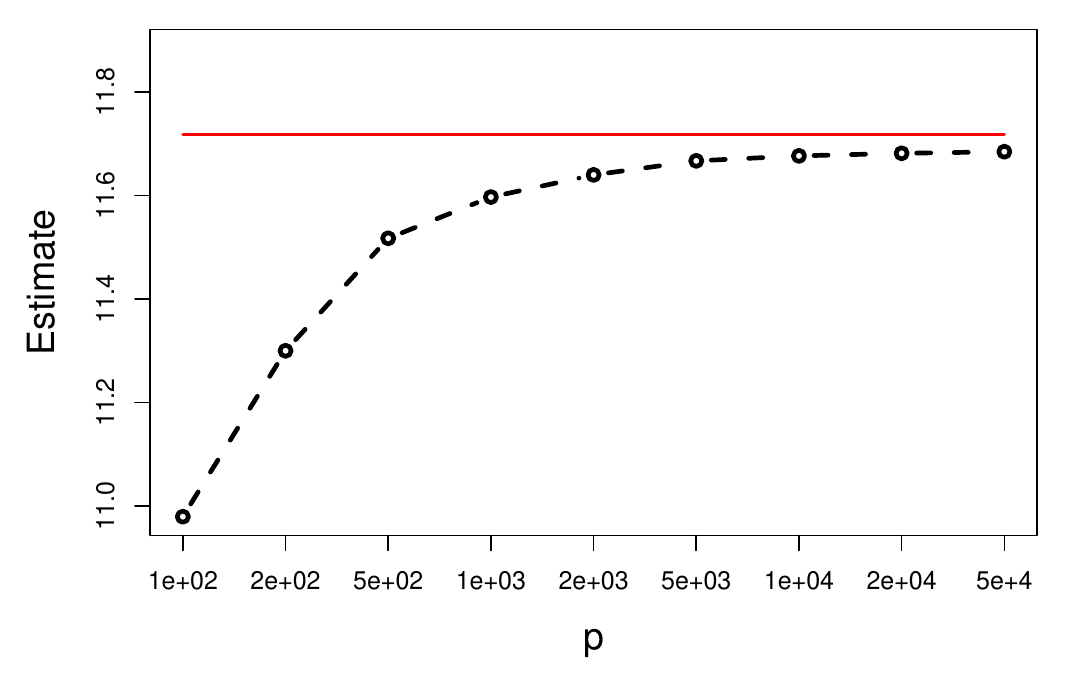}
	\caption{$\E\langle\prox_{J}(\bm\Pi+\tau \bm Z;{\bm A\tau})^\top \bm\Pi\rangle$ estimated by the quantiles and the closed-form conditional expectation (black dotted) against true expectation (red solid). Here $\delta=0.3$ and $\A\sim \text{Exp}(0.2)/10$. Left: $\Pi\sim\text{Exp(0.1)}$. Middle: $\Pi\sim10\cdot\text{Bernoulli(0.1)}$. Right: $\Pi\sim \mathcal{N}(2,25)$.}
	\label{fig:decompose SE different priors}
\end{figure}
For any $q\in \mathbb{R}$, denoting the support of $\Pi$ as $\supp$ and the probability density function as $p_{\Pi}$, we get
\begin{align*}
\E[\Pi|\Pi+\tau Z=q]=&\frac{\int_{\supp(\Pi)}{x\cdot\frac{1}{\sqrt{2\pi}\tau}\exp(-\frac{(q-x)^2}{2\tau^2})}p_{\Pi}(x)dx}{\int_{\supp(\Pi)}{ \frac{1}{\sqrt{2\pi}\tau}\exp(-\frac{(q-x)^2}{2\tau^2})}p_{\Pi}(x)dx}.
\end{align*}

Substitute $u=q-x$,
\begin{align*}
\E[\Pi|\Pi+\tau Z=q]=&q-\frac{\int_{\supp(q-\Pi)}{u\exp(-\frac{u^2}{2\tau^2})}p_{\Pi}(q-u)du}{\int_{\supp(q-\Pi)}{\exp(-\frac{u^2}{2\tau^2})}p_{\Pi}(q-u)du}.
\end{align*}

Denote $s_q(u):=p_{\Pi}(q-u)\exp(-\frac{u^2}{2\tau^2}), S_q:=\int_{-\infty}^\infty{s_q(u)du}$. Then $h_q(u)=s_q(u)/S_q$ is a normalized density of $s_q$. It can be viewed as a posterior density $h(u|q)$ with a Gaussian prior of $u$ and $p_\Pi(q-u)$ as evidence. Then
\begin{align*}
\E[\Pi|\Pi+\tau Z=q]=&q-\frac{\int_{\supp(q-\Pi)}{u\cdot s_q(u)du}}{\int_{\supp(q-\Pi)}{s_q(u)du}}
\\
=&q-\frac{\int_{\supp(q-\Pi)}{u\cdot h_q(u)du}}{\int_{\supp(q-\Pi)}{h_q(u)du}}=q-\E[U_q|U_q\in{\supp(q-\Pi)}],
\end{align*}
where $U_q$ is a univariate random variable with the density $h_q$.

%
Here we derive explicit formulae when the priors are Gaussian, exponential and Bernoulli. Two types of special generalization are worth mentioning: (1) when the support of $\Pi$ is $(-\infty,\infty)$, the conditional expectation $\E[U_q|U_q\in{\supp(q-\Pi)}]$ is indeed unconditional and the computation is simplified; (2) the cases of discrete priors can be easily derived in general besides the Bernoulli case.

\textbf{Gaussian distribution\quad}
When $\Pi=\N(\mu,\sigma^2)$, we have $\Pi+\tau Z=\N(\mu,\sigma^2+\tau^2)$, and $h_q(u)$ is the density of $\N(\frac{(q-\mu)\tau^2}{\sigma^2+\tau^2},\frac{\sigma^2\tau^2}{\sigma^2+\tau^2})$. Hence
\begin{align*}
\E[\Pi|\Pi+\tau Z=q]=&q-\E[U_q]=q-\frac{(q-\mu)\tau^2}{\sigma^2+\tau^2}=\frac{q\sigma^2+\mu\tau^2}{\sigma^2+\tau^2}.
\end{align*}

\textbf{Exponential distribution\quad}
When $\Pi=\text{Exp}(c)$, we have $\Pi+\tau Z$ being an exponentially modified Gaussian (EMG) distribution. Then $$s_q(u)=c\exp\left(-cq+\frac{c^2\tau^2}{2}\right)\left[\frac{1}{\sqrt{2\pi}\tau}\exp(-\frac{(u-c\tau^2)^2}{2\tau^2})\right],$$
and $h_q$ is the density of $N(c\tau^2,\tau^2)$. And, denoting $\xi=\frac{q-c\tau^2}{\tau}$, we get
\begin{align*}
\E[\Pi|\Pi+\tau Z=q]=&q-\E[U_q|U_q\in(-\infty,q)]=q-\E[U_q|U_q<q]=\tau\left(\xi+\frac{\phi(\xi)}{\Phi(\xi)}\right).
\end{align*}

\textbf{Discrete distribution\quad}
We begin with $\Pi=k\cdot \text{Bernoulli}(\epsilon)$ and then generalize to any discrete priors. Writing the density as $(1-\epsilon)\delta(x)+\epsilon\delta(x-k)$,we have
\begin{align*}
h_q(u)&\propto [\epsilon\delta(q-u-k)+(1-\epsilon)\delta(q-u)]\exp(-\frac{u^2}{2\tau^2})
\\
&=[\epsilon\delta(u-(q-k))+(1-\epsilon)\delta(u-q)]\exp(-\frac{u^2}{2\tau^2}).
\end{align*}
The last equality is true since the Dirac delta function is an even function. Hence
\begin{align*}
U_q&=
\begin{cases}
q&\text{ w.p. $(1-\epsilon)\exp(-\frac{q^2}{2\tau^2})/C$}
\\
q-k&\text{ w.p. $\epsilon\exp(-\frac{(q-k)^2}{2\tau^2})/C$}
\end{cases}
\end{align*}
where $C=(1-\epsilon)\exp(-\frac{q^2}{2\tau^2})+\epsilon\exp(-\frac{(q-k)^2}{2\tau^2})$. We get
\begin{align*}
\E[\Pi|\Pi+\tau Z=q]=&q-\E[U_q|U_q\in\{q,q-k\}]=q-\E[U_q]
\\
=&q-q\PP[U_q=q]-(q-k)\PP[U_q=q-k].
\end{align*}
where both probabilities are given above.

\begin{rem} 
It is easy to derive the conditional expectation for any discrete priors by writing the probability mass function as a sum of Dirac delta functions. In general, suppose the prior takes values in $a_1,...a_n$ with probability $p_1,...p_n$, then $U_q$ takes values $(q-a_i)$ with probability $\PP_i=p_i\exp(-\frac{(q-a_i)^2}{2\tau^2})/C$ where $C$ is normalizing constant and the conditional expectation is $q-\sum_i(q-a_i)\PP_i$.
\end{rem}

\subsection{Algorithm for state evolution term}
Taking the quantile method and closed-form conditional expectation described in the previous sections, we present the following algorithm to compute the state evolution term efficiently.

\begin{algorithm}[H]
	\caption{Calculating $\E\langle[\prox_{J}(\bm\Pi+\tau\bm Z;{\bm A\tau})-\bm\Pi]^2\rangle$ given $\A,\Pi,\tau$}
	\begin{algorithmic}
		\STATE{1. Derive quantiles $q_{\A}, q_{\Pi}$ }
		\STATE{2. Derive $D=\Pi+\tau Z$ by convolution and compute $q_D$}
		\STATE{3. Compute $G=\prox_{J}(q_{D};q_{\A\tau})\in \R^\p$, (notice that $q_{\A\tau}=\tau q_{\A}$)}
		\STATE{4. Compute $H=\E[\bm\Pi|\bm\Pi+\tau\bm Z=q_{D}] \in \R^\p$}
		\STATE{5. Return $\frac{1}{p}\left[G^\top G+q_{\Pi}^\top q_{\Pi}^{}-2G^\top H\right]$}
	\end{algorithmic}
\end{algorithm}

We give some simulation results on different dimensions. Since all three priors show similar patterns, only the exponential prior case is plotted. 
\begin{figure}[!htb]
	\centering
	\includegraphics[width=0.45\linewidth]{ComplexExp}
	\includegraphics[width=0.45\linewidth]{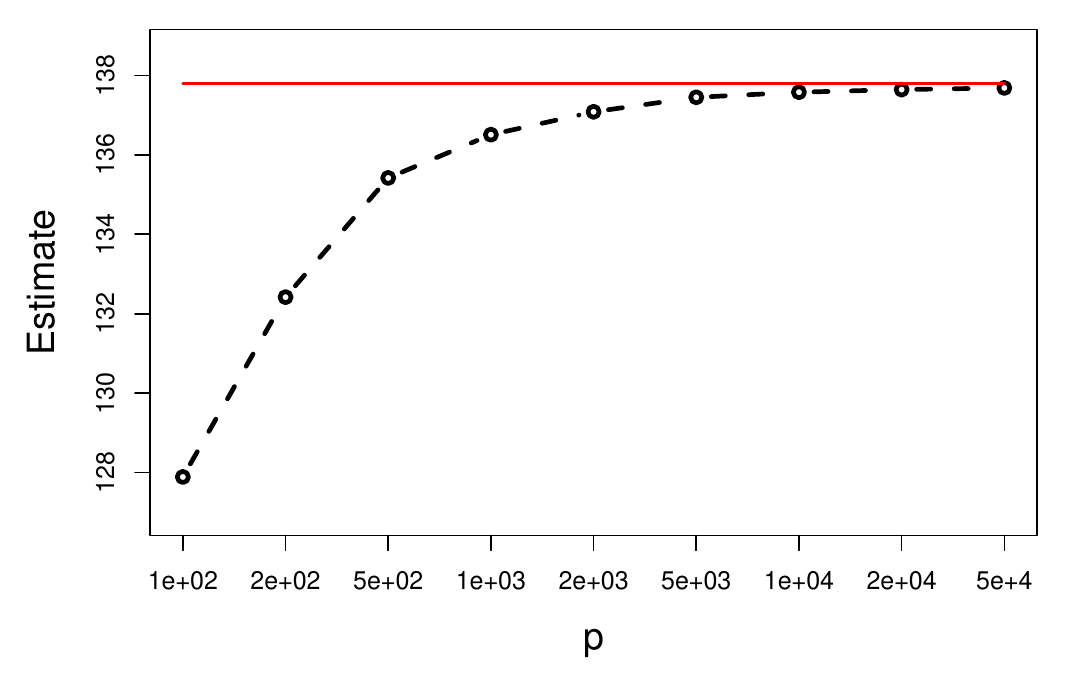}
	\includegraphics[width=0.45\linewidth]{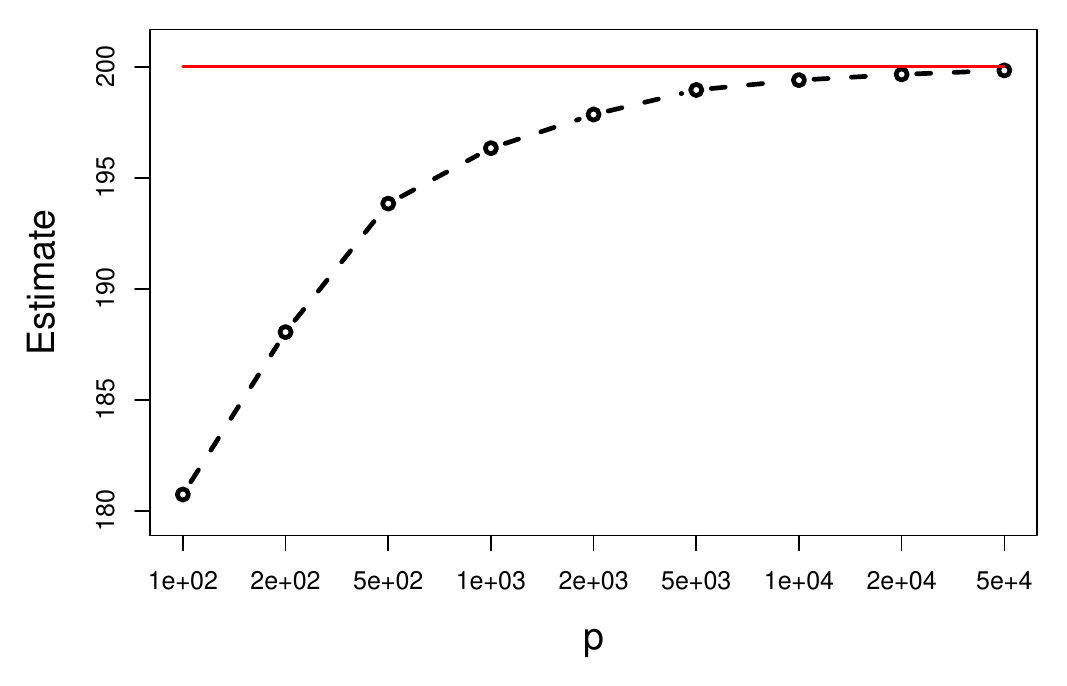}
	\includegraphics[width=0.45\linewidth]{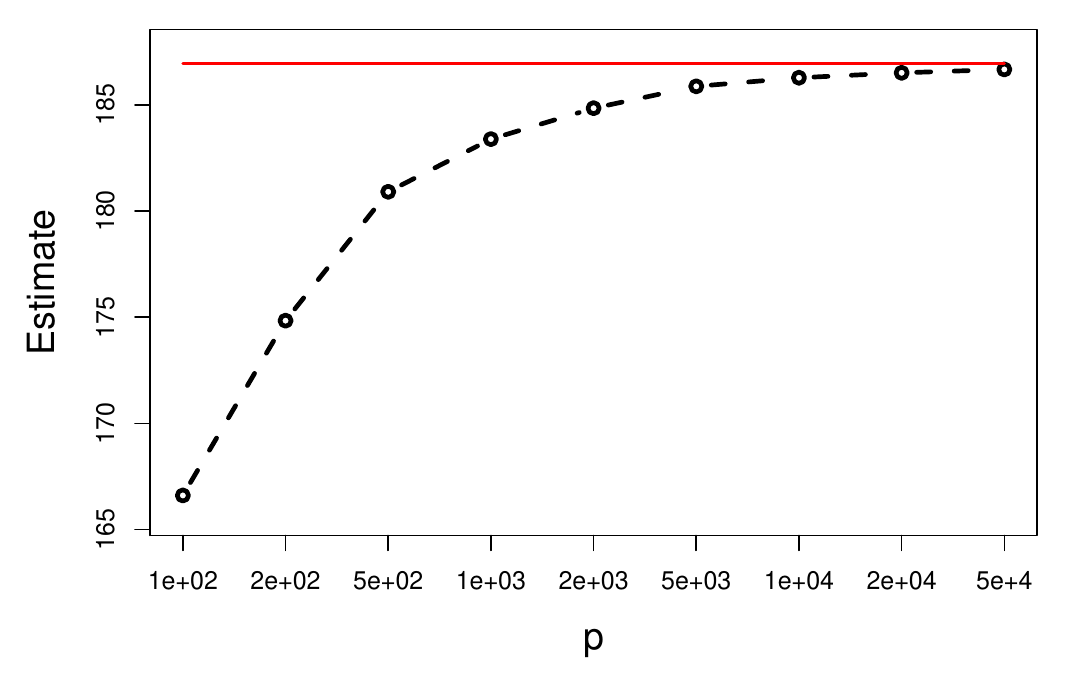}
	\caption{$\E\langle\prox_{J}(\bm\Pi+\tau \bm Z;{\bm A\tau})^\top \bm\Pi\rangle$ (top-left),
		$\E\langle[\prox_{J}(\Pi+\tau Z;{\alpha\tau})]^2\rangle$ (top-right),
		$\E[\Pi^2]$ (bottom-left) and $\E\langle[\prox_{J}(\bm\Pi+\tau\bm Z;{\bm A\tau})-\bm\Pi]^2\rangle$ (bottom-right) estimated by the quantiles (black dotted) against true expectation (red solid). Here $\sigma=0$ (no noise), $\X\sim \mathcal{N}(0,1/n), \n/\p=\delta=0.3, \A\sim \text{Exp}(0.2)/10, \Pi\sim \text{Exp}(0.1)$.}
	\label{fig:decompose SE}
\end{figure}

\clearpage
	\section{Design of SLOPE penalty under fixed prior}
	\label{app:quadratic programming}
	This section studies the problem of minimizing $\fdpinf$ at fixed $\tppinf=u$ over all possible SLOPE penalties, when the prior $\Pi$ is fixed. This problem has been investigated extensively in \cite{SLOPEasymptotic} but not via the quadratic programming approach that we proposed in \Cref{sec:lower bound}. We give the detailed procedure to find the SLOPE trade-off below.
	\begin{enumerate}
		\item Given $\Pi$, we try different $\tau$ from the small to the large, which defines $\pi:=\Pi/\tau$ and $\pi^\star:=\Pi^\star/\tau$. Denote the corresponding probability density function as $p_{\pi^\star}$.
		\item Since $\tppinf=u$, we require $\int_0^\infty [\Phi(t-\alpha)+\Phi(-t-\alpha)]p_{\pi^\star}(t)dt=u$ by the definition of the zero-threshold $\alpha$. Note that $\alpha(\tau)$ is a unique scalar for a given $\pi$, or equivalently $\tau$.
		\item By \eqref{eq:F_alpha definition}, we have the formula of $F_\alpha[\mathrm{A}_\textup{eff},p_\pi]$ which we want to minimize over all $\mathrm{A}_\textup{eff}$ under the same constraints as in \eqref{eq:quadratic programming functional}. The minimization can again be achieved by discretization and via quadratic programming in \eqref{eq:quadratic programming standard}, though the forms of $\mathrm{Q},\mathrm{d}$ are different due to the more generalized form of the prior. I.e.
		\begin{align}
			\begin{split}
				&\min_{\bm\A}\quad\frac{1}{2}\bm \A^\top\bm{\mathrm{Q}}\bm \A-\bm \A^\top\bm{\mathrm{d}}
				\\
				&\text{s.t.}\quad
				\begin{pmatrix}
					1&0&0&\cdots&0
					\\
					-1&1&0&\cdots&0
					\\
					0&-1&1&\cdots&0
					\\
					\cdots&\cdots&\cdots&\cdots&\cdots
					\\
					0&\cdots&0&-1&1
				\end{pmatrix}\bm\A\geq
				\begin{pmatrix}
					\alpha(\tau)
					\\
					0
					\\
					\vdots
					\\
					0
				\end{pmatrix}.
			\end{split}
		\end{align}
		where
		\begin{align*}
			\bm{\mathrm{Q}}&=\textup{diag}\left(2(1-\epsilon)\phi(\bm z)+\epsilon\int_0^\infty\Big[\phi(\bm z-t)+\phi(-\bm z-t)\Big]p_{\pi^\star}(t)dt\right),
			\\
			\bm{\mathrm{d}}&=2(1-\epsilon)\bm z\phi(\bm z)+\epsilon\int_0^\infty\Big[(\bm z-t)\phi(\bm z-t)+(\bm z+t)\phi(\bm z+t)\Big]p_{\pi^\star}dt,
		\end{align*}
		\item The smallest $\tau$ that is valid, i.e. $F_\alpha[\mathrm{A}_\textup{eff}^*,p_{\pi^\star}]\leq\delta$ with $\mathrm{A}_\textup{eff}^*$ being the optimal penalty from the above quadratic programming, can be shown to correspond to the largest zero-threshold $\alpha$ by \Cref{fact:alpha tau}.
		\item The largest zero-threshold $\alpha$ gives the minimum FDP via (3.8):
		$$\frac{2(1-\epsilon)\Phi(-\alpha(u))}{2(1-\epsilon)\Phi(-\alpha(u))+\epsilon u}.$$ 
	\end{enumerate}
	
	\begin{fact}
		\label{fact:alpha tau}
		Fixing the prior $\Pi=\pi\tau$ and under the condition $\int_0^\infty [\Phi(t-\alpha)+\Phi(-t-\alpha)]p_{\pi^\star}(t)dt=u$, we have $\frac{d\alpha}{d\tau}<0$.
	\end{fact}
	\begin{proof}[Proof of \Cref{fact:alpha tau}]
		\begin{align*}
			&\mathbb{E}[\Phi(\Pi/\tau-\alpha)+\Phi(-\Pi/\tau-\alpha)]=u
			\\
			\Longrightarrow\quad
			&\mathbb{E}[-(\frac{\Pi}{\tau^2}+\frac{d\alpha}{d\tau})\phi(\Pi/\tau-\alpha)+(\frac{\Pi}{\tau^2}-\frac{d\alpha}{d\tau})\phi(-\Pi/\tau-\alpha)]=0
			\\
			\Longrightarrow\quad
			&\frac{d\alpha}{d\tau}=\frac{\mathbb{E}\left[-(\frac{\Pi}{\tau^2})[\phi(\Pi/\tau-\alpha)-\phi(-\Pi/\tau-\alpha)]\right]}{\mathbb{E}[\phi(\Pi/\tau-\alpha)+\phi(-\Pi/\tau-\alpha)]}<0.
		\end{align*}

\vspace{-0.5cm}	
\end{proof}


\end{document}